\numberwithin{equation}{section}
\newtheorem{theorem}{Theorem}[section]
\newtheorem*{theoremA*}{Theorem A}
\newtheorem*{theoremB*}{Theorem B}
\newtheorem*{corollaryB*}{Corollary of Theorem B}
\newtheorem{lemma}[theorem]{Lemma} 
\newtheorem{corollary}[theorem]{Corollary}
\newtheorem{proposition}[theorem]{Proposition} 
\newtheorem{conjecture}[theorem]{Conjecture} 
\newtheorem{remark}[theorem]{Remark}
\newtheorem{example}[theorem]{Example}
\newtheorem{definition}[theorem]{Definition}
\def\C{\mathbb C}
\def\Q{\mathbb Q}
\def\Z{\mathbb Z}
\def\P{{\mathcal P}}
\def\cf{\check f}
\def\x{x}
\def\varphih{\varphi_h}
\def\Ih{I_h}
\def\f{\theta}
\DeclareMathOperator{\Hess}{Hess}
\DeclareMathOperator{\NR}{NR}
\DeclareMathOperator{\Lie}{Lie}
\DeclareMathOperator{\pt}{pt}
\newcommand{\Flags}{Flag}
\newcommand{\NInv}[1]{D_{#1}}
\newcommand{\hsm}{{\hspace{1mm}}}
\newcommand{\height}{\mathrm{ht}}
\newcommand{\dimX}{d}
\newcommand{\into}{\hookrightarrow}
\newcommand{\TChFlag}{\tau^T}
\newcommand{\SChFlag}{\tau^S}
\newcommand{\ChFlag}{\tau}
\newcommand{\SChNil}{\bar\tau^S}
\newcommand{\ChNil}{\bar\tau}
\newcommand{\TChSemi}{\hat\tau^T}
\newcommand{\ChSemi}{\hat\tau}
\newcommand{\gl}{\mathfrak{gl}}
\def\N{\mathsf{N}}
\def\gjk{g_{j,k}}
\def\Sn{\mathfrak{S}_n}
\def\Abe{\mathcal A}
\def\Tn{T}
\def\sp{s}
\begin{document}
  
\title[Cohomology rings of Hessenberg varieties]{The
  cohomology rings of regular nilpotent Hessenberg varieties in Lie type A}
\author {Hiraku Abe}
\address{Osaka City University Advanced Mathematical Institute, 3-3-138 Sugimoto, Sumiyoshi-ku, Osaka 558-8585, 
Japan / Department of Mathematics, University of Toronto, 40 St. George Street, Toronto, Ontario, Canada, M5S 2E4}
\email{hirakuabe@globe.ocn.ne.jp}

\author {Megumi Harada} 
\address{Department of Mathematics and Statistics, McMaster University, 1280 Main Street West, Hamilton, Ontario L8S4K1, Canada}
\email{Megumi.Harada@math.mcmaster.ca}
\urladdr{\url{http://www.math.mcmaster.ca/~haradam}}

\author {Tatsuya Horiguchi}
\address{Osaka City University Advanced Mathematical Institute, 3-3-138 Sugimoto, Sumiyoshi-ku, Osaka 558-8585, Japan}
\email{tatsuya.horiguchi0103@gmail.com}

\author {Mikiya Masuda}
\address{Department of Mathematics, Osaka City University, 3-3-138 Sugimoto, Sumiyoshi-ku, Osaka 558-8585, Japan}
\email{masuda@sci.osaka-cu.ac.jp}
\date{\today}

\keywords{Hessenberg varieties, flag varieties, cohomology, regular sequences, Hilbert series, Shareshian-Wachs conjecture} 

\subjclass[2000]{Primary: 55N91, Secondary: 14N15} 

\begin{abstract}
 Let $n$ be a fixed positive integer and $h: \{1,2,\ldots,n\} \rightarrow \{1,2,\ldots,n\}$ a
 Hessenberg function. The main results of this paper are twofold. First, 
we give a 
systematic method, depending in a simple manner on the Hessenberg
function $h$, for producing 
an explicit presentation by generators and relations of the
cohomology ring $H^\ast(\Hess(\mathsf{N},h))$ with $\Q$ coefficients
of the corresponding regular nilpotent Hessenberg variety $\Hess(\mathsf{N},h)$. 
Our result generalizes known results in
special cases such as the Peterson variety and also allows us to answer a question posed by Mbirika and Tymoczko. Moreover, our list of
generators in fact forms a regular sequence, allowing us to use
techniques from commutative algebra in our arguments. 
Our second main result
gives an isomorphism between the cohomology ring $H^*(\Hess(\mathsf{N},h))$
of the regular nilpotent Hessenberg variety and the $\Sn$-invariant
subring $H^*(\Hess(\mathsf{S},h))^{\Sn}$ of the cohomology ring of the
regular semisimple Hessenberg variety (with respect to the $\Sn$-action on $H^*(\Hess(\mathsf{S},h))$ defined by Tymoczko). 
Our second main result implies that 
$\mathrm{dim}_{\Q} H^k(\Hess(\mathsf{N},h)) = \mathrm{dim}_{\Q} H^k(\Hess(\mathsf{S},h))^{\Sn}$ for all $k$ and hence partially proves 
the Shareshian-Wachs conjecture in combinatorics, which is in turn
related to the well-known Stanley-Stembridge conjecture. A proof of
the full Shareshian-Wachs conjecture was recently given by Brosnan and
Chow, but in our special case, our methods yield a stronger result (i.e. an isomorphism of rings) 
by more elementary considerations. This paper provides detailed proofs
of results we recorded previously in a research announcement. 
\end{abstract}

\maketitle

\setcounter{tocdepth}{1}

\section{Introduction and statement of main results (Theorem A and Theorem B)}

Hessenberg varieties in type A are subvarieties of the full flag
variety $\Flags(\C^n)$ of nested sequences of linear subspaces in
$\C^n$. Their geometry and (equivariant) topology have been studied
extensively since the late 1980s \cite{DeM, DeMShay, ma-pr-sh}.  This
subject lies at the intersection of, and makes connections between,
many research areas such as geometric representation theory (see for example
\cite{Springer76, Fung03}), combinatorics (see e.g. \cite{Fulm, mb}), and
algebraic geometry and topology (see e.g. \cite{Kostant, br-ca04, ty, InskoYong, precup13a, precup13b}). 
A special case of Hessenberg varieties called the Peterson
variety $Pet_n$ arises in the study of the quantum
cohomology of the flag variety \cite{Kostant, Rietsch}, and more
generally, geometric properties and invariants of many different types
of Hessenberg varieties (including in Lie types other than A) have
been widely studied. 
The (equivariant and ordinary) cohomology rings of
Hessenberg varieties have received particular attention. To cite just two
examples, 
the second author and Tymoczko gave an 
explicit set of generators for $H^*(Pet_n)$ and prove a 
Schubert-calculus-type ``Monk formula'', resulting in a presentation of
$H^*(Pet_n)$ via generators and relations in \cite{ha-ty}, and  
in a different direction, Brion and Carrell
showed an isomorphism between the equivariant cohomology ring of a
regular nilpotent Hessenberg variety with the affine coordinate ring
of a certain affine curve \cite{br-ca04}. Beyond the two manuscripts
just mentioned, there has also been extensive work on 
the equivariant and ordinary 
cohomology rings of Springer
varieties \cite{DeConciniProcesi, Tanisaki, DewittHarada, Horiguchi,
  AbeHoriguchi} and of some types of regular
nilpotent Hessenberg varieties (including Peterson varieties in
different Lie types) \cite{BayeganHarada2,
  fu-ha-ma, ha-ho-ma}. 
However, it has been an open question to give a general and systematic
description of the equivariant cohomology rings of all regular
nilpotent Hessenberg varieties \cite[Introduction, page
2]{InskoTymoczko}, to which our results provides an answer (in Lie type
A). 

In addition, very recent developments provide further evidence that
Hessenberg varieties occupy a central place in the fruitful
intersection of algebraic geometry, combinatorics, and geometric
representation theory. We first recall some background. The
well-known Stanley-Stembridge conjecture in combinatorics states that
the chromatic symmetric function of the incomparability graph of a
so-called $(3+1)$-free poset is $e$-positive. 
In related work, Stanley \cite{Stanley1989} also showed a relation between $q$-Eulerian polynomials and a certain $\Sn$-representation on
the cohomology of the toric variety associated with the Coxeter
complex of type $\mathrm{A}_{n-1}$ studied by Procesi \cite{proc90}. The above toric variety is a special case
of a regular semisimple Hessenberg variety of type A \cite{ma-pr-sh}, and Tymoczko
\cite{tymo08} has defined $\Sn$-representations on their cohomology
rings which generalize the $\Sn$-representation studied by
Procesi. Motivated by the above, Shareshian and
Wachs formulated in 2011 a conjecture \cite{sh-wa11} relating the chromatic quasisymmetric
function of the incomparability graph of a natural unit interval order and
Tymoczko's $\Sn$-representation on the cohomology of the associated
regular semisimple Hessenberg variety. While
the Shareshian-Wachs conjecture does not imply the Stanley-Stembridge
conjecture, it nevertheless represents a significant step towards its
solution. In a 2015 preprint, Brosnan and Chow \cite{br-ch} prove the
Shareshian-Wachs conjecture by showing a remarkable relationship
between the Betti numbers of different Hessenberg varieties; a key
ingredient in their approach is a certain family of Hessenberg
varieties, the (cohomology of the) fibers of which are related via
monodromy. Our second main result (Theorem B) also contributes to this discussion, as we explain below.

We now describe the two main results (Theorem A and Theorem B below)
of this manuscript in more detail. 
Recall that the \textbf{flag variety} $\Flags(\C^n)$ consists of nested sequences of linear subspaces of $\C^n$, 
\[
\Flags(\C^n) := 
\{ V_{\bullet} = (\{0\} \subset  V_1 \subset  V_2 \subset  \cdots V_{n-1} \subset 
V_n = \C^n) \hsm \mid \hsm \dim_{\C}(V_i) = i \ \textrm{for all} \ i=1,\ldots,n\}. 
\]
Additionally, let $h: \{1,2,\ldots,n\} \to \{1,2,\ldots,n\}$ be a
\textbf{Hessenberg function}, i.e. $h$ satisfies
$h(i) \geq i$ for all $i$ and $h(i+1)\geq h(i)$ for all $i<n$. Also
let $\mathsf{N}$ denote 
a regular 
nilpotent matrix in $\gl(n,\C)$, i.e. 
a matrix whose Jordan form consists of exactly one Jordan block with
corresponding eigenvalue equal to $0$. 
Then we may define the
\textbf{regular nilpotent Hessenberg variety (associated to $h$)} to
be the subvariety of $\Flags(\C^n)$ defined by 
\begin{equation}
\Hess(\mathsf{N},h) := \{ V_{\bullet}  \in \Flags(\C^n) \;
\vert \;  \mathsf{N} V_i \subset 
V_{h(i)} \text{ for all } i=1,\ldots,n\} \subset  \Flags(\C^n).
\end{equation}
Our first main theorem gives an explicit presentation via generators
and relations of the cohomology\footnote{Throughout this document
  (unless
  explicitly stated otherwise) we work with
  cohomology with coefficients in $\Q$.} ring $H^*(\Hess(\mathsf{N}, h))$
of the regular nilpotent Hessenberg
variety associated to any Hessenberg function $h$. 
For any pair $i, j$ with $i\geq j$, let $\cf_{i,j}$ be the polynomial 
\begin{align}\label{eq:definition of f check}
\cf_{i,j} := \sum_{k=1}^j \Big( x_k \prod_{\ell=j+1}^{i} (x_k -
x_\ell) \Big) \textup{ for } i\geq j
\end{align}
with the convention $\prod_{\ell=j+1}^{j}(x_k-x_\ell)=1$.

\begin{theoremA*} 
  Let $n$ be a positive integer and $h: \{1,2,\ldots,n\} \to
  \{1,2,\ldots,n\}$ a Hessenberg function. Let $\mathsf{N}$
  denote a regular nilpotent matrix in $\mathfrak{gl}(n,\C)$ and let
 $\Hess(\mathsf{N},h) \subset  \Flags(\C^n)$ be
  the associated regular nilpotent Hessenberg variety. Then the
  restriction map
\[
H^*(\Flags(\C^n)) \to H^*(\Hess(\mathsf{N},h))
\]
is surjective, and there is an isomorphism of graded $\Q$-algebras
\begin{equation}\label{eq:intro Theorem A} 
H^*(\Hess(\mathsf{N},h)) \cong \Q[x_1, \ldots, x_n]/\check \Ih
\end{equation}
where $\check \Ih$ is the ideal of $\Q[x_1, \ldots, x_n]$ defined by
\begin{equation}\label{definition ideal check Ih} 
\check \Ih := (\cf_{h(j),j} \mid 1 \leq j \leq n ). 
\end{equation}
\end{theoremA*}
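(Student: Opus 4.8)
The plan is to attack Theorem A via commutative algebra and a dimension count, exploiting the structure of the polynomials $\cf_{i,j}$. First I would recall the classical Borel presentation $H^*(\Flags(\C^n)) \cong \Q[x_1,\ldots,x_n]/(e_1,\ldots,e_n)$, where the $e_i$ are the elementary symmetric polynomials, and observe that $\cf_{i,i} = \sum_{k=1}^i x_k$; more importantly, one checks directly from \eqref{eq:definition of f check} that $\cf_{n,j}$ for $j=1,\ldots,n$, after passing to the symmetric situation, recovers (up to a triangular change of basis) the ideal $(e_1,\ldots,e_n)$, so that $\check I_h$ with $h \equiv n$ gives back the flag variety. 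Since $h(j) \geq j$ always, each generator $\cf_{h(j),j}$ is a well-defined element of $\Q[x_1,\ldots,x_n]$, and when $h(j)=n$ for all $j$ we recover $H^*(\Flags(\C^n))$, which is the ``largest'' Hessenberg variety; this gives a sanity check and also shows that the restriction-surjectivity claim is consistent.

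The core of the argument is to show that $\{\cf_{h(j),j} \mid 1 \leq j \leq n\}$ is a regular sequence in $\Q[x_1,\ldots,x_n]$. Since the ring is Cohen--Macaulay and we have $n$ homogeneous elements in $n$ variables, it suffices to show that the quotient $\Q[x_1,\ldots,x_n]/\check I_h$ is finite-dimensional over $\Q$ (equivalently, that the only common zero of the $\cf_{h(j),j}$ is the origin). I would prove this by an inductive/triangular analysis: note that $\cf_{h(j),j}$ has a distinguished ``leading'' behavior in the variable $x_j$ — indeed $\cf_{h(j),j}$ involves $x_1,\ldots,x_j$ and the products $\prod_{\ell=j+1}^{h(j)}(x_k-x_\ell)$ — and by looking at the system of equations $\cf_{h(1),1}=\cdots=\cf_{h(n),n}=0$ from $j=1$ upward, one shows successively that $x_1=0$, then $x_2=0$, and so on. The key observation making this work is that $h(j)\geq j$, so each new equation introduces a genuine constraint on $x_j$ once $x_1,\ldots,x_{j-1}$ are known to vanish. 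Once regularity is established, the quotient $\Q[x_1,\ldots,x_n]/\check I_h$ is a complete intersection, hence Poincaré duality holds and its Hilbert series is $\prod_{j=1}^n \frac{1-t^{\deg \cf_{h(j),j}}}{1-t}$ with $\deg \cf_{h(j),j} = h(j)-j+1$.

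Next I would construct the surjection $H^*(\Flags(\C^n)) \twoheadrightarrow H^*(\Hess(\mathsf{N},h))$ and exhibit that the classes $\cf_{h(j),j}$ (viewed as polynomials in the $x_i$, which represent the Chern classes of the tautological line bundles) actually map to zero in $H^*(\Hess(\mathsf{N},h))$. This is the geometric input: one pulls back the defining condition $\mathsf{N}V_i \subset V_{h(i)}$ to get vanishing of an appropriate section of a bundle of the form $\Hom(V_i, \C^n/V_{h(i)})$ or a related quotient bundle, whose Euler class is (a sign times) the product appearing in $\cf_{h(i),i}$; summing over the relevant range and using the explicit form of a regular nilpotent $\mathsf{N}$ in Jordan form yields exactly $\cf_{h(j),j}$. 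The surjectivity of the restriction map I would get either from a Leray--Hirsch / Bialynicki-Birula paving argument, or by citing that $\Hess(\mathsf{N},h)$ has a known affine paving compatible with that of $\Flags(\C^n)$; together with the computation of $\dim_\Q H^*(\Hess(\mathsf{N},h))$ (the Betti numbers of regular nilpotent Hessenberg varieties are known, e.g. from Sommers--Tymoczko or from the Poincaré polynomial $\prod_j [h(j)-j+1]_t$), we get that the induced surjection $\Q[x]/\check I_h \twoheadrightarrow H^*(\Hess(\mathsf{N},h))$ is between graded vector spaces of the same finite dimension, hence an isomorphism.

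The main obstacle I anticipate is the regular-sequence claim — specifically, pinning down the ``triangular'' structure of the polynomials $\cf_{h(j),j}$ precisely enough to run the inductive vanishing argument for the common zero locus. The subtlety is that $\cf_{h(j),j}$ is a sum over $k=1,\ldots,j$ of terms $x_k \prod_{\ell=j+1}^{h(j)}(x_k-x_\ell)$, so it is not literally a polynomial in $x_j$ alone modulo lower terms; one has to argue that, having already forced $x_1=\cdots=x_{j-1}=0$, the $j$-th equation reduces to $x_j \cdot (\text{something with nonzero constant term in the remaining variables' relevant specialization})=0$, and handle the degenerate cases where $h(j)=j$ (so the product is empty and the equation is just $\sum_{k\le j}x_k = 0$). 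Getting the bookkeeping right across all Hessenberg functions $h$ simultaneously, rather than case-by-case, is where the real work lies; everything else (the Borel presentation, Poincaré duality for complete intersections, the Hilbert-series/Betti-number matchup, and the geometric identification of the relations) is relatively standard.
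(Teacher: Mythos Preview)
Your high-level strategy --- show the $\cf_{h(j),j}$ form a regular sequence, show they vanish in $H^*(\Hess(\mathsf{N},h))$, then match Hilbert series --- is exactly the skeleton the paper uses. But two of your proposed implementations have genuine gaps.

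\textbf{The regular-sequence step.} Your ``triangular'' induction from $j=1$ upward does not work. After setting $x_1=\cdots=x_{j-1}=0$, the equation $\cf_{h(j),j}=0$ becomes
\[
x_j\prod_{\ell=j+1}^{h(j)}(x_j-x_\ell)=0,
\]
which does \emph{not} force $x_j=0$: it allows $x_j=x_\ell$ for any $\ell\in\{j+1,\ldots,h(j)\}$. So the $j$-th equation alone, even after the inductive hypothesis, does not pin down $x_j$; you are forced into a case analysis using later equations, and this does not obviously terminate uniformly in $h$. The paper avoids this entirely: using the recursion $\cf_{i,j}=\cf_{i-1,j-1}+(x_j-x_i)\cf_{i-1,j}$ one shows that $\check I_h$ contains $\cf_{i,j}$ for all $i\ge h(j)$, hence contains $\cf_{n,1},\ldots,\cf_{n,n}$. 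Then an explicit computation shows the $\cf_{n,j}$ are related to the power sums $\mathsf{p}_1,\ldots,\mathsf{p}_n$ by a unitriangular transformation, so $(\cf_{n,1},\ldots,\cf_{n,n})=(\mathsf{p}_1,\ldots,\mathsf{p}_n)$ and the common zero locus is the origin. This reduction to $h\equiv n$ is the missing idea in your argument.

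\textbf{The vanishing step.} Your sketch that ``the Euler class of $\Hom(V_i,\C^n/V_{h(i)})$, summed appropriately, yields $\cf_{h(j),j}$'' is not a proof: that Euler class is $\prod_{k\le i,\ \ell>h(i)}(x_\ell-x_k)$, which is not the polynomial $\cf_{h(j),j}$, and it is unclear what ``summing over the relevant range'' means or why it produces the right relation. The paper's route is quite different and substantially harder: it lifts to $S$-equivariant cohomology (for a one-dimensional torus $S$), defines equivariant lifts $f_{i,j}$ of the $\cf_{i,j}$, and proves $f_{h(j),j}(w)=0$ at every $S$-fixed point $w$ by an induction over Hessenberg functions in reverse-lex order, repeatedly swapping adjacent entries of $w$ and using a symmetry property of the $f_{i,j}$ (Corollary~\ref{lemm:f-2}). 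This is the technical heart of the paper, and nothing in your proposal substitutes for it. (A direct bundle-theoretic argument does exist in later literature, but it is not the handwave you describe.)

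Finally, a minor point: an affine paving gives vanishing of odd cohomology, not surjectivity of $H^*(\Flags(\C^n))\to H^*(\Hess(\mathsf{N},h))$. In the paper surjectivity is a \emph{consequence} of the ring presentation (the generators $\bar\tau_i$ are pulled back from the flag variety), not an input to it.
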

The following points are worth noting immediately. Firstly, the
equation~\eqref{eq:definition of f check} gives a simple closed
formula for the 
polynomials $\cf_{h(j),j}$ generating the ideal $\check \Ih$ 
in~\eqref{eq:intro Theorem A}; moreover,
the ideal depends in a manifestly simple and systematic manner on the Hessenberg
function $h$. Secondly, these generators $\{\cf_{h(j),j}\}_{j=1}^n$
have algebraic properties which make them particularly
useful. Specifically, the $\cf_{h(j),j}$ (as well as their equivariant
counterparts $f_{i,j}$ which we discuss below) in fact form a \emph{regular
  sequence} (cf. Definition~\ref{def of regular seq for Hilb series})
in the sense of commutative algebra, and it is precisely this property
which allows us to exploit techniques in e.g. the theory of Hilbert series and Poincar\'e duality
algebras to prove both of our main results. Thirdly, we can answer a
question posed by Mbirika and Tymoczko
\cite[Question 2]{mb-ty13}: they asked whether $H^*(\Hess(\mathsf{N},h))$ is
isomorphic to the quotient of $\Q[x_1,\ldots,x_n]$ by a certain ideal,
described in detail in \cite{mb-ty13}, which is generated by ``truncated
symmetric polynomials''. Our Theorem A says that, in general, the
answer is ``No''. 
For instance, in the special case
of the Peterson variety $Pet_n$ of complex dimension $n$ for $n \geq 3$, it is not
difficult to see directly from Mbirika and Tymoczko's definitions in \cite{mb-ty13}
that their ring contains a non-zero element of degree $2$ whose square is equal to $0$,
whereas one can see from our presentation~\eqref{eq:intro Theorem A} that
$H^*(Pet_n)$ contains no such element. 
Finally, our Theorem A generalizes known results: in the special cases of the full flag variety $\Flags(\C^n)$ and 
the Peterson variety $Pet_n$, the presentation given in Theorem A
recovers previously known presentations of the relevant cohomology
rings (cf. Remarks~\ref{rema:peterson}). 

Next we turn to Theorem B, for which we need additional
terminology. Let $h$ be a 
Hessenberg function and this time let $\mathsf{S}$ denote a
regular semisimple matrix in $\gl(n,\C)$, i.e. a matrix which is
diagonalizable with distinct eigenvalues. 
Then the \textbf{regular semisimple
  Hessenberg variety (associated to $h$)} is defined to be
\begin{equation}\label{eq:def reg ss Hess} 
\Hess(\mathsf{S},h) := \{ V_{\bullet}  \in \Flags(\C^n) \;
\vert \;  \mathsf{S} V_i \subset 
V_{h(i)} \text{ for all } i=1,\ldots,n\} \subset  \Flags(\C^n).
\end{equation}
The cohomology rings of these varieties admit an action
of the symmetric group $\Sn$, as Tymoczko pointed out many years ago
\cite{tymo08}. 
In Theorem B, we 
prove - for a fixed Hessenberg function $h$ - that there exists an isomorphism
of graded rings between the cohomology ring of the corresponding regular
nilpotent Hessenberg variety and the $\Sn$-invariant subring of the
cohomology ring of the corresponding regular semisimple Hessenberg
variety. More precisely, we have the following.

\begin{theoremB*}\label{AHHM conjecture}
  Let $n$ be a positive integer and $h: \{1,2,\ldots,n\} \to
  \{1,2,\ldots,n\}$ a Hessenberg function. 
Let $\mathsf{N}$ 
  denote a regular nilpotent matrix and $\mathsf{S}$ denote 
a regular semisimple matrix in $\gl(n,\C)$. Let $\Hess(\mathsf{N},h)$
and $\Hess(\mathsf{S},h)$ be the associated 
regular nilpotent and regular semisimple Hessenberg varieties
respectively. 
Then there exists a unique graded $\Q$-algebra homomorphism $\Abe\colon
  H^*(\Hess(\mathsf{N},h))\rightarrow H^*(\Hess(\mathsf{S},h))$
making the following diagram commute:
\vspace{20pt}
\begin{align}\label{eq:Abe map}
 \ 
\end{align}
\vspace{-50pt}
\begin{center}
\begin{picture}(160,50)
   \put(10,35){$H^*(\Flags(\C^n))$}
   \put(77,35){$\overrightarrow{\qquad}$}
   \put(50,18){\rotatebox[origin=c]{-45}{$\overrightarrow{\qquad\ }$}}
   \put(50,18.7){\rotatebox[origin=c]{-45}{$\overrightarrow{\hspace{22pt}}$}}
   \put(110,18){\rotatebox[origin=c]{45}{$\overrightarrow{\qquad\ }$}}
   \put(100,35){$H^*(\Hess(\mathsf{S},h))$}
   \put(122,15){$\footnotesize{\text{$\Abe$}}$}
   \put(55,0){$H^*(\Hess(\mathsf{N},h))$}
\end{picture} 
\end{center}
\vspace{5pt}
where the maps $H^*(\Flags(\C^n)) \to H^*(\Hess(\mathsf{S}, h))$ and
$H^*(\Flags(\C^n)) \to H^*(\Hess(\mathsf{N},h))$ are induced from the
inclusions $\Hess(\mathsf{S},h) \into \Flags(\C^n)$ and
$\Hess(\mathsf{N},h) \into \Flags(\C^n)$ respectively. 
Moreover, the image of $\Abe$ is precisely the ring $H^*(\Hess(\mathsf{S},h))^{\Sn} $ of
$\Sn$-invariants in $H^*(\Hess(\mathsf{S},h))$, and when the target of
$\Abe$ is restricted to this invariant subring, then
\begin{equation*}
\Abe\colon H^*(\Hess(\mathsf{N},h))\to H^*(\Hess(\mathsf{S},h))^{\Sn} 
\end{equation*}
is an isomorphism of graded $\Q$-algebras. 
\end{theoremB*}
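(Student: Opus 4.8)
The strategy is to leverage Theorem A together with known facts about the $\Sn$-module structure on $H^*(\Hess(\mathsf{S},h))$. First I would establish the existence and uniqueness of $\Abe$: since Theorem A says $H^*(\Flags(\C^n)) \to H^*(\Hess(\mathsf{N},h))$ is surjective, uniqueness is automatic, and existence amounts to checking that the composite $H^*(\Flags(\C^n)) \to H^*(\Hess(\mathsf{S},h))$ kills the ideal $\check \Ih$. This is a direct computation: one restricts the classes $\cf_{h(j),j}$ to $H^*(\Hess(\mathsf{S},h))$ and shows they vanish there. In fact this should follow from the corresponding equivariant statement (involving the $f_{i,j}$ mentioned in the introduction) by specializing the equivariant parameters to zero; the equivariant vanishing on $\Hess(\mathsf{S},h)$ can be checked at the torus fixed points, where everything is combinatorially explicit. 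This gives the commuting triangle~\eqref{eq:Abe map}.

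Next, the image of $\Abe$ lands in the $\Sn$-invariants $H^*(\Hess(\mathsf{S},h))^{\Sn}$. For this I would use the fact that the image of the restriction map $H^*(\Flags(\C^n)) \to H^*(\Hess(\mathsf{S},h))$ is contained in the $\Sn$-invariant subring --- this is essentially built into Tymoczko's construction of the $\Sn$-action, in which the classes pulled back from the flag variety (or their equivariant analogues) are fixed. Since $\Abe$ factors the flag-variety restriction map and that map is onto $H^*(\Hess(\mathsf{N},h))$, the image of $\Abe$ equals the image of the flag restriction map into $H^*(\Hess(\mathsf{S},h))$, hence lies in the invariants. The reverse inclusion --- that $\Abe$ surjects onto \emph{all} of $H^*(\Hess(\mathsf{S},h))^{\Sn}$ --- is more delicate and I would defer it until after I prove injectivity, obtaining surjectivity from a dimension count.

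The heart of the argument is injectivity of $\Abe$, and this is where the regular sequence property from Theorem A does the work. Both rings are Poincar\'e duality algebras (the source because $\Hess(\mathsf{N},h)$ is known to be, or because a complete intersection presentation by a regular sequence of the right length automatically yields one; the target because $\Hess(\mathsf{S},h)$ is smooth and compact). For a degree-preserving algebra map between Poincar\'e duality algebras, injectivity is equivalent to the map being nonzero in top degree, which in turn I would check by showing $\Abe$ is injective in degree $2$ (the generators sit there) and then propagating upward using that the source is generated in degree $2$ together with a Hilbert-series / Poincar\'e-polynomial comparison. Concretely: compute $\dim_\Q H^k(\Hess(\mathsf{N},h))$ from the complete intersection presentation of Theorem A (a quotient of a polynomial ring by a regular sequence whose degrees are determined by $h$), and separately compute $\dim_\Q H^k(\Hess(\mathsf{S},h))^{\Sn}$; if these agree in every degree, then an injective graded map between them is forced to be an isomorphism. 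The main obstacle is precisely this numerical coincidence $\dim_\Q H^k(\Hess(\mathsf{N},h)) = \dim_\Q H^k(\Hess(\mathsf{S},h))^{\Sn}$: the left side is the Hilbert series of $\Q[x_1,\dots,x_n]/\check\Ih$, which by the regular sequence property equals $\prod_{j=1}^n \frac{1 - t^{h(j)-j+1}}{1-t}$ up to the usual cohomological doubling, while the right side requires understanding the $\Sn$-fixed part of Tymoczko's representation --- I would identify it with the permutation module structure and extract the invariants, matching the two generating functions combinatorially. Once injectivity and the dimension match are in hand, surjectivity onto the invariant subring and the ring isomorphism follow immediately.
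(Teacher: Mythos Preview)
Your overall architecture --- existence/uniqueness of $\Abe$ via Theorem~A, image lands in the $\Sn$-invariants, then a Poincar\'e duality argument --- matches the paper's. But the order in which you propose to handle surjectivity and injectivity is inverted relative to the paper, and that inversion creates a real gap.

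You plan to prove injectivity first and then deduce surjectivity from a degree-by-degree dimension match $\dim_{\Q} H^{2k}(\Hess(\mathsf{N},h)) = \dim_{\Q} H^{2k}(\Hess(\mathsf{S},h))^{\Sn}$. You correctly flag the right-hand side as ``the main obstacle,'' but you do not supply a method for computing it: identifying the $\Sn$-fixed part of Tymoczko's representation in every degree is precisely the content of the trivial-representation piece of the Shareshian--Wachs conjecture, which is a \emph{consequence} of Theorem~B, not an input to it. So your plan as written is circular at this step. (Your alternative route to injectivity, ``nonzero in top degree via injectivity in degree $2$ and propagation,'' is also not fleshed out: being injective on generators does not by itself force a specific top-degree monomial in the $\ChSemi_i$ to be nonzero in $H^*(\Hess(\mathsf{S},h))$.)

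The paper sidesteps this entirely by proving \emph{surjectivity first}. The key extra ingredient you are missing is an equivariant statement: the restriction $H^*_T(\Flags(\C^n))^{\Sn} \to H^*_T(\Hess(\mathsf{S},h))^{\Sn}$ is an \emph{isomorphism} (both sides are identified with $\Q[t_1,\dots,t_n]$ via the map $t_i \mapsto \TChSemi_i$, since the $\TChSemi_i$ are $\Sn$-invariant and any invariant tuple is determined by its value at $e$). Combined with surjectivity of the forgetful map on invariants (an averaging argument), this gives surjectivity of $H^*(\Flags(\C^n)) \to H^*(\Hess(\mathsf{S},h))^{\Sn}$ and hence of $\Abe$. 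Once $\Abe$ is known to be surjective, the PDA lemma you mention applies in its easy form: a surjection from a PDA that is an isomorphism in top degree is an isomorphism. And in top degree one only needs $\dim H^{2d}(\Hess(\mathsf{S},h))^{\Sn}=1$, which follows from the $\Sn$-invariance and nondegeneracy of the Poincar\'e pairing together with $\dim H^{0}(\Hess(\mathsf{S},h))^{\Sn}=1$ --- no full Hilbert-series computation of the invariants is needed.

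A smaller point: for well-definedness you say ``the equivariant vanishing on $\Hess(\mathsf{S},h)$ can be checked at the torus fixed points.'' The classes $\cf_{h(j),j}(\TChSemi_1,\dots,\TChSemi_n)$ do \emph{not} vanish in $H^*_T(\Hess(\mathsf{S},h))$; at a fixed point $w$ one gets $\cf_{h(j),j}(t_{w(1)},\dots,t_{w(n)}) \neq 0$ in general. What one must show is that this class lies in the ideal $(t_1,\dots,t_n)$, which the paper does by exhibiting explicit GKM classes $g_{j,k}$ with $\cf_{h(j),j}(\TChSemi) = \sum_k t_k\, g_{j,k}$. This is more than a fixed-point check: one has to verify that the $g_{j,k}$ themselves satisfy the GKM conditions for $\Hess(\mathsf{S},h)$.
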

As a special case, we note that Theorem B implies that the cohomology
$H^*(Pet_n)$ of the Peterson variety $Pet_n$ is isomorphic to the $\Sn$-invariant
subring $H^*(X)^{\Sn}$ of the cohomology ring of the toric variety $X$ associated with
the Coxeter complex of type $\mathrm{A}_{n-1}$. This fact
  could be previously seen by comparing the description of
  $H^*(Pet_n)$ given explicitly in \cite{fu-ha-ma} to the description
  of $H^*(X)^{\Sn}$ stated without proof in \cite{klya}. Indeed, the
  striking similarity of the rings in \cite{klya} and \cite{fu-ha-ma}
  was our original motivation to prove Theorem B.

Next we discuss the relationship between Theorem B and recent research
in combinatorics. As briefly discussed above, Shareshian and Wachs conjectured a precise
relationship between the (Frobenius characteristic of) Tymoczko's
$\Sn$-representation on the cohomology group of a regular semisimple
Hessenberg variety $\Hess(\mathsf{S},h)$ and the
chromatic quasisymmetric function $X_G(x,t)$ of a graph $G$ defined
from the Hessenberg function $h$ \cite{sh-wa11, sh-wa14}. (Details are in Section~\ref{sect:
  Shareshian-Wachs conjecture}.) When both sides of their conjectured
equality are expanded in terms of Schur functions $s_\lambda(x)$ for
$\lambda$ a partition of $n$, their conjecture can be
interpreted as a set of equalities of the coefficients (which are polynomials in $t$) of 
$s_\lambda(x)$ on each side. In \cite[Theorem 6.9]{sh-wa14} Shareshian
and Wachs also obtain a closed formula for the coefficient of
$s_n(x)$, i.e. the coefficient corresponding to the trivial
representation, and it agrees with the Hilbert series (also a
  polynomial in $t$) of
$H^*(\Hess(\mathsf{N},h))$. Upon unraveling some definitions, it readily follows
that our Theorem B proves the Shareshian-Wachs conjecture
for the coefficient of the trivial representation. 
During the preparation of this manuscript we
learned that Brosnan and Chow have independently proved the full
Shareshian-Wachs conjecture, i.e. the equality of the coefficients for
\emph{all} Schur functions, not just $s_n(x)$. However, we note that 
while Brosnan and Chow obtain an
equality of dimensions of vectors spaces $\mathrm{dim}\,
H^k(\Hess(\mathsf{N},h)) = \mathrm{dim} \, H^k(\Hess(\mathsf{S},h))^{\Sn}$ for
varying $k$ \cite[Theorem 76]{br-ch}, 
their techniques do not appear to immediately yield
further information about the product structure on the rings
$H^*(\Hess(\mathsf{N},h))$ and $H^*(\Hess(\mathsf{S},h))^{\Sn}$. 
Thus, for our special case, our Theorem B is stronger than the corresponding result
in \cite{br-ch}. Moreover, while Brosnan and Chow's arguments utilize
deep and powerful results in the theory of local systems and perverse
sheaves (specifically, the local invariant cycle theorem of
Beilinson-Bernstein-Deligne), our methods are more elementary,
thus providing a useful alternative perspective on this circle of
ideas. 

We now briefly discuss the methods used in the proofs of Theorems A
and B. Our basic strategy is to exploit the presence of torus actions 
on the varieties in question and to use well-known techniques in
equivariant topology, e.g. localization and
Goresky-Kottwitz-MacPherson (GKM) theory. More specifically, we state
and prove an ``equivariant'' version of Theorem A using certain
polynomials $f_{h(j),j}$ which lift in an appropriate sense the
polynomials $\cf_{h(j),j}$ appearing in Theorem A. For Theorem B, we
use GKM theory to combinatorially describe both the equivariant cohomology
of $\Hess(\mathsf{S},h)$ and the $\Sn$-action on it, and also use some
standard commutative algebra results on Poincar\'e duality algebras
together with the fact that the polynomials $\{\cf_{h(j),j}\}$ form a
regular sequence to obtain our result. 

We take a moment to record an open question motivated by our work. 
It would be of interest to determine
    the ring structure of the full cohomology ring 
    $H^*(\Hess(\mathsf{S},h))$ (i.e. not just the $\Sn$-invariant
    subring) of regular semisimple Hessenberg
    varieties for arbitrary
    Hessenberg functions $h$. We have preliminary results in this
    direction. For example, it turns out that, when
    $h=(h(1),n,\dots,n)$ with an arbitrary value of $h(1)$, the classes
    $g_{1,i}$ and $\ChSemi_i$ (see Section~\ref{sec:definition Abe map}   for the precise definitions) for $i=1,\dots,n$ generate
    $H^*(\Hess(\mathsf{S},h))$ as a ring, and it is possible to give
    an explicit presentation of $H^*(\Hess(\mathsf{S},h))$ with
    respect to these generators. Similarly, the corresponding classes
    also can be shown to generate the ring
    $H^*(\Hess(\mathsf{S},h))$ for the case $h=(m,\dots,m,n,\dots,n)$
    for any $m$. Furthermore, we have found a finite list of
    generators of $H^*(\Hess(\mathsf{S},h))$ for the case
    $h=(h(1),h(2),n,\dots,n)$ with arbitrary values of $h(1)$ and
    $h(2)$. In an ongoing project, we are investigating the problem of
    finding ring
    generators of $H^*(\Hess(\mathsf{S},h))$ for arbitrary Hessenberg
    functions $h$ which behave well with respect to Tymoczko's $\Sn$-representation.

The paper is organized as follows. After briefly reviewing some
background and terminology on regular nilpotent Hessenberg varieties in Section~\ref{sec:H}, we state the
equivariant version of our Theorem A in Section~\ref{sec:f} as
Theorem~\ref{theorem:reg nilp Hess cohomology}. The key properties of
the polynomials $f_{i,j}$, necessary for the proof of
Theorem~\ref{theorem:reg nilp Hess cohomology}, are recorded in
Section~\ref{sec:fproperty}. The fact that 
the equivariant version of the homomorphism \eqref{eq:intro Theorem A}
is well-defined is shown
in Section~\ref{sec:y}. To prove that it
is in fact an isomorphism requires some preparatory arguments using
Hilbert series, which are recorded in Section~\ref{sec:hilbert}. In
Section~\ref{sec:proof of first main theorem}, using the results of the previous sections we are able
to complete the proof of Theorem~\ref{theorem:reg nilp Hess
  cohomology} and hence also of Theorem A. Next, turning our attention to
Theorem B, we quickly recount some background and terminology
concerning regular semisimple Hessenberg varieties in
Section~\ref{sec:background on reg ss Hess}. We recall and also
prove some essential facts about Tymoczko's $\Sn$-action on the
(equivariant and ordinary) cohomology of regular semisimple Hessenberg
varieties in Section~\ref{sec:properties of Tymoczko action}. We
prove Theorem B in Section~\ref{sec:definition Abe map} and 
discuss the
connection between our results and the Shareshian-Wachs conjecture in
Section~\ref{sect: Shareshian-Wachs conjecture}.

\bigskip
\noindent \textbf{Acknowledgements.}  
We are grateful to Satoshi Murai for his invaluable help in the commutative
algebra arguments. 
We also thank John Shareshian and Michelle Wachs
for their kind support and their interest in this project.  
We are also grateful to Takashi Sato for improving the proof of Lemma~\ref{lem:gjk is GKM}. 
The first
author was supported in part by the JSPS Program for Advancing
Strategic International Networks to Accelerate the Circulation of
Talented Researchers: ``Mathematical Science of Symmetry, Topology and Moduli, Evolution of International Research Network based on
OCAMI'', and he is grateful to Yoshihiro Ohnita for enhancing his research projects.
He was also supported by a JSPS Grant-in-Aid for Young
Scientists (B): 15K17544.  The second author was partially supported
by an NSERC Discovery Grant, a Canada Research Chair (Tier 2) Award,
an Association for Women in Mathematics Ruth Michler Award, and a
Japan Society for the Promotion of Science Invitation Fellowship for
Research in Japan (Fellowship ID L-13517).  
The third author was partially supported by a JSPS Grant-in-Aid 
for JSPS
Fellows: 15J09343.
The fourth author was partially supported by a JSPS Grant-in-Aid for Scientific Research 25400095.

\bigskip
\section{Background and preliminaries}\label{sec:H}

In this section we recall some background and establish some terminology 
  for the rest of the paper. Specifically, in
  Section~\ref{subsec:setup} we recall the definitions of the
regular nilpotent 
  Hessenberg varieties, as well as the torus actions on them. We also quickly
  recount some techniques in torus-equivariant cohomology which will
  be used throughout. 
In Section~\ref{subsec:fixed
    points nilp} we analyze the torus-fixed point set of the regular
  nilpotent Hessenberg variety which plays a key role in our later arguments. 

\subsection{The setup}\label{subsec:setup} 
Hessenberg varieties in Lie
type A are 
subvarieties of the \textbf{(full) flag variety}
$\Flags(\C^n)$, which is the 
collection of sequences of nested linear subspaces of $\C^n$:
\[
\Flags(\C^n) := 
\{ V_{\bullet} = (\{0\} \subset  V_1 \subset  V_2 \subset  \cdots V_{n-1} \subset 
V_n = \C^n) \hsm \mid \hsm \dim_{\C}(V_i) = i \ \textrm{for all} \ i=1,\ldots,n\}. 
\]
It is well-known that $\Flags(\C^n)$ can also be realized as a
homogeneous space $\text{GL}(n,\C)/B$ where $B$ is the standard Borel
subgroup of upper-triangular invertible matrices. Thus there is a
natural action of $\text{GL}(n,\C)$ on $\Flags(\C^n)$ given by left
multiplication on cosets. 

A Hessenberg variety in $\Flags(\C^n)$ is specified by two pieces of
data: a Hessenberg function and a choice of an element in the Lie algebra
$\gl(n,\C)$ 
of $\text{GL}(n,\C)$. We begin by discussing the first of these
parameters. 
Throughout this document we use the notation 
\begin{align*}
[n] := \{1,2,\ldots, n\}.
\end{align*} 

\begin{definition}\label{definition:Hessenberg function} 
A \textbf{Hessenberg function} is a function $h: [n] \to [n]$ satisfying the following two conditions
\begin{align*}
&h(i) \geq i \hspace{43pt} \textrm{for} \ \ i\in[n],\\
&h(i+1)\geq h(i) \hspace{12pt} \textrm{for} \ \ i\in[n-1].
\end{align*}
We frequently write a Hessenberg function by listing its values in sequence,
i.e. $h = (h(1), h(2), \ldots, h(n))$.
\end{definition} 
We also define $H_n$ to be the set of 
Hessenberg functions $h: [n] \to [n]$, i.e.
\begin{align}\label{def of the set of Hess fct}
H_n := \{ h:[n]\rightarrow[n] \mid \emph{$h$ is a Hessenberg function} \}.
\end{align}

For the discussion to follow, it will be useful to introduce some
terminology associated to a given Hessenberg function. 

\begin{definition}\label{definition:Hessenberg subspace} 
  Let $h \in H_n$ be a Hessenberg function. Then we define the
  \textbf{Hessenberg subspace} $H(h)$ to be the linear subspace of
  $\gl(n,\C) \cong Mat(n\times n, \C)$ specified as follows:
\begin{align}\label{eq:Hessenberg subspace} 
H(h) := \{ A = (a_{ij})_{i,j\in[n]} \in \gl(n,\C) \mid a_{ij} = 0 \textup{ if } i > h(j) \}.
\end{align} 
\end{definition}

It is important to note that the $H(h)$ is frequently \emph{not} a Lie
subalgebra of $\gl(n,\C)$. However, it \emph{is} stable under the conjugation
action of the usual maximal torus $T$ (of invertible
  diagonal matrices),
and
we may decompose $H(h)$ into eigenspaces with respect to this action
as
\begin{align}\label{eq:H(h) weight spaces} 
H(h) \cong \mathfrak{b} \oplus \left( \ \bigoplus_{\substack{i, j \in [n], \\ j < i \leq h(j)}} \gl(n,\C)_{(i,j)} \right) 
\end{align} 
where $\mathfrak{b} = \Lie(B)$ denotes the Lie algebra of the Borel subgroup of
upper-triangular matrices, and $\gl(n,\C)_{(i,j)}$ denotes the
$1$-dimensional $T$-weight space of $\gl(n,\C)$ 
spanned by the
elementary matrix $E_{i,j}$ with a $1$ in the $(i,j)$-th entry and
$0$'s elsewhere. In Lie-theoretic language, the $(i,j)$ satisfying the
condition in the RHS of~\eqref{eq:H(h) weight spaces} correspond to
the negative roots of $\gl(n,\C)$ whose corresponding root spaces
appear in $H(h)$. It will be useful later on to focus attention on
these roots, so we introduce the notation
\begin{align}\label{eq:def NR(h)} 
\NR(h) := \{ (i,j) \in [n] \times [n] \mid j < i \leq h(j) \}.
\end{align}
It is conceptually useful to express $H(h)$
  pictorially by drawing a configuration of boxes on a square grid of
  size $n\times n$ whose shaded boxes correspond to the roots
  appearing in \eqref{eq:H(h) weight spaces}, and from this perspective,
the set 
$\NR(h)$ corresponds one-to-one with ``the boxes in (the picture associated to) $H(h)$
which lie strictly below the main diagonal''. See
Figure~\ref{picture for NR}. 

\begin{figure}[h]
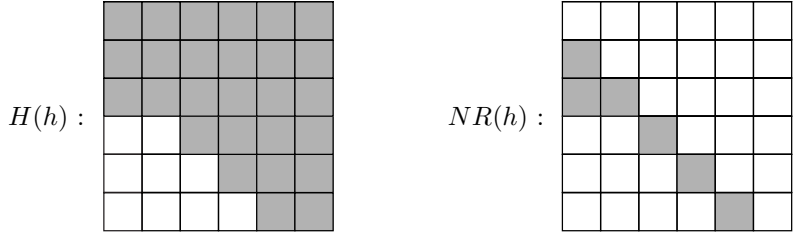

\centering
{\unitlength 0.1in%
}%
\caption{The pictures of $H(h)$ and $\NR(h)$ for $h=(3,3,4,5,6,6)$.}
\label{picture for NR}
\end{figure}
We now introduce the main geometric objects of interest in this
manuscript. 
Let $h:[n]\to[n]$ be a Hessenberg function and let 
$A$ be an $n\times n$ matrix in $\gl(n,\C)$. Then the
\textbf{Hessenberg variety} $\Hess(A,h)$ associated to $h$ and $A$ is
defined to be  
\begin{align}\label{eq:def-general Hessenberg}
\Hess(A,h) := \{ V_{\bullet}  \in \Flags(\C^n) \;
\vert \;  A V_i \subset 
V_{h(i)} \text{ for all } i\in[n]\} \subset  \Flags(\C^n).
\end{align}
In particular, by definition $\Hess(A,h)$ is a subvariety of
$\Flags(\C^n)$, and if $h=(n,n,\ldots,n)$, then it is immediate
from~\eqref{eq:def-general Hessenberg}
that $\Hess(A,h)=\Flags(\C^n)$ for any choice of
$A$. Thus the full flag variety $\Flags(\C^n)$ is itself a special
case of a Hessenberg variety; this will be important later on. We also
remark that if $g \in \text{GL}(n,\C)$, then $\Hess(A,h)$ and
$\Hess(gAg^{-1},h)$ can be identified via the action of
$\text{GL}(n,\C)$ on $\Flags(\C^n)$. In particular, important geometric
features of Hessenberg varieties are frequently dependent only on the
conjugacy class of the element $A \in \gl(n,\C)$, and not on $A$
itself. 

In this paper we focus on two special cases of
Hessenberg varieties, as we now describe. Let $\mathsf{N}$ denote a regular
nilpotent matrix in $\gl(n,\C)$, i.e.  
a matrix whose Jordan form consists of exactly one Jordan block with
corresponding eigenvalue equal to $0$. Similarly let $\mathsf{S}$ denote a
regular semisimple matrix in $\gl(n,\C)$, i.e. a matrix which is
diagonalizable with distinct eigenvalues. 
Then, for any choice of
Hessenberg function $h \in H_n$, we call $\Hess(\mathsf{N},h)$ the 
\textbf{regular nilpotent Hessenberg variety (associated to $h$)}
and call $\Hess(\mathsf{S},h)$ the 
\textbf{regular semisimple Hessenberg variety (associated to $h$)}.
Both of the above types of Hessenberg varieties have been much
studied, and it is known, for example, that $\Hess(\mathsf{N},h)$ is
irreducible \cite{an-ty} and possibly singular  \cite{Kostant, InskoYong},
while $\Hess(\mathsf{S},h)$ is smooth, and possibly 
non-connected \cite{ma-pr-sh}. 
As already noted, the essential geometry of the regular semisimple
Hessenberg variety $\Hess(\mathsf{S},h)$
depends only on the conjugacy class of $\mathsf{S}$. In fact, even
more is true: it can be seen, for instance, that the (ordinary or
equivariant) cohomology of $\Hess(\mathsf{S},h)$ is also independent
of the choices of the (distinct) eigenvalues of $\mathsf{S}$
(see e.g. \cite{tymo08}). For concreteness,
henceforth we will always
assume that $\mathsf{N}$ and $\mathsf{S}$ are of the form
\begin{equation*}
\mathsf{N} = 
\begin{pmatrix}
0 & 1 & &  & \\
 & 0 &  1  & \\
 &    & \ddots & \ddots & \\
 &    &   & 0 & 1 \\
 &    &  & & 0 \\ 
\end{pmatrix} \quad 
\textup{ and } \quad
\mathsf{S} = 
\begin{pmatrix} 
\mu_1 &  &  & & \\
&  \mu_2 &  & & \\ 
&  &  \ddots & & \\
& &  &  & \\
& &  &  & \mu_n \\
\end{pmatrix}
\end{equation*}
with respect to the standard basis of $\C^n$, where $\mu_1,\mu_2,\dots,\mu_n$ are mutually distinct complex numbers.
We also note that the dimensions of $\Hess(\mathsf{N},h)$ and
$\Hess(\mathsf{S},h)$ 
have been computed explicitly in terms of the Hessenberg function
\cite{ma-pr-sh, so-ty} and they coincide: 
\begin{align}\label{eq:dim of HessN and HessS}
\dim_{\C} \Hess(\mathsf{N},h) = \dim_{\C} \Hess(\mathsf{S},h) = \sum_{j=1}^n (h(j)-j).
\end{align}
Note that this number is also the number of boxes in
the picture associated to $\NR(h)$. For example, if $h=(3,3,4,5,6,6)$
as in Figure \ref{picture for NR}, then $\dim_{\C} \Hess(\mathsf{N},h)
= \dim_{\C} \Hess(\mathsf{S},h)=6$. 

An essential ingredient in our discussion 
is the presence of 
torus actions on both $\Hess(\mathsf{N},h)$ and $\Hess(\mathsf{S},h)$. In
both cases, the actions are induced from one on the
ambient variety $\Flags(\C^n)$. 
Let
\begin{align} \label{eq:S and T}
S :=\left\{ \left.\begin{pmatrix}
  g  &    &    &     \\ 
    &  g^2  &    &         \\
    &    &  \ddots  &         \\
    &    &    &      g^n  
\end{pmatrix} \ \right| \   \; g\in\C^* \right\}
\quad \text{and} \quad 
T:=\left\{ \left.\begin{pmatrix}
  g_1  &    &    &     \\ 
    &  g_2  &    &         \\
    &    &  \ddots  &         \\
    &    &    &      g_n  
\end{pmatrix} \ \right| \ \; g_i\in\C^*, \ i \in [n] \right\}
\end{align} 
which are subgroup of $\text{GL}(n,\C)$ and hence naturally act on
$\Flags(\C^n)$. We have $\dim_{\C}S=1$ and $\dim_{\C}T=n$.  Notice
that $S$ commutes with the matrix $\mathsf{N}$ (up to a scalar multiplication)
and that $T$ commutes with $\mathsf{S}$. It 
is straightforward to see that the $S$-action on $\Flags(\C^n)$
preserves 
$\Hess(\mathsf{N},h)$ (\cite[Lemma 5.1]{h-t}) and that the $T$-action on
$\Flags(\C^n)$ preserves 
$\Hess(\mathsf{S},h)$. However, note that the $T$-action does \emph{not}
preserve $\Hess(\mathsf{N},h)$ in general.

These torus actions lead us to a study of the equivariant
cohomology of Hessenberg varieties, so we now quickly recall some 
basic background on equivariant topology. Suppose $X$ is a topological space
which admits a continuous action by the torus $T$. The $T$-equivariant
cohomology $H^\ast_T(X)$ is defined to be the ordinary cohomology
$H^\ast(X \times_T ET)$ where $ET\rightarrow BT$ is the universal principal bundle of $T$. In
particular, $H^\ast_T(\pt) = H^\ast(BT)$ and $H^\ast_T(X)$ is an
$H^\ast_T(\pt)$-module.
 We have 
 \begin{align}\label{eq:equiv coh of pt}
 H^*(BT) \cong \text{Sym}_{\Q}(\text{Hom}(T,\C^*)\otimes_{\Z}\Q) 
 \end{align}
so we may identify $H^*(BT)$ with the polynomial ring $\Q[t_1,\dots,t_n]$ where
the element $t_i$ is the first Chern class of the line
bundle over $BT$ corresponding to the projection $T\rightarrow \C^*, diag(g_1,\dots,g_n)\mapsto g_i$. 

Next we recall some standard constructions on the ambient
space $\Flags(\C^n)$ leading to 
a well-known ring
presentation for the equivariant cohomology of $\Flags(\C^n)$. Let $E_i$ denote the $i$-th tautological vector
bundle over $\Flags(\C^n)$; namely, $E_i$ is the sub-bundle of the
trivial vector bundle $\Flags(\C^n)\times \C^n$ over $\Flags(\C^n)$
whose fiber over a point $V_{\bullet}=(V_1\subset \cdots\subset V_n) \in
\Flags(\C^n)$ is exactly $V_i$.  Let 
\begin{align}\label{def of T eq ch in flag}
\TChFlag_i \in H^2_T(\Flags(\C^n))
\end{align}
denote the $T$-equivariant first Chern class of
the tautological line bundle $E_i/E_{i-1}$.  
It is known that $H^{\ast}_{T}(\Flags(\C^n))$ is generated as a ring by
the elements $\TChFlag_1,\dots ,\TChFlag_n$ together with the
$t_1,\dots ,t_n$ (the latter coming from the $H^\ast_T(\pt)$-module structure). Indeed, there is a ring isomorphism 
\begin{align*}
H^{\ast}_{T}(\Flags(\C^n))\cong \mathbb{Q}[x_1,\dots,x_n,t_1,\dots,t_n]/(e_i(x_1,\dots ,x_n)-e_i(t_1,\dots ,t_n) \mid i\in[n])
\end{align*}
defined by sending the polynomial ring variables $x_i$ on the
RHS to the Chern class $\TChFlag_i$ of the $i$-th tautological line bundle and the variables $t_i$
to the Chern classes (which by slight abuse of notation we denote by
the same) $t_i$, 
and the $e_i$ denotes the degree-$i$ elementary symmetric polynomial
in the relevant variables. 
Here and below it should be noted that the degrees of the variables in
question are $2$, i.e. 
\begin{align*}
\deg x_i = \deg t_i = 2 \textup{ for all } i \in [n]. 
\end{align*}
By setting the variables $t_i$ equal to $0$, we can also describe the non-equivariant cohomology ring $H^*(\Flags(\C^n))$ as follows.
Let
\begin{align}\label{def of ch in flag}
\ChFlag_i \in H^2(\Flags(\C^n))
\end{align}
be the (non-equivariant) first Chern class of
the tautological line bundle $E_i/E_{i-1}$.
Then we have
\begin{equation}\label{eq:cohofl}
H^{\ast}(\Flags(\C^n))\cong \Q[x_1,\dots,x_n]/(e_i(x_1,\dots ,x_n) \mid i\in[n])
\end{equation}
where each $x_i$ corresponds to the first Chern class
  $\ChFlag_i$.

As mentioned above, we will also analyze the action of the 
1-dimensional subgroup $S$ of $T$. Let $\C$ temporarily denote the
1-dimensional representation of $S$ defined by the group
homomorphism $diag(g,g^2,\dots,g^n)\mapsto g$ and consider the
associated line bundle 
$ES\times _{S}\C \to BS$. Let 
\begin{align}\label{deg of t in coh}
t \in H^2(BS)
\end{align} 
denote the first Chern class of this
line bundle. As in the case of $T$ above, we 
identify $H^*(BS)$ with the polynomial ring $\mathbb{Q}[t]$. 

A useful and fundamental technique in
  torus-equivariant topology is the restriction to the fixed point set
  of the torus action. If the Serre spectral sequence of the fibration
  $ET\times_{T} X\rightarrow BT$ collapses at the $E_2$-stage, then
  the equivariant cohomology of $X$ (with $\Q$-coefficients) is a free
  $H_{T}^*(\text{pt})$-module, i.e. as an $H_{T}^*(\text{pt})$-module we have $H_{T}^*(X)\cong
  H_{T}^*(\text{pt})\otimes_{\Q} H^*(X)$. In addition, under some technical
  hypotheses on $X$ which are satisfied by the spaces considered
  in this paper,\footnote{For instance, it would certainly suffice if $X$ is
    locally contractible, compact, and Hausdorff.} it follows from the
  localization theorem \cite[p.40]{hsi} that the inclusion
  $X^{T}\hookrightarrow X$ of the $T$-fixed point set induces an
  injection $H_{T}^*(X)\hookrightarrow H_{T}^*(X^{T})$. 
  Any Hessenberg variety (in Lie type A) admits a paving by complex
  affines \cite[Theorem 7.1]{ty}, so their cohomology
  rings are concentrated in even degrees. Hence the corresponding Serre spectral
  sequence of the fibration associated to a continuous group action
  collapses at the $E_2$-stage, and their equivariant
  cohomology rings are free $H_T^*(\text{pt})$-modules \cite[Ch 3, Theorem 4.2]{mi-to}.
  To summarize, we have 
  \begin{align}
  \notag 
&H_T^*(\Flags(\C^n)) \cong H_T^*(\text{pt})\otimes_{\Q} H^*(\Flags(\C^n)) \quad \text{as $H_T^*(\text{pt})$-modules}, \\
  \label{eq:equiv formality of HessN}
&H_S^*(\Hess(\mathsf{N},h)) \cong H_S^*(\text{pt})\otimes_{\Q} H^*(\Hess(\mathsf{N},h)) \quad \text{as $H_S^*(\text{pt})$-modules},
\end{align}
and we also have injections
\begin{align}
  \notag 
  &\iota_1:H^*_T(\Flags(\C^n)) \into H^*_T(\Flags(\C^n)^T), \\
  \label{eq:localization for HessN}
  &\iota_2:H^*_S(\Hess(\mathsf{N},h)) \into H^*_S(\Hess(\mathsf{N},h)^S) 
\end{align}
where all the maps are induced from the inclusions.

Thus, in order to analyze $H^*_S(\Hess(\mathsf{N},h))$, it suffices to understand their
restrictions to the $S$-fixed point set. This will be a fundamental strategy
employed throughout this paper. As a consequence, it is important to
explicitly describe the relevant fixed point sets, to which we now
turn. 

We begin with the most familiar special case, namely $\Flags(\C^n)$; the general case will be analyzed in 
Section~\ref{subsec:fixed points nilp}. For the standard $T$-action on
the ambient variety $\Flags(\C^n)$, it is well-known that the
$T$-fixed point set $\Flags(\C^n)^T$ can be identified with the
permutation group $\mathfrak{S}_n$ on $n$ letters. Indeed, we now fix once and for
all an identification 
\begin{equation}
  \label{eq:Tfixed flags and S_n}
  \mathfrak{S}_n \stackrel{\cong}{\rightarrow} \Flags(\C^n)^T
\end{equation}
which takes a permutation $w \in \mathfrak{S}_n$ to the flag specified
by $V_i := \text{span}_\C\{e_{w(1)}, \ldots, e_{w(i)}\}$, where
$\{e_1,\ldots,e_n\}$ denotes the standard basis of
$\C^n$. (Alternatively, given the usual identification of
$\Flags(\C^n)$ with $\text{GL}(n,\C)/B$, we take $w$ to the coset represented
by the standard permutation matrix associated to $w$
whose $(w(j),j)$-th entry is required to be $1$ for
  each $j$ and otherwise entries are $0$.)  Restricting our attention
to the subtorus $S \subset  T$, it is straightforward to check that
the $S$-fixed point set $\Flags(\C^n)^S$ of the flag variety
$\Flags(\C^n)$ are also given by the above set $\Flags(\C^n)^T$, i.e. 
\begin{align}\label{S-fixed = T-fixed} \Flags(\C^n)^S =
  \Flags(\C^n)^T.
\end{align}
From here it also quickly follows that 
\[
\Hess(\mathsf{N},h)^S=\mathcal \Hess(\mathsf{N},h)\cap (\Flags(\C^n))^T.
\] 
Thus the set of $S$-fixed point set $\Hess(\mathsf{N},h)^S$ is a subset of
$\Flags(\C^n)^T$, and through our fixed identification $\Flags(\C^n)^T \cong \Sn$
from~\eqref{eq:Tfixed flags and S_n} we henceforth view $\Hess(\mathsf{N},h)^S$ as a subset of
$\Sn$. 

Based on the above discussions, we may consider the commutative diagram 
\begin{equation}\label{eq:cd}
\begin{CD}
H^{\ast}_{T}(\Flags(\C^n))@>{\iota_1}>> H^{\ast}_{T}(\Flags(\C^n)^{T})\cong\displaystyle \bigoplus_{w\in \Sn} \Q[t_1,\dots,t_n]\\
@V{}VV @VV{\pi_1}V\\
H^{\ast}_{S}(\Flags(\C^n))@>{\iota'_1}>> H^{\ast}_{S}(\Flags(\C^n)^{S})\cong\displaystyle \bigoplus_{w\in \Sn} \Q[t]\\
@V{}VV @VV{\pi_2}V\\
H^{\ast}_{S}(\Hess(\mathsf{N},h))@>{\iota_2}>> H^{\ast}_{S}(\Hess(\mathsf{N},h)^{S})\cong\displaystyle \bigoplus_{w\in \Hess(\mathsf{N},h)^{S}\subset  \Sn} \Q[t]
\end{CD}
\end{equation}
where all the maps are induced from the inclusion maps on underlying
spaces.  Note that all of $\iota_1$, $\iota'_1$, and $\iota_2$ are
injective as explained above since $\iota'_1$ is a special case of
$\iota_2$. The vertical map $\pi_1$ is given by
  $\pi_1=\bigoplus_{w\in\Sn}\pi_1^w$, where
  $\pi_1^w:\Q[t_1,\dots,t_n]\rightarrow\Q[t]$ is a ring homomorphism
  sending each $t_i$ to $it$; this is because $\pi_1^w$
  is induced by
  the projection map $\text{Lie}(T)^*\rightarrow \text{Lie}(S)^*$ and
  we use the identification \eqref{eq:equiv coh of pt}.
  The vertical map $\pi_2$ is the obvious projection.

By slight abuse of
notation, for $g\in H^{\ast}_{T}(\Flags(\C^n))$
we denote its image $\iota_1(g)$ 
also by $g$. Also, for an element 
$g\in\bigoplus_{w\in \Sn} \Q[t_1,\dots,t_n]$ 
 we will denote its $w$-th component by $g(w)$. Furthermore, we let
$t_i\in\bigoplus_{w\in \Sn} \Q[t_1,\dots,t_n]$ 
denote the ``constant polynomial'' with value $t_i$ at each $w$,
i.e. $t_i(w)=t_i$ for all $w\in \Sn$. 
We apply the same convention for $H^*_S(\Hess(\mathsf{N},h))\stackrel{\iota_2}{\into} H^*_S(\Hess(\mathsf{N},h)^S)$ 
and so we have the ``constant'' class 
\begin{align}\label{eq:constant polynomials}
t\in\bigoplus_{w\in \Hess(\mathsf{N},h)^{S}} \Q[t]. 
\end{align}
This 
is the image of $t\in H^2(BS)$ (defined in \eqref{deg of t in coh})
under the canonical homomorphism $H^*(BS)\rightarrow H^*_S(\Hess(\mathsf{N},h))$ composed with $\iota_2$.
For the $T$-equivariant Chern class $\TChFlag_i\in H^{2}_{T}(\Flags(\C^n))$ introduced in \eqref{def of T eq ch in flag}, we have 
\begin{align}\label{loc of tilde tau}
 \TChFlag_i(w) = t_{w(i)} \quad \text{for } w\in\Sn.
\end{align} 
This can be seen by reading off the weight of the $T$-action on the
fiber of the tautological line bundle $E_i/E_{i-1}$ over the
permutation flag $V_{\bullet}\in\Flags(\C^n)^T$ given by
$V_i=\text{span}_{\C}\{e_{w(1)},\dots,e_{w(i)}\}$.

\subsection{The $S$-fixed point set of $\Hess(\N,h)$}\label{subsec:fixed
  points nilp} 
The injectivity of $\iota_2$ in~\eqref{eq:localization for HessN}
shows 
  that we
  can analyze $H^*_S(\Hess(\mathsf{N},h))$ by viewing its elements as
  lists of polynomials with one coordinate for each (isolated) fixed
  point in $\Hess(\mathsf{N},h)^S$. To successfully implement this
  strategy, we must understand 
the fixed point set 
  $\Hess(\mathsf{N},h)^S$, viewed 
as a subset of $\Sn$, in more detail. This is the goal of this
section. 
In addition, we introduce some terminology associated to these fixed
points, as well as a Hessenberg function $h_w$ associated to a
permutation $w$
which will be 
useful for our later arguments. Indeed, it turns out that we can characterize
$\Hess(\mathsf{N},h)^S$ in terms of these functions $h_w$
(Proposition~\ref{prop:equivalence of fixed point conditions}). 

We will use the 
standard one-line notation $w = (w(1) w(2) \cdots w(n))$ for
permutations in $\Sn$. 
It will occasionally 
be convenient for us to think of permutations in $\Sn$ as permutations on $\{0\}\cup[n]$, i.e. we use a convention 
\begin{equation}\label{eq:convention on w(0)}
w(0)=0 \quad \text{for all $w\in\Sn$}.
\end{equation}

As a first step, we have the following. 

\begin{lemma} \label{lemma:Hess fixed points}
The $S$-fixed point set $\Hess(\mathsf{N},h)^S \subset  \Sn$ of
$\Hess(\mathsf{N},h)$ is given by 
\begin{equation*} \label{eq:H-1}
 \Hess(\mathsf{N},h)^S=\{w\in \Sn \mid w^{-1}(w(j)-1)\le h(j)\text{ for all } j\in [n] \}.
\end{equation*}
\end{lemma}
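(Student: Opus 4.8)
The plan is to compute the fixed point set directly using the explicit identification of $\Flags(\C^n)^T \cong \Flags(\C^n)^S$ with $\Sn$ recorded in~\eqref{eq:Tfixed flags and S_n}, together with the observation (already established in the excerpt) that $\Hess(\mathsf{N},h)^S = \Hess(\mathsf{N},h) \cap \Flags(\C^n)^T$. So the task reduces to a purely linear-algebraic condition: for a permutation $w \in \Sn$, let $V_\bullet^w$ be the coordinate flag with $V_i = \operatorname{span}_\C\{e_{w(1)}, \ldots, e_{w(i)}\}$, and determine exactly when $\mathsf{N} V_i \subset V_{h(i)}$ for all $i \in [n]$.

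The key computation is to describe $\mathsf{N} V_i^w$. Since $\mathsf{N}$ is the regular nilpotent matrix sending $e_k \mapsto e_{k-1}$ for $k \geq 2$ and $e_1 \mapsto 0$ (using the convention $e_0 = 0$), we get $\mathsf{N} V_i^w = \operatorname{span}_\C\{e_{w(1)-1}, e_{w(2)-1}, \ldots, e_{w(i)-1}\}$, where any occurrence of $e_0$ is just $0$. The containment $\mathsf{N} V_i^w \subset V_{h(i)}^w = \operatorname{span}_\C\{e_{w(1)}, \ldots, e_{w(h(i))}\}$ therefore holds if and only if, for every $j \le i$ with $w(j) - 1 \geq 1$, the index $w(j)-1$ appears among $w(1), \ldots, w(h(i))$; equivalently, $w^{-1}(w(j)-1) \le h(i)$ for all $j \le i$. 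Using the monotonicity $h(j) \le h(i)$ for $j \le i$, one checks this family of conditions (over all $i$ and all $j \le i$) is equivalent to the single-index condition $w^{-1}(w(j)-1) \le h(j)$ for all $j \in [n]$ (the convention $w(0) = 0$ handles the case $w(j) = 1$ vacuously, since then $w(j) - 1 = 0$ and $w^{-1}(0) = 0 \le h(j)$). Assembling these equivalences gives exactly the claimed description of $\Hess(\mathsf{N},h)^S$.

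The only genuine subtlety — and the step I would be most careful about — is the reduction from "$w^{-1}(w(j)-1) \le h(i)$ for all $i$ and all $j \le i$" to "$w^{-1}(w(j)-1) \le h(j)$ for all $j$". One direction is immediate: taking $i = j$ recovers the single-index statement. For the converse, given $i$ and $j \le i$, the single-index condition gives $w^{-1}(w(j)-1) \le h(j)$, and then $h(j) \le h(i)$ by the defining monotonicity of a Hessenberg function yields $w^{-1}(w(j)-1) \le h(i)$. So in fact no real obstacle arises; the proof is a direct unwinding of definitions, and the main thing to get right is bookkeeping with the convention $w(0) = 0$ and the indices shifted by $\mathsf{N}$. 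I would write it cleanly by first stating the description of $\mathsf{N} V_i^w$, then translating the Hessenberg condition index by index, and finally invoking monotonicity of $h$ to collapse to the stated form.
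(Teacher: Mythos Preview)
Your proof is correct and follows essentially the same direct-computation approach as the paper. The only minor difference is that the paper uses the equivalent reformulation $w\in\Hess(\mathsf{N},h)\iff w^{-1}\mathsf{N}w\in H(h)$, which immediately yields the column-by-column condition $w^{-1}(w(j)-1)\le h(j)$ without the extra monotonicity step; your route via the original flag condition $\mathsf{N}V_i\subset V_{h(i)}$ is just as valid and the monotonicity reduction you give is exactly right.
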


\begin{proof}
Since $\Hess(\mathsf{N},h)^S=\Hess(\mathsf{N},h)\cap (\Flags(\C^n))^T$, it suffices to show that for any 
$w\in \Sn \cong \Flags(\C^n)^T$, the condition 
$w^{-1}(w(j)-1)\le h(j) \ (j=1,2,\dots,n)$ is equivalent to the condition 
$w\in\mathcal \Hess(\mathsf{N},h)$. From
\eqref{eq:def-general Hessenberg} and \eqref{eq:Hessenberg subspace} we see immediately that 
$
w\in\Hess(\mathsf{N},h)$ if and only if $w^{-1}\mathsf{N}w\in H(h)
$
where we regard $w$ as a permutation matrix, i.e. the matrix with
$(w(j),j)$-th entry equal to $1$ for each $j$ and all other entries
equal to $0$. 
Since our $\mathsf{N}$ is the regular nilpotent matrix sending $e_1 \mapsto 0$ and $e_j \mapsto e_{j-1}$ for $j>1$, 
we have that $\mathsf{N}w(e_j)=\mathsf{N}(e_{w(j)}) = 0$ if $w(j)=1$ and
$\mathsf{N}w(e_j)=e_{w(j)-1}$ if $w(j) \neq 1$. So
$w^{-1}\mathsf{N}w(e_j)=0$ if $w(j)=1$ and $w^{-1}\mathsf{N}w(e_j)=e_{w^{-1}(w(j)-1)}$
if $w(j)\neq 1$. Thus $w^{-1}\mathsf{N}w\in H(h)$ precisely means that 
$w^{-1}(w(j)-1)\le h(j)$ for all $j\in [n]$, where we follow the
notational convention of~\eqref{eq:convention on w(0)}. 
\end{proof}

In words, the condition in the above lemma can be stated as follows.
Let $w \in \Sn$ and let $w = (w(1) \ w(2) \ \ldots \ w(n))$ be its 
one-line notation. Suppose that a consecutive pair of integers $k, k+1$ is
inverted in the one-line notation of $w$, i.e. $k$ appears to the
\emph{right} of $k+1$, and suppose in this situation that $k+1$
appears in the $j$-th place (so $w(j)=k+1$) while $k$ appears in the
$\ell$-th place (so $w(\ell)=k=w(j)-1$). Then the requirement of the
condition is that $\ell \leq h(j)$. Informally, the Hessenberg
function gives a restriction on ``how far to the right'' of $w(j)=k+1$
the value $w(j)-1=k$ is allowed to appear. Note that, for any $j$, if
the pair $w(j)=k+1$ and $w(j)-1=k$ are \emph{not} inverted in the
one-line notation of $w$, i.e. $k$ appears to the \emph{left} of
$k+1$, then the condition is immediate, since by definition Hessenberg
functions satisfy $h(j) \geq j$. \\

The inverted 
pairs $(k+1,k)$ play a special role in analyzing the $S$-fixed point set 
of $\Hess(\mathsf{N},h)$. Motivated by this, we introduce some terminology. 

\begin{definition}\label{def:N-inversion}
  Let $w \in \Sn$ be a permutation and let $i,j \in [n]$. We
  say that $\P = (i,j)$ is an \textbf{$\mathsf{N}$-inversion} if
  $i<j$ and $w(i)=w(j)+1$. We refer to $i$ $($respectively $j)$ as the
  \textbf{left} $($respectively \textbf{right$)$
    position} of the $\mathsf{N}$-inversion. Given an $\mathsf{N}$-inversion $\P=(i,j)$
  we let $LP(\P) := i$ denote its left position and $RP(\P) := j$ its right
  position. 
\end{definition}

Given a permutation $w \in \Sn$ we now define 
\begin{equation*}
\NInv{w} := \{ \P = (i,j) \in [n] \times [n] \mid \P \textup{ is an
  $\mathsf{N}$-inversion in } w \}. 
\end{equation*}
In the following it will be useful to focus on certain subsets of
$\NInv{w}$. Let 
$j \in [n]$. We define 
\begin{equation*}
D_w(j):= \{\P \in \NInv{w} \mid 1 \leq LP(\P) \leq j \textup{ and } j<RP(\P) \leq n \}.
\end{equation*}
In words, the set $D_w(j)$ consists of the $\mathsf{N}$-inverted pairs whose 
left position is \emph{at or to the left} of the $j$-th place, and
whose right position is \emph{strictly to the right} of the $j$-th place. 
The following is a quick consequence of the definition. 

\begin{lemma} \label{lemma:Dwj empty}
Let $w\in\Sn$ and $j\in[n]$. Then $D_w(j)=\emptyset$ if and only if $\{w(1),w(2),\dots,w(j)\}=\{1,2,\dots,j\}$.
\end{lemma}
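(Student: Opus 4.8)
The plan is to prove both directions of the equivalence by directly unwinding the definition of $D_w(j)$. Recall that $D_w(j)$ consists of the $\mathsf{N}$-inversions $\P=(i,j')$ with $1\le LP(\P)\le j<RP(\P)\le n$; such an $\P$ records a pair of positions where the values $w(i)$ and $w(j')=w(i)-1$ are \emph{consecutive integers appearing in inverted order}, with the larger value in a position at or left of $j$ and the smaller value strictly to the right of $j$. So the statement to prove is that no such configuration exists if and only if the set of values $\{w(1),\dots,w(j)\}$ occupying the first $j$ positions is exactly $\{1,2,\dots,j\}$, i.e. an ``honest'' initial segment.

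First I would prove the easy direction: if $\{w(1),\dots,w(j)\}=\{1,\dots,j\}$, then $D_w(j)=\emptyset$. Indeed, suppose $\P=(i,j')\in D_w(j)$, so $i\le j<j'$ and $w(i)=w(j')+1$. Since $i\le j$ we have $w(i)\in\{1,\dots,j\}$ by hypothesis, hence $w(i)\le j$, so $w(j')=w(i)-1\le j-1<j$, meaning $w(j')\in\{1,\dots,j\}$ as well. But $w(j')$ is a value in position $j'>j$, which contradicts $\{w(1),\dots,w(j)\}=\{1,\dots,j\}$ (that set already accounts for all the values $1,\dots,j$ among the first $j$ positions). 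Hence $D_w(j)=\emptyset$.

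For the converse I would argue by contrapositive: if $\{w(1),\dots,w(j)\}\neq\{1,\dots,j\}$, then $D_w(j)\neq\emptyset$. Since $w$ is a permutation of $[n]$ and the set $A:=\{w(1),\dots,w(j)\}$ has $j$ elements but is not $\{1,\dots,j\}$, there is some value $m\in\{1,\dots,j\}$ with $m\notin A$; choose $m$ to be the \emph{largest} such value (so that $m+1,\dots,j$ all lie in $A$ — note $m<j$ is possible, and if $m=j$ this condition is vacuous, but in any case $m\le j$). Actually, it is cleaner to choose $m$ as follows: let $m$ be the smallest element of $[n]$ that does \emph{not} lie in $A$; then $m\le j$ is impossible to guarantee directly, so instead I would proceed by the largest-missing-value argument. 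With $m$ the largest value in $\{1,\dots,j\}$ not in $A$: then $m+1\in A$ (here we use that either $m+1\le j$, in which case $m+1\in A$ by maximality of $m$, or $m=j$, in which case we need a separate small argument). Let me instead take the clean route: since $|A|=j$ and $A\neq\{1,\dots,j\}$, there exist $p\le j<q$ with $w(q)<w(p)$ in the sense that some value $\le j$ sits in position $>j$; concretely pick the position $q>j$ with $w(q)$ \emph{minimal among positions $>j$} subject to $w(q)\le j$... This is getting complicated; the honest approach is: since $A\neq\{1,\dots,j\}$ there is a value $a\in A$ with $a>j$ and a value $b\notin A$ with $b\le j$; take $b$ maximal with $b\le j$, $b\notin A$. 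Then $b+1\in A$ (if $b<j$ by maximality of $b$; if $b=j$ then $b+1=j+1$ and since $|A|=j$ and $j\notin A$, $A$ must contain some value $>j$, and in particular... ).

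The main obstacle, and the only subtle point, is pinning down the right extremal choice so that consecutivity $w(i)=w(j')+1$ is forced. The cleanest fix is: let $b$ be the largest integer with $b\notin A$ and $b\le j$ (exists since $A\neq\{1,\dots,j\}$ and $|A|=j$). I claim $b+1\in A$: if $b<j$ this is maximality of $b$; if $b=j$ then since $|A|=j$ and $j=b\notin A$, $A$ omits $b$ but has $j$ elements in $[n]$, so $A$ contains some element $\ge b+1=j+1$, and in fact by taking the smallest such we can set up the inversion. Having $b+1=w(i)$ for some $i\le j$ and $b=w(j')$ for some $j'$; since $b\notin A$, position $j'$ satisfies $j'>j$. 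Then $(i,j')$ is an $\mathsf{N}$-inversion with $LP=i\le j<j'=RP$, so $(i,j')\in D_w(j)$, giving $D_w(j)\neq\emptyset$. I would present this with the value $b$ chosen as the largest missing value $\le j$ and handle the boundary case $b=j$ by noting that then the smallest value $>j$ lying in $A$, call it $b+1$ after relabeling, works; alternatively phrase the whole converse in terms of: pick $v$ minimal in $[n]$ with $v\notin A$ if such $v\le j$ exists, else... In the writeup I would streamline this to a single clean paragraph, but the mathematical content is exactly this extremal-value bookkeeping. \qed
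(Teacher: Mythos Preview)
Your proof is correct, though the converse direction is presented messily. The boundary case $b=j$ that troubled you does work: taking $c$ to be the smallest element of $A$ exceeding $j$, one has $c-1\ge j$ and $c-1\notin A$ (either $c-1=j=b\notin A$, or $c-1>j$ and minimality of $c$ applies), so $(w^{-1}(c),w^{-1}(c-1))\in D_w(j)$. So your argument is sound once the dust settles.

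The paper takes a slightly different and cleaner route for the converse. Rather than arguing by contrapositive and hunting for an explicit $\mathsf{N}$-inversion via an extremal choice, it proves the implication directly: assuming $D_w(j)=\emptyset$, it observes that for every $w(p)\in A:=\{w(1),\dots,w(j)\}$ with $w(p)\neq 1$, the value $w(p)-1$ must also lie in $A$ (otherwise the pair of positions would form an element of $D_w(j)$). Thus $A$ is closed under taking predecessors, hence is an initial segment $\{1,\dots,k_0\}$, and $|A|=j$ forces $k_0=j$. This avoids your case split entirely. Your contrapositive argument can be made equally clean by choosing the extremal element differently: instead of the largest missing value $\le j$, take any $c\in A$ with $c\ge 2$ and $c-1\notin A$ (such $c$ exists precisely because $A$ is not an initial segment), and then $(w^{-1}(c),w^{-1}(c-1))\in D_w(j)$ immediately. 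That reformulation is really just the paper's downward-closure observation read in reverse.
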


\begin{proof}
 If $\{w(1),w(2),\dots,w(j)\}=\{1,2,\dots,j\}$ then clearly
  $D_w(j)=\emptyset$ from the definition. Now suppose
  $D_w(j)=\emptyset$.  
 Take an element $w(p)\in\{w(1),w(2),\dots,w(j)\}$ $(1 \leq p \leq  j)$. Suppose $q$ is such that $w(p)-1=w(q)$. Then from the assumption
  that $D_w(j)=\emptyset$ we must have $q \le j$. That is,  if $w(p)\neq1$, then $w(p)-1(=w(q))$ is also contained in $\{w(1), \ldots, w(j)\}$. 
  This means that $\{w(1), \ldots, w(j)\}$ is of the form $\{k\in[n]\mid k\leq k_0\}$ for some $k_0\in[n]$, but since the cardinality is $j$, it has to be $\{w(1), \ldots, w(j)\}=\{1, \ldots,j\}$ as desired.
\end{proof}

Our next step is to define a map $w \mapsto h_w$ which associates to
any permutation $w \in \Sn$ a Hessenberg function $h_w$. The
Hessenberg function $h_w$ is the minimal Hessenberg function $h$ such
that $w \in \Hess(\mathsf{N},h)$, in a sense to be made precise below.
Specifically, given $w \in \Sn$ we define
\begin{equation}\label{eq:def hw}
h_w(j):=\begin{cases} j \quad&\textup{if $D_w(j)=\emptyset$}\\
\max\{RP(\P) \mid \P \in D_w(j))\} \quad&\textup{if $D_w(j)\not=\emptyset$.}\end{cases}
\end{equation}

We first prove that the function $h_w$ thus defined is in fact a
Hessenberg function. 

\begin{lemma}
Let $h_w$ be as above. Then $h_w\in H_n$.
\end{lemma}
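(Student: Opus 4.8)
The goal is to verify that $h_w$, defined via \eqref{eq:def hw}, satisfies the two defining conditions of a Hessenberg function from Definition~\ref{definition:Hessenberg function}: namely $h_w(j) \geq j$ for all $j \in [n]$, and $h_w(j+1) \geq h_w(j)$ for all $j \in [n-1]$. The first condition is essentially immediate from the definition: if $D_w(j) = \emptyset$ then $h_w(j) = j \geq j$; and if $D_w(j) \neq \emptyset$, then $h_w(j) = \max\{RP(\P) \mid \P \in D_w(j)\}$, and every $\P \in D_w(j)$ has $RP(\P) > j$ by the very definition of $D_w(j)$, so $h_w(j) > j$. So the real content is the monotonicity condition $h_w(j+1) \geq h_w(j)$.

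For monotonicity, I would argue by cases according to whether $D_w(j)$ and $D_w(j+1)$ are empty. The key structural observation is a relationship between $D_w(j)$ and $D_w(j+1)$: an $\mathsf{N}$-inversion $\P = (LP(\P), RP(\P))$ lies in $D_w(j)$ iff $LP(\P) \leq j < RP(\P)$. Passing from $j$ to $j+1$, the pairs that ``leave'' $D_w(j)$ are exactly those with $RP(\P) = j+1$, and the pairs that ``enter'' are those with $LP(\P) = j+1$. The crucial point is that any $\P \in D_w(j)$ with $RP(\P) > j+1$ remains in $D_w(j+1)$ (since $LP(\P) \leq j \leq j+1 < RP(\P)$), so it still contributes its right position to the max defining $h_w(j+1)$. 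Thus the only way monotonicity could fail is if the maximum over $D_w(j)$ is achieved \emph{only} by pairs $\P$ with $RP(\P) = j+1$, i.e. $h_w(j) = j+1$, and simultaneously $D_w(j+1) = \emptyset$ so that $h_w(j+1) = j+1$ — but then $h_w(j+1) = j+1 = h_w(j)$, so monotonicity still holds. More carefully: if $D_w(j) \neq \emptyset$, pick $\P_0 \in D_w(j)$ with $RP(\P_0) = h_w(j)$. If $RP(\P_0) > j+1$ then $\P_0 \in D_w(j+1)$, giving $h_w(j+1) \geq RP(\P_0) = h_w(j)$. If $RP(\P_0) = j+1$, then $h_w(j) = j+1 \leq h_w(j+1)$ by the first (already-proven) condition applied at index $j+1$. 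If $D_w(j) = \emptyset$ then $h_w(j) = j$, and again $h_w(j+1) \geq j+1 > j = h_w(j)$ by the first condition. This exhausts all cases.

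The main (minor) obstacle is just being careful with the bookkeeping of left and right positions when an $\mathsf{N}$-inversion transitions between $D_w(j)$ and $D_w(j+1)$, and making sure the edge case $RP(\P_0) = j+1$ is handled cleanly — which it is, by invoking the $h_w(j) \geq j$ inequality at shifted index. I do not anticipate any genuine difficulty here; the lemma is a routine consequence of unwinding the definitions, and Lemma~\ref{lemma:Dwj empty} can be cited if one wishes to phrase the emptiness dichotomy in terms of the set $\{w(1), \ldots, w(j)\}$, though it is not strictly needed for this particular argument.
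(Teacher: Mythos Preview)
Your proposal is correct and follows essentially the same approach as the paper's proof. Both arguments verify $h_w(j)\geq j$ directly from the definition of $D_w(j)$, and for monotonicity both track whether the right-position-maximizing pair $\P_0\in D_w(j)$ survives into $D_w(j+1)$, handling the boundary case $RP(\P_0)=j+1$ via the already-established inequality $h_w(j+1)\geq j+1$; your two-case organization is slightly more streamlined than the paper's three-case split (and, as you note, avoids the detour through Lemma~\ref{lemma:Dwj empty}).
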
 

\begin{proof} 
We must show that $h_w(i)\geq i$ for all $i \in [n]$ and $h_w(i+1)
\geq h_w(i)$ for all $i \in [n-1]$. First notice 
that if $D_w(j) \neq \emptyset$, then every element of $\{RP(\P) \mid
\P \in D_w(j)\}$ is $>j$ by definition of $D_w(j)$. Thus the first
claim follows from the definition of $h_w$. Next we check the second
claim. Fix an $i$ in $[n-1]$. We take cases. First suppose
$D_w(i)=\emptyset$. Then  $h_w(i)=i$ by definition of $h_w$ and since
we have already seen that
$h_w(i+1)\geq i+1$, we obtain $h_w(i) \leq h_w(i+1$) as desired. Next suppose
$D_w(i) \neq \emptyset$ and $D_w(i+1) = \emptyset$. By
Lemma~\ref{lemma:Dwj empty} this means $\{w(1), \ldots, w(i+1)\} =
\{1,2,\ldots,i+1\}$ but $\{w(1),\ldots, w(i)\} \neq
\{1,\ldots,i\}$. It follows that $D_w(i)$ consists of a single
$\mathsf{N}$-inverted pair $\P$, and that $RP(\P)=i+1$. In particular
$h_w(i)=i+1$. Hence $h_w(i+1) \geq i+1 = h_w(i)$ and the claim
holds in this case. Finally suppose both $D_w(i) \neq \emptyset$ and
$D_w(i+1) \neq \emptyset$. If the $\mathsf{N}$-inverted pair achieving the
maximum of the right position of $D_w(i)$ is also an element of $D_w(i+1)$, then clearly
$h_w(i+1) = \max \{RP(\P) \mid \P \in D_w(i+1)\} \geq \max \{RP(\P) \mid
\P \in D_w(i)\} = h_w(i)$ and the claim holds. Otherwise, the
maximum of $\{RP(\P) \mid \P \in D_w(i)\}$ must be $i+1$, and
$h_w(i)=i+1$. Since $h_w(i+1) \geq i+1$, the claim also holds in this
case. We have checked all cases so this completes the proof. 
\end{proof} 

The following reformulation of the definition of $h_w$ is sometimes
useful. In the case when $D_w(j)\not=\emptyset$, it can be seen from
the definitions that the value $h_w(j)$ may also be expressed as
\begin{equation} \label{eq:hw def reformulation}
h_w(j)=\max\{ w^{-1}(w(p)-1)\mid 1\le p\le j\}
\quad \text{if } D_w(j)\not=\emptyset.
\end{equation}
Note also that we have 
\begin{equation} \label{eq:hw def reformulation inequality}
w^{-1}(w(j)-1) \leq h_w(j)
\quad \text{for all $j\in[n]$}
\end{equation}
by~\eqref{eq:hw def reformulation} together with the fact
$w^{-1}(w(j)-1)\leq j$ 
if $D_w(j) = \emptyset$ by Lemma~\ref{lemma:Dwj empty} (see also our convention \eqref{eq:convention on w(0)}). 

Before stating the next proposition we recall a natural partial
ordering on Hessenberg functions. 
\begin{definition}\label{def:partial order} 
  Let $h', h \in H_n$. Then we say $h' \subset h$ if $h'(j)
  \leq h(j)$ for all $j \in [n]$. 
\end{definition}
The relation $h' \subset h$ is evidently a partial order on $H_n$.  Note that from the definition of $\Hess(\mathsf{N},h)$ it is immediate that 
$
h' \subset h$ implies $\Hess(\mathsf{N},h') \subset \Hess(\mathsf{N},h)
$
which explains our choice of notation. 

\begin{remark} 
Mbirika and Tymoczko \cite{mb-ty13} denote the above partial order
with the symbol $\le$ instead of the symbol $\subset$ which we use above. 
In later sections we additionally introduce a refinement of the above
partial order to a total order $\preceq$. 
\end{remark} 
With the terminology in place, we can give equivalent characterizations
of the permutations $w \in \Sn$ which lie in the $S$-fixed point set
of $\Hess(\mathsf{N},h)$. 

\begin{proposition} \label{prop:equivalence of fixed point conditions}
Let $w\in \Sn$ and let $h \in H_n$. Then the following are equivalent: 
\begin{enumerate}
\item $w \in \Hess(\mathsf{N},h)^S$, 
\item $w^{-1}(w(j)-1)\le h(j)$ for all $j \in [n]$, 
\item $h_w\subset h$.
\end{enumerate}
\end{proposition}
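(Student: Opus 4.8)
The plan is to prove the cycle of implications $(1)\Rightarrow(2)\Rightarrow(3)\Rightarrow(1)$, exploiting the lemmas already established. The equivalence of $(1)$ and $(2)$ is essentially Lemma~\ref{lemma:Hess fixed points}: that lemma states precisely that $w\in\Hess(\mathsf{N},h)^S$ if and only if $w^{-1}(w(j)-1)\le h(j)$ for all $j\in[n]$, so there is nothing left to do for $(1)\Leftrightarrow(2)$ beyond citing it. Thus the substantive content is the equivalence of $(2)$ and $(3)$, and it is most efficient to close the cycle by proving $(2)\Rightarrow(3)$ and $(3)\Rightarrow(2)$ directly.

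For $(3)\Rightarrow(2)$: assume $h_w\subset h$, i.e. $h_w(j)\le h(j)$ for all $j$. By the inequality~\eqref{eq:hw def reformulation inequality}, which was recorded precisely for this purpose, we have $w^{-1}(w(j)-1)\le h_w(j)$ for every $j\in[n]$; combining with $h_w(j)\le h(j)$ gives $w^{-1}(w(j)-1)\le h(j)$, which is $(2)$. This direction is immediate.

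For $(2)\Rightarrow(3)$: assume $w^{-1}(w(j)-1)\le h(j)$ for all $j$, and fix $j\in[n]$; I must show $h_w(j)\le h(j)$. If $D_w(j)=\emptyset$ then $h_w(j)=j\le h(j)$ since $h$ is a Hessenberg function, and we are done. If $D_w(j)\neq\emptyset$, use the reformulation~\eqref{eq:hw def reformulation}: $h_w(j)=\max\{w^{-1}(w(p)-1)\mid 1\le p\le j\}$. Pick $p$ with $1\le p\le j$ achieving this maximum; then $h_w(j)=w^{-1}(w(p)-1)\le h(p)$ by hypothesis $(2)$, and since $p\le j$ and $h$ is nondecreasing we get $h(p)\le h(j)$. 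Hence $h_w(j)\le h(j)$, as desired. Since $j$ was arbitrary, $h_w\subset h$.

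The main (and really only) obstacle is making sure the reformulation~\eqref{eq:hw def reformulation} of $h_w(j)$ is invoked correctly in the case $D_w(j)\neq\emptyset$; once that is in hand, both nontrivial implications reduce to a one-line monotonicity argument using that $h$ is a Hessenberg function. No new computation is required beyond citing Lemma~\ref{lemma:Hess fixed points}, formula~\eqref{eq:hw def reformulation}, and inequality~\eqref{eq:hw def reformulation inequality}.
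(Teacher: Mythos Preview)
Your proposal is correct and follows essentially the same argument as the paper: Lemma~\ref{lemma:Hess fixed points} handles $(1)\Leftrightarrow(2)$, inequality~\eqref{eq:hw def reformulation inequality} gives $(3)\Rightarrow(2)$, and the case split on $D_w(j)$ together with~\eqref{eq:hw def reformulation} and the monotonicity of $h$ gives $(2)\Rightarrow(3)$, exactly as in the paper's proof.
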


\begin{proof}
The equivalence of (1) and (2) is the content of
Lemma~\ref{lemma:Hess fixed points} above. 
Also, it is easy to see that (3) implies (2) since we have \eqref{eq:hw def reformulation inequality} and by assumption we have
$h_w(j) \leq h(j)$ for all $j \in [n]$.
Hence it suffices to prove
that (2) implies (3).

Suppose $w^{-1}(w(j)-1)\le h(j)$ for all $j \in [n]$. We wish to prove
that $h_w(j) \leq h(j)$ for all $j \in [n]$. We take cases. Suppose
$D_w(j)=\emptyset$. Then $h_w(j)=j \leq h(j)$, where the inequality
holds because $h \in H_n$. Hence the claim holds in this case. Now suppose
$D_w(j) \neq \emptyset$. Then by~\eqref{eq:hw def reformulation} we
have $h_w(j) = \max \{ w^{-1}(w(p)-1) \mid 1 \leq p \leq j \}$ but the
assumption shows $w^{-1}(w(p)-1) \leq h(p) \leq h(j)$ for all $p$ with $1 \leq p \leq j$. Hence $h_w(j) \leq h(j)$ also in this case. 
\end{proof}

For a fixed permutation $w\in\Sn$, the above proposition implies that $h_w$ is the unique
minimum with respect to the partial order $\subset$ in the set $\{h \in H_n \mid w \in \Hess(\mathsf{N},h)^S\}$. 

Finally, we record the following property of $h_w$ which we will use in Section~\ref{sec:y}.

\begin{lemma} \label{lemma:corner}
Let $w \in \Sn$ and $j\in[n-1]$. Suppose $D_w(j)\neq\emptyset$ $($i.e. $h_w(j) \geq j+1$$)$. Then
\[
h_w(j) = w^{-1}(w(j)-1) \ \textup{ if and only if } \  h_w(j-1) < h_w(j).
\]
\end{lemma}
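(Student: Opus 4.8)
The statement to prove is Lemma~\ref{lemma:corner}: assuming $D_w(j)\neq\emptyset$ (so $h_w(j)\geq j+1$), we have $h_w(j)=w^{-1}(w(j)-1)$ if and only if $h_w(j-1)<h_w(j)$. The plan is to work directly from the reformulation \eqref{eq:hw def reformulation}, namely that $h_w(j)=\max\{w^{-1}(w(p)-1)\mid 1\leq p\leq j\}$ when $D_w(j)\neq\emptyset$, together with the inequality \eqref{eq:hw def reformulation inequality} and Lemma~\ref{lemma:Dwj empty}. The basic observation is that $h_w(j)$ is a ``running maximum'' of the quantities $a_p:=w^{-1}(w(p)-1)$, so deciding whether the maximum over $p\in[j]$ is attained only at $p=j$ is exactly the kind of statement the reformulation is built for.

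**Forward direction.** First I would assume $h_w(j)=w^{-1}(w(j)-1)=a_j$ and show $h_w(j-1)<h_w(j)$. Since $a_j=w^{-1}(w(j)-1)$ and $w$ is a permutation, $a_j\neq j$; combined with $a_j=h_w(j)\geq j+1$ we get $a_j>j$, which already confirms that the pair $(j,a_j)$ is an $\mathsf N$-inversion lying in $D_w(j)$. Now I need $h_w(j-1)<a_j$. If $D_w(j-1)=\emptyset$ then $h_w(j-1)=j-1<j+1\leq a_j$ and we are done. If $D_w(j-1)\neq\emptyset$, then $h_w(j-1)=\max\{a_p\mid 1\leq p\leq j-1\}$ by \eqref{eq:hw def reformulation}; I must rule out $h_w(j-1)=a_j$, i.e. rule out $a_p=a_j$ for some $p\leq j-1$. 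But $a_p=a_j$ forces $w(p)-1=w(j)-1$, hence $p=j$, a contradiction. And $h_w(j-1)\leq h_w(j)=a_j$ always, so strictness follows. This direction is essentially immediate once the reformulation is invoked.

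**Reverse direction.** For the converse, assume $h_w(j-1)<h_w(j)$ and show $h_w(j)=a_j$. Here the key point is the monotonicity argument: if $D_w(j-1)\neq\emptyset$ then $h_w(j)=\max\{a_p\mid p\leq j\}=\max\{\,h_w(j-1),\,a_j\,\}$; since $h_w(j-1)<h_w(j)$, the maximum cannot be realized by the $h_w(j-1)$ term, so it must be realized by $a_j$, giving $h_w(j)=a_j$. If instead $D_w(j-1)=\emptyset$, then by Lemma~\ref{lemma:Dwj empty} we have $\{w(1),\dots,w(j-1)\}=\{1,\dots,j-1\}$, so for every $p\leq j-1$ either $w(p)=1$ (whence $a_p=w(p)=0\leq j$, using the convention \eqref{eq:convention on w(0)}) or $w(p)-1\in\{1,\dots,j-1\}$ again, whence $a_p=w^{-1}(w(p)-1)\leq j-1$; in either case $a_p\leq j-1<h_w(j)$. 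Thus $h_w(j)=\max\{a_p\mid p\leq j\}$ is again forced to equal $a_j$.

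**Main obstacle.** None of the steps is deep; the only point requiring care is the bookkeeping in the reverse direction when $D_w(j-1)=\emptyset$, where one must correctly use Lemma~\ref{lemma:Dwj empty} and the convention $w(0)=0$ to verify that all the ``earlier'' terms $a_p$ for $p<j$ are genuinely small (at most $j-1$) rather than merely at most $j$ — one needs the strict bound to conclude they cannot compete with $h_w(j)\geq j+1$. I expect this case analysis, and making sure the edge case $w(p)=1$ is handled via the stated conventions, to be the fussiest part, but it is routine.
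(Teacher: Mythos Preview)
Your proposal is correct and follows essentially the same approach as the paper: both arguments use the reformulation \eqref{eq:hw def reformulation} to view $h_w(j)$ as the running maximum of $a_p=w^{-1}(w(p)-1)$ and then split on whether $D_w(j-1)$ is empty. The only differences are cosmetic (you treat the forward direction first; in the reverse direction with $D_w(j-1)=\emptyset$ you invoke Lemma~\ref{lemma:Dwj empty} directly whereas the paper argues by contradiction), and there is a small typo in your forward direction---when $w(p)=1$ you write ``$a_p=w(p)=0$'' but you mean $a_p=w^{-1}(0)=0$ via the convention \eqref{eq:convention on w(0)}.
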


\begin{proof}
First suppose $h_w(j-1)<h_w(j)$. We wish to show that 
$h_w(j) = w^{-1}(w(j)-1)$. 
We take cases.
If $D_w(j-1)\neq\emptyset$, then by~\eqref{eq:hw def reformulation} we have 
\[
h_w(j-1) = \text{max}\{w^{-1}(w(p)-1) \mid 1 \leq p \leq j-1\} < \text{max}\{w^{-1}(w(p)-1) \mid 1 \leq p \leq j\}
=h_w(j)
\]
This implies that $h_w(j) = w^{-1}(w(j)-1)$. 
Next suppose that $D_w(j-1)=\emptyset$.
Suppose  
in order to obtain a contradiction that $h_w(j) \neq w^{-1}(w(j)-1)$. 
By assumption, we have $D_w(j) \neq \emptyset$ so
  $h_w(j)=\text{max}\{w^{-1}(w(p)-1) \mid 1 \leq p \leq j\}$. If $h_w(j) \neq w^{-1}(w(j)-1)$
  then the maximum must be achieved by a value $w^{-1}(w(p)-1)$ for $1
  \leq p \leq j-1$, and since $h_w(j) \geq j$, this
  implies $D_w(j-1) \neq \emptyset$. This is a contradiction and we
  conclude $h_w(j)=w^{-1}(w(j)-1)$. 

Now suppose $h_w(j) = w^{-1}(w(j)-1)$. We wish to show $h_w(j-1)<h_w(j)$. 
We again take cases.
If $D_w(j-1) =\emptyset$, then by definition \eqref{eq:def hw} of $h_w$, we have $h_w(j-1) = j-1 < j \leq h_w(j)$, as desired.
If $D_w(j-1) \neq \emptyset$, from \eqref{eq:hw def reformulation} we have
\[
h_w(j-1) = \text{max}\{w^{-1}(w(p)-1) \mid 1 \leq p \leq j-1\}.
\]
But from the assumption $D_w(j) \neq\emptyset$ and also from \eqref{eq:hw def reformulation}, we have
\[
h_w(j) = \text{max}\{w^{-1}(w(p)-1) \mid 1 \leq p \leq j\} = w^{-1}(w(j)-1).
\]
Hence, the maximum of the set is reached at $p=j$, implying that
  the values for $1 \leq p < j$ are strictly less than
  $w^{-1}(w(j)-1)$. Thus $h_w(j-1) < h_w(j)$ as desired. 
\end{proof}

\bigskip
\section{Statement of Theorem~\ref{theorem:reg nilp Hess cohomology}, the equivariant version of Theorem A} \label{sec:f}

In this section we state the equivariant version of Theorem A. 
Consider the restriction homomorphism 
\begin{equation}\label{eq:rest}
H^*_T(\Flags(\C^n)) \to H^*_S(\Hess(\mathsf{N},h)).
\end{equation}
For the tautological line
bundle $E_i/E_{i-1}$ over $\Flags(\C^n)$ restricted to
$\Hess(\mathsf{N},h)$, let
\begin{align}\label{def of S eq ch in Nil}
\SChNil_i\in H_S^2(\Hess(\mathsf{N},h))
\textup{ for } i\in[n]
\end{align}
denote its $S$-equivariant first Chern class.
That is, $\SChNil_i$ is the image of
$\TChFlag_i$ (see \eqref{def of T eq ch in flag}) under
\eqref{eq:rest}.  We next analyze some algebraic relations satisfied
by the $\SChNil_i$. For this purpose, we now introduce some
polynomials
$f_{i,j}(x_1,\dots,x_n,t)\in\mathbb{Q}[x_1,\dots,x_n,t]$ for
$n\geq i\geq j\geq 1$.

First we define 
\begin{equation} \label{eq:f-1}
p_i:=\sum_{k=1}^i(x_k-kt) \in \Q[x_1, \ldots, x_n, t] \quad \textup{ for } i
\in [n]. 
\end{equation} 
For convenience we also set $p_0 :=0$. 

\begin{definition}\label{definition:fij}
Let $(i,j)$ be a pair of natural numbers satisfying  $n \geq i \geq j \geq 1$. 
We define polynomials $f_{i,j}$ inductively as follows. As the base
case, when $i=j$, we
define 
\begin{equation*}
f_{j,j}:=p_j \quad \textup{ for } j \in [n]. 
\end{equation*}
Proceeding inductively, for $(i,j)$ with $n\ge i>j\ge 1$ we define 
\begin{equation} \label{eq:f-3}
f_{i,j}:=f_{i-1,j-1}+\big(x_j-x_i-t\big)f_{i-1,j}
\end{equation}
where we take the convention $f_{*,0}:=0$ for any $*$. 
\end{definition} 

Informally, we may visualize each $f_{i,j}$ as being associated to the lower-triangular $(i,j)$-th entry in an $n \times n$ matrix, as follows: 
\begin{equation}\label{eq:matrix fij}
\begin{pmatrix}
f_{1,1} & 0 & \cdots & \cdots & 0 \\
f_{2,1} & f_{2,2} & 0 & \cdots &  \\
f_{3,1} & f_{3,2} & f_{3,3} & \ddots & \\
\vdots &        &                 &  &  \\
f_{n,1} & f_{n,2} & \cdots &  & f_{n,n} 
\end{pmatrix}.
\end{equation}

\begin{example}\label{exam:f}
Suppose $n=4$. Then the $f_{i,j}$ have the following form. 
\begin{align*}
&f_{i,i}=p_i \ (1\leq i\leq 4) \\
&f_{2,1}=(x_1-x_2-t)p_1 \\
&f_{3,2}=(x_1-x_2-t)p_1+(x_2-x_3-t)p_2  \\
&f_{4,3}=(x_1-x_2-t)p_1+(x_2-x_3-t)p_2+(x_3-x_4-t)p_3 \\
&f_{3,1}=(x_1-x_3-t)(x_1-x_2-t)p_1  \\
&f_{4,2}=(x_1-x_3-t)(x_1-x_2-t)p_1+(x_2-x_4-t)\{(x_1-x_2-t)p_1+(x_2-x_3-t)p_2\} \\
&f_{4,1}=(x_1-x_4-t)(x_1-x_3-t)(x_1-x_2-t)p_1
\end{align*}
\end{example}

Now let $\Q[x_1,\dots,x_n,t]$ denote the polynomial ring equipped with
a grading defined by 
\begin{align*}
\deg x_i =2 \textup{ for all } i \in [n] \quad \textup{and}\quad  \deg t=2. 
\end{align*}
Note that $\Q[x_1,\ldots,x_n,t]$ is evidently a $\Q[t]$-algebra. 
We define a graded $\Q[t]$-algebra homomorphism $\tilde\varphi_h$ by 
\begin{align}\label{def of equiv phi_h}
\tilde\varphi_h : \Q[x_1,\dots,x_n,t] \rightarrow H_S^*(\Hess(\mathsf{N},h))
\quad ;\quad 
x_i \mapsto \SChNil_i, \ t\mapsto t
\end{align}
where $\SChNil_i$ (defined in \eqref{def of S eq ch in Nil}) is the
$S$-equivariant first Chern class of the tautological line bundle
$E_i/E_{i-1}$ restricted to $\Hess(\mathsf{N},h)$ and the class $t \in H_S^*(\Hess(\mathsf{N},h))$ is the Chern class in~\eqref{deg of t in coh}. We are now ready to state the main technical result of this manuscript, the content of
which is that the map $\tilde\varphi_h$ induces an isomorphism of
graded $\Q[t]$-algebras between $H^\ast_S(\Hess(\mathsf{N},h))$ and
the quotient of $\Q[x_1,\ldots,x_n,t]$ by the ideal $\Ih$ generated by
a certain subset of the polynomials $f_{i,j}$ defined above. 
The proof of Theorem~\ref{theorem:reg nilp Hess cohomology} occupies
Sections~\ref{sec:fproperty} through~\ref{sec:proof of first main theorem}. 

\begin{theorem} \label{theorem:reg nilp Hess cohomology}
Let $n$ be a positive integer and $h: [n] \to [n]$ a Hessenberg function. Let $\Hess(\mathsf{N},h) \subset \Flags(\C^n)$ denote the corresponding regular nilpotent Hessenberg variety equipped with the action of the 1-dimensional subgroup $S$ described in Section~\ref{subsec:setup}. Then the restriction map 
\begin{equation*}
H^*_T(\Flags(\C^n)) \to H^*_S(\Hess(\mathsf{N},h))
\end{equation*} 
is surjective, and 
there is an isomorphism of graded $\mathbb{Q}[t]$-algebras
\begin{equation*}
H^{\ast}_{S}(\mathcal \Hess(\mathsf{N},h))\cong \mathbb{Q}[x_1,\dots,x_n,t]/\Ih 
\end{equation*}
sending $x_i$ to $\SChNil_i$ and $t$ to $t$, 
where $\SChNil_i$ (defined in \eqref{def of S eq ch in
  Nil}) is the $S$-equivariant first Chern class of the
tautological line bundle  restricted to $\Hess(\mathsf{N},h)$ and we
identify $H^{\ast}(BS) \cong \mathbb{Q}[t]$.
Here the ideal $\Ih$ is defined by 
\begin{equation}\label{eq:def Ih} 
\Ih:= (f_{h(j),j} \mid 1\leq j\leq n ).
\end{equation} 
\end{theorem}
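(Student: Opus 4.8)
The plan is to prove the theorem in three stages: first establish that the map $\tilde\varphi_h$ descends to a well-defined map on the quotient $\Q[x_1,\ldots,x_n,t]/\Ih$, then prove surjectivity, and finally upgrade injectivity-on-the-associated-graded to an actual isomorphism using a Hilbert series / regular sequence argument. For the first stage I would need to show that each generator $f_{h(j),j}$ of $\Ih$ lies in $\ker\tilde\varphi_h$, i.e.\ that $f_{h(j),j}(\SChNil_1,\ldots,\SChNil_n,t)=0$ in $H^*_S(\Hess(\mathsf{N},h))$. By the injectivity of the localization map $\iota_2$ in \eqref{eq:localization for HessN}, it suffices to check this at each $S$-fixed point $w \in \Hess(\mathsf{N},h)^S$. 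Using \eqref{loc of tilde tau} and the fact that the $\pi_1^w$ send $t_i \mapsto it$, the class $\SChNil_i$ restricts at $w$ to $t_{w(i)} \mapsto w(i)\,t$, so we must verify a polynomial identity: $f_{h(j),j}$ evaluated at $x_k = w(k)\,t$ vanishes whenever $w \in \Hess(\mathsf{N},h)^S$. This is where the inductive recursion \eqref{eq:f-3} and the combinatorial characterization of $\Hess(\mathsf{N},h)^S$ from Proposition~\ref{prop:equivalence of fixed point conditions} (namely $h_w \subset h$, or equivalently $w^{-1}(w(j)-1) \le h(j)$) should interlock. I expect the key properties of the $f_{i,j}$ referenced in Section~\ref{sec:fproperty} (not yet displayed in this excerpt) to give precisely the needed vanishing: morally $f_{i,j}$ evaluated at $x_k = w(k)t$ should vanish exactly when $w^{-1}(w(j)-1) \le i$, so $f_{h(j),j}$ vanishes at $w$ iff $w^{-1}(w(j)-1)\le h(j)$, which is condition (2) of the proposition.

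For surjectivity of the restriction map $H^*_T(\Flags(\C^n)) \to H^*_S(\Hess(\mathsf{N},h))$, I would argue via the commutative diagram \eqref{eq:cd} together with the localization/GKM picture. The point is that $H^*_S(\Hess(\mathsf{N},h))$ sits inside $\bigoplus_{w \in \Hess(\mathsf{N},h)^S} \Q[t]$ via $\iota_2$, and the images of $\SChNil_1,\ldots,\SChNil_n$ and $t$ already generate: by equivariant formality \eqref{eq:equiv formality of HessN}, $H^*_S(\Hess(\mathsf{N},h)) \cong H^*_S(\mathrm{pt}) \otimes_\Q H^*(\Hess(\mathsf{N},h))$ as modules, so it is enough to show the ordinary classes $\ChNil_i$ (setting $t=0$) generate $H^*(\Hess(\mathsf{N},h))$ — and this follows because the $\ChFlag_i$ generate $H^*(\Flags(\C^n))$ by \eqref{eq:cohofl} and $\Hess(\mathsf{N},h) \hookrightarrow \Flags(\C^n)$ is (by the paving-by-affines argument, \cite[Theorem 7.1]{ty}) such that the restriction $H^*(\Flags(\C^n)) \to H^*(\Hess(\mathsf{N},h))$ is surjective. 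Thus $\tilde\varphi_h$ is surjective, hence so is the induced map $\bar\varphi_h\colon \Q[x_1,\ldots,x_n,t]/\Ih \to H^*_S(\Hess(\mathsf{N},h))$.

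Finally, to promote the surjection $\bar\varphi_h$ to an isomorphism, I would compare graded dimensions (Hilbert series) of source and target, both regarded as $\Q$-vector spaces or as graded $\Q[t]$-modules. On the target side, equivariant formality gives $H^*_S(\Hess(\mathsf{N},h)) \cong \Q[t] \otimes_\Q H^*(\Hess(\mathsf{N},h))$, whose Hilbert series is $\tfrac{1}{1-s^2}$ times the Poincaré polynomial of $\Hess(\mathsf{N},h)$; this Poincaré polynomial is known combinatorially (it matches the count of fixed points, suitably weighted, via \cite[Theorem 7.1]{ty}). On the source side, the crucial input — to be established in Sections~\ref{sec:fproperty}--\ref{sec:hilbert} — is that $\{f_{h(j),j}\}_{j=1}^n$ is a \emph{regular sequence} in $\Q[x_1,\ldots,x_n,t]$ (equivalently, that after setting $t=0$ the $\{\cf_{h(j),j}\}$ form a regular sequence in $\Q[x_1,\ldots,x_n]$, since the quotient then has Krull dimension $0$ and we have $n$ generators in a ring of dimension $n$). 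Granting this, the Hilbert series of $\Q[x_1,\ldots,x_n,t]/\Ih$ is $\tfrac{1}{1-s^2}\prod_{j=1}^n \tfrac{1-s^{2\deg' f_{h(j),j}}}{1-s^2}$ where $\deg' f_{h(j),j} = h(j)-j+1$; a short computation shows this equals $\tfrac{1}{1-s^2}$ times the Poincaré polynomial of $\Hess(\mathsf{N},h)$ (both are $\tfrac{1}{1-s^2}\prod_j (1+s^2+\cdots+s^{2(h(j)-j)})$, matching \eqref{eq:dim of HessN and HessS} degree-by-degree). Since $\bar\varphi_h$ is a degree-preserving surjection between graded $\Q$-vector spaces with equal (finite in each degree) Hilbert series, it is an isomorphism. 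The main obstacle is clearly proving the regular sequence property of the $f_{i,j}$ — this is the technical heart and will require the structural results on these polynomials deferred to Section~\ref{sec:fproperty}; everything else is bookkeeping with the localization diagram and Hilbert series formalism.
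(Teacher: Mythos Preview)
Your overall architecture---well-definedness via fixed-point restriction, then a Hilbert series comparison using that the $\cf_{h(j),j}$ form a regular sequence---matches the paper's. However, your surjectivity step contains a genuine gap that makes the argument circular. You assert that the paving by affines from \cite{ty} implies that the restriction $H^*(\Flags(\C^n)) \to H^*(\Hess(\mathsf{N},h))$ is surjective; but paving by affines only tells you that the cohomology is concentrated in even degrees and is free with a basis indexed by cells---it does \emph{not} tell you that these basis classes lie in the image of the restriction map. In the paper, surjectivity of the restriction map is deduced as a \emph{consequence} of the isomorphism $\varphi_h$ at the very end of Section~\ref{sec:proof of first main theorem}, not used as an input to it. So as written you are assuming part of the conclusion.

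The paper avoids this by establishing \emph{injectivity} of $\varphi_h$ instead, via localization. The key step (Lemma~\ref{lemm:5-3}) inverts $R=\Q[t]\setminus\{0\}$: by the localization theorem the horizontal maps $R^{-1}H^*_S(\Flags(\C^n)) \to R^{-1}H^*_S(\Flags(\C^n)^S)$ and $R^{-1}H^*_S(\Hess(\mathsf{N},h)) \to R^{-1}H^*_S(\Hess(\mathsf{N},h)^S)$ are isomorphisms, and since $\Hess(\mathsf{N},h)^S \subset \Flags(\C^n)^S$ the rightmost vertical arrow is surjective; together with the classical surjectivity of $\varphi$ in the special case $h=(n,\ldots,n)$ this forces $R^{-1}\varphi_h$ to be surjective, and a $\Q(t)$-dimension count (using your Hilbert series computation) makes it an isomorphism. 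Because $\Q[x_1,\ldots,x_n,t]/\Ih$ is free over $\Q[t]$ (a consequence of the regular-sequence property, Corollary~\ref{lemm:5-1.5}), the natural map into its localization is injective, whence $\varphi_h$ is injective; equal Hilbert series then finishes. If you replace your surjectivity paragraph with this localization argument, the rest of your outline goes through.

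A smaller remark: your ``moral'' claim that $f_{i,j}(w)=0$ exactly when $w^{-1}(w(j)-1)\le i$ is too optimistic. The actual vanishing statement (Proposition~\ref{proposition:fij go to zero}) is only that $f_{h(j),j}(w)=0$ for all $j$ simultaneously whenever $w\in\Hess(\mathsf{N},h)^S$, and its proof is a rather intricate induction on a total refinement of the partial order $\subset$ on Hessenberg functions, using Corollary~\ref{lemm:f-2} to transport vanishing between permutations $w$ and $w'$ differing by an adjacent transposition; it does not reduce to a clean single-$j$ criterion.
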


\bigskip
Using the association of the polynomials $f_{i,j}$
  with the $(i,j)$-th entry of the matrix~\eqref{eq:matrix fij}, the
  ideal $\Ih$ can
visually be described as being generated by the $f_{i,j}$ in the boxes at the bottom of each column in the the picture associated to the Hessenberg subspace $H(h)$ defined in \eqref{eq:Hessenberg subspace} (see Figure~\ref{picture for NR}). For instance, when $h=(3,3,4,5,6,6)$, the generators are $f_{3,1}, f_{3,2}, f_{4,3}, f_{5,4}, f_{6,5}, f_{6,6}$.

\begin{remark}\label{rema:peterson}
   The above generalizes known results.  Specifically, 
    the special case when $h(j)=j+1$ for $j  \in [n-1]$ has been
    well-studied and the corresponding variety $\Hess(\mathsf{N},h)$
    is called the 
    \textbf{Peterson variety}
    $Pet_n$ $($in Lie type A$)$. Our result above generalizes 
    a known presentation of 
    $H^*_S(Pet_n)$ \cite{fu-ha-ma}. Indeed, for $1\leq j\leq n-1$, we obtain from
    \eqref{eq:f-3} and \eqref{eq:f-1} that
\begin{align*}
f_{j+1,j}=f_{j,j-1}+(-p_{j-1}+2p_j-p_{j+1}-2t)p_j
\end{align*}
and since $f_{n,n}=p_n$ we can inductively see that
\begin{align*}
H^{\ast}_{S}(Pet_n)
                 &\cong \mathbb{Q}[p_1,\dots,p_{n-1},t]/\big((-p_{j-1}+2p_j-p_{j+1}-2t)p_j \mid 1\leq j\leq n-1\big).
\end{align*}
where we take the convention $p_0=p_n=0$.
This agrees with \cite{fu-ha-ma}. 

Also, when $h(i)=n$ for $i\in[n]$, the 
  associated variety is the 
  full flag variety $\Flags(\C^n)$. In this case, we will see later
  $($in \eqref{eq:f check and power sum} and \eqref{eq:f check ideal
    and power sum ideal}$)$ an explicit relationship between the
  generators $\check{f}_{n, j}$ of our ideal $\check \Ih = \check
  I_{(n,n,\ldots,n)}$ and the power sums $\mathsf{p}_j(x) =
  \mathsf{p}_j(x_1, \ldots, x_n) := \sum_{k=1}^n x_k^j$, thus relating
  our presentation with the usual Borel presentation of the cohomology
  of $\Flags(\C^n)$ as
  in~\eqref{eq:cohofl}. 
\end{remark}

\bigskip
\section{Properties of the $f_{i,j}$} \label{sec:fproperty}

In this section, in preparation for the proof of
Theorem~\ref{theorem:reg nilp Hess cohomology}, 
we further analyze the polynomials $f_{i,j}$ defined in
Definition~\ref{definition:fij}. 
The results in this section, particularly Corollary~\ref{lemm:f-2}, set the stage for the 
proof in Section~\ref{sec:y} that the map $\tilde\varphi_h$
of~\eqref{def of equiv phi_h} induces a
well-defined map 
\begin{align*}
\varphi_h : \Q[x_1,\dots,x_n,t]/\Ih \rightarrow H_S^*(\Hess(\mathsf{N},h)).
\end{align*}

We begin with the following. 

\begin{lemma}\label{relation for Ih and Ih'}
The ideal $\Ih$ defined in~\eqref{eq:def Ih}  contains $f_{i,j}$ for all $i\geq h(j)$. 
In particular, if $h\subset h'$ in the sense of Definition \emph{\ref{def:partial order}}, then $I_{h'} \subset \Ih$.
\end{lemma}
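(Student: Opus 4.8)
The plan is to prove the two assertions in turn: first that $f_{i,j} \in \Ih$ whenever $i \geq h(j)$, and then deduce the containment of ideals as a formal consequence. The key is the recursion \eqref{eq:f-3}, namely $f_{i,j} = f_{i-1,j-1} + (x_j - x_i - t) f_{i-1,j}$, which lets us climb from the generator $f_{h(j),j}$ up to $f_{i,j}$ for larger $i$, at the cost of introducing terms of the form $f_{i-1,j-1}$. So the first thing I would do is set up an induction on $i \geq h(j)$ with $j$ allowed to vary. The base case $i = h(j)$ is exactly the statement that $f_{h(j),j}$ is one of the defining generators of $\Ih$, hence lies in $\Ih$. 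For the inductive step, suppose $i > h(j)$ and that the claim holds for all smaller values of the first index (for every relevant column). From \eqref{eq:f-3} we have
\begin{equation*}
f_{i,j} = f_{i-1,j-1} + (x_j - x_i - t)\, f_{i-1,j}.
\end{equation*}
The second summand is $(x_j - x_i - t)$ times $f_{i-1,j}$, and since $i - 1 \geq h(j)$ (because $i > h(j)$), the inductive hypothesis gives $f_{i-1,j} \in \Ih$, so this summand lies in $\Ih$. For the first summand $f_{i-1,j-1}$, I would use that $h$ is a Hessenberg function: $h(j-1) \leq h(j) \leq i - 1$, so $i - 1 \geq h(j-1)$, and the inductive hypothesis applied to column $j-1$ gives $f_{i-1,j-1} \in \Ih$ (when $j = 1$ this term is $0$ by the convention $f_{*,0} = 0$, so there is nothing to check). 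Hence $f_{i,j} \in \Ih$, completing the induction.

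For the second assertion, suppose $h \subset h'$, i.e.\ $h(j) \leq h'(j)$ for all $j \in [n]$. The ideal $I_{h'}$ is generated by the polynomials $f_{h'(j),j}$ for $1 \leq j \leq n$. For each such $j$, since $h'(j) \geq h(j)$, the first part of the lemma (applied with $i = h'(j)$) shows $f_{h'(j),j} \in \Ih$. Since every generator of $I_{h'}$ lies in $\Ih$, we conclude $I_{h'} \subset \Ih$.

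The argument is essentially a bookkeeping induction and there is no serious obstacle; the only point requiring a moment's care is making sure the induction is organized so that both $f_{i-1,j}$ and $f_{i-1,j-1}$ are covered by the inductive hypothesis, which is exactly where the monotonicity $h(j-1) \leq h(j)$ of the Hessenberg function is used. One clean way to phrase this is to induct on $i$ and, for each fixed $i$, prove the statement simultaneously for all $j$ with $h(j) \leq i$; I would present it that way to avoid any circularity in the indices.
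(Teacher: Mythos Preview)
Your proposal is correct and follows essentially the same approach as the paper: both arguments use the recursion \eqref{eq:f-3} together with the monotonicity $h(j-1)\le h(j)$ of the Hessenberg function to climb from the generators $f_{h(j),j}$ to all $f_{i,j}$ with $i\ge h(j)$. The only difference is organizational: the paper runs an outer induction on $j$ with an inner induction on $i$, whereas you induct on $i$ and handle all columns $j$ with $h(j)\le i$ simultaneously; these are equivalent packagings of the same idea.
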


\begin{proof}
We prove that $\Ih$ contains $f_{i,j}$ for all $i\geq h(j)$ by induction on $j$. 
When $j=1$, by the recursive relation \eqref{eq:f-3} we have 
\[
f_{i+1,1}=(x_1-x_{i+1}-t)f_{i,1},
\]
and hence the assumption $f_{h(1),1}\in\Ih$ implies that $f_{i,1}\in\Ih$ for $i\geq h(1)$.

Now, assume that the claim holds for $j-1$, that is, $f_{i,j-1}\in\Ih$ for all $i\geq h(j-1)$.
We show that $f_{i,j}\in\Ih$ for all $i\geq h(j)$.
Since $f_{h(j),j}\in\Ih$ by definition, again by induction on $i$, we may suppose $f_{i,j}\in\Ih$ for some $i\geq h(j)$ and then we must prove that $f_{i+1,j}\in\Ih$. By the recursive relation \eqref{eq:f-3}, we have
\[
f_{i+1,j}=f_{i,j-1}+\big(x_j-x_{i+1}-t\big)f_{i,j}.
\]
Since we have $i\geq h(j)\geq h(j-1)$, the inductive hypothesis
implies that the RHS of this identity is contained in $\Ih$, and hence
we obtain $f_{i+1,j}\in\Ih$ as desired. 
\end{proof}

The definition of $\tilde\varphi_h$ in~\eqref{def of equiv phi_h} sends the variables $x_i$ to
$\SChNil_i$. 
In the next section we will prove that this map sends each $f_{h(j),j}$ to zero.
In preparation for this, we investigate below some general properties of $f_{i,j}(w)$, i.e.
the $w$-th component of $f_{i,j}$, at each fixed point $w \in
\Flags(\C^n)^S=\Sn$. 
More precisely, letting
\[
\SChFlag_i \in H^{\ast}_{S}(\Flags(\C^n))
\]
be the $S$-equivariant first Chern class of the tautological line bundle $E_i/E_{i-1}$ over $\Flags(\C^n)$ described in Section~\ref{sec:H}, we study the image of 
$f_{i,j}(\SChFlag_1,\dots,\SChFlag_n,t)$ under the localization map
\begin{equation}\label{eq:localization of S-eqvt coh}
\iota'_1: H^{\ast}_{S}(\Flags(\C^n))
\rightarrow \bigoplus_{w\in\Sn} \Q[t].
\end{equation}

For the rest of this section, by slight abuse of notation we write 
\begin{align}\label{def of f as a polynomial of tau}
f_{i,j}=f_{i,j}(\SChFlag_1,\cdots,\SChFlag_n,t)
\end{align} 
i.e. the elements of $H^\ast_S(\Flags(\C^n))$ obtained by ``evaluating at the $\SChFlag_i$''. 
Fix $w\in \Sn$ 
and denote by 
$
f_{i,j}(w) \in\Q[t]
$
the $w$-th component of the image of $f_{i,j}$ under the localization
map $\iota'_1$ above.
Recall from Section~\ref{sec:H} that the $w$-th component of the map $\pi_1=\bigoplus_{w\in\Sn}\pi_1^w$ in the commutative diagram~\eqref{eq:cd} is a ring homomorphism sending each $t_i$ to $it$.
Combined with \eqref{loc of tilde tau}, this
implies that 
\begin{equation*} 
\SChFlag_i(w)=w(i)t \quad \text{for } i\in[n].
\end{equation*} 
It then follows from the definition of the $f_{i,j}$
(Definition~\ref{definition:fij}) that 
\begin{equation} \label{eq:fij at w}
\begin{split}
f_{j,j}(w)&=\sum_{k=1}^j(w(k)-k)t\quad (1\leq j\leq n),\\
f_{i,j}(w)&=f_{i-1,j-1}(w)+(w(j)-w(i)-1)t\cdot f_{i-1,j}(w)\quad (n\ge i>j\ge 1).
\end{split}
\end{equation}

The inductive nature of the $f_{i,j}$ allows us to conclude the
following.

\begin{lemma} \label{lemma:fij vanish} 
Let $h \in H_n$. Let $f_{i,j}=f_{i,j}(\SChFlag_1, \ldots, \SChFlag_n, t)$ be defined as above. 
If $f_{h(j),j}(w)=0$ for all $j\in [n]$, then $f_{i,j}(w)=0$ for all
$j \in [n]$ and $i \geq h(j)$. 
\end{lemma}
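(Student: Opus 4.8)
The plan is to mimic the proof of Lemma~\ref{relation for Ih and Ih'}, but with the recursion \eqref{eq:f-3} for the polynomials $f_{i,j}$ replaced by the recursion \eqref{eq:fij at w} for their evaluations $f_{i,j}(w)$ at the fixed point $w$. Concretely, I would prove by induction on $j \in [n]$ the statement ``$f_{i,j}(w) = 0$ for all $i \geq h(j)$''.

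For the base case $j = 1$, the recursion \eqref{eq:fij at w} together with the convention $f_{*,0} = 0$ gives $f_{i+1,1}(w) = (w(1)-w(i+1)-1)\,t\cdot f_{i,1}(w)$, so beginning from the hypothesis $f_{h(1),1}(w) = 0$, an inner induction on $i$ yields $f_{i,1}(w) = 0$ for all $i \geq h(1)$. For the inductive step, assume $f_{i,j-1}(w) = 0$ for all $i \geq h(j-1)$; we prove $f_{i,j}(w) = 0$ for all $i \geq h(j)$ by a second (inner) induction on $i \geq h(j)$. The inner base case $i = h(j)$ is exactly the hypothesis $f_{h(j),j}(w) = 0$. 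For the inner step, \eqref{eq:fij at w} gives
\[
f_{i+1,j}(w) = f_{i,j-1}(w) + (w(j)-w(i+1)-1)\,t\cdot f_{i,j}(w);
\]
here $i \geq h(j) \geq h(j-1)$ since $h$ is a Hessenberg function, so the outer inductive hypothesis kills the first term and the inner inductive hypothesis kills the second, giving $f_{i+1,j}(w) = 0$. This completes both inductions.

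There is essentially no real obstacle: the only point requiring (trivial) care is invoking the monotonicity $h(j) \geq h(j-1)$, which is precisely what makes the double induction close, plus keeping straight which induction is outer (on $j$) and which is inner (on $i$). As an even quicker alternative, one can deduce the lemma directly from Lemma~\ref{relation for Ih and Ih'}: that lemma provides an identity $f_{i,j} = \sum_{k=1}^{n} g_k\, f_{h(k),k}$ in $\Q[x_1,\dots,x_n,t]$ whenever $i \geq h(j)$, and applying the $\Q[t]$-algebra homomorphism $\Q[x_1,\dots,x_n,t] \to \Q[t]$ determined by $x_m \mapsto \SChFlag_m(w) = w(m)t$ and $t \mapsto t$ (under which $f_{i,j}$ maps to $f_{i,j}(w)$ by \eqref{eq:fij at w} and the definitions) immediately gives $f_{i,j}(w) = \sum_k g_k(w)\, f_{h(k),k}(w) = 0$ once each $f_{h(k),k}(w) = 0$.
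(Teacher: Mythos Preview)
Your proposal is correct. Your ``quicker alternative'' is exactly the paper's proof: the paper invokes Lemma~\ref{relation for Ih and Ih'} to get $f_{i,j}\in I_h$ for $i\ge h(j)$, then observes that the evaluation at $w$ is a ring homomorphism $\Q[x_1,\dots,x_n,t]\to\Q[t]$ sending each generator $f_{h(k),k}$ to zero, so it kills all of $I_h$. Your main approach, the explicit double induction on $(j,i)$ using \eqref{eq:fij at w}, is also fine but is essentially re-proving Lemma~\ref{relation for Ih and Ih'} at the level of evaluations rather than citing it; since that lemma is already available, the paper's route is shorter.
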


\begin{proof}
 Recall that $I_h$ is by definition the ideal of $\Q[x_1,\dots,x_n,t]$ generated by the
 $f_{h(j),j}$ for $j \in [n]$. From Lemma~\ref{relation for Ih and Ih'} we know that if $i
 \geq h(j)$ then $f_{i,j} \in I_h$.  By assumption, each $f_{h(j),j}$
 lies in the kernel of the ring homomorphism 
\[
\Q[x_1, \ldots, x_n, t] \to H^*_S(\Flags(\C^n)) \to \Q[t]
\]
where the first arrow sends $x_i$ to $\tau_i^S$ and the second map is
the $w$-th coordinate of the localization map
in~\eqref{eq:localization of S-eqvt coh}. Thus the ideal $I_h$ also
lies in the kernel as well, and hence also $f_{i,j}$ for $i \geq
h(j)$. 
\end{proof}

To motivate the following discussion, it is useful to observe
  some properties of the $f_{i,j}$ for $j=1$. 
For simplicity, we use the notation
\[
u_1 := (w(1)-1)t.
\]
Since $f_{\ast,0}=0$ for any $\ast$, for the case $j=1$ the inductive
description in~\eqref{eq:fij at w} simplifies, and it is easy to see
that for $i \geq 2$ we have 
\begin{equation}\label{eq:fi1 in u}
f_{i,1}(w)=u_1  \prod_{k=2}^i(u_1-w(k)t)=\sum_{k=1}^{i}(-1)^{i-k}e_{i-k}(w(2),\dots,w(i)) t^{i-k} u_1^{k}
\end{equation}
where $e_\ell$ denotes the $\ell$-th elementary symmetric polynomial
in the given variables, and we take $e_0 :=1$ by convention. Note that
$f_{1,1}(w)=u_1$ by definition, so by the above convention on $e_0$, the
equation~\eqref{eq:fi1 in u} also holds for $i=1$. 

The above computation turns out to be a special case of a general
phenomenon, recorded in Lemma~\ref{lemma:fij in terms of e and b
  equation (short)}. In order to state and prove the result, we need to first introduce
and study some properties of a new set of polynomials.

Let $\Q[u_1,\dots,u_n,t]$ be a graded polynomial ring of indeterminates $u_1,\dots,u_n,t$ with $\deg u_i=2$ for $i\in[n]$ and $\deg t=2$.
We inductively define a collection of polynomials $b_{k,j}\in\Q[u_1,\dots,u_n,t]$ for $n\geq k\geq j\geq1$ as follows. First define 
\begin{equation} \label{eq:definition bjj}
b_{j,j} :=\sum_{k=1}^j(u_k-(k-1)t) \quad \textup{ for all }  j\in [n].
\end{equation}
Then we define $b_{k,j}$ for $k \geq j$ by the equation
\begin{equation} \label{eq:definition bkj}
b_{k+1,j} :=b_{k,j-1}+u_j b_{k,j} - (u_j+t)b_{k-1,j-1} 
\end{equation}
where by convention we take $b_{*,0} :=0$ for any $\ast$. 
Note that $b_{k,j}=b_{k,j}(u_1,\dots,u_j,t)$ depends only on
  $u_1,\dots,u_j$ and $t$, and $\deg b_{k,j}=2(k-j+1)=\deg f_{k,j}$
for $k\geq j$ since $\deg t =2$.

The above observation on $f_{i,1}$ generalizes to the following property of $f_{i,j}$.
\begin{lemma} \label{lemma:fij in terms of e and b equation (short)}
Let $b_{k,j}$ be defined as above. Then for any pair $k,j \in [n]$ with $k\geq j$  the function $b_{k,j}$ is a symmetric polynomial in the variables $u_1, \ldots, u_j$. 
Moreover, we have the following equality in $\Q[t]$;
\begin{equation} \label{eq:fij in terms of e and b}
f_{i,j}(w)=\sum_{k=j}^i(-1)^{i-k}e_{i-k}(w(j+1),\dots,w(i))t^{i-k}b_{k,j}((w(1)-1)t,\dots,(w(j)-1)t,t).
\end{equation}
\end{lemma}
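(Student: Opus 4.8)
The statement has two parts: (a) each $b_{k,j}$ is symmetric in $u_1,\dots,u_j$, and (b) the expansion formula~\eqref{eq:fij in terms of e and b} holds. The plan is to prove both by a simultaneous induction, using the recursions~\eqref{eq:f-3} for the $f_{i,j}$ and~\eqref{eq:definition bkj} for the $b_{k,j}$, with the base case $j=1$ already handled by the explicit computation in~\eqref{eq:fi1 in u} (noting $b_{k,1}(u_1,t) = u_1(u_1-t)(u_1-2t)\cdots$ matches, and a polynomial in one variable is trivially ``symmetric'').

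First I would set up the induction on $j$. Fix $j\geq 2$ and assume both (a) and (b) hold for all smaller column indices (in particular for $j-1$, for all relevant row indices $k$). To prove~\eqref{eq:fij in terms of e and b} for the pair $(i,j)$, I would induct on $i$ starting from $i=j$: the base case $i=j$ reduces to checking that $f_{j,j}(w) = b_{j,j}((w(1)-1)t,\dots,(w(j)-1)t,t)$, which follows by comparing~\eqref{eq:definition bjj} with the first line of~\eqref{eq:fij at w} after the substitution $u_k \mapsto (w(k)-1)t$, since $\sum_{k=1}^j\big((w(k)-1)t - (k-1)t\big) = \sum_{k=1}^j (w(k)-k)t$. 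For the inductive step $i \to i+1$, I would substitute the recursion $f_{i+1,j}(w) = f_{i,j-1}(w) + (w(j)-w(i+1)-1)t\cdot f_{i,j}(w)$ from~\eqref{eq:fij at w}, plug in the inductive expansions of $f_{i,j-1}(w)$ and $f_{i,j}(w)$, and reorganize. The key combinatorial identity needed is the Pascal-type relation for elementary symmetric polynomials, $e_{m}(y_1,\dots,y_r,y_{r+1}) = e_m(y_1,\dots,y_r) + y_{r+1}\,e_{m-1}(y_1,\dots,y_r)$, applied with the variable $w(i+1)$ being adjoined; matching coefficients of each monomial $e_{i+1-k}(w(j+1),\dots,w(i+1))\,t^{i+1-k}$ should produce exactly the three-term recursion~\eqref{eq:definition bkj} evaluated at $u_\ell = (w(\ell)-1)t$. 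This is the step where I expect the bookkeeping to be delicate: one must carefully track the shift between $e_{i-k}(w(j+1),\dots,w(i))$ (appearing in the expansion of $f_{i,j}$, i.e. column $j$) versus $e_{i-k}(w(j),\dots,w(i))$ (column $j-1$), and reconcile the ``$u_j = (w(j)-1)t$'' terms with the ``$w(j)t$'' factors that appear when passing between columns — this is precisely why the recursion for $b$ involves both $u_j b_{k,j}$ and $(u_j+t)b_{k-1,j-1}$.

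Once~\eqref{eq:fij in terms of e and b} is established, the symmetry of $b_{k,j}$ in $u_1,\dots,u_j$ can be deduced as follows. The point is that a polynomial in $u_1,\dots,u_j,t$ which becomes symmetric in $u_1,\dots,u_j$ after \emph{every} specialization $u_\ell \mapsto (w(\ell)-1)t$ need not be symmetric — so instead I would argue directly from the recursion. Assuming inductively that $b_{k,j-1}$ and $b_{k-1,j-1}$ are symmetric in $u_1,\dots,u_{j-1}$, the recursion~\eqref{eq:definition bkj} exhibits $b_{k+1,j}$ as $b_{k,j-1} - (u_j+t)b_{k-1,j-1} + u_j b_{k,j}$; symmetry in $u_1,\dots,u_{j-1}$ of $b_{k+1,j}$ is then immediate from the inductive hypothesis (and an inner induction on $k$ for the $u_j b_{k,j}$ term). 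The only thing to check is invariance under the transposition swapping $u_{j-1}$ and $u_j$, and here I would use the alternative characterization: it is cleaner to first prove, again by the same double induction, a ``divided-difference'' or explicit-sum formula for $b_{k,j}$ — for instance that $b_{k,j}$ equals a complete-homogeneous-type symmetric function, e.g. $b_{k,j} = \sum h_{k-j}\big(\{u_\ell - (m)t\}\big)$ summed appropriately — from which full symmetry is manifest. Alternatively, and perhaps most economically, one observes that the base case $b_{k,1}$ is symmetric (vacuously), that $b_{j,j} = \sum_{\ell=1}^j(u_\ell - (\ell-1)t)$ is symmetric in $u_1,\dots,u_j$ (the $-(\ell-1)t$ can be absorbed since $\sum(\ell-1) = \binom{j}{2}$ is $\ell$-independent as a sum), and that the recursion preserves symmetry provided one checks the single transposition $(u_{j-1}\ u_j)$ — this last check is the genuine content and is where I would spend the effort, likely by re-deriving $b_{k,j}$ in a manifestly symmetric closed form first.

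Thus the main obstacle is the inductive verification of~\eqref{eq:fij in terms of e and b}: reconciling the two recursions requires the right grouping of terms and careful use of the Pascal identity for $e_m$, and getting the column-shift index bookkeeping exactly right. The symmetry statement then follows either as a corollary of a closed-form expression for $b_{k,j}$ obtained along the way, or by a direct (but comparatively routine) check that the defining recursion preserves symmetry.
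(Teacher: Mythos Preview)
Your plan for the expansion formula~\eqref{eq:fij in terms of e and b} is essentially the paper's: a double induction on $(i,j)$ using the recursion~\eqref{eq:fij at w}, the recursion~\eqref{eq:definition bkj}, and the Pascal identity $e_m(y_1,\dots,y_r,y_{r+1}) = e_m(y_1,\dots,y_r) + y_{r+1}e_{m-1}(y_1,\dots,y_r)$. One slip: from~\eqref{eq:definition bkj} with $b_{*,0}=0$ you get $b_{k+1,1}=u_1 b_{k,1}$ and hence $b_{k,1}=u_1^k$, not $u_1(u_1-t)(u_1-2t)\cdots$ as you wrote; but~\eqref{eq:fi1 in u} still matches after substituting $u_1=(w(1)-1)t$, so the base case goes through.

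For the symmetry, you correctly isolate the crux --- invariance under the transposition $(u_{j-1}\ u_j)$ --- but your proposal leaves it unexecuted, gesturing instead toward a hypothetical closed form. The paper does not find a closed form; it simply applies the recursion~\eqref{eq:definition bkj} \emph{twice} to express $b_{k+1,j}$ entirely in terms of the $b_{*,j-2}$ (which involve only $u_1,\dots,u_{j-2}$) together with explicit polynomials in $u_{j-1}$ and $u_j$, and then checks by direct inspection that the resulting expression is symmetric in those two variables. This is a somewhat lengthy but elementary computation, and it is the genuine content you flagged. Note also that the two claims are logically independent in the paper's treatment --- symmetry is proved first and is never invoked in the proof of the expansion formula --- so your proposed simultaneous induction is not actually needed.
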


We will use this property and the following Corollary in the next section to prove that $\tilde\varphi_h$
of~\eqref{def of equiv phi_h} sends each $f_{h(j),j}$ to zero, but we
postpone the highly technical proof to the Appendix.

\begin{corollary} \label{lemm:f-2}
Let $m \in [n-1]$ and $w\in \Sn$. If $w'$ is the permutation
obtained by interchanging $w(m)$ and $w(m+1)$, 
then we have $f_{i,j}(w') = f_{i,j}(w)$ for $i, j \in [n]$ with $i \geq j$ and $i\neq m, j
\neq m$.
\end{corollary}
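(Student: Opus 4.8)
The plan is to deduce Corollary~\ref{lemm:f-2} directly from the formula~\eqref{eq:fij in terms of e and b} in Lemma~\ref{lemma:fij in terms of e and b equation (short)}, which expresses $f_{i,j}(w)$ in terms of the symmetric polynomials $b_{k,j}$ evaluated at $(w(1)-1)t,\dots,(w(j)-1)t,t$ together with the elementary symmetric polynomials $e_{i-k}$ in the variables $w(j+1),\dots,w(i)$. The key observation is that transposing the values in positions $m$ and $m+1$ changes $w$ only in those two positions, so for the pair $(i,j)$ with $i\neq m$ and $j\neq m$ we have three cases to consider according to where $m$ and $m+1$ fall relative to $j$ and $i$.

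First, I would handle the case $m \geq i$ (so both $m,m+1 > j$, and in fact both are $\geq i$, hence outside the range $\{j+1,\dots,i\}$ as well, unless $m+1 = i$ which is excluded since $i \neq m$ forces $m+1 \ne i$ only when... wait, $m+1=i$ is possible; let me instead organize by: either $m < j$, or $m > i$, or $j \le m \le i-1$, but $m \ne j$ rules out $m=j$ and... actually $m$ could equal $i$? No: $i \ne m$ is assumed, and we also need to track $m+1$). Let me restructure: since $i \ne m$ and $j \ne m$, and $j \le i$, the position $m$ lies in one of the ranges $m < j$, $j < m < i$, or $m > i$; correspondingly $m+1$ lies in $m+1 \le j$, $j < m+1 \le i$, or $m+1 > i$, possibly straddling. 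In each case I would check that neither $\{w(1),\dots,w(j)\}$ as a multiset nor $\{w(j+1),\dots,w(i)\}$ as a multiset is changed by the transposition, except possibly when the transposition swaps an element across the boundary $j$ or the boundary $i$. But $m \ne j$ means the boundary between position $j$ and position $j+1$ is not the pair $(m,m+1)$, so the transposition does not move any value across position $j$; similarly $i \ne m$ (and one must also note $i \ne m+1$ need not hold, but if $m+1 = i$ then $m = i-1 \geq j$, and the transposition swaps the values in positions $i-1$ and $i$, both of which lie in $\{j+1,\dots,i\}$, so the multiset $\{w(j+1),\dots,w(i)\}$ is still unchanged). Thus in every case the multiset $\{w(1),\dots,w(j)\}$ is preserved and the multiset $\{w(j+1),\dots,w(i)\}$ is preserved.

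Given that, the conclusion is immediate: $e_{i-k}(w'(j+1),\dots,w'(i)) = e_{i-k}(w(j+1),\dots,w(i))$ because $e_{i-k}$ is a symmetric polynomial and the arguments form the same multiset; and $b_{k,j}((w'(1)-1)t,\dots,(w'(j)-1)t,t) = b_{k,j}((w(1)-1)t,\dots,(w(j)-1)t,t)$ because, by the first assertion of Lemma~\ref{lemma:fij in terms of e and b equation (short)}, $b_{k,j}$ is symmetric in its first $j$ variables and $\{(w'(1)-1)t,\dots,(w'(j)-1)t\}$ is the same multiset as $\{(w(1)-1)t,\dots,(w(j)-1)t\}$. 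Plugging these equalities term-by-term into~\eqref{eq:fij in terms of e and b} gives $f_{i,j}(w') = f_{i,j}(w)$.

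The only real subtlety — and the step I would be most careful about — is the bookkeeping of the three (or four, counting the straddling subcase $m = j$... no, $m \ne j$; but $m+1 = j+1$ straddles position $j$ precisely when $m = j$, which is excluded) boundary cases, making sure that the hypotheses $i \ne m$ and $j \ne m$ really do prevent the transposition from moving a value out of $\{1,\dots,j\}$-positions into $\{j+1,\dots\}$-positions or out of the window $\{j+1,\dots,i\}$. Once that case analysis is laid out cleanly, the argument is purely formal. Since Lemma~\ref{lemma:fij in terms of e and b equation (short)} is invoked as already proved (its proof being deferred to the Appendix), no further input is needed.
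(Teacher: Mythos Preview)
Your proposal is correct and follows essentially the same approach as the paper's proof: apply formula~\eqref{eq:fij in terms of e and b}, observe that the hypotheses $i\neq m$ and $j\neq m$ force the pair of positions $(m,m+1)$ to lie entirely within $\{1,\dots,j\}$, entirely within $\{j+1,\dots,i\}$, or entirely outside $\{1,\dots,i\}$, and then invoke the symmetry of $b_{k,j}$ (from Lemma~\ref{lemma:fij in terms of e and b equation (short)}) and of the elementary symmetric polynomials $e_{i-k}$. The paper organizes the case split as $i<m$, $m<j$, and $j<m<i$, which is exactly your trichotomy; your multiset formulation is a clean way to package the same observation.
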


\begin{proof}
From \eqref{eq:fij in terms of e and b} it follows that
$f_{i,j}(w)$ depends only on $\{w(1), \ldots, w(i)\}$. Thus, if $i < m$ then since $f_{i,j}$ is independent of both $w(m)$ and 
$w(m+1)$, the claim follows trivially. If $m<j$ then $w(m), w(m+1) \in \{w(1), \ldots, w(j)\}$, and
since the $b_{k,j}$ are symmetric by Lemma~\ref{lemma:fij in terms of e and b equation (short)}, the claim follows. If $j<m<i$ then $w(m), w(m+1) \in \{w(j+1), \ldots, w(i)\}$, and
since the $e_{i-k}$ are also symmetric, the result follows. 
\end{proof}

\bigskip
\section{First part of proof of Theorem~\ref{theorem:reg nilp Hess cohomology}: well-definedness} \label{sec:y}
In order to prove that the homomorphism
$\tilde\varphi_h$ defined in~\eqref{def of equiv phi_h} induces a
well-defined homomorphism 
\begin{equation}\label{eq:isom} 
\varphi_h : 
\mathbb{Q}[x_1,\dots,x_n,t]/\Ih \to H^{\ast}_{S}(\mathcal
\Hess(\mathsf{N},h)) 
\quad ; \quad  x_i\mapsto \SChNil_i, \ t\mapsto t,
\end{equation}
it suffices to show that the polynomials $f_{h(j),j}$
generating the ideal $\Ih$ lie in the kernel of the map $\tilde\varphi_h : \Q[x_1,\dots,x_n,t] \rightarrow H^{\ast}_{S}(\mathcal
\Hess(\mathsf{N},h))$ given in \eqref{def of equiv phi_h}.
By the commutative diagram~\eqref{eq:cd} and in particular by the
injectivity of the bottom horizontal map $\iota_2$, it in turn
suffices to show that $f_{h(j),j}(w)=0$ for any fixed point $w \in
\Hess(\mathsf{N},h)^{S}$. This is the content of
Proposition~\ref{proposition:fij go to zero} below, whose proof
occupies the bulk of this section.

For the purposes of the argument below it is useful to introduce the
following terminology. Consider the pairs $(i,j)$ for $i, j \in [n]$
in bijective correspondence with the entries in an $n \times n$
matrix. 
For a fixed integer $\ell \geq 0$, we refer to the pairs $\{(i,j)\mid
i>j, \ i-j = \ell\}$ as the \textup{$\ell$-th lower diagonal}.  We say
that the $\ell$-th lower diagonal is \textbf{lower than} the $k$-th
lower diagonal if $\ell > k$.  For example, the dots 
  in Figure~\ref{picture for lower diagonals 2} indicate the 
  $\ell$-th lower diagonals for $\ell=1$ and $\ell=2$.

Given a Hessenberg function $h$, we have already defined a
corresponding Hessenberg subspace $H(h)$ 
(Definition~\ref{definition:Hessenberg subspace}). We can then ask which is the
\emph{lowest} lower diagonal which the Hessenberg subspace meets. 
More precisely, we say that a Hessenberg function
\textbf{meets the $\ell$-th lower diagonal} if there exists some $j
\in [n]$ such that $h(j)-j \geq \ell$. 
For example, for $h=(2,3,4,5,5)$ a Peterson Hessenberg function, the
Hessenberg subspace meets the $0$-th and $1$st lower diagonals, whereas for
$h=(3,4,4,5,5)$, the Hessenberg subspace also meets the $2$nd lower
diagonal. The \textbf{lowest lower diagonal which
$h$ meets} is evidently $\max_{j \in [n]} \{ h(j)-j \}$. Finally, we
shall say that $m$ is the \textbf{last time that $h$ meets its lowest
  lower diagonal} if $\ell = \max_{j \in [n]} \{h(j)-j\}$ and $m =
\max_{j \in [n]} \{ j \mid h(j)-j = \ell\}$. See Figure~\ref{picture for
  lower diagonals 2}. 
\begin{figure}[h]
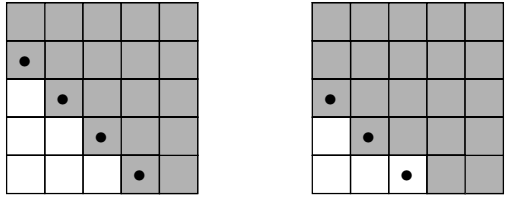

\centering
{\unitlength 0.1in%
}%
\caption{$h=(2,3,4,5,5)$ meets the 1st lower diagonal and $m=4$ (on the left), and $h=(3,4,4,5,5)$ meets the 2nd lower diagonal and $m=2$ (on the right).}
\label{picture for lower diagonals 2}
\end{figure}

The following lemma proven by Drellich 
will be useful to prove Proposition~\ref{proposition:fij go to zero}
below. Recall from \eqref{def of the set of
  Hess fct} that $H_n$ is the set of Hessenberg functions on $[n]$.
\begin{lemma}\label{remark:split}
\emph{(}\cite[Theorem 4.5]{dr}\emph{)}
  Let $h \in H_n$. Suppose $h(r)=r$ for some $r$ and let 
  $h_1=(h(1),\dots,h(r))$ and $h_2=(h(r+1)-r,\dots,h(n)-r)$.
Then $h_1\in H_r$ and $h_2\in H_{n-r}$. Moreover, for any $V_{\bullet}\in\Hess(\mathsf{N},h)$ we
have $V_r=\C^r=\C e_1\oplus\cdots\oplus\C e_r$ where $e_1,\dots,e_n$ denote the standard basis of $\C^n$. In particular
$
\Hess(\mathsf{N},h) \cong \Hess(\mathsf{N}_1,h_1)\times \Hess(\mathsf{N}_2,h_2)
$
where $\mathsf{N}_1$ and $\mathsf{N}_2$ are the regular nilpotent matrices in Jordan canonical form of size $r$ and $n-r$, respectively.
\end{lemma}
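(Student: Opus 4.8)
The plan is to establish the three assertions in turn: that $h_1$ and $h_2$ are honest Hessenberg functions, that every $V_\bullet\in\Hess(\mathsf{N},h)$ satisfies $V_r=\C^r$, and finally the product decomposition. Checking $h_1\in H_r$ and $h_2\in H_{n-r}$ is a direct unwinding of the defining inequalities in Definition~\ref{definition:Hessenberg function}: monotonicity and the conditions $h_1(i)\ge i$, $h_2(i')\ge i'$ are inherited from $h\in H_n$, and the only additional points are $h_1(i)\le r$ for $i\le r$ (because $h$ is nondecreasing, so $h(i)\le h(r)=r$) and $h_2(i')=h(r+i')-r\le n-r$ (because $h(r+i')\le n$).

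The geometric heart of the argument is the rigidity of $\mathsf{N}$-stable subspaces. I would first record that, since $\mathsf{N}$ is a single Jordan block, the only $\mathsf{N}$-stable subspace $W\subseteq\C^n$ with $\dim_\C W=k$ is $\ker\mathsf{N}^k=\text{span}_\C\{e_1,\dots,e_k\}$: if $\mathsf{N}W\subseteq W$ then $\mathsf{N}|_W$ is a nilpotent operator on a $k$-dimensional space, so $\mathsf{N}^k|_W=0$, giving $W\subseteq\ker\mathsf{N}^k$, and since $\dim_\C\ker\mathsf{N}^k=k$ this inclusion is an equality. Now for $V_\bullet\in\Hess(\mathsf{N},h)$ the Hessenberg condition at the index $r$ reads $\mathsf{N}V_r\subseteq V_{h(r)}=V_r$, so $V_r$ is $\mathsf{N}$-stable of dimension $r$, forcing $V_r=\text{span}_\C\{e_1,\dots,e_r\}=\C^r$.

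For the decomposition, I would use that the locus $\{V_\bullet\in\Flags(\C^n)\mid V_r=\C^r\}$ is isomorphic as a variety to $\Flags(\C^r)\times\Flags(\C^n/\C^r)$, via $V_\bullet\mapsto\big((V_1\subset\dots\subset V_r),\ (V_r/\C^r\subset\dots\subset V_n/\C^r)\big)$. Since $\mathsf{N}$ maps $\C^r=\text{span}_\C\{e_1,\dots,e_r\}$ into itself, it restricts to the regular nilpotent $\mathsf{N}_1$ of size $r$ on $\C^r$ and descends to an operator $\bar{\mathsf{N}}$ on $\C^n/\C^r$; identifying $\C^n/\C^r$ with $\C^{n-r}$ via the images of $e_{r+1},\dots,e_n$, this $\bar{\mathsf{N}}$ is the regular nilpotent $\mathsf{N}_2$ of size $n-r$. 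By the previous paragraph $\Hess(\mathsf{N},h)$ lies inside this locus, so it remains to verify that, under this identification, the defining conditions of $\Hess(\mathsf{N},h)$ decouple into those of $\Hess(\mathsf{N}_1,h_1)$ on the first factor and those of $\Hess(\mathsf{N}_2,h_2)$ on the second. Writing $W_{i'}:=V_{r+i'}/\C^r$: for $i\le r$ one has $h(i)\le r$, so $\mathsf{N}V_i\subseteq V_{h(i)}$ takes place entirely inside $\C^r$, namely $\mathsf{N}_1V_i\subseteq V_{h_1(i)}$; for $i=r+i'$ with $1\le i'\le n-r$ one has $h(r+i')\ge r+1$, so $V_{h(r+i')}\supseteq\C^r$, and since $\mathsf{N}\C^r\subseteq\C^r$ the inclusion $\mathsf{N}V_{r+i'}\subseteq V_{h(r+i')}$ is equivalent to its reduction modulo $\C^r$, i.e. $\bar{\mathsf{N}}W_{i'}\subseteq W_{h_2(i')}$ (using $h(r+i')=r+h_2(i')$). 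The condition at $i=r$ is automatic once $V_r=\C^r$. This gives the claimed isomorphism $\Hess(\mathsf{N},h)\cong\Hess(\mathsf{N}_1,h_1)\times\Hess(\mathsf{N}_2,h_2)$.

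The step I expect to require the most care is this last decoupling of the Hessenberg conditions, specifically checking that for the indices $i>r$ no information is lost upon passing to the quotient $\C^n/\C^r$; this relies precisely on $h(i)>r$ there, so that $V_{h(i)}$ already contains $\C^r$. Everything else reduces to elementary linear algebra together with the standard description of a flag with a prescribed intermediate subspace.
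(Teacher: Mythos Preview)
Your proof is correct. The paper does not give its own proof of this lemma; it is simply quoted as \cite[Theorem 4.5]{dr} and used as a black box. Your argument supplies exactly the standard justification: the key rigidity step that any $\mathsf{N}$-stable $k$-dimensional subspace of $\C^n$ must equal $\ker\mathsf{N}^k=\text{span}_{\C}\{e_1,\dots,e_k\}$ (using that $\mathsf{N}$ is a single Jordan block), followed by the decoupling of the Hessenberg conditions under the usual identification $\{V_\bullet\in\Flags(\C^n)\mid V_r=\C^r\}\cong\Flags(\C^r)\times\Flags(\C^{n-r})$. The point you flag as most delicate---that for $i>r$ one has $h(i)\ge r+1$ so $V_{h(i)}\supseteq\C^r$ and nothing is lost upon passing to the quotient---is handled correctly.
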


The following is straightforward and will be used later.

\begin{corollary}\label{corollary:split}
Let $h \in H_n$. Suppose $h(r)=r$ for some $r$ and let $h_1, h_2$ be
  as above. Let $S \subset T$ be the subtorus defined
  in~\eqref{eq:S and T}. The $S$-action on $\Hess(\mathsf{N},h)$ preserves
  each factor in the decomposition $\Hess(\mathsf{N},h) \cong
  \Hess(\mathsf{N}_1, h_1) \times \Hess(\mathsf{N}_2,h_2)$. In
  particular, 
$
\Hess(\mathsf{N},h)^S \cong \Hess(\mathsf{N}_1,h_1)^{S_1} \times
\Hess(\mathsf{N}_2, h_2)^{S_2}
$
where $S_1, S_2$ are the subgroups of $\emph{GL}(r,\C)$ and $\emph{GL}(n-r,\C)$
respectively defined in the same manner as in~\eqref{eq:S and T}. 
\end{corollary}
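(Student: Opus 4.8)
The plan is to trace through the explicit isomorphism $\Hess(\mathsf{N},h) \cong \Hess(\mathsf{N}_1,h_1)\times\Hess(\mathsf{N}_2,h_2)$ furnished by Lemma~\ref{remark:split} and verify that it is equivariant once the product on the right is equipped with the evident $S$-action. Recall that, by Lemma~\ref{remark:split}, every $V_\bullet\in\Hess(\mathsf{N},h)$ has $V_r=\C^r=\C e_1\oplus\cdots\oplus\C e_r$, and the isomorphism is induced by sending such a $V_\bullet$ to the pair consisting of the flag $(V_1\subset\cdots\subset V_r)$ in $\C^r$ and the flag $(V_{r+1}/\C^r\subset\cdots\subset V_n/\C^r)$ in $\C^n/\C^r$, where $\C^n/\C^r$ is identified with $\C e_{r+1}\oplus\cdots\oplus\C e_n$ and its basis relabelled $e'_1,\dots,e'_{n-r}$.

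First I would observe that the matrix $\mathrm{diag}(g,g^2,\dots,g^n)\in S$ preserves the subspace $\C^r$, hence also acts on the quotient $\C^n/\C^r$: on $\C^r$ it acts as $\mathrm{diag}(g,\dots,g^r)$, which is exactly the generic element of $S_1$, while on $\C^n/\C^r$ (in the basis $e'_1,\dots,e'_{n-r}$) it acts as $\mathrm{diag}(g^{r+1},\dots,g^n)=g^r\cdot\mathrm{diag}(g,g^2,\dots,g^{n-r})$. Since scalar matrices act trivially on a flag variety, the induced action of $\mathrm{diag}(g,g^2,\dots,g^n)$ on $\Flags(\C^n/\C^r)$ coincides with that of $\mathrm{diag}(g,\dots,g^{n-r})\in S_2$. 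Consequently, transporting through the isomorphism of Lemma~\ref{remark:split}, the element $g\in S$ acts on $\Hess(\mathsf{N}_1,h_1)\times\Hess(\mathsf{N}_2,h_2)$ as $(a_g,b_g)$, where $a_g\in S_1$ and $b_g\in S_2$ are the images of $g$ under the natural identifications $S\cong\C^*\cong S_1$ and $S\cong\C^*\cong S_2$ (both simply $g\mapsto g$). In particular each factor is $S$-stable, which is the first assertion.

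For the claim on fixed points I would then use that, for a ``diagonal'' action of this shape, a point $(x,y)$ is fixed by all of $S$ if and only if $x$ is fixed by all the $a_g$ --- equivalently by all of $S_1$, since $g\mapsto a_g$ is an isomorphism $S\cong S_1$ --- and $y$ is fixed by all the $b_g$, equivalently by all of $S_2$. Hence $(\Hess(\mathsf{N}_1,h_1)\times\Hess(\mathsf{N}_2,h_2))^S=\Hess(\mathsf{N}_1,h_1)^{S_1}\times\Hess(\mathsf{N}_2,h_2)^{S_2}$, and pulling this back through Lemma~\ref{remark:split} yields the stated decomposition of $\Hess(\mathsf{N},h)^S$. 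The only point needing any attention --- and it is bookkeeping rather than a genuine obstacle --- is the overall scalar $g^r$ that appears in the action on the second factor, and the consequent need to identify it, after dropping the scalar, with the standard copy $S_2\cong\C^*$ acting on $\Flags(\C^{n-r})$; everything else is immediate from the construction in Lemma~\ref{remark:split}.
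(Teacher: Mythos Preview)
Your argument is correct. The paper does not actually supply a proof of this corollary: it simply introduces it with the remark ``The following is straightforward and will be used later'' and gives no further details. Your write-up is precisely the routine verification the authors are leaving to the reader, including the one subtlety worth noting---that the induced action on the second factor is only the $S_2$-action up to the scalar $g^r$, which is harmless since scalars act trivially on flags.
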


We are ready for the main assertion of this section. 
\begin{proposition}\label{proposition:fij go to zero}
Let $h\in H_n$ and $j \in [n]$. If $w \in \Hess(\mathsf{N},h)^S, \textup{ then } f_{h(j),j}(w) = 0.$
\end{proposition}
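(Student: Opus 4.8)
The plan is to reduce, once and for all, to the statement that $f_{h_w(j),j}(w)=0$ for every $j\in[n]$, and then to attack column $j$ through the recursion \eqref{eq:fij at w}. For the reduction: since $w\in\Hess(\mathsf{N},h)^S$ we have $h_w\subset h$ by Proposition~\ref{prop:equivalence of fixed point conditions}, so $h(j)\ge h_w(j)$ for all $j$; hence Lemma~\ref{lemma:fij vanish}, applied with $h_w$ in place of $h$, shows it is enough to prove $f_{h_w(j),j}(w)=0$ for all $j$. If $D_w(j)=\emptyset$ this is immediate: then $h_w(j)=j$ and $f_{j,j}(w)=\sum_{k=1}^j(w(k)-k)t=0$ because $\{w(1),\dots,w(j)\}=\{1,\dots,j\}$ by Lemma~\ref{lemma:Dwj empty}. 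So the content is the case $D_w(j)\ne\emptyset$, i.e. $h_w(j)\ge j+1$.

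As an auxiliary ``propagation'' step I would first record: if $f_{h_w(k),k}(w)=0$ for all $k\le j_0$, then in fact $f_{i,k}(w)=0$ for all $k\le j_0$ and all $i\ge h_w(k)$. This is proved by exactly the double-induction bookkeeping of Lemma~\ref{relation for Ih and Ih'}, using the recursion \eqref{eq:fij at w} and $f_{*,0}=0$. With this in hand the main argument is a strong induction on $j$. Fix $j$ with $D_w(j)\ne\emptyset$, put $i_0:=h_w(j)\ge j+1$, and note the fixed-point inequality $w^{-1}(w(j)-1)\le i_0$ from \eqref{eq:hw def reformulation inequality}: it says that $w(p)\ne w(j)-1$ for every $p>i_0$, hence the coefficient $(w(j)-w(p)-1)t$ occurring in \eqref{eq:fij at w} is a \emph{nonzero} element of the domain $\Q[t]$ for all $p>i_0$.

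Now I would run a \emph{downward} induction on the row index inside column $j$: the base is the ``top'' value $f_{n,j}(w)$, and the inductive step, for $i_0\le i\le n-1$, reads off the identity $f_{i+1,j}(w)=f_{i,j-1}(w)+(w(j)-w(i+1)-1)t\,f_{i,j}(w)$ from \eqref{eq:fij at w}; here $f_{i,j-1}(w)=0$ by propagation (since $i\ge i_0\ge h_w(j-1)$ and column $j-1$ is already handled by the induction on $j$), and the coefficient is nonzero since $i+1>i_0$, so $f_{i+1,j}(w)=0$ forces $f_{i,j}(w)=0$. Descending from $i=n$ to $i=i_0$ yields $f_{i_0,j}(w)=0$. (When $j$ is a ``corner'' of $h_w$, i.e. $h_w(j-1)<h_w(j)$, Lemma~\ref{lemma:corner} lets one shortcut all of this: there $w(i_0)=w(j)-1$, the recursion collapses to $f_{i_0,j}(w)=f_{i_0-1,j-1}(w)$, and $i_0-1\ge h_w(j-1)$, so propagation finishes it; the downward induction is really only needed at the non-corner columns.) In parallel I would use Drellich's splitting (Lemma~\ref{remark:split}) together with Corollary~\ref{corollary:split} to organize a secondary induction on $n$: whenever $h_w$ has an interior value $h_w(r)=r$, $w$ corresponds to a pair $(w_1,w_2)$ of smaller-rank permutations, $h_w$ restricts to $h_{w_1},h_{w_2}$, and each $f_{h_w(j),j}(w)$ reduces (after checking that the $f_{i,j}$ are compatible with the product decomposition, the crux being $f_{i,r}(w)=0$ for $i>r$) to the analogous statement for $w_1$ or $w_2$.

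The remaining ingredient, and the step I expect to be the real obstacle, is the base case $f_{n,j}(w)=0$ for all $w\in\Sn$ and all $j$ — precisely the flag-variety instance $h=(n,\dots,n)$. I would establish it either from an explicit description of $f_{n,j}$ as lying in the ideal $\bigl(e_i(x_1,\dots,x_n)-e_i(t,2t,\dots,nt)\mid i\in[n]\bigr)$ of $\Q[x_1,\dots,x_n,t]$ (compatibly with the relation between the $\check f_{n,j}$ and power sums indicated in Remark~\ref{rema:peterson}), so that the substitution $x_k\mapsto w(k)t$ annihilates it, or by a separate induction on $n$ that uses Corollary~\ref{lemm:f-2} to reduce to the case $w(n)=n$. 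The delicate point throughout is that the non-corner columns force the downward induction to ``reach all the way up to row $n$'', so the whole argument hinges on this base case and on the fixed-point inequality $w^{-1}(w(j)-1)\le h_w(j)$ being exactly sharp enough to keep every recursion coefficient invertible; arranging the induction on $j$ (for fixed $w$), the downward induction on the row, and the induction on $n$ via the Drellich decomposition so that none of them becomes circular is the part that requires care.
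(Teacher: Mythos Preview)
Your reduction to $h_w$ and the downward row-induction are both correct: granting $f_{n,j}(w)=0$, the recursion~\eqref{eq:fij at w}, the propagation from column $j-1$, and the nonvanishing of $(w(j)-w(i+1)-1)t$ for $i\ge h_w(j)$ do combine to yield $f_{h_w(j),j}(w)=0$. The gap is the base case itself: the statement $f_{n,j}(w)=0$ for all $w\in\Sn$ is exactly the proposition for $h=(n,\dots,n)$, the \emph{maximal} Hessenberg function, and neither of your two proposed attacks on it closes. The first needs an explicit algebraic identity (a $t$-deformed analogue of~\eqref{eq:f check and power sum}) expressing $f_{n,j}$ in the ideal $\bigl(e_i(x)-e_i(t,2t,\dots,nt)\bigr)$; without such an identity this membership is, via the injectivity of $\iota_1'$, simply equivalent to what you are trying to show. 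The second is incomplete: Corollary~\ref{lemm:f-2} applied to $f_{n,j}$ allows only swaps of adjacent positions $m,m+1$ with $m\ne j$, and these preserve the two sets $\{w(1),\dots,w(j)\}$ and $\{w(j+1),\dots,w(n)\}$ separately. One can therefore arrange $w'(n)=n$ only when $w^{-1}(n)>j$; if $n\in\{w(1),\dots,w(j)\}$ (for instance $n=3$, $j=2$, $w=(3\ 1\ 2)$) the reduction is unavailable and no alternative is offered. The Drellich splitting does not rescue this, since it only applies when $h_w$ has an interior fixed value.

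The paper runs the induction in the opposite direction. Its base case is the Peterson Hessenberg function $(2,3,\dots,n,n)$, where the vanishing is already known from~\cite{fu-ha-ma}, and it climbs upward along a total order $\prec$ refining $\subset$. For the inductive step with $h_w=h$ strictly above the Peterson function, one locates the position $m$ at which $h$ last meets its lowest lower diagonal, forms $w'$ by interchanging $w(m)$ and $w(m+1)$, verifies $h_{w'}\prec h$, and then uses Corollary~\ref{lemm:f-2} together with a case analysis (according to whether $h(r)\ge m+2$, $h(r)=m+1$, or $h(r)=m$ at the first $r$ with $h(r)\ge m$) to transfer the inductively known vanishing for $w'$ back to $w$. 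In this scheme the flag-variety instance emerges as the final step of the induction, not as an input.
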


\begin{proof} 

We will first reduce the argument to the case when $n \geq 2$ and
  $h(j) \geq j+1$ for all $j \in [n-1]$. To see this, suppose that
  $n=1$. Then 
    $\Hess(\mathsf{N},h)=\Flags(\C^1)=\{\text{id}\}$
    where $\text{id}\in\mathfrak{S}_1$ is the identity permutation.
    Hence, in this case the claim is obvious by the recursive
    description \eqref{eq:fij at w} of $f_{j,j}(\text{id})$. 
    Now suppose that $n>1$ and that the claim holds for 
    all $n'<n$. Suppose also there exists $r$, $1 \leq r < n$,
      such that $h(r)=r$ and without loss of generality let $r$
      be the smallest such.  From
    Corollary~\ref{corollary:split} we have that in 
(writing permutations in one-line notation)
\begin{equation}\label{eq:split into smaller}
    \Hess(\mathsf{N},h)^S = \{(u(1) \ \dots \ u(r) \ v(1)+r \ \dots \
    v(n-r)+r)\in\Sn \mid u\in\Hess(\mathsf{N}_1,h_1)^{S_1},
    v\in\Hess(\mathsf{N}_2,h_2)^{S_2}\} 
\end{equation}
 where $S_1\subset \text{GL}(r,\C)$
    and $S_2\subset \text{GL}(n-r,\C)$ are as in
    Corollary~\ref{corollary:split}. 
    By assumption on $r$ and the definition of Hessenberg
      functions we have that if $1\leq j\leq r$
    then $1\leq h(j)\leq r$, and if $r+1\leq j\leq n$ then $r+1\leq
    h(j)\leq n$. 
Now let $w \in \Hess(\mathsf{N},h)^S$. We wish to show that
  $f_{h(j),j}(w)=0$ for all $j  \in [n]$. First consider the case when
  $1 \leq j \leq r$. 
  By Lemma~\ref{lemma:fij in terms of e and b equation (short)}, 
  we know
  $f_{h(j),j}(w)$ depends only on the values $\{w(1), \ldots,
  w(h(j))\}$. By assumption on $h$ and $j$ we know $h(j)\leq r$, so
  $f_{h(j),j}(w)$ depends only on $\{w(1), \ldots, w(r)\}$. Since $w
  \in \Hess(\mathsf{N},h)^S$, from~\eqref{eq:split into smaller} we
  know $\{w(1),\ldots,w(r)\} = \{u(1),\ldots,u(r)\}$ for some $u \in
  \Hess(\mathsf{N}_1,h_1)^{S_1} \subseteq \mathfrak{S}_r$. Now the
  inductive hypothesis applied to $n'=r<n$ implies
  $f_{h(j),j}(w)=f_{h_1(j),j}(u)=0$ as desired. Second, consider the
  case when $r+1 \leq j \leq n$. For this case, note first that since
  $h(j)\leq r$ for $j\leq r$, the above argument together with
  Lemma~\ref{lemma:fij vanish} implies that $f_{i,j}(w)=0$ for all $j
  \leq r$ and $i\geq r$. 
  From the inductive definition of the $f_{i,j}$, it
  follows that for $j$ with $r+1 \leq j \leq n$, the value
  $f_{h(j),j}(w)$ agrees with the value of $f_{h_2(j-r), j-r}(v)$
  where $v$ appears in~\eqref{eq:split into smaller}. Since $n-r<n$,
  the inductive hypothesis again implies
  $f_{h(j),j}(w)=f_{h_2(j-r),j-r}(v)=0$ as desired. 
    
An induction on $n$ and the argument above shows that it now
  suffices to prove the claim for the case when $n \geq 2$ and  
    \[ h(j) \geq j+1 \quad \text{for all } j \in [n-1]. \]
Fix such an $n$. The set of Hessenberg functions associated to
  $n$ which we must analyze is exactly
\[
H'_n := \{ h \in H_n \mid h(j) \geq j+1 \textup{ for all } j \in [n-1] \}.
\]
The remainder of our argument is by induction using the total
  order on $H'_n$, denoted by $\prec$, defined by 
\begin{align*}
h' \prec h  \Leftrightarrow \exists \hsm m\in [n-1] \textup{ such that for all } j \in [n] \textup{ with } j > m\textup{ we have } h'(j)=h(j), \textup{ and } h'(m) < h(m).
\end{align*}
The order $\prec$ is the usual reverse lexicographic order on
  $\Z^n_{\geq 0}$ if we view a Hessenberg function $h$ as a sequence
  $(h(1),h(2),\ldots,h(n))$ of positive integers. We also note that
  the above
total order is a refinement of the partial
  order $h' \subset h$ of Definition~\ref{def:partial order}.  Moreover, the
  unique minimal element in $H'_n$ with respect to $\preceq$ is the
  Hessenberg function satisfying $h(j)=j+1$ for all $j\in [n-1]$.
The base case of our induction therefore exactly corresponds to
  the Peterson variety, and as discussed in Remark~\ref{rema:peterson} the results
  of \cite{fu-ha-ma} imply that the claim of the proposition holds in this case.
  Thus we may now assume that the claim
  is true for all $h' \prec h$ and we must now prove the
  claim for $h$.

Suppose 
$w \in \Hess(\mathsf{N},h)^S$. Then by Proposition~\ref{prop:equivalence of fixed point
  conditions} we know $h_w \subset h$, from which it follows that
$h_w \preceq h$. If $h_w \neq h$, by the
inductive hypothesis we may conclude that $f_{h_w(j), j}(w)=0$ for all $j$. From
Lemma~\ref{lemma:fij vanish} and the definition of the partial order
$h_w \subset h$ it then follows that $f_{h(j),j}(w)=0$ for all $j$, as
desired. Thus it remains to check the claim for those $w$ with the property that 
$h=h_w$. 

Since we may assume that $h=h_w$ is strictly larger than the
Hessenberg function associated to the base case of the Peterson
variety, there exists a $j$ such that $h(j) \geq j+2$. Note that such
a $j$ must satisfy $j\leq n-2$ due to the definition
of Hessenberg functions. Now let $m$ be the last time that $h=h_w$
meets its lowest lower diagonal, in the sense discussed above. By the
above, 
such an $m$ must satisfy $h(m)\geq m+2$ and
  $m\leq n-2$. 
We also have that $h(m-1)<h(m)$, since
otherwise $h$ meets a lower diagonal which is lower than that
containing $(h(m),m)$, contradicting the definition of $m$. From
Lemma~\ref{lemma:corner} it then follows that
$h(m)=h_w(m)=w^{-1}(w(m)-1)$, or in other words
\begin{align}\label{eq:notice}
w(h(m)) =w(m)-1.
\end{align}
Define a permutation
$w'\in\Sn$ obtained from $w$ by interchanging the values in the $m$-th and
$(m+1)$-st positions of the one-line notation of $w$, i.e.
$w'(m)=w(m+1)$, $w'(m+1)=w(m)$, and $w'(j)=w(j)$ for
all $j \neq m, m+1$. 
Let $h':=h_{w'}$ denote the
corresponding Hessenberg function.

We claim that 
\begin{align}\label{eq:claim for h and h'}
h'(j)=h(j) \ \ \text{for all $j\ge m+1$} \ \ \ \text{and} \ \ \ h'(m) < h(m).
\end{align}
To see this, first consider the case $j \geq
m+1$. Recall that the definition of $h(j)=h_w(j)$ and
$h'(j)=h_{w'}(j)$ is in terms of the sets $D_w(j), D_{w'}(j)$ which
are in turn constructed from the sets $\NInv{w}$ and $\NInv{w'}$ by
looking at $\mathsf{N}$-inverted pairs $\mathcal{P}$ with $LP(\mathcal{P}) \leq
j$. Since $w$ and $w'$ only differ in the $m$-th and $(m+1)$-st
entries, if $j \geq m+1$ then from the definition it follows that
$D_w(j)$ is obtained from $D_{w'}(j)$ by replacing any
  $m$ which appears in a left position with an $m+1$,
 and hence $h(j)=h_w(j)=h_{w'}(j)=h'(j)$ as
desired.  
Next, we wish to prove that $h'(m)<h(m)$. To see this, first
consider the case that $D_{w'}(m)=\emptyset$. In this case, from the
definition of $h'=h_{w'}$ we have $h'(m)=m$, but since $h(m)
\geq m+2$ as we observed above, we conclude  
$h'(m)<h(m)$ as
desired. Second, we consider the case $D_{w'}(m) \neq
\emptyset$. 
From \eqref{eq:notice}
we know $(m,h(m)) \in D_w(m)$
and since $h(m)$ achieves the maximum
of the set $\{RP(\mathcal{P}) \hsm \mid \hsm \mathcal{P} \in
D_w(m)\}$ by definition~\eqref{eq:def hw}, 
there are no $\mathsf{N}$-inverted pairs
$\mathcal{P} \in \NInv{w}(m)$ with $RP(\mathcal{P}) > h(m)$. Now recall that we wish to show $h'(m)<h(m)$
and we assume that $D_{w'}(m) \neq \emptyset$. Since 
$w$ and $w'$ swapped their $m$-th and $(m+1)$-st
places, the pair $(m,h(m))$ is no longer an $\mathsf{N}$-inverted pair in
$D_{w'}(m)$. 
Let $p:=w^{-1}(w(m)+1)$ and $q:=w^{-1}(w(m+1)-1)\geq0$ (see our convention \eqref{eq:convention on w(0)}). Then it follows that 
\begin{align}\label{eq:inclusion of Dw'(m)}
D_{w'}(m) \subset  (D_w(m) \setminus \{(m,h(m))\} ) \cup \{(p,m+1)\} \cup \{(m,q)\}.
\end{align}
We have 
\[
q=w^{-1}(w(m+1)-1) \leq h_w(m+1) = h_w(m) = w^{-1}(w(m)-1)
\]
where the first inequality follows from \eqref{eq:hw def reformulation
  inequality} and the middle
 equality is because $m$ is the last time $h$ meets its lowest
   lower-diagonal. Since $w(m) \neq w(m+1)$ it cannot happen that
  $q=w^{-1}(w(m+1)-1)=w^{-1}(w(m)-1)$, so we conclude
$q<h_w(m)=h(m)$.
Since $(m,h(m))\in D_w(m)$ and $D_{w'}(m) \neq \emptyset$,
by~\eqref{eq:def hw} we have $h_w(m)=\max RP(D_w(m))$ and $h_{w'}(m)=\max RP(D_{w'}(m))$.
It then
follows from \eqref{eq:inclusion of Dw'(m)} and $h(m)\geq m+2$ that $h'(m)=h_{w'}(m) < h_w(m)=h(m)$, as desired. 

We have just seen that 
$h$ and $h'$ agree in all coordinates to the right
of $m$, and that $h'(m) < h(m)$. Thus 
$h' \prec h$, 
and by the inductive hypothesis,
the claim of the proposition holds for $h'$, so $f_{h'(j),j}(w')=0$
for all $j$. Moreover, since $w'$ is obtained from $w$ by
interchanging the entries $w(m)$ and $w(m+1)$, from
Corollary~\ref{lemm:f-2} we may conclude
that $f_{h'(j),j}(w) = 0$ for all $j \geq m+1$. 
Since we have also seen above that $h(j) = h'(j)$ for $j \geq m+1$,
this implies $f_{h(j),j}(w)=0$ for all $j \geq m+1$. 

Next we compute for $j=m$. First, we have from the recursive relations \eqref{eq:fij at w} that
\begin{equation} \label{eq:fhmm} 
\begin{split}
f_{h(m),m}(w)&=f_{h(m)-1,m-1}(w)+(w(m)-w(h(m))-1)t\cdot
f_{h(m)-1,m}(w) \\ 
&=f_{h(m)-1,m-1}(w) \\ 
&=f_{h(m)-1,m-1}(w') 
\end{split}
\end{equation}
where the second equality holds because we have shown that $w(m)-1=w(h(m))$ in \eqref{eq:notice} and the third equality follows by Corollary~\ref{lemm:f-2} since $h(m)\ge m+2$ so $h(m)-1 \neq m$.
Recall that $h_{w'}=h' \prec h$, so by the inductive hypothesis we have that
$f_{h'(j),j}(w') = 0$ for all $j$. But then by Lemma~\ref{lemma:fij
  vanish} we know that $f_{i,j}(w')=0$ for any $j \in [n]$ and $i \geq
h'(j)$. In particular, since $h'(m-1)\leq h'(m)$ by definition of
Hessenberg functions and since $h'(m) \leq h(m)-1$ as shown in \eqref{eq:claim for h and h'}, we
have $h'(m-1) \leq h(m)-1$ and hence $f_{h(m)-1,
  m-1}(w')=0$. From~\eqref{eq:fhmm} this implies $f_{h(m),m}(w)=0$, as
desired. 

It remains to check that $f_{h(j),j}(w)=0$ for $j \leq m-1$. 
We will again argue by comparing the computations for $h'$ with those
for $h$. Note that in general it may not be the case that $h' \subset
h$. Recalling that $h'=h_{w'}$ is defined in terms of the permutation
$w'$ which differs from $w$ only in the $m$-th and $(m+1)$-st spots,
it is useful to define
\begin{equation}\label{eq:def r s} 
r:=\min\{j\mid m\le h(j)\}, \quad s:=\min\{j\mid m+1\le h(j)\}.
\end{equation}
We also define $r_0$ (respectively $s_0$) as the position of $w(m)+1$
(respectively $w(m+1)+1$):  
\begin{equation*}
w(r_0)-1=w(m)=w'(m+1),\quad w(s_0)-1=w(m+1)=w'(m). 
\end{equation*}
It is clear from the definitions that $r_0\neq m$ and $s_0\neq m+1$.
If $r_0<m$, then $(r_0, m)$ is an $\mathsf{N}$-inverted pair in $D_w(r_0)$, so
$m \leq h_w(r_0) =h(r_0)$ by the definition \eqref{eq:def hw} of $h_w$, and if $r_0 >m$ then since $h(r_0)=h_w(r_0)
\geq r_0$, we also have $m \leq h(r_0)$. Thus, from the
definition~\eqref{eq:def r s} of $r$ we see that $r \leq
r_0$. Similarly $s \leq s_0$. 
Moreover, since $h(m) \geq m+2$ and $m \leq n-2$ by the definition of $m$, we also have $r\le s\le m \leq n-2$. In summary, we have
\begin{equation}\label{eq:r and s less than m} 
r\leq r_0, \quad s\leq s_0, \quad 
r\le s\le m \leq n-2.
\end{equation}
Note also that from the definition of $r$ it follows that
$h(r-1)<h(r)$. Furthermore, from~\eqref{eq:r and s less than m} we know $r<n$
and hence from the original assumption on the Hessenberg
function $h$ we know $h(r) \geq r+1$. Thus we may apply Lemma~\ref{lemma:corner} 
to conclude that
\begin{align}\label{eq:w(h(r)) = w(r)-1}
h(r) = w^{-1}(w(r)-1) \textup{ and hence } w(h(r)) = w(r)-1.
\end{align}
That is, we have $(r,h(r))\in \NInv{w}$.

The next observation will be useful in what follows.
By assumption on the Hessenberg function $h$ we have $h(j)\geq j+1$ for all $j\leq n-1$ and hence from the definition \eqref{eq:def hw} of $h=h_w$ we see that $D_w(j) \neq \emptyset$ for all $j\leq n-1$.
Since $w'$ and $w$ only differ in the $m$-th and $(m+1)$-st spots we also have $D_{w'}(j) \neq \emptyset$ for $j\leq m-1$.
Hence, the description \eqref{eq:hw def reformulation} for $h_w$ and $h_{w'}$ shows that for $j\leq m-1$ 
we can express $h(j)$ and $h'(j)$ by 
\begin{equation} \label{eq:hw reformulation take 2}
\begin{split}
&h(j)=h_w(j)=\max\{w^{-1}(w(p)-1)\mid 1\le p\le j\}, \\
&h'(j)=h_{w'}(j)=\max\{w'^{-1}(w'(p)-1)\mid 1\le p\le j\}.
\end{split}
\end{equation}

Recall that we wish to show $f_{h(j),j}(w)=0$ for $j \leq m-1$. We
will argue on a case-by-case basis according to the value of $h(r)$,
where $r$ is the value defined in~\eqref{eq:def r s}. 

\medskip
{\bf Case 1.} Suppose $h(r)\ge m+2$. Then 
from the
definitions of $r$ and $s$ in~\eqref{eq:def r s} it immediately
follows that $r=s$. We already know that $r \leq r_0$ from \eqref{eq:r and s less than m}, but in this case from \eqref{eq:w(h(r)) = w(r)-1}
we in fact have 
\begin{equation*} 
w^{-1}(w(r)-1)=h(r)\ge m+2> m=w^{-1}(w(r_0)-1)
\end{equation*}
so $r\not=r_0$, from which it follows $r<r_0$.  
It similarly follows that $s=r<s_0$. From this we claim that 
$h(j)=h'(j)$ for $j\leq m-1$. Indeed, recall from~\eqref{eq:hw reformulation take
  2} that $h(j)$ (respectively $h'(j)$)
can be described as the maximum of the right positions
$RP(\mathcal{P})$ of $\mathsf{N}$-inverted pairs $\mathcal{P}$ whose left
positions go from $1$ up to $j$. 
Our assumption that $h(r)\geq m+2$ implies that the one-line
notation of $w$ is of the form 
\[
(\ldots, w(r), \ldots, w(m), w(m+1), \ldots, w(h(r))=w(r)-1, \ldots)
\]
where the position $h(r)$ of $w(r)-1$ is, by assumption, to the right 
of both $w(m)$ and $w(m+1)$. We have just argued that $r<r_0$ and
$r<s_0$, which is to say that if $w(m)$ and $w(m)+1$ (respectively
$w(m+1)$ and $w(m+1)+1$) appear in inverted order in $w$, then the
larger value $w(m)+1=w(r_0)$ (respectively $w(m+1)+1=w(s_0)$) must appear to the
\emph{right} of $w(r)$. (If they do not appear in inverted order, then
they cannot be an inverted pair and hence never contribute to the
computation of $h=h_w$.) But since $h(j)$ is computed by looking
for the maximum of the $RP(\mathcal{P})$ for such $\mathcal{P}$ whose left position is
\emph{up to $j$}, and since the $\mathsf{N}$-inverted pair $(r, h(r))$ occurs
before $r_0$ and $s_0$ (i.e. $r<r_0$ and $r<s_0$) but has a larger
$RP(\mathcal{P})$ (i.e. $w(h(r))=w(r)-1$ occurs to the right of $w(m)$ and $w(m+1)$), this
implies that the inverted pairs (if any) with right positions $m$ and
$m+1$ never achieve the maximum in the computation of $h(j)$. Since
$w'$ differs from $w$ only by interchanging the $w(m)$ and $w(m+1)$,
this assertion remains true for $w'$. Hence the computation for $h(j)$
and $h'(j)$ remains unchanged, and we conclude 
\[
h(j)=h'(j) \ \ \text{for $j\leq m-1$}.
\]

Now from the inductive hypothesis we know $f_{h'(j),j}(w')=0$ for all $j\in[n]$. Since we just saw 
$h(j)=h'(j)$ for $j \leq m-1$ in this case, we then obtain that $f_{h(j),j}(w')=0$
for $j\leq m-1$.  Finally, observe that $h(j)<m$ for any $j<r$ by
definition of $r$, and for $j \geq r$ the assumption that $h(r) \geq m+2$
implies that $h(j) \neq m$. Hence we may apply Corollary~\ref{lemm:f-2}
and conclude that $f_{h(j),j}(w)=0$ for $j \leq m-1$, as desired. 

\medskip
{\bf Case 2.} Next we consider the case $h(r)=m+1$. We immediately see that $r=s$ in this case as well. 
Recall from \eqref{eq:w(h(r)) = w(r)-1} that
$h(r)=w^{-1}(w(r)-1)$. Since $h(r)=m+1$
by assumption we have $w(m+1)=w(r)-1$. Recall that the definition of $m$
guarantees that $h(m)\geq
m+2$, so $r<m$. Based on this discussion we conclude that the one-line
notation for $w$ looks like 
\[
(\ldots, w(r)=w(m+1)+1, \ldots, w(m), w(m+1), \ldots) 
\]
so we can see that $r=s=s_0$ in this case. Also, arguing as in the
case above, we know that $r<r_0$. 
We claim that 
\begin{align}\label{eq:case 2 claim}
h'(j)\leq h(j) \ \ \text{for all $j\leq m-1$}.
\end{align}
We take cases. Recall that $r_0\neq m$ since $r_0$ is defined by $w(r_0)=w(m)+1$. First suppose
$r_0\geq m+1$, i.e. the value $w(m)+1$ occurs to the right of $w(m)$
in the one-line notation for $w$. In this case, the integers $w(m)$
and $w(m)+1$ are not inverted in $w$ and hence 
never contributes
to the computation of any $h(j)$. Hence we may conclude \eqref{eq:case 2 claim} in this case. Next consider the case $r_0 \leq
m-1$, so the one-line notation for $w$ looks like 
\[
(\ldots, w(r)=w(m+1)+1, \ldots, w(r_0) = w(m)+1, \ldots, w(m), w(m+1), \ldots)
\]
and the one-line notation for $w'$ then looks like 
\[
(\ldots, w'(r)=w(m+1)+1, \ldots, w'(r_0)=w(m)+1, \ldots, w(m+1), w(m), \ldots).
\]
In what follows we prove \eqref{eq:case 2 claim} by
  looking at the one-line notations.
Recall that the only difference between $w$ and $w'$ is that $w(m)$ and
$w(m+1)$ have been interchanged, and that the computation
of $h(j)$ involves looking at $\mathsf{N}$-inverted pairs in $\NInv{w}$ with left position up to
$j$ (and similarly for $h'(j)$). 
For the cases $1\leq j<r$ or $r_0\leq j\leq m-1$ (i.e. the cases in which $j$ is outside of the range between $r$ and $r_0-1$), we see from this observation together with the above one-line notations that $h'(j)=h(j)$.
For the case $r\leq j<r_0$, we have $h'(j) \leq h(j)$ by the same reasoning.
Thus we conclude \eqref{eq:case 2 claim}, as desired. 

Now from the inductive hypothesis we know that $f_{h'(j),
  j}(w')=0$ for all $j$, so from Lemma~\ref{lemma:fij vanish} and \eqref{eq:case 2 claim} we may
conclude that $f_{h(j),j}(w')=0$ for all $j \leq m-1$. From the assumption that
$h(r)=m+1$ it follows as in the argument for Case 1 that there does
not exist any $j$ with $h(j)=m$, and since $j \leq m-1$ we have $j\neq m$. Hence we may apply Corollary~\ref{lemm:f-2} to conclude
that $f_{h(j),j}(w)=0$ for $j\leq m-1$, as desired. This completes the argument for
Case 2. 

\medskip
{\bf Case 3.}  It remains to consider the case when $h(r)=m$. This
means that $m$ is actually achieved as a value of $h$, so by the
definition of $s$ it follows that $r \neq s$, $r<s$, and $h(s) \geq
m+1$. From \eqref{eq:w(h(r)) = w(r)-1} we also know
$w(r)-1=w(h(r))$, and since $h(r)=m$ we have $w(r)-1=w(m)$. Hence
$r=r_0$. 
The one-line notation for $w$ therefore looks like 
\[
(\ldots, w(r)=w(r_0)=w(m)+1, \ldots, w(s), \ldots, w(m),w(m+1), \ldots)
\]
and the one-line notation for $w'$ looks like 
\[
(\ldots, w(r)=w(r_0)=w(m)+1, \ldots, w(s), \ldots, w(m+1), w(m), \ldots)
\]
where the only difference is the interchanging of $w(m)$ and
$w(m+1)$. 

We now concretely analyze the difference between $h$ and $h'$.
Recalling $s\leq s_0$ from \eqref{eq:r and s less than
    m} where we recall that $s_0$ is the position of $w(m+1)+1$ in the
  one-line notation for $w$, it follows from arguments similar to
  those in the previous cases that
\begin{equation} \label{eq:case s_0 ge m}
\begin{split}
h'(j)&=h(j)<m \quad \textup{ for } 1\le j\le r-1,\\
h'(j)&=m+1=h(j)+1 \quad \textup{ for } r\le j\le s-1, \textup{ and } \\
h'(j)&=h(j)>m \quad \textup{ for } s\le j\le m-1 \textup{ (note that if $s=m$ then
  there are no such $j$).}
\end{split}
\end{equation}

Now, consider $j$ with $1 \leq j \leq r-1$ or $s \leq j
\leq m-1$. By~\eqref{eq:case s_0 ge m} we have in these cases that
$h'(j)=h(j)$. Hence by the inductive hypothesis
$f_{h(j),j}(w')=f_{h'(j),j}(w')=0$. Moreover, since 
$j \neq m$ and $h(j)\neq m$ by \eqref{eq:case s_0 ge m}, we may again apply Corollary~\ref{lemm:f-2} to conclude that
$f_{h(j),j}(w)=0$, as desired. 

It remains to consider the case of $j$ with $r \leq j \leq s-1$. 
For such $j$ we have $h(j)=m$, so we wish to prove that $f_{m,j}(w)=0$
for $r \leq j \leq s-1$. We argue by induction, with the base case
being $j=r$.  For what follows we introduce the temporary notation 
\begin{equation}\label{eq:def Aj}
A_j := f_{m-1,j}(w) = f_{m-1,j}(w') \quad (j \leq m-1).
\end{equation}
where the second equality holds because $j \neq m$ by assumption
and by Corollary~\ref{lemm:f-2}.
Also, from~\eqref{eq:case s_0 ge m} we have
that 
$h'(r-1)=h(r-1)\le m-1$, so 
from our inductive hypothesis on $h'$ and Lemma~\ref{lemma:fij vanish} we may
conclude $A_{r-1}=f_{m-1,r-1}(w')=0. $
Since $m>r$, using the recursive equation \eqref{eq:fij at w} of the $f_{i,j}(w)$ we may now compute 
\begin{equation*}
\begin{split}
f_{m,r}(w)&=f_{m-1,r-1}(w)+(w(r)-w(m)-1)f_{m-1,r}(w)t=A_{r-1}+(w(r)-w(m)-1)A_r t= 0
\end{split}
\end{equation*}
where we have used the fact that $r=r_0$ and
thus $w(r)=w(m)+1$. This proves the claim for the base case $j=r$. 

Now suppose by induction that $f_{m,k}(w)=0$ for some $k$ with $r \leq k \leq s-2$, and
we wish to prove the statement for $k+1$. 
We know from~\eqref{eq:case s_0 ge m} that $h'(k+1)=m+1$, so
from our inductive hypothesis on $h'$ we have
$f_{m+1,k+1}(w')=0$. Since $m>k+1$, using
(repeatedly) the recursive equation \eqref{eq:fij at w} of the $f_{i,j}(w)$ and~\eqref{eq:def Aj} we have 
\begin{equation}\label{eq:reexpress in A}  
\begin{split}
0&=f_{m+1,k+1}(w')\\
&=f_{m,k}(w')+\big(w'(k+1)-w'(m+1)-1\big)f_{m,k+1}(w') t \\
&=A_{k-1}+(w'(k)-w'(m)-1)A_{k}t\\
&\qquad +\big(w'(k+1)-w'(m+1)-1\big)\Big(A_{k}+\big(w'(k+1)-w'(m)-1\big)A_{k+1}t\Big)t\\
&=A_{k-1}+\big(w(k)-w(m+1)+w(k+1)-w(m)-2\big)A_{k}t\\
&\qquad +\big(w(k+1)-w(m)-1\big)\big(w(k+1)-w(m+1)-1\big)A_{k+1}t^2
\end{split}
\end{equation}
where the last equality also uses the definition of $w'$ in terms
  of $w$. 
By our assumption on $k$ we have 
\begin{equation*}
f_{m,k}(w)=A_{k-1}+\big(w(k)-w(m)-1\big)A_k t=0, 
\end{equation*} 
and hence we can further
simplify the last expression in~\eqref{eq:reexpress in A} as
\begin{equation} \label{eq:product}
0=\big(w(k+1)-w(m+1)-1\big)\Big(A_k+\big(w(k+1)-w(m)-1\big)A_{k+1}t\Big)t.
\end{equation}
Now remember that by assumption $k+1\le s-1$ and also $s \leq s_0$ from \eqref{eq:r and s less than m}, which
means $w(k+1)\not=w(s_0)=w(m+1)+1$. Thus from~\eqref{eq:product} we
finally obtain 
\[
f_{m,k+1}(w)=A_k+\big(w(k+1)-w(m)-1\big)A_{k+1}t=0,
\]
as desired. This proves the result that $f_{h(j),j}(w)=0$ for all $r \leq j \leq s-1$, so we
have checked all cases and the result is proved. 
\end{proof} 

\smallskip

We now prove that the ring homomorphism \eqref{eq:isom} is
well-defined. 
Recall that $\Ih$ is the ideal of $\Q[x_1,\dots,x_n,t]$ generated by $f_{h(j),j}$ for $j=1,\dots,n$.
\begin{corollary}\label{cor for well-def}
The graded $\Q[t]$-algebra homomorphism 
\begin{align*}
\varphi_h : \Q[x_1,\dots,x_n,t]/\Ih \rightarrow H^*_S(\Hess(\N,h))
\end{align*}
which sends each $x_i$ to the first Chern class
$\SChNil_i$ and $t$ to $t$ is well-defined, where we identify $H^*(BS)
\cong \Q[t]$.
\end{corollary}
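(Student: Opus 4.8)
The plan is to show that the ideal $\Ih$ lies in the kernel of the graded $\Q[t]$-algebra homomorphism $\tilde\varphi_h$ of~\eqref{def of equiv phi_h}, so that $\tilde\varphi_h$ descends to the quotient $\Q[x_1,\dots,x_n,t]/\Ih$ and defines $\varphi_h$; the uniqueness is automatic since the map is prescribed on the generators $x_i$ and $t$. Since $\Ih = (f_{h(j),j} \mid 1 \le j \le n)$ by definition, it suffices to verify that $\tilde\varphi_h(f_{h(j),j}) = 0$ in $H^*_S(\Hess(\mathsf{N},h))$ for each $j \in [n]$.

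The first step is to translate this vanishing into a fixed-point statement. By construction $\SChNil_i$ is the image of $\SChFlag_i$ under the restriction map $H^*_S(\Flags(\C^n)) \to H^*_S(\Hess(\mathsf{N},h))$, so $\tilde\varphi_h(f_{h(j),j}) = f_{h(j),j}(\SChNil_1,\dots,\SChNil_n,t)$ is the restriction to $\Hess(\mathsf{N},h)$ of the class $f_{h(j),j}(\SChFlag_1,\dots,\SChFlag_n,t) \in H^*_S(\Flags(\C^n))$. Appealing to the commutative diagram~\eqref{eq:cd}, and in particular to the injectivity of the localization map $\iota_2$ from~\eqref{eq:localization for HessN}, it is enough to check that the $w$-th component of $\iota_2\bigl(\tilde\varphi_h(f_{h(j),j})\bigr)$ vanishes for every $w \in \Hess(\mathsf{N},h)^S \subset \Sn$. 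Chasing~\eqref{eq:cd} together with $\SChFlag_i(w) = w(i)t$, this $w$-th component is precisely the polynomial $f_{h(j),j}(w) \in \Q[t]$ in the notation of Section~\ref{sec:fproperty}.

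Finally, I would invoke Proposition~\ref{proposition:fij go to zero}, which asserts exactly that $f_{h(j),j}(w) = 0$ for all $w \in \Hess(\mathsf{N},h)^S$ and all $j \in [n]$. This yields $\iota_2\bigl(\tilde\varphi_h(f_{h(j),j})\bigr) = 0$, hence $\tilde\varphi_h(f_{h(j),j}) = 0$ by injectivity of $\iota_2$. Therefore $\Ih \subseteq \ker\tilde\varphi_h$, and $\tilde\varphi_h$ factors uniquely through a graded $\Q[t]$-algebra homomorphism $\varphi_h : \Q[x_1,\dots,x_n,t]/\Ih \to H^*_S(\Hess(\mathsf{N},h))$ with $\varphi_h(x_i) = \SChNil_i$ and $\varphi_h(t) = t$. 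Since all the real content lies in Proposition~\ref{proposition:fij go to zero}, there is essentially no obstacle remaining at this stage; the only point demanding any care is the diagram-chase in~\eqref{eq:cd} identifying the $w$-th localized component of $\tilde\varphi_h(f_{h(j),j})$ with $f_{h(j),j}(w)$, which follows from the compatibility of the restriction maps with localization.
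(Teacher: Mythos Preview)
Your proof is correct and follows essentially the same approach as the paper: reduce the vanishing of $\tilde\varphi_h(f_{h(j),j})$ to the fixed-point statement via the injectivity of $\iota_2$ in the commutative diagram~\eqref{eq:cd}, then invoke Proposition~\ref{proposition:fij go to zero}. The paper's proof is slightly terser but the logical structure is identical.
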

\begin{proof}
For $w\in \Hess(\N,h)^S$, 
the $w$-th component of the image of $f_{h(j),j}(\SChNil_1,\dots,\SChNil_n,t)$ under the localization map $\iota_2: H^{\ast}_{S}(\Hess(\mathsf{N},h))
\rightarrow \bigoplus_{w\in \Hess(\mathsf{N},h)^{S}} \Q[t]$
is precisely the $f_{h(j),j}(w)$ considered in
Proposition~\ref{proposition:fij go to zero}. Thus,
Proposition~\ref{proposition:fij go to zero} together with the
injectivity of $\iota_2$ implies that 
\[
f_{h(j),j}(\SChNil_1,\dots,\SChNil_n,t)=0\in H^{\ast}_{S}(\Hess(\mathsf{N},h))
\]
for all $j\in[n]$.
Hence, the ring homomorphism $\tilde\varphi_h$ 
defined in \eqref{def of equiv phi_h} factors through the quotient by
$\Ih$, inducing the map $\varphi_h$ as desired. 
\end{proof}

\bigskip
\section{Hilbert series}\label{sec:hilbert} 

The main result of this section,
  Proposition~\ref{prop:hilbert series equal},
  takes a further step in the proof of Theorem~\ref{theorem:reg nilp
    Hess cohomology} by proving that 
the two rings are \emph{additively} isomorphic as graded $\Q$-vector
spaces, i.e. that their Hilbert series (to be defined below) are
equal. This will be useful in our arguments in Section~\ref{sec:proof
  of first main theorem}
because, if a map between two graded vector spaces is injective 
 and
we know the dimensions of the graded pieces are equal, then the map
must be an isomorphism. 

We outline the content of this section. 
We first record some preliminary
definitions and recall some properties of regular sequences.  To prove
Proposition~\ref{prop:hilbert series equal} it will turn out to be
useful to first compute the Hilbert series for the ordinary cohomology. 
As a first step, by using results of
  Mbirika-Tymoczko \cite{mb-ty13} and a small trick involving the Hessenberg space's
  negative roots, we rewrite the Hilbert series of the ordinary
  cohomology $H^*(\Hess(\N,h))$ in
  terms of $h$. 
Next, we show in Lemma~\ref{lem:formula for f check} that the
polynomials $\cf_{i,j}$ defined in~\eqref{eq:definition of f check} can be obtained from the
$f_{i,j}$ by setting the variable $t$ equal to $0$, and then  
prove that the homogeneous polynomials $\cf_{h(1),1}, \ldots,
\cf_{h(n),n}$ described in \eqref{formula for f check} form a regular sequence in $\Q[x_1,\ldots,x_n]$. Since
the degrees of the $\cf_{h(i),i}$ are known, this allows us to
conclude that the Hilbert series of $\Hess(\N,h)$ and
$\Q[x_1,\ldots,x_n]/\check \Ih$ are equal. Here recall that $\check
\Ih$, defined in \eqref{definition ideal check Ih}, is the ideal 
\[
\check \Ih = (\cf_{h(1),1},\dots,\cf_{h(n),n})
\]
generated by the $f_{h(j),j}$ for $j \in [n]$. Now some straightforward
arguments, involving on the one hand some elementary considerations
using module bases and the $S$-equivariant formality of $\Hess(\N,h)$
on the other, yield the fact that the Hilbert series of the
$S$-equivariant cohomology $H^*_S(\Hess(\N,h))$ and
$\Q[x_1,\ldots,x_n,t]/\Ih$ are equal. 

As before, we equip the polynomial rings $\Q[x_1, \ldots, x_n]$ and
$\Q[x_1, \ldots, x_n,t]$ with the gradings defined by 
\begin{align*}
\deg x_i=2 \textup{ for all } i \in [n] \quad \textup{ and } \quad \deg t=2. 
\end{align*} 

We begin by recalling the definition of Hilbert series. Let $R=\bigoplus_{i=0}^\infty
R_i$ be 
a graded $\Q$-vector space where each $R_i$ is finite-dimensional.
Then we define its \textbf{Hilbert series} to be 
\[
F(R,\sp):=\sum_{i=0}^\infty (\dim_{\Q}R_{i}) \sp^i \in \Z[[\sp]]
\]
where $\sp$ is a formal parameter. 
We also take a moment to 
recall the definition and some properties of regular
sequences, which we use extensively for the remainder of this
section. 

\begin{definition}\label{def of regular seq for Hilb series}
\emph{(}\cite[Section 16]{matsumura}\emph{)}
For a ring $S$, a sequence $\f_1,\dots,\f_r\in S$ is called a
\textbf{regular sequence} if: 
\begin{itemize}
\item[(i)] $\f_i$ is non-zero, and not a zero-divisor, in $S/(\f_1,\dots,\f_{i-1})$ for $i=1,\dots,r$,
\item[(ii)] $S/(\f_1,\dots,\f_r)\neq0$.
\end{itemize}
\end{definition}

\begin{remark}\label{remark:characterizations of regular sequences}
There are other useful characterizations of regular sequences which we
shall employ in our arguments below. 
\begin{enumerate} 
\item \label{rem:req seq alg-ind} 
If $S$ is a graded $\Q$-algebra and $\f_1, \dots, \f_r$ are positive-degree homogeneous elements, 
then it is well-known that $\{\f_1, \dots, \f_r\}$ is a regular sequence if and only if $\{\f_1,\dots,\f_r\}$ is algebraically
independent over $\Q$ and $S$ is a free
$\Q[\f_1,\dots,\f_r]$-module \emph{(e.g. \cite[Chapter 1, Section 5.6]{stan96})}. 
\item \label{rem:req seq Hilb}
Let the polynomial ring $S=\Q[x_1,\dots,x_n]$ be graded with $\deg x_i=2$ for $1\leq i\leq n$.
It is also well-known 
$($see for instance \cite[p.35]{stan96}$)$ 
that 
a sequence $\f_1,\dots,\f_r \in \Q[x_1,\dots,x_n]$ 
of positive-degree homogeneous polynomials
is a regular sequence if and only if 
\begin{align*}
F(\Q[x_1,\dots,x_n]/(\f_1,\dots,\f_r),s)
=\frac{1}{(1-s^2)^n}\prod_{k=1}^r(1-s^{\deg{\f_k}}).
\end{align*}
\item \label{rem:req seq zero-set}
Finally, continuing in the special setting of the previous item, 
if $r=n$ in addition, 
it is known 
that  a sequence of positive-degree
  homogeneous elements $\f_1,\dots,\f_n$ in 
  $\Q[x_1,\dots,x_n]$ is a regular sequence if and only if the
  solution set of the equations $\f_1=0,\dots,\f_n=0$ in $\C^n$
  consists only of the origin $\{0\}$. 
  $($This is because the above characterization $\eqref{rem:req seq Hilb}$ is valid for any coefficient field and hence \cite[Proposition 5.1]{fu-ha-ma} gives the claim.$)$
\end{enumerate} 
\end{remark}

The following simple fact, which follows from \cite[Chapter 1, Theorem 5.9]{stan96}, is also useful.

\begin{lemma}\label{lemma:finite generation}
Let $g_1, \ldots, g_n$ be positive-degree homogeneous polynomials in
$\Q[x_1, \ldots, x_n]$. Suppose $\{g_1, \ldots, g_n\}$ is a regular
sequence in $\Q[x_1, 
\ldots, x_n]$. 
Then $\Q[x_1,\ldots, x_n]$ is finitely generated as a
$\Q[g_1, \ldots, g_n]$-module. 
\end{lemma}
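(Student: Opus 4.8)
The plan is to combine the two characterizations of regular sequences recorded in Remark~\ref{remark:characterizations of regular sequences}. Write $S := \Q[x_1,\ldots,x_n]$ and let $R := \Q[g_1,\ldots,g_n] \subseteq S$ be the subalgebra generated by the $g_i$. First I would apply Remark~\ref{remark:characterizations of regular sequences}\eqref{rem:req seq alg-ind} to the regular sequence $g_1,\ldots,g_n$: this tells us both that the $g_i$ are algebraically independent over $\Q$ --- so that $R$ is genuinely a polynomial ring in $n$ variables, and its irrelevant ideal $R_+$ equals $(g_1,\ldots,g_n)R$, whence $R_+S = (g_1,\ldots,g_n)S$ --- and that $S$ is a free $R$-module. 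Since freeness alone does not give finite generation, the remaining task is to show that the rank is finite.

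Next I would bound that rank using Hilbert series. By Remark~\ref{remark:characterizations of regular sequences}\eqref{rem:req seq Hilb} (applied with $r = n$),
\[
F\bigl(S/(g_1,\ldots,g_n),s\bigr) = \frac{1}{(1-s^2)^n}\prod_{k=1}^n \bigl(1-s^{\deg g_k}\bigr) = \prod_{k=1}^n \frac{1-s^{\deg g_k}}{1-s^2}.
\]
Since in the grading used here each $x_i$ has degree $2$, every $\deg g_k$ is a positive even integer, so each factor $\tfrac{1-s^{\deg g_k}}{1-s^2} = 1 + s^2 + \cdots + s^{\deg g_k - 2}$ is a polynomial; hence $F(S/(g_1,\ldots,g_n),s)$ is a polynomial in $s$, i.e. $S/(g_1,\ldots,g_n)$ is a finite-dimensional $\Q$-vector space. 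Finally, tensoring a free-module decomposition $S \cong \bigoplus_{\alpha} R$ with $R/R_+ \cong \Q$ over $R$ yields $S/(g_1,\ldots,g_n) = S \otimes_R (R/R_+) \cong \bigoplus_\alpha \Q$, so the index set has cardinality $\dim_\Q S/(g_1,\ldots,g_n) < \infty$. Thus $S$ is a free, and in particular finitely generated, $R = \Q[g_1,\ldots,g_n]$-module, as claimed.

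There is really no serious obstacle here; the only points requiring any care are the identification $R_+S = (g_1,\ldots,g_n)S$ (which is exactly where algebraic independence enters) and the observation that the $\deg g_k$ are even, so that the Hilbert series displayed above is a genuine polynomial rather than merely a formal power series. As an alternative to the argument sketched above, one may simply invoke \cite[Chapter~1, Theorem~5.9]{stan96}, of which this lemma is a direct consequence.
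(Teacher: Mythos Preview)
Your argument is correct. The paper itself does not give a proof of this lemma at all; it simply asserts that the statement ``follows from \cite[Chapter~1, Theorem~5.9]{stan96}'' --- precisely the alternative you mention in your final sentence. Your write-up therefore goes further than the paper does: rather than appealing to Stanley's theorem as a black box, you unpack the argument using only the two characterizations of regular sequences already recorded in Remark~\ref{remark:characterizations of regular sequences}, which makes the exposition more self-contained. The only minor point one might add for completeness is that the free-module decomposition coming from Remark~\ref{remark:characterizations of regular sequences}\eqref{rem:req seq alg-ind} can be taken to be graded (this is how Stanley states it), though as you implicitly observe, the tensoring step $S \otimes_R R/R_+ \cong \bigoplus_\alpha \Q$ does not actually require this.
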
 

With these preliminaries in place, we begin our computation of the Hilbert series $F(H^*_S(\Hess(N,h)), s)$ of the equivariant
cohomology ring $H^*_S(\Hess(\mathsf{N},h))$. Our first step towards this goal is 
to compute the Hilbert series of 
the ordinary cohomology ring $H^*(\Hess(\mathsf{N},h))$ using results
of Mbirika \cite{mb} (we also took inspiration from related work of Peterson and Brion-Carrell
as in \cite{br-ca04}). 

\begin{lemma}\label{lemma:hesshil} 
Let $n$ be a positive integer and $h: [n] \to
[n]$ a Hessenberg function. Let $\Hess(\mathsf{N},h)$ denote the
corresponding regular nilpotent Hessenberg variety. Then the Hilbert series of the ordinary
cohomology ring $H^*(\Hess(\mathsf{N},h))$ $($equipped with the usual grading$)$ is 
\begin{equation*} 
F(H^*(\Hess(\mathsf{N},h)),s)=\prod_{j=1}^{n}\frac{1-s^{2(h(j)-j+1)}}{1-s^2}.
\end{equation*}
\end{lemma}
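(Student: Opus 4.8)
The plan is to compute $F(H^*(\Hess(\mathsf{N},h)),s)$ by relating it to the Hilbert series of the combinatorially defined quotient ring of Mbirika--Tymoczko \cite{mb-ty13, mb}, whose Hilbert series is already known, and then rewriting the resulting product formula in terms of the Hessenberg function $h$. Concretely, Mbirika's work produces a monomial basis (or a Gr\"obner-basis-type statement) for a quotient of $\Q[x_1,\ldots,x_n]$ by an ideal built from ``truncated symmetric polynomials'' associated to $h$, and the dimension count of this basis yields a Hilbert series of the form $\prod_{i=1}^n \bigl(1+s^2+\cdots+s^{2(\beta_i-1)}\bigr)$ for an appropriate tuple $(\beta_1,\ldots,\beta_n)$ of positive integers read off from $h$. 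The first step is therefore to recall precisely the relevant statement from \cite{mb} and to identify the exponents $\beta_i$.

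The second step is the ``small trick involving the Hessenberg space's negative roots'' alluded to in the section introduction: one observes that the multiset of exponents appearing in Mbirika's formula, when reindexed appropriately, is exactly the multiset $\{h(j)-j+1 \mid j\in[n]\}$. The cleanest way to see this is to count, for each column $j$, the boxes of $H(h)$ at or below the diagonal in that column --- there are $h(j)-j+1$ of them --- and to match this with the degrees appearing in the combinatorial presentation; equivalently, $\sum_j (h(j)-j)$ equals the total number of boxes in $\NR(h)$, which by~\eqref{eq:dim of HessN and HessS} is $\dim_{\C}\Hess(\mathsf{N},h)$, and a refined bookkeeping upgrades this equality of top degrees to the full matching of the two multisets of exponents. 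Granting this identification, one obtains
\[
F(H^*(\Hess(\mathsf{N},h)),s)=\prod_{j=1}^n\bigl(1+s^2+\cdots+s^{2(h(j)-j)}\bigr)=\prod_{j=1}^n\frac{1-s^{2(h(j)-j+1)}}{1-s^2},
\]
which is the claimed formula. The remaining subtlety is to make sure that the combinatorial quotient of \cite{mb} genuinely has the same Hilbert series as $H^*(\Hess(\mathsf{N},h))$; here one uses that Mbirika (building on Peterson and Brion--Carrell \cite{br-ca04}) establishes an additive (graded vector space) comparison, even though --- as Theorem A of this paper shows --- the ring structures need not agree.

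I expect the main obstacle to be the bookkeeping in the second step: correctly extracting the exponent multiset from the Mbirika--Tymoczko presentation and verifying it coincides with $\{h(j)-j+1\}_{j=1}^n$, rather than some other tuple with the same sum. This is purely combinatorial but requires care with the indexing conventions in \cite{mb-ty13}. An alternative, more self-contained route --- which avoids relying on \cite{mb} at all --- would be to use the paving by complex affine cells of $\Hess(\mathsf{N},h)$ \cite[Theorem 7.1]{ty}: since the cohomology is concentrated in even degrees and the Poincar\'e polynomial equals the generating function for the cell dimensions, one would instead identify the cell-dimension generating function with $\prod_j(1+s^2+\cdots+s^{2(h(j)-j)})$ directly. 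Either way, the analytic content is trivial; the work is in the indexing.
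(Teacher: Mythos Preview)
Your proposal is correct and follows essentially the same approach as the paper: invoke Mbirika's result \cite[Theorem~3.4.3]{mb} to identify $F(H^*(\Hess(\mathsf{N},h)),s)$ with the Hilbert series of the combinatorial quotient, then re-index the resulting product from the row-based exponents $\beta_i$ to the column-based exponents $h(j)-j+1$. The paper's execution differs only in minor details: rather than citing Mbirika's monomial basis directly, it proves the generators $h_{\beta_i}(x_i,\ldots,x_n)$ form a regular sequence (via the zero-locus criterion) and then reads off the Hilbert series; and for the re-indexing it writes $\prod_i\frac{1-s^{2\beta_i}}{1-s^2}$ as a telescoping product $\prod_{(i,j)\in\NR(h)}\frac{1-s^{2(\height(i,j)+1)}}{1-s^{2\height(i,j)}}$ over the negative roots, then regroups by columns, rather than arguing directly that the multisets $\{\beta_i\}$ and $\{h(j)-j+1\}$ coincide (though they do, and your direct-matching argument would work equally well).
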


The following proof of Lemma~\ref{lemma:hesshil} uses a
trick which re-writes certain expressions as a product over negative
roots $\NR(h)$ contained in the Hessenberg space as in~\eqref{eq:def
  NR(h)}. 

\begin{proof}[Proof of Lemma~\ref{lemma:hesshil}]
Following \cite{mb}, we define integers $\beta_i$ for $i\in[n]$ by 
\[
\beta_i :=i-|\{k\in[n]\mid h(k)<i\}|. 
\]
It is straightforward to see that $\beta_i-1$ is the number of
elements in $\NR(h)$ which are contained in the $i$-th row, i.e.  pairs
in $\NR(h)$ whose first coordinates equal to $i$.
In particular, $\beta_i > 0$ for all $i \in [n]$. 
For a positive integer $\beta$, denote by $h_\beta(x_1, \ldots, x_k)$ the degree-$\beta$ complete symmetric polynomial in the listed
variables. Following \cite{mb} we define $J_h$ to be the ideal in $\Q[x_1, \ldots, x_n]$ generated by 
the polynomials $h_{\beta_n}(x_n), h_{\beta_{n-1}}(x_{n-1},x_n),
\dots, h_{\beta_1}(x_1,\dots,x_n)$. 
It turns out \cite[Theorem 3.4.3]{mb} that the Hilbert series of $H^*(\Hess(\mathsf{N},h))$ and
$\Q[x_1,\dots,x_n]/J_h$ coincide: 
\begin{equation*}
F(H^*(\Hess(\mathsf{N},h)),s)=F(\Q[x_1,\dots,x_n]/J_h,s) .
\end{equation*}
We next claim that this sequence 
$h_{\beta_n}(x_n),h_{\beta_{n-1}}(x_{n-1},x_n)$,$\dots,$ $h_{\beta_1}(x_1,\dots,x_n)$ forms a regular sequence. Since
there are precisely $n$ elements in the sequence, which is equal to
the number of variables in the ambient polynomial ring, we may use the
characterization in Remark~\ref{remark:characterizations of regular
  sequences}\eqref{rem:req seq zero-set} above; in particular, it suffices to see that their
common zero locus in $\C^n$ is just the origin $0 \in \C^n$. Noting as above
that each $\beta_i$ is positive, we see first that
$h_{\beta_n}(x_n) = x_n^{\beta_n} =0$ implies $x_n = 0$. But if
$x_n=0$ then 
$h_{\beta_{n-1}}(x_n, x_{n-1}) = h_{\beta_{n-1}}(0,x_{n-1}) =
h_{\beta_{n-1}}(x_{n-1}) = x_{n-1}^{\beta_{n-1}} = 0$ and we may conclude
$x_{n-1}=0$. Continuing in this manner we see that all $x_i=0$, i.e. 
the common zero locus is $\{0\}$ as desired. Using the
characterization of regular sequences in
Remark~\ref{remark:characterizations of regular sequences}\eqref{rem:req seq Hilb} we then have 
\begin{equation}\label{eq:beta_i}
F(\Q[x_1,\dots,x_n]/J_h,s) = F(\Q[x_1,\dots,x_n],s)\prod_{i=1}^{n}(1-s^{2\beta_i}) 
 = \prod_{i=1}^{n}\prod_{k=1}^{\beta_i-1}\frac{1-s^{2(k+1)}}{1-s^{2k}}. 
\end{equation} 
As we already observed, $\beta_i-1$ counts the number of pairs $(i,j)$
in $\NR(h)$ in the
$i$-th row. Put another way, the set of pairs in $\NR(h)$ with first
coordinate equal to $i$ can also be expressed as 
$\{(i, i-k) \mid 1 \leq k \leq \beta_i-1\}
$
and in particular we see that 
the differences $i-(i-k)=k$ of the coordinates
range precisely between $1$ and $\beta_i-1$. Using the same reasoning
for each $i \in [n]$, 
the last expression in~\eqref{eq:beta_i} can be
re-written as 
\begin{equation}\label{eq:heights} \prod_{(i,j) \in \NR(h)} \frac{1 - s^{2(\height(i,j)+1)}}{1 - s^{2 \cdot \height(i,j)}}
\end{equation}
where $\height(i,j):=i-j$ is called the height\footnote{Here, contrary to customary usage, we require that the height of a negative root is a \emph{positive} integer.}.
But now we may decompose the terms in~\eqref{eq:heights} according to
columns instead of rows. In this case, from the definition of $\NR(h)$
it is straightforward to rewrite~\eqref{eq:heights} as
\begin{align*}
\prod_{j=1}^{n}\prod_{k=1}^{h(j)-j}\frac{1-s^{2(k+1)}}{1-s^{2k}} 
=&\prod_{j=1}^{n}\frac{1-s^{2(h(j)-j+1)}}{1-s^2}. 
\end{align*}
This proves the claim. 
\end{proof} 

We now wish to relate the Hilbert series of
$H^\ast(\Hess(\mathsf{N},h))$ to the Hilbert series of the
quotient ring $\Q[x_1,\ldots,x_n]/\check \Ih$. This will in turn allow
us to compute and compare the Hilbert series of
$H^\ast_S(\Hess(\mathsf{N},h))$ and $\Q[x_1,\ldots,x_n,t]/\Ih$.
However, 
in order to accomplish this, we must first analyze the relationship between the series $f_{i,j}$
defined in Section~\ref{sec:f} and the 
series $\cf_{i,j}\in\Q[x_1, \ldots, x_n]$ 
defined in \eqref{eq:definition of f check}. It turns out that
$\cf_{i,j}$ is obtained from $f_{i,j}$ by setting the variable $t$
equal to $0$. 

\begin{lemma}\label{lem:formula for f check}
For all $n\geq i\geq j\geq 1$, we have
\begin{align}\label{formula for f check}
f_{i,j}(x_1,\ldots, x_n, t=0) = 
\check{f}_{i, j} 
=\sum_{k=1}^j\Big(\x_k\prod_{\ell=j+1}^i(\x_k-\x_\ell)\Big).
\end{align}
\end{lemma}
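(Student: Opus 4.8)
The plan is to show that the family of polynomials $f_{i,j}(x_1,\dots,x_n,0)$ and the family $\cf_{i,j}$ satisfy the same initial data and the same recursion, and then to conclude by induction on $i$. Setting $t=0$ in Definition~\ref{definition:fij} (that is, in~\eqref{eq:f-1} and~\eqref{eq:f-3}) shows that $f_{j,j}(x_1,\dots,x_n,0)=p_j|_{t=0}=\sum_{k=1}^j x_k$ and
\[
f_{i,j}(x_1,\dots,x_n,0)=f_{i-1,j-1}(x_1,\dots,x_n,0)+(x_j-x_i)\,f_{i-1,j}(x_1,\dots,x_n,0)\qquad (n\ge i>j\ge 1),
\]
with the convention $f_{*,0}=0$. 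Hence it suffices to prove that the polynomials $\cf_{i,j}$ of~\eqref{eq:definition of f check} satisfy $\cf_{j,j}=\sum_{k=1}^j x_k$ and $\cf_{i,j}=\cf_{i-1,j-1}+(x_j-x_i)\cf_{i-1,j}$ for $n\ge i>j\ge 1$, where we again take $\cf_{*,0}=0$ (which agrees with the empty sum in~\eqref{eq:definition of f check}).

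The initial condition is immediate, since the product $\prod_{\ell=j+1}^j(x_k-x_\ell)$ is empty, hence equal to $1$. For the recursion, I would separate, in the defining sum of $\cf_{i,j}$, the term $k=j$ from the terms with $1\le k\le j-1$. The index $k=j$ does not occur in $\cf_{i-1,j-1}$, and the $k=j$ term of $(x_j-x_i)\cf_{i-1,j}$ equals
\[
(x_j-x_i)\,x_j\prod_{\ell=j+1}^{i-1}(x_j-x_\ell)=x_j\prod_{\ell=j+1}^{i}(x_j-x_\ell),
\]
which is exactly the $k=j$ term of $\cf_{i,j}$. For $1\le k\le j-1$, using $\prod_{\ell=j}^{i-1}(x_k-x_\ell)=(x_k-x_j)\prod_{\ell=j+1}^{i-1}(x_k-x_\ell)$, the $k$-th terms of $\cf_{i-1,j-1}$ and of $(x_j-x_i)\cf_{i-1,j}$ add up to
\[
x_k\prod_{\ell=j+1}^{i-1}(x_k-x_\ell)\bigl[(x_k-x_j)+(x_j-x_i)\bigr]=x_k\prod_{\ell=j+1}^{i}(x_k-x_\ell),
\]
which is the $k$-th term of $\cf_{i,j}$. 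Summing over $1\le k\le j$ gives the claimed recursion.

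With the initial condition and the recursion in hand, the lemma follows by induction on $i$: for $i=j$ both sides of~\eqref{formula for f check} equal $\sum_{k=1}^j x_k$, and for $n\ge i>j\ge 1$ the recursions for $f_{i,j}(x_1,\dots,x_n,0)$ and for $\cf_{i,j}$, together with the inductive hypothesis applied to $f_{i-1,j-1}$ and $f_{i-1,j}$ (and the matching conventions $f_{*,0}=0=\cf_{*,0}$ when $j=1$), give $f_{i,j}(x_1,\dots,x_n,0)=\cf_{i,j}$. The argument is entirely elementary; the only point worth highlighting is the telescoping identity $(x_k-x_j)+(x_j-x_i)=x_k-x_i$, which is what glues the contributions of $\cf_{i-1,j-1}$ and $\cf_{i-1,j}$ into $\cf_{i,j}$, so there is no genuine obstacle beyond bookkeeping the ranges of the products.
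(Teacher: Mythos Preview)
Your proof is correct and follows essentially the same approach as the paper: both set $t=0$ to obtain the recursion for $f_{i,j}(x,0)$, verify the closed-form expression satisfies the same recursion via the identity $(x_k-x_j)+(x_j-x_i)=x_k-x_i$ after separating the $k=j$ term, and conclude by induction. The paper inducts on a total order on pairs $(i,j)$ (ordered first by $i-j$, then by $i$) rather than simply on $i$, but since the recursion only requires $(i-1,j-1)$ and $(i-1,j)$, your induction on $i$ is equally valid and slightly cleaner.
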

\begin{proof}
The second equality is just the definition \eqref{eq:definition of f
  check}, so we only need to prove the first equality.
Let $f'_{i,j}:=f_{i,j}(x_1,\dots,x_n, 0) \in \Q[x_1,\ldots,x_n]$. We
wish to show $f'_{i,j} = \cf_{i,j}$. From Definition~\ref{definition:fij}
we immediately see that 
these polynomials satisfy the following recursion relations: 
\begin{equation}\label{eq:recursion} 
\begin{split} 
&f'_{j,j} = \sum_{k=1}^j x_k \quad \textup{ for } j \in [n], \\
&f'_{i,j} = f'_{i-1,j-1} + (x_j - x_i) f'_{i-1,j} \quad \textup{ for } n \geq
i > j \geq 1.
\end{split}
\end{equation}
We introduce a total order on the set $\{(i,j) \in [n] \times [n] \mid
i \geq j\}$ by the condition 
\[
(i', j') \leq (i,j) \quad \textup{ if and only if } \quad  (i'-j' < i-j) \textup{ or }
(i'-j' = i-j \textup{ and } i' \leq i)
\]
and we prove the claim by induction on this total order. When $i=j$,
it is clear by~\eqref{eq:recursion} that the claim holds. Let $i>j$
and assume the claim holds for $(i',j')$ less than $(i,j)$. Since we
have $(i-1,j-1) < (i,j)$ and $(i-1,j)<(i,j)$ by definition of our
total order, the inductive hypothesis and~\eqref{eq:recursion} show
that 
\begin{align*}
f'_{i,j} & = f'_{i-1,j-1} + (x_j- x_i) f'_{i-1,j} \\
 & = \sum_{k=1}^{j-1} \Big(x_k \prod_{\ell=j}^{i-1} (x_k - x_\ell)\Big) + (x_j
 - x_i) \sum_{k=1}^j \Big(x_k \prod_{\ell=j+1}^{i-1} (x_k - x_\ell) \Big) \\
 & = \sum_{k=1}^{j-1} \Big(x_k \prod_{\ell=j+1}^{i-1} (x_k - x_\ell)\Big)
 \Big((x_k-x_j) + (x_j - x_i)\Big) + (x_j - x_i) x_j \prod_{\ell=j+1}^{i-1}
 (x_j - x_\ell) \\
 & = \sum_{k=1}^{j-1} \Big(x_k \prod_{\ell=j+1}^i (x_k - x_\ell)\Big) + x_j
 \prod_{\ell=j+1}^i (x_j - x_\ell) = \sum_{k=1}^j \Big(x_k
 \prod_{\ell=j+1}^i (x_k - x_\ell)\Big) = \cf_{i,j}
\end{align*} 
as desired. 
\end{proof}

For future reference, we also record the degrees of the polynomials
$f_{i,j}$ and $\cf_{i,j}$, both of which are immediate from their definitions. 

\begin{lemma} 
The degree of $f_{i,j}$ and $\cf_{i,j}$ in the variables $x_i$ and $t$
is $i-j+1$. With respect to the grading in $\Q[x_1,\ldots,x_n,t]$ and
$\Q[x_1,\ldots,x_n]$ respectively, we have 
\begin{align}\label{eq:deg of f and f check}
\mathrm{deg}(f_{i,j}) = \mathrm{deg}(\cf_{i,j}) = 2(i-j+1).
\end{align}
\end{lemma}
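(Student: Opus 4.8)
The plan is a straightforward induction on the difference $i-j$, using the recursion in Definition~\ref{definition:fij}. For the argument it is convenient to assign weight $1$ to each of $x_1,\dots,x_n$ and to $t$, so that the graded degree asserted in the lemma is exactly twice the resulting weighted degree. First I would note that $p_i=\sum_{k=1}^i(x_k-kt)$ from \eqref{eq:f-1} is homogeneous of weight $1$ (the integer coefficients $k$ do not change the weight), which settles the base case $f_{j,j}=p_j$. For the inductive step I would feed the inductive hypothesis into the recursion \eqref{eq:f-3}, $f_{i,j}=f_{i-1,j-1}+(x_j-x_i-t)f_{i-1,j}$: here $f_{i-1,j-1}$ has weight $(i-1)-(j-1)+1=i-j+1$, while $f_{i-1,j}$ has weight $(i-1)-j+1=i-j$, so multiplying by the weight-$1$ factor $x_j-x_i-t$ again gives weight $i-j+1$. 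Thus $f_{i,j}$ is a sum of homogeneous polynomials of common weight $i-j+1$, hence homogeneous of weight at most $i-j+1$.

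The only point that is not purely formal is checking that the weight is \emph{exactly} $i-j+1$, i.e. that $f_{i,j}\neq 0$. I would run the same induction while also tracking the effect of the evaluation $x_2=x_3=\dots=x_n=0$, $t=0$. Using \eqref{eq:f-3} together with the convention $f_{*,0}=0$, this evaluation sends $f_{j,j}$ to $x_1$, and inductively sends $f_{i,j}$ to $x_1^{\,i-j+1}$ (the cross term $(x_j-x_i-t)f_{i-1,j}$ survives the evaluation only when $j=1$). Since $x_1^{\,i-j+1}\neq 0$, this exhibits a weight-$(i-j+1)$ monomial in $f_{i,j}$, so $\deg f_{i,j}=i-j+1$.

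For $\cf_{i,j}$ I would use Lemma~\ref{lem:formula for f check}, which says that $\cf_{i,j}=f_{i,j}(x_1,\dots,x_n,t=0)$ and also gives the closed formula \eqref{eq:definition of f check}. Setting $t=0$ cannot raise the weighted degree, so $\deg\cf_{i,j}\le\deg f_{i,j}=i-j+1$; on the other hand, evaluating the closed formula at $x_2=\dots=x_n=0$ kills every summand with $k\geq 2$ and sends the $k=1$ summand to $x_1^{\,i-j+1}$, so $\deg\cf_{i,j}\ge i-j+1$. Combining, $\deg\cf_{i,j}=\deg f_{i,j}=i-j+1$ in the weight-$1$ normalization, and doubling yields $\deg(f_{i,j})=\deg(\cf_{i,j})=2(i-j+1)$ for the gradings on $\Q[x_1,\dots,x_n,t]$ and $\Q[x_1,\dots,x_n]$. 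I expect no genuine obstacle here; the only care needed is the non-cancellation point above, which the explicit $t=0$ (and $x_k=0$) specialization renders transparent.
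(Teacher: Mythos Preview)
Your argument is correct and simply spells out what the paper declares ``immediate from their definitions'': homogeneity follows at once from the recursion \eqref{eq:f-3} (and from the closed formula \eqref{eq:definition of f check} for $\cf_{i,j}$), while your specialization $x_2=\cdots=x_n=0$, $t=0$ cleanly certifies nonvanishing. One tiny wording point: when you ``run the same induction'' for nonvanishing, note that for $j\ge 2$ the evaluation reduces $f_{i,j}$ to $f_{i-1,j-1}$, which has the \emph{same} difference $i-j$; so either induct on $i$ (which covers both $f_{i-1,j-1}$ and $f_{i-1,j}$) or simply iterate the reduction down to $f_{i-j+1,1}\mapsto x_1^{\,i-j+1}$, exactly as your parenthetical remark suggests.
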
 

We denote by 
\begin{align}\label{eq:def of Ih check}
\check \Ih := (\cf_{h(j),j} \mid 1\leq j\leq n ) \subset \Q[x_1,\ldots,x_n]
\end{align}
the ideal of
$\Q[x_1,\ldots,x_n]$ generated by the polynomials $\cf_{h(j),j}$. 
Our next goal is to relate the polynomials $\cf_{h(j),j}$ (and the
ideal $\check \Ih$ they generate) with the Hilbert series
$F(H^\ast(\Hess(\mathsf{N},h)), s)$. Armed with Lemma~\ref{lemma:hesshil} and the
results summarized in Remark~\ref{remark:characterizations of regular
  sequences}, we can accomplish this goal once we show that the
$\cf_{h(j),j}$ form a regular sequence, which we do in the next two
lemmas. 

\begin{lemma} \emph{(}\cite[Exercise 1, page 74]{fult97}\emph{)}
Let $m$ be an arbitrary positive integer and $\mathsf{y}_1, \ldots,
\mathsf{y}_m$ be indeterminates. For $i$ a positive integer let  $e_i(\mathsf{y}) := \sum_{1 \leq k_1< \cdots < k_i \leq m}
\mathsf{y}_{k_1} \cdots \mathsf{y}_{k_i}$ and $\mathsf{p}_i(\mathsf{y}) := \sum_{k=1}^m \mathsf{y}_k^i$
be the $i$-th elementary symmetric polynomial and the $i$-th power sum
respectively. 
Then we have
\begin{align}\label{eq:4-2 =0 form}
-\sum_{r=1}^{q-1}(-1)^{r}e_{r}(\mathsf{y})\emph{$\mathsf{p}$}_{q-r}(\mathsf{y})=(-1)^q qe_q(\mathsf{y})+ \emph{$\mathsf{p}$}_q(\mathsf{y}) \quad \text{for any $q\geq 1$}.
\end{align}
\end{lemma}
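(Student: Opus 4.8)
The plan is to recognize this identity as Newton's identity relating the power sums to the elementary symmetric polynomials, and to prove it by a generating-function computation carried out in the ring of formal power series $\Q[\mathsf{y}_1,\dots,\mathsf{y}_m][[T]]$. Introduce
\[
E(T) := \prod_{k=1}^{m}(1+\mathsf{y}_k T) = \sum_{i\geq 0} e_i(\mathsf{y})\,T^i, \qquad P(T) := \sum_{j\geq 1} \mathsf{p}_j(\mathsf{y})\,T^{j-1},
\]
where $e_0 := 1$ and $e_i(\mathsf{y}) = 0$ for $i>m$ (so that $E(T)$ is a polynomial of degree $m$), and where the second formula is justified by the geometric expansion $\frac{\mathsf{y}_k}{1-\mathsf{y}_k T} = \sum_{l\geq 0}\mathsf{y}_k^{l+1}T^{l}$, valid in $\Q[\mathsf{y}_1,\dots,\mathsf{y}_m][[T]]$, summed over $k$; thus $P(T) = \sum_{k=1}^m \frac{\mathsf{y}_k}{1-\mathsf{y}_k T}$.

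The first step is to establish the differential relation
\[
\frac{d}{dT}\,E(-T) = -\,P(T)\,E(-T)
\]
in $\Q[\mathsf{y}_1,\dots,\mathsf{y}_m][[T]]$. This follows from the product rule applied to $E(-T) = \prod_{k=1}^m(1-\mathsf{y}_k T)$, which gives $\frac{d}{dT}E(-T) = E(-T)\sum_{k=1}^m \frac{-\mathsf{y}_k}{1-\mathsf{y}_k T} = -E(-T)P(T)$ once one notes that each factor $1-\mathsf{y}_k T$ is a unit in the power series ring; equivalently, this is the formal logarithmic derivative of $E(-T)$. The second step is to compare the coefficient of $T^{q-1}$ on the two sides. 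On the left, $E(-T) = \sum_{i\geq 0}(-1)^i e_i(\mathsf{y})T^i$, so $\frac{d}{dT}E(-T) = \sum_{i\geq 1}(-1)^i i\,e_i(\mathsf{y})T^{i-1}$ and the coefficient of $T^{q-1}$ is $(-1)^q q\,e_q(\mathsf{y})$. On the right, $-P(T)E(-T) = -\bigl(\sum_{j\geq 1}\mathsf{p}_j(\mathsf{y})T^{j-1}\bigr)\bigl(\sum_{i\geq 0}(-1)^i e_i(\mathsf{y})T^{i}\bigr)$, whose coefficient of $T^{q-1}$ is $-\sum_{i=0}^{q-1}(-1)^i e_i(\mathsf{y})\,\mathsf{p}_{q-i}(\mathsf{y})$. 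Splitting off the $i=0$ term, which equals $-\mathsf{p}_q(\mathsf{y})$ since $e_0 = 1$, and rearranging yields precisely
\[
-\sum_{r=1}^{q-1}(-1)^{r}e_{r}(\mathsf{y})\,\mathsf{p}_{q-r}(\mathsf{y}) = (-1)^q q\,e_q(\mathsf{y}) + \mathsf{p}_q(\mathsf{y}),
\]
with no hypothesis relating $m$ and $q$, since the vanishing $e_i(\mathsf{y}) = 0$ for $i>m$ is already built into $E(T)$.

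Since this is a well-known identity, I do not anticipate any genuine obstacle; the only point deserving care is to phrase the computation in $\Q[\mathsf{y}_1,\dots,\mathsf{y}_m][[T]]$ so that the inverses $(1-\mathsf{y}_k T)^{-1}$, the geometric expansions, and the formal derivative are all legitimate operations. If one prefers to avoid power series altogether, one can instead argue by induction on $q$: the inductive step reduces to checking that, in the signed sum $-\sum_{r=1}^{q-1}(-1)^r e_r(\mathsf{y})\mathsf{p}_{q-r}(\mathsf{y})$, each monomial of degree $q$ is produced with the correct multiplicity according to how many distinct variables it involves, but I expect the generating-function proof above to be the shortest to write out.
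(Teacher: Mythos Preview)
Your proof is correct; it is the standard generating-function derivation of Newton's identity, and every step (the formal logarithmic derivative of $E(-T)$, the coefficient extraction, and the final rearrangement) checks out. The paper itself does not prove this lemma at all --- it simply cites it as an exercise in Fulton's \emph{Young Tableaux} --- so you have supplied a self-contained argument where the paper relies on a reference.
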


\begin{lemma} \label{lemm:4-2}
The polynomials $\cf_{h(1),1}, \cf_{h(2),2}, \cdots, \cf_{h(n),n}$ form a regular sequence in $\Q[x_1,\dots,x_n]$. 
\end{lemma}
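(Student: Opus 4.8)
The plan is to verify the criterion of Remark~\ref{remark:characterizations of regular sequences}\eqref{rem:req seq zero-set}: since $\cf_{h(1),1},\dots,\cf_{h(n),n}$ are $n$ homogeneous polynomials of positive degree — the degree of $\cf_{h(j),j}$ being $2(h(j)-j+1)\ge 2$ by~\eqref{eq:deg of f and f check} — in the $n$ variables $x_1,\dots,x_n$, it suffices to prove that their common zero locus in $\C^n$ is $\{0\}$, and I will get this from the stronger assertion that $\check\Ih$ contains every power sum $\mathsf{p}_m(x):=\sum_{k=1}^n x_k^m$, $m\in[n]$. Two preliminary reductions are needed. First, expanding $\prod_{\ell=j+1}^i(x_k-x_\ell)=\sum_{r=0}^{i-j}(-1)^r e_r(x_{j+1},\dots,x_i)x_k^{i-j-r}$ in the definition~\eqref{eq:definition of f check} and summing over $k$ gives
\[
\cf_{i,j}=\sum_{r=0}^{i-j}(-1)^r e_r(x_{j+1},\dots,x_i)\,\mathsf{p}_{i-j+1-r}(x_1,\dots,x_j),\qquad n\ge i\ge j\ge 1 .
\]
Second, since $\cf_{i,j}=f_{i,j}(x_1,\dots,x_n,0)$ by Lemma~\ref{lem:formula for f check}, setting $t=0$ in Lemma~\ref{relation for Ih and Ih'} (which applies because $n\ge h(j)$ for every $j$) shows $\cf_{n,j}\in\check\Ih$ for all $j\in[n]$; hence $\check I_{(n,\dots,n)}=(\cf_{n,1},\dots,\cf_{n,n})\subseteq\check\Ih$.

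The key step is to prove by induction on $m$ that $\mathsf{p}_m(x)\in\check I_{(n,\dots,n)}$ for all $m\in[n]$. The case $m=1$ holds since $\mathsf{p}_1(x)=\cf_{n,n}$. For $m\ge 2$, put $j:=n-m+1$ and $\mathsf{y}:=(x_{n-m+2},\dots,x_n)$, a tuple of $m-1$ variables. Substituting $\mathsf{p}_{m-r}(x_1,\dots,x_{n-m+1})=\mathsf{p}_{m-r}(x)-\mathsf{p}_{m-r}(\mathsf{y})$ into the expansion of $\cf_{n,n-m+1}$ above gives
\[
\cf_{n,n-m+1}=\sum_{r=0}^{m-1}(-1)^r e_r(\mathsf{y})\,\mathsf{p}_{m-r}(x)\;-\;\sum_{r=0}^{m-1}(-1)^r e_r(\mathsf{y})\,\mathsf{p}_{m-r}(\mathsf{y}) .
\]
The second sum is identically zero: since $\mathsf{y}$ has only $m-1$ entries, $e_m(\mathsf{y})=0$, so Newton's identity~\eqref{eq:4-2 =0 form} with $q=m$, applied in the variables $\mathsf{y}$, yields $\sum_{r=0}^{m-1}(-1)^r e_r(\mathsf{y})\mathsf{p}_{m-r}(\mathsf{y})=-(-1)^m m\,e_m(\mathsf{y})=0$. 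Therefore $\cf_{n,n-m+1}=\mathsf{p}_m(x)+\sum_{r=1}^{m-1}(-1)^r e_r(\mathsf{y})\mathsf{p}_{m-r}(x)$, whence $\mathsf{p}_m(x)\in\check I_{(n,\dots,n)}+(\mathsf{p}_1(x),\dots,\mathsf{p}_{m-1}(x))=\check I_{(n,\dots,n)}$ by the inductive hypothesis. Combining, $(\mathsf{p}_1(x),\dots,\mathsf{p}_n(x))\subseteq\check I_{(n,\dots,n)}\subseteq\check\Ih$. Since a common zero $a\in\C^n$ of $\mathsf{p}_1(x),\dots,\mathsf{p}_n(x)$ satisfies $e_1(a)=\dots=e_n(a)=0$ by the Newton identities over $\Q$, so that $\prod_{k=1}^n(T-a_k)=T^n$ and hence $a=0$, the common zero locus of $\check\Ih$ is contained in $\{0\}$; it is therefore equal to $\{0\}$, and Remark~\ref{remark:characterizations of regular sequences}\eqref{rem:req seq zero-set} finishes the proof.

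The argument is essentially formal once the power-sum expansion of $\cf_{i,j}$ is available, and I expect the only delicate point to be the cancellation of the ``$\mathsf{y}$-part'' in the key step: one must notice that $\mathsf{y}=(x_{n-m+2},\dots,x_n)$ has exactly $m-1$ entries, so that $e_m(\mathsf{y})=0$ and Newton's identity degenerates, leaving $\cf_{n,n-m+1}$ congruent to $\mathsf{p}_m(x)$ modulo the lower power sums — after which everything reduces to the classical fact that $\mathsf{p}_1(x),\dots,\mathsf{p}_n(x)$ is a regular sequence in $\Q[x_1,\dots,x_n]$.
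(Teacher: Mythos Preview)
Your proof is correct and follows essentially the same route as the paper's: both reduce to $h=(n,\dots,n)$ via Lemma~\ref{relation for Ih and Ih'}, derive the identity $\cf_{n,n-m+1}=\mathsf{p}_m(x)+\sum_{r=1}^{m-1}(-1)^r e_r(x_{n-m+2},\dots,x_n)\,\mathsf{p}_{m-r}(x)$ using Newton's identity~\eqref{eq:4-2 =0 form}, and conclude via the zero-set criterion for regular sequences. The only cosmetic difference is that the paper packages the key step as ``the transition matrix from $\mathsf{p}_1,\dots,\mathsf{p}_n$ to $\cf_{n,1},\dots,\cf_{n,n}$ is unitriangular'' (yielding equality of ideals), whereas you run the same computation as an induction on $m$ (yielding the containment $(\mathsf{p}_1,\dots,\mathsf{p}_n)\subseteq\check I_{(n,\dots,n)}$, which is all that is needed).
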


\begin{proof}
We use Remark \ref{remark:characterizations of regular sequences}\eqref{rem:req seq zero-set} to prove this claim, that is, we show that the solution set in $\C^n$ of the equations $\cf_{h(j),j}=0$ for all $j\in [n]$ consists of only the origin $\{0\}$.
Observe that if $\cf_{h(j),j}=0$ for all $j\in [n]$ then from Lemma~\ref{relation for Ih and Ih'} we also have $\cf_{n,j}=0$ for all $j\in [n]$ since we have \eqref{formula for f check} and the substitution $t=0$ is a ring homomorphism from $\Q[x_1,\dots,x_n,t]$ to $\Q[x_1,\dots,x_n]$. 
Hence it suffices to prove the statement of the lemma
in the special case 
when $h(j)=n$ for all $j \in [n]$, i.e.  
that if $\cf_{n,j}=0$ for all $j \in [n]$ then $\x_j=0$ for all $j \in [n]$. 

To prove this, we first claim that for $j\in[n]$ we have
\begin{align}\label{eq:f check and power sum}
\cf_{n,j}=\sum_{i=0}^{j-1}(-1)^ie_i(\x_{n+2-j},\dots,\x_n)\mathsf{p}_{j-i}(x)
\end{align}
where we denote $\mathsf{p}_{j-i}(x)=\mathsf{p}_{j-i}(x_1,\dots,x_n)$.
This equality holds since 
from \eqref{formula for f check} and \eqref{eq:4-2 =0 form} the LHS is 
\begin{align*} 
\cf_{n,j}&=\sum_{k=1}^{n+1-j} \Big(x_k\prod_{\ell=n+2-j}^n(x_k-x_\ell)\Big)\\
&=\sum_{k=1}^{n+1-j}x_k\Big(\sum_{i=0}^{j-1}(-1)^ie_i(x_{n+2-j},\dots,x_n)x_k^{j-1-i}\Big)\\
&=\sum_{i=0}^{j-1}(-1)^ie_i(x_{n+2-j},\dots,x_n)\mathsf{p}_{j-i}(x_1,\dots,x_{n+1-j})\\
&=\mathsf{p}_{j}(x_1,\dots,x_{n+1-j})+\sum_{i=1}^{j-1}(-1)^ie_i(x_{n+2-j},\dots,x_n)\Big(\mathsf{p}_{j-i}(x)-\mathsf{p}_{j-i}(x_{n+2-j},\dots,x_n)\Big)\\
&=\mathsf{p}_{j}(x_1,\dots,x_{n+1-j})+\mathsf{p}_{j}(\x_{n+2-j},\dots,\x_n) +\sum_{i=1}^{j-1}(-1)^ie_i(x_{n+2-j},\dots,x_n)\mathsf{p}_{j-i}(\x) \\
&\qquad\qquad\qquad\qquad\qquad\qquad\qquad\qquad\qquad\qquad\qquad\qquad\qquad\text{by \eqref{eq:4-2 =0 form} and $e_{j}(\x_{n+2-j},\dots,\x_n)=0$} \\
&=\sum_{i=0}^{j-1}(-1)^ie_i(\x_{n+2-j},\dots,\x_n)\mathsf{p}_{j-i}(x).
\end{align*}
Now \eqref{eq:f check and power sum} shows that the transition matrix
from $\mathsf{p}_1(x),\dots,\mathsf{p}_n(x)$ to
$\cf_{n,1},\dots,\cf_{n,n}$ is lower-triangular with diagonal
  entries all equal to $1$, and hence the ideal of $\Q[x_1,\dots,x_n]$ generated by $\cf_{n,1},\dots,\cf_{n,n}$ and the ideal of $\Q[x_1,\dots,x_n]$ generated by power sums $\mathsf{p}_1(x),\dots,\mathsf{p}_n(x)$ are the same ;
\begin{align}\label{eq:f check ideal and power sum ideal}
(\cf_{n,1},\dots,\cf_{n,n})=(\mathsf{p}_1(x),\dots,\mathsf{p}_n(x))
\subset\Q[x_1,\dots,x_n].
\end{align}

Recall that we assume that $\cf_{n,j}=0$ for all $j$  with $1 \leq j \leq n$. In particular, 
we obtain $\mathsf{p}_j(x)=0$ for all $j$ with $1 \leq j \leq n$. It is well-known and easy to prove that this implies that $x_1=\cdots=x_n=0$. Now the claim follows from the characterization of regular sequences in Remark~\ref{remark:characterizations of regular sequences}\eqref{rem:req seq zero-set}.
\end{proof}

A computation of the Hilbert series is now straightforward. 

\begin{corollary}\label{cor:hilbert of ordinary coh} 
The Hilbert series of the graded $\Q$-algebras $H^*(\Hess(\N,h))$ and
$\Q[x_1,\ldots,x_n]/\check \Ih$ are equal, i.e.  
\[
F(H^*(\Hess(\N,h)), s) = F(\Q[x_1,\ldots,x_n]/\check \Ih, s) = 
\prod_{j=1}^{n}\frac{1-s^{2(h(j)-j+1)}}{1-s^2}.
\]
\end{corollary}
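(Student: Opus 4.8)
The plan is to assemble this from the three ingredients already in hand: the computation of $F(H^*(\Hess(\mathsf{N},h)),s)$ in Lemma~\ref{lemma:hesshil}, the fact that $\cf_{h(1),1},\dots,\cf_{h(n),n}$ is a regular sequence in $\Q[x_1,\dots,x_n]$ (Lemma~\ref{lemm:4-2}), and the degree identity $\deg\cf_{i,j}=2(i-j+1)$ from~\eqref{eq:deg of f and f check}. The only genuinely new step is to feed the regular sequence into the Hilbert-series characterization.

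First I would observe that $\check\Ih=(\cf_{h(1),1},\dots,\cf_{h(n),n})$ is generated by exactly $n$ positive-degree homogeneous polynomials, and by Lemma~\ref{lemm:4-2} this list is a regular sequence in $\Q[x_1,\dots,x_n]$. Applying the characterization in Remark~\ref{remark:characterizations of regular sequences}\eqref{rem:req seq Hilb} with $r=n$, and using that $\deg\cf_{h(j),j}=2(h(j)-j+1)$, we obtain
\begin{equation*}
F(\Q[x_1,\dots,x_n]/\check\Ih,s)=\frac{1}{(1-s^2)^n}\prod_{j=1}^n\bigl(1-s^{\,2(h(j)-j+1)}\bigr)=\prod_{j=1}^n\frac{1-s^{2(h(j)-j+1)}}{1-s^2}.
\end{equation*}

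Then I would simply invoke Lemma~\ref{lemma:hesshil}, which gives the identical product expression for $F(H^*(\Hess(\mathsf{N},h)),s)$, and conclude the two Hilbert series agree. There is no real obstacle here; the substantive content was already discharged in proving that the $\cf_{h(j),j}$ form a regular sequence (Lemma~\ref{lemm:4-2}) and in the negative-roots bookkeeping of Lemma~\ref{lemma:hesshil}, so this corollary is a one-line deduction combining those two results with the degree count.
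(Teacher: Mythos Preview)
Your proposal is correct and follows essentially the same route as the paper's own proof: combine Lemma~\ref{lemm:4-2} with Remark~\ref{remark:characterizations of regular sequences}\eqref{rem:req seq Hilb} and the degree computation~\eqref{eq:deg of f and f check} to obtain the Hilbert series of $\Q[x_1,\dots,x_n]/\check\Ih$, then match it against Lemma~\ref{lemma:hesshil}.
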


\begin{proof} 
Recalling that $\deg \cf_{h(j),j}=2(h(j)-j+1)$ from \eqref{eq:deg of f and f check}, Lemma \ref{lemm:4-2} and Remark \ref{remark:characterizations of regular sequences}\eqref{rem:req seq Hilb} show that 
\[
F(\Q[x_1,\ldots,x_n]/\check \Ih, s) = 
\prod_{j=1}^{n}\frac{1-s^{2(h(j)-j+1)}}{1-s^2}.
\]
Thus, together with Lemma \ref{lemma:hesshil}, we obtain the claim.
\end{proof}

We now turn our attention to the main goal of this section, which is
  the computation of the Hilbert series
  $F(H^*_S(\Hess(\mathsf{N},h)),s)$ in terms of the ideal $\Ih$
  generated by the polynomials $f_{h(j),j}$. We
  continue to use the technique of regular sequences. Indeed, our
  first step, Lemma~\ref{lemm:5-1} below, states that 
the $n+1$ homogeneous
polynomials 
$\{f_{h(1),1}, \dots, f_{h(n),n}, t\}$ form a regular sequence in
$\Q[x_1, \ldots, x_n, t]$; this in fact follows easily from the above
arguments. 

\begin{lemma} \label{lemm:5-1}
The polynomials $f_{h(1),1},\dots, f_{h(n),n}, t$ form a regular
sequence in $\Q[x_1, \ldots, x_n,t]$.
Moreover, $\Q[x_1,\dots,x_n,t]$ is a finitely generated and free
$\Q[f_{h(1),1},\dots, f_{h(n),n}, t]$-module.
\end{lemma}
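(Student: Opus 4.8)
The plan is to reduce the claim about the sequence $\{f_{h(1),1},\dots,f_{h(n),n},t\}$ in $\Q[x_1,\dots,x_n,t]$ to the already-established fact (Lemma~\ref{lemm:4-2}) that $\{\cf_{h(1),1},\dots,\cf_{h(n),n}\}$ is a regular sequence in $\Q[x_1,\dots,x_n]$, using the characterization of regular sequences via Hilbert series in Remark~\ref{remark:characterizations of regular sequences}\eqref{rem:req seq Hilb}. The point is that setting $t=0$ carries $f_{h(j),j}$ to $\cf_{h(j),j}$ by Lemma~\ref{lem:formula for f check}, so the two ideals are closely related, and since we are in a polynomial ring over a field with $n+1$ homogeneous generators of known degrees in $n+1$ variables, the combinatorial Hilbert-series criterion is available.

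Concretely, I would first compute $\Q[x_1,\dots,x_n,t]/(f_{h(1),1},\dots,f_{h(n),n},t)$. Since $t$ is one of the generators, this quotient is canonically isomorphic to $\Q[x_1,\dots,x_n]/(\bar f_{h(1),1},\dots,\bar f_{h(n),n})$ where $\bar f_{h(j),j}$ denotes the image of $f_{h(j),j}$ under the substitution $t=0$; by Lemma~\ref{lem:formula for f check} this image is exactly $\cf_{h(j),j}$. Hence
\begin{equation*}
\Q[x_1,\dots,x_n,t]/(f_{h(1),1},\dots,f_{h(n),n},t)\cong \Q[x_1,\dots,x_n]/\check\Ih.
\end{equation*}
By Lemma~\ref{lemm:4-2} the $\cf_{h(j),j}$ form a regular sequence, so by Remark~\ref{remark:characterizations of regular sequences}\eqref{rem:req seq Hilb} together with $\deg\cf_{h(j),j}=2(h(j)-j+1)$ from \eqref{eq:deg of f and f check}, the Hilbert series of this quotient is $\frac{1}{(1-s^2)^n}\prod_{j=1}^n(1-s^{2(h(j)-j+1)})$. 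On the other hand, $\deg f_{h(j),j}=2(h(j)-j+1)$ and $\deg t=2$, and $\Q[x_1,\dots,x_n,t]$ has $n+1$ variables of degree $2$, so the ``candidate'' product from Remark~\ref{remark:characterizations of regular sequences}\eqref{rem:req seq Hilb} for the sequence $\{f_{h(1),1},\dots,f_{h(n),n},t\}$ is
\begin{equation*}
\frac{1}{(1-s^2)^{n+1}}\Big(\prod_{j=1}^n(1-s^{2(h(j)-j+1)})\Big)(1-s^2)=\frac{1}{(1-s^2)^n}\prod_{j=1}^n(1-s^{2(h(j)-j+1)}),
\end{equation*}
which matches the Hilbert series just computed. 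Applying the ``if'' direction of Remark~\ref{remark:characterizations of regular sequences}\eqref{rem:req seq Hilb} then shows $\{f_{h(1),1},\dots,f_{h(n),n},t\}$ is a regular sequence in $\Q[x_1,\dots,x_n,t]$.

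For the ``moreover'' clause, freeness and finite generation of $\Q[x_1,\dots,x_n,t]$ as a module over the subalgebra generated by $f_{h(1),1},\dots,f_{h(n),n},t$ follow from Remark~\ref{remark:characterizations of regular sequences}\eqref{rem:req seq alg-ind} (a regular sequence of positive-degree homogeneous elements is algebraically independent and the ambient ring is free over the polynomial subring they generate); finite generation can alternatively be deduced exactly as in Lemma~\ref{lemma:finite generation}, now in $n+1$ variables. I do not anticipate a serious obstacle here: the only point requiring any care is the bookkeeping of variables and degrees so that the two Hilbert-series products genuinely coincide — in particular checking that the extra factor $(1-s^{\deg t})=(1-s^2)$ cancels exactly one factor of $(1-s^2)$ in the denominator coming from the extra variable $t$ — but this is a short verification rather than a genuine difficulty.
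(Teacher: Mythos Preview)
Your proof is correct. The paper's own proof takes a very slightly different but essentially parallel route: instead of the Hilbert-series criterion of Remark~\ref{remark:characterizations of regular sequences}\eqref{rem:req seq Hilb}, it invokes the zero-locus criterion of Remark~\ref{remark:characterizations of regular sequences}\eqref{rem:req seq zero-set}. Since there are $n+1$ homogeneous elements in $n+1$ variables, it suffices to check that the common zero set of $f_{h(1),1},\dots,f_{h(n),n},t$ in $\C^{n+1}$ is $\{0\}$; setting $t=0$ reduces this (via Lemma~\ref{lem:formula for f check}) to the vanishing of $\cf_{h(1),1},\dots,\cf_{h(n),n}$, whose common zero set is $\{0\}$ by Lemma~\ref{lemm:4-2}. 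The ``moreover'' clause is then handled exactly as you do, via Remark~\ref{remark:characterizations of regular sequences}\eqref{rem:req seq alg-ind} and Lemma~\ref{lemma:finite generation}. Both arguments hinge on the same reduction $t\mapsto 0$ and the same input Lemma~\ref{lemm:4-2}; the paper's version is marginally shorter because the zero-set check is a one-liner, whereas your Hilbert-series computation requires the extra (though entirely routine) bookkeeping you flagged.
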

\begin{proof}
By Remark~\ref{remark:characterizations of regular sequences}\eqref{rem:req seq zero-set}, the first claim follows from Lemma~\ref{lem:formula for f check} and Lemma~\ref{lemm:4-2}.
The second
claim then follows from the characterization in
Remark~\ref{remark:characterizations of regular sequences}\eqref{rem:req seq alg-ind} and
Lemma~\ref{lemma:finite generation}. 
\end{proof} 

As we have just seen, $\Q[x_1,\ldots, x_n,t]$ is a free and finitely
generated $\Q[f_{h(1),1}, \ldots, f_{h(n),n}, t]$-module. A
straightforward argument (using, for instance, a choice of
basis together with the fact that $f_{h(1),1}, \ldots, f_{h(n),n}, t$ are algebraically independent over $\Q$) then shows that the quotient $\Q[x_1,\ldots,x_n,t]/(f_{h(1),1},
\ldots, f_{h(n),n}) = \Q[x_1,\ldots,x_n,t]/\Ih$ is a free and finitely
generated module over $\Q[t]$.  
We record the
following.

\begin{corollary}\label{lemm:5-1.5}
As $\Q[t]$-modules and hence as graded $\Q$-vector spaces, we have 
\[
\Q[x_1, \ldots, x_n, t]/\Ih \cong \Q[t] \otimes_{\Q} \left( \Q[x_1, \ldots,
  x_n]/\check \Ih \right).
\]
In particular, 
\[
F(\Q[x_1,\ldots,x_n,t]/\Ih, s) = \frac{F(\Q[x_1,\ldots,x_n]/\check \Ih, s)}{1-s^2}.
\]
\end{corollary}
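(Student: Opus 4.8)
The goal is Corollary~\ref{lemm:5-1.5}: that $\Q[x_1,\ldots,x_n,t]/\Ih$ decomposes as a tensor product $\Q[t]\otimes_\Q(\Q[x_1,\ldots,x_n]/\check\Ih)$ as $\Q[t]$-modules (equivalently as graded $\Q$-vector spaces), with the Hilbert series consequence following immediately since $F(\Q[t],s)=1/(1-s^2)$ and Hilbert series are multiplicative on tensor products. The plan is to exploit the fact, recorded just above in Lemma~\ref{lemm:5-1}, that $t$ together with the $f_{h(j),j}$ forms a regular sequence in $\Q[x_1,\ldots,x_n,t]$, and in particular that $t$ is a nonzerodivisor on the quotient $\Q[x_1,\ldots,x_n,t]/\Ih$.

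First I would observe that, since $\{f_{h(1),1},\ldots,f_{h(n),n},t\}$ is a regular sequence, any permutation of it is again a regular sequence (this holds for regular sequences of homogeneous elements in a graded ring, or one can cite it directly); hence $t$ is a nonzerodivisor modulo $\Ih$, i.e. $t$ is a nonzerodivisor on $R:=\Q[x_1,\ldots,x_n,t]/\Ih$. Next, note that reducing $R$ modulo $t$ gives $R/tR\cong\Q[x_1,\ldots,x_n]/(f_{h(j),j}(x,t=0)\mid j\in[n])=\Q[x_1,\ldots,x_n]/\check\Ih$, where the identification of the reduced generators uses Lemma~\ref{lem:formula for f check}. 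Now the key step: because $R$ is a finitely generated graded $\Q[t]$-module (this follows from Lemma~\ref{lemm:5-1}, which states $\Q[x_1,\ldots,x_n,t]$ is finitely generated and free over $\Q[f_{h(1),1},\ldots,f_{h(n),n},t]$, hence $R$ is finitely generated over $\Q[t]$) on which $t$ acts as a nonzerodivisor, $R$ is torsion-free over the PID $\Q[t]$, and a finitely generated torsion-free graded module over $\Q[t]$ is free. Choose a homogeneous $\Q[t]$-basis $\{r_1,\ldots,r_N\}$ of $R$; then the images $\{\bar r_1,\ldots,\bar r_N\}$ in $R/tR$ form a $\Q$-basis of $R/tR=\Q[x_1,\ldots,x_n]/\check\Ih$ (a graded Nakayama-type argument, or direct dimension count in each degree). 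This yields the $\Q[t]$-module isomorphism $R\cong\Q[t]\otimes_\Q(R/tR)=\Q[t]\otimes_\Q(\Q[x_1,\ldots,x_n]/\check\Ih)$, sending $\sum p_i(t)r_i\mapsto\sum p_i(t)\otimes\bar r_i$, which is well-defined and bijective because $\{r_i\}$ is a free basis on the left and $\{\bar r_i\}$ is a $\Q$-basis on the right.

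The Hilbert series statement is then immediate: $F(R,s)=F(\Q[t],s)\cdot F(\Q[x_1,\ldots,x_n]/\check\Ih,s)=\dfrac{F(\Q[x_1,\ldots,x_n]/\check\Ih,s)}{1-s^2}$, respecting the grading conventions $\deg t=2$, $\deg x_i=2$.

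The main obstacle is making the passage ``finitely generated torsion-free graded $\Q[t]$-module $\Rightarrow$ free'' airtight and then transferring a free basis to a basis of the fiber $R/tR$; this is standard commutative algebra (structure theorem for modules over a PID in the graded setting, plus graded Nakayama), but one must be slightly careful to work homogeneously so the isomorphism is graded. One could alternatively bypass the structure theorem entirely: since $t$ is a nonzerodivisor on the finitely generated graded $\Q[t]$-module $R$, multiplication by $t$ is injective in each degree, so $\dim_\Q R_d = \dim_\Q(R/tR)_{d} + \dim_\Q R_{d-2}$ for all $d$, which by induction on $d$ gives $\dim_\Q R_d=\sum_{k\ge 0}\dim_\Q(R/tR)_{d-2k}$, and this is exactly the degree-$d$ coefficient identity encoded by $F(R,s)=F(R/tR,s)/(1-s^2)$; combined with the explicit map above (or even just the dimension count, if only the Hilbert series is needed downstream) this completes the proof. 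I would present the torsion-free-implies-free version for the clean module statement but remark that the inductive dimension count suffices for the Hilbert series.
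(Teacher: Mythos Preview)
Your proposal is correct and follows essentially the same approach as the paper: both arguments establish that $R=\Q[x_1,\ldots,x_n,t]/\Ih$ is a finitely generated free $\Q[t]$-module, identify $R/tR$ with $\Q[x_1,\ldots,x_n]/\check\Ih$ via Lemma~\ref{lem:formula for f check}, and read off the tensor decomposition and Hilbert series. The only minor difference is in how freeness is obtained: the paper deduces it directly from the free $\Q[f_{h(1),1},\ldots,f_{h(n),n},t]$-basis of $\Q[x_1,\ldots,x_n,t]$ (Lemma~\ref{lemm:5-1}) by modding out the first $n$ generators, whereas you argue via ``$t$ nonzerodivisor $\Rightarrow$ torsion-free $\Rightarrow$ free over the graded PID $\Q[t]$''.
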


\begin{proof}
Since $\Q[x_1,\ldots,x_n,t]/\Ih$ is a finitely generated free $\Q[t]$-module as observed above, we have 
\[
\Q[x_1,\ldots,x_n,t]/\Ih \cong \Q[t] \otimes_{\Q}
(\Q[x_1,\ldots,x_n,t]/(f_{h(1),1}, \ldots, f_{h(n),n}, t))
\]
as $\Q[t]$-modules.
The module in the right hand side is naturally isomorphic to $\Q[t] \otimes_{\Q} (\Q[x_1, \ldots, x_n]/\check \Ih)$ by the definition \eqref{eq:def of Ih check} of $\check \Ih$.
Hence, we obtain the first claim.
In particular this means 
\begin{equation*}
  \begin{split}
    F(\Q[x_1,\ldots,x_n,t]/\Ih, s) & =
    F(\Q[t],s)F(\Q[x_1,\ldots,x_n]/\check \Ih,s) = \frac{F(\Q[x_1,\ldots,x_n]/\check \Ih,s)}{1-s^2 } 
  \end{split}
\end{equation*}
as desired. 
\end{proof} 

The main result of this section now follows easily. 

\begin{proposition} \label{prop:hilbert series equal} 
The Hilbert series of the graded $\Q$-algebras $H^*_S(\Hess(\N,h))$
and $\Q[x_1,\ldots,x_n,t]/\Ih$ are equal, i.e. 
$F\big(\Q[x_1,\dots,x_{n},t]/\Ih,s\big)=F\big( H^*_S(\Hess(\mathsf{N},h)),s).$
\end{proposition}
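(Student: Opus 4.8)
The strategy is simply to assemble the pieces already in place: equivariant formality expresses the equivariant Hilbert series as the ordinary one times $F(\Q[t],s)$, and the two preceding corollaries then identify everything in terms of the common quantity $F(\Q[x_1,\ldots,x_n]/\check\Ih, s)$. Concretely, I would first invoke the $S$-equivariant formality of $\Hess(\mathsf{N},h)$ recorded in \eqref{eq:equiv formality of HessN}, namely the isomorphism $H^*_S(\Hess(\mathsf{N},h)) \cong H^*_S(\text{pt}) \otimes_{\Q} H^*(\Hess(\mathsf{N},h))$ of graded $\Q$-vector spaces, together with the identification $H^*(BS) \cong \Q[t]$ with $\deg t = 2$. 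Since the Hilbert series of a tensor product of graded vector spaces is the product of the Hilbert series, this gives
\[
F(H^*_S(\Hess(\mathsf{N},h)), s) = F(\Q[t], s)\cdot F(H^*(\Hess(\mathsf{N},h)), s) = \frac{1}{1-s^2}\, F(H^*(\Hess(\mathsf{N},h)), s).
\]

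Next I would apply Corollary~\ref{cor:hilbert of ordinary coh}, which says $F(H^*(\Hess(\mathsf{N},h)), s) = F(\Q[x_1,\ldots,x_n]/\check\Ih, s)$, so that the right-hand side above becomes $\frac{1}{1-s^2}\,F(\Q[x_1,\ldots,x_n]/\check\Ih, s)$. On the other side, Corollary~\ref{lemm:5-1.5} gives precisely
\[
F(\Q[x_1,\ldots,x_n,t]/\Ih, s) = \frac{F(\Q[x_1,\ldots,x_n]/\check\Ih, s)}{1-s^2}.
\]
Comparing the two displayed expressions yields $F(H^*_S(\Hess(\mathsf{N},h)), s) = F(\Q[x_1,\ldots,x_n,t]/\Ih, s)$, which is the assertion. (Equivalently, one can just note that both series equal $\frac{1}{1-s^2}\prod_{j=1}^n\frac{1-s^{2(h(j)-j+1)}}{1-s^2}$ by combining Lemma~\ref{lemma:hesshil}, the degree formula \eqref{eq:deg of f and f check}, Lemma~\ref{lemm:4-2}, and Remark~\ref{remark:characterizations of regular sequences}\eqref{rem:req seq Hilb}.)

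There is no real obstacle here; the proposition is a formal bookkeeping consequence of results already established. The only point requiring a modicum of care is the passage from the \emph{module} isomorphism in equivariant formality to the \emph{equality of Hilbert series} — one must check that the isomorphism $H^*_S(X)\cong H^*_S(\text{pt})\otimes_{\Q}H^*(X)$ respects the grading (it does, being the statement that the Serre spectral sequence collapses with no grading shift), so that Hilbert series multiply as claimed. All the genuinely substantive input — the regular-sequence properties of the $\cf_{h(j),j}$ and of $\{f_{h(j),j}, t\}$, and Mbirika's computation underlying Lemma~\ref{lemma:hesshil} — has been carried out in the earlier lemmas.
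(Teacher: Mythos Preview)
Your proposal is correct and follows essentially the same approach as the paper's proof: invoke equivariant formality~\eqref{eq:equiv formality of HessN} to factor $F(H^*_S(\Hess(\mathsf{N},h)),s)$ as $\frac{1}{1-s^2}F(H^*(\Hess(\mathsf{N},h)),s)$, then apply Corollary~\ref{cor:hilbert of ordinary coh} and Corollary~\ref{lemm:5-1.5} to match this with $F(\Q[x_1,\ldots,x_n,t]/\Ih,s)$.
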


\begin{proof}
Since $\Hess(\N,h)$ admits a paving by complex affines  (cf. discussion
before~\eqref{eq:equiv formality of HessN}), we have that 
$
H^*_S(\Hess(\N,h)) \cong H^*_S(\pt) \otimes_{\Q} H^*(\Hess(\N,h))
$
as $H^*_S(\pt)$-modules and hence also as graded $\Q$-vector
spaces. In particular, 
\begin{equation*}
    F(H^*_S(\Hess(\N,h)), s)  = F(H^*_S(\pt),s) F(H^*(\Hess(\N,h)), s).
\end{equation*}
Also since $H^*_S(\pt) \cong \Q[t]$ is a
polynomial ring in one variable we have $F(H^*_S(\pt),s) =
\frac{1}{1-s^2}$ and we obtain 
\begin{equation*}
  \begin{split}
    F(H^*_S(\Hess(\N,h)), s) = \frac{F(H^*(\Hess(\N,h)),
    s)}{1-s^2} = \frac{F(\Q[x_1,\ldots,x_n]/\check I_h, s)}{1-s^2}  
    = F(\Q[x_1,\ldots,x_n,t]/\Ih, s) 
  \end{split}
\end{equation*}
by Corollary~\ref{cor:hilbert of ordinary coh} and Corollary~\ref{lemm:5-1.5},
as desired. 
\end{proof}

\bigskip
\section{Second part of proof of Theorem~\ref{theorem:reg nilp Hess cohomology}\ and proof of 
  Theorem A}\label{sec:proof of first
  main theorem} 
The purpose of this section is to complete the proof of 
Theorem~\ref{theorem:reg nilp Hess cohomology} and hence also of
Theorem A. 
Specifically, we prove that 
the graded $\Q[t]$-algebra homomorphism 
\begin{equation*}
\varphi_h : \mathbb{Q}[x_1,\dots,x_n,t]/\Ih \rightarrow H^{\ast}_{S}(\mathcal \Hess(\mathsf{N},h))
\quad ; \quad 
x_i \mapsto \SChNil_i , \quad t\mapsto t
\end{equation*}
(which was shown to be well-defined in Corollary \ref{cor for
  well-def}) is in fact an isomorphism. 
Before launching into the proof we sketch the essential idea. As mentioned in
the introductory remarks to Section~\ref{sec:hilbert}, 
if two vector spaces are a priori known to
have the same dimension, then a linear map between them is an
isomorphism if and only if it is injective if and only if it is
surjective. We will now use this elementary linear algebra fact to its
full effect, given that we have shown  
that the dimensions of
$H^*(\Hess(\N,h))$ and $\Q[x_1,\ldots,x_n]/\check \Ih$ coincide (Corollary~\ref{cor:hilbert of ordinary coh}), and
that the dimensions of (the graded pieces of) $H^*_S(\Hess(\N,h))$ and
$\Q[x_1,\ldots,x_n,t]/\Ih$ coincide (Proposition~\ref{prop:hilbert series equal}). 
The other essential trick we use is that of localization: instead of
directly attacking the problem of showing that $\varphi_h$ is
injective (which, as we said above, would suffice to show that
$\varphi_h$ is an isomorphism), we show first that a certain localization
$R^{-1}\varphi_h$ is an isomorphism by using the localization theorem in
equivariant topology. We make this more precise below.

Let $R=\Q[t]\backslash\{0\}$ and consider the induced homomorphism 
$R^{-1}\varphi_h$ between the $R^{-1}\Q[t]$-algebras
\begin{align}\label{localization of varphi}
R^{-1}\varphih\colon R^{-1}\big(\Q[x_1,\dots,x_n,t]/\Ih\big)\to R^{-1}H^*_S(\Hess(\mathsf{N},h)).
\end{align}

Recall from 
Section~\ref{subsec:setup} that the $S$-equivariant cohomology of the full flag
variety $\Flags(\C^n)$ is generated as an $H^*_S(\pt)$-module by the
$S$-equivariant first Chern classes of the tautological line bundles. 
In our setting, this means that for the special case 
$h=(n,n,\ldots,n)$, by the definition of $\varphih$ we already know
that 
\begin{equation*}
\Q[x_1,\ldots,x_n,t] \to H^*_S(\Hess(\N,h)) = H^*_S(\Flags(\C^n))
\end{equation*}
is surjective. We harness this fact, together with the localization
theorem in equivariant topology and our explicit 
description of the $S$-fixed point set of $\Hess(\N,h)$, to show that
$R^{-1}\varphih$ is surjective for general $h$. The fact that
$R^{-1}\varphih$ is an isomorphism then follows from a simple
dimension-counting argument over the field 
$R^{-1}\Q[t] = \Q(t)$ of rational functions in one variable, as
suggested in the introductory remarks above. 

\begin{lemma} \label{lemm:5-3}
The map $R^{-1}\varphih$ in \eqref{localization of varphi} is an isomorphism.
\end{lemma}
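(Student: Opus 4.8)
The plan is to show that $R^{-1}\varphih$ is surjective, and then deduce that it is an isomorphism by a dimension count over the field $\Q(t)=R^{-1}\Q[t]$. For the dimension count, note first that by Proposition~\ref{prop:hilbert series equal} the graded $\Q[t]$-modules $\Q[x_1,\dots,x_n,t]/\Ih$ and $H^*_S(\Hess(\mathsf{N},h))$ have equal Hilbert series; moreover by Corollary~\ref{lemm:5-1.5} and the $S$-equivariant formality~\eqref{eq:equiv formality of HessN}, both are free $\Q[t]$-modules of the \emph{same} finite rank, namely $\dim_\Q H^*(\Hess(\mathsf{N},h))$. Hence after localizing at $R$, both sides of~\eqref{localization of varphi} become $\Q(t)$-vector spaces of the same finite dimension. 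Therefore it suffices to prove that $R^{-1}\varphih$ is surjective.

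To prove surjectivity, I would use the localization theorem in equivariant cohomology together with the explicit description of the $S$-fixed locus. Since $\Hess(\mathsf{N},h)^S$ consists of isolated points, the restriction $\iota_2 : H^*_S(\Hess(\mathsf{N},h)) \to \bigoplus_{w\in\Hess(\mathsf{N},h)^S}\Q[t]$ becomes an isomorphism after inverting $R$ (this is the content of the localization theorem; cf.\ the commutative diagram~\eqref{eq:cd}). So it is enough to show that the composite $R^{-1}\iota_2\circ R^{-1}\varphih$ surjects onto $\bigoplus_{w\in\Hess(\mathsf{N},h)^S}R^{-1}\Q[t]$. By the definition of $\varphih$ and~\eqref{loc of tilde tau}, this composite sends $x_i\mapsto (w\mapsto w(i)t)$ and $t\mapsto t$; that is, the image contains all the ``restricted Chern classes'' $\SChNil_i$ together with the constant $t$. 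Concretely, the image of $R^{-1}\varphih$ inside $\bigoplus_w R^{-1}\Q[t]$ is the $R^{-1}\Q[t]$-subalgebra generated by the vectors $\bigl(w(i)t\bigr)_{w}$ for $i\in[n]$.

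The key point is that these vectors already separate the fixed points: if $w\neq w'$ in $\Hess(\mathsf{N},h)^S\subset\Sn$, then $w(i)\neq w'(i)$ for some $i$, so the corresponding coordinates $w(i)t$ and $w'(i)t$ differ. Hence, by the standard Vandermonde/partition-of-unity argument (valid since $\Q(t)$ is an infinite field and the fixed-point set is finite), the $R^{-1}\Q[t]$-subalgebra generated by $\{(w(i)t)_w : i\in[n]\}$ together with the constants is all of $\bigoplus_{w}R^{-1}\Q[t]$: one builds, for each fixed point $w$, an idempotent-like element that is a polynomial in the separating coordinates vanishing at all $w'\neq w$ and nonzero at $w$, then rescales. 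This shows $R^{-1}\iota_2\circ R^{-1}\varphih$ is surjective, hence (since $R^{-1}\iota_2$ is an isomorphism) so is $R^{-1}\varphih$.

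Combining surjectivity with the equality of $\Q(t)$-dimensions established in the first paragraph, $R^{-1}\varphih$ is an isomorphism. The main obstacle—though it is a mild one—is making the separation-of-fixed-points argument airtight: one must observe both that distinct permutations in $\Sn$ genuinely differ in some coordinate $w(i)$ (immediate) and that the localization theorem applies in our setting (which follows from the hypotheses recalled in Section~\ref{subsec:setup}, since $\Hess(\mathsf{N},h)$ is compact, admits an affine paving, and $S$ acts with isolated fixed points). Everything else is the routine dimension bookkeeping already set up by Proposition~\ref{prop:hilbert series equal} and Corollary~\ref{lemm:5-1.5}.
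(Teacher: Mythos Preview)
Your proof is correct and follows the same overall architecture as the paper's: establish surjectivity of $R^{-1}\varphih$, then conclude by a $\Q(t)$-dimension count using Proposition~\ref{prop:hilbert series equal}, Corollary~\ref{lemm:5-1.5}, and equivariant formality.

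The only genuine difference lies in how surjectivity is obtained. The paper argues structurally: for the maximal Hessenberg function $h'=(n,\dots,n)$ one already knows that $\varphi:\Q[x_1,\dots,x_n,t]\to H^*_S(\Flags(\C^n))$ is surjective, and then one uses the commutative square
\[
\begin{CD}
R^{-1}H^*_S(\Flags(\C^n)) @>\cong>> R^{-1}H^*_S(\Flags(\C^n)^S)\\
@VVV @VVV\\
R^{-1}H^*_S(\Hess(\mathsf{N},h)) @>\cong>> R^{-1}H^*_S(\Hess(\mathsf{N},h)^S)
\end{CD}
\]
together with the obvious surjectivity of the right vertical arrow (projection onto a subset of coordinates) to deduce surjectivity of the left vertical arrow, and hence of $R^{-1}\varphih$. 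Your argument instead bypasses the flag variety entirely and shows directly, via a Lagrange-interpolation/separation-of-points argument over $\Q(t)$, that the $\Q(t)$-algebra generated by the vectors $(w(i)t)_w$ is all of $\bigoplus_w \Q(t)$. Both approaches are valid; the paper's has the virtue of reusing a known fact about $\Flags(\C^n)$ with no additional computation, while yours is more self-contained and makes explicit why the Chern classes suffice (they separate fixed points). Either way the content is the same: after localization, the Chern classes generate everything at the fixed-point level.
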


\begin{proof}
For simplicity of notation in what follows, for the special case
$h=(n,n,\ldots,n)$ with
$\Hess(\N,h)=\Flags(\C^n)$, we denote the corresponding ideal by $I$
and the corresponding map by $\varphi$. Then, as discussed above,
$\varphi$ is surjective. In particular, $R^{-1}\varphi$ is also
surjective. 

Next, recall from Lemma~\ref{relation for Ih and Ih'} that if we have $h \subset h'$ for two Hessenberg
functions $h$ and $h'$ then $I_{h'} \subset 
\Ih$ and hence there exists a natural induced map
$\Q[x_1,\ldots,x_n, t]/I_{h'} \to \Q[x_1,\ldots,x_n, t]/\Ih$. In our case,
for any Hessenberg function $h$ it is always true that $h \subset
h'$ for the ``largest'' Hessenberg function $h':=(n,n,\ldots,n)$, so
we conclude from Lemma~\ref{relation for Ih and Ih'} and a localization that there
exists a natural map 
\begin{equation}\label{eq:localized I to Ih} 
R^{-1}(\Q[x_1,\ldots,x_n,t]/I) \to R^{-1}(\Q[x_1,\ldots,x_n,t]/\Ih).
\end{equation}
In fact, we may enlarge this to the following commutative diagram
\[
\begin{CD}
R^{-1}\big(\Q[x_1,\dots,x_n,t]/I\big) @>R^{-1}\varphi >> R^{-1}H^*_S(\Flags(\C^n)) @>>\cong> R^{-1}H^*_S(\Flags(\C^n)^S) \\
@VVV @VVV @VVV \\
 R^{-1}\big(\Q[x_1,\dots,x_n,t]/\Ih\big) @>R^{-1}\varphih >>
 R^{-1}H^*_S(\Hess(\N, h)) @>>\cong> R^{-1}H^*_S(\Hess(\N, h)^S) 
\end{CD}
\]
where the map in~\eqref{eq:localized I to Ih} is the leftmost vertical
arrow, and all other unlabelled maps are 
induced
from the geometric inclusion maps. 
The two horizontal maps in the square on the right 
are both isomorphisms by the localization theorem in equivariant
topology \cite[p.40]{hsi}. Moreover, 
the right-most vertical map is a surjection since the $S$-fixed point set
$\Hess(\N, h)^S$ is a subset of the $S$-fixed point set of
$\Flags(\C^n)$. Thus the middle vertical map must be surjective, and
from there it also follows that 
$R^{-1}\varphih$ is surjective. 

To show that $R^{-1}\varphih$ is in fact an isomorphism, we now compare
the dimensions of $R^{-1}(\Q[x_1,\ldots,x_n,t]/\Ih)$ and
$R^{-1}H^*_S(\Hess(\N,h))$ as vector spaces over 
$R^{-1}\Q[t]=\Q(t)$ the field of rational functions in one variable
$t$.
Since we have $H^*_S(\Hess(N,h)) \cong H^*_S(\pt) \otimes_{\Q} H^*(\Hess(\N,h))$ by \eqref{eq:equiv formality of HessN}, 
we obtain
\[
R^{-1}H^*_S(\Hess(\N, h))\cong R^{-1}H^*_S(\pt) \otimes_{\Q}
H^*(\Hess(\N,h)) \cong \Q(t) \otimes_{\Q} H^*(\Hess(\N,h)).
\]
In particular, the dimension of $R^{-1}H^*_S(\Hess(\N,h))$ as a
$\Q(t)$-vector space is the dimension of $H^*(\Hess(\N,h))$ as a
$\Q$-vector space. 
On the other hand, from Corollary~\ref{lemm:5-1.5} 
we also know that, as a $\Q[t]$-module, we have 
\[
\Q[x_1,\ldots, x_n,t]/\Ih \cong \Q[t] \otimes_{\Q}
(\Q[x_1,\ldots,x_n]/\check \Ih)
\]
which means that 
\[
R^{-1}(\Q[x_1,\ldots, x_n,t]/\Ih) \cong \Q(t) \otimes_{\Q}
(\Q[x_1,\ldots,x_n]/\check \Ih)
\]
and hence the dimension of $R^{-1}(\Q[x_1,\ldots,x_n,t]/\Ih)$ as a $\Q(t)$-vector space is the
dimension of $\Q[x_1,\ldots,x_n]/\check \Ih$ as a $\Q$-vector space. But we
have just seen in Corollary~\ref{cor:hilbert of ordinary coh} 
that these two rings $H^*(\Hess(\N,h))$ and $\Q[x_1,\ldots,x_n]/\check \Ih$ have the same Hilbert series, and in
particular are of the same dimension. 
Since $R^{-1}\varphih$ has been shown to be a surjective map
between vector spaces of the same dimension, it must be an
isomorphism, as desired. 
\end{proof}

We now wish to use the fact that $R^{-1}\varphih$ is an isomorphism to
deduce that $\varphih$ must be an isomorphism. 

\begin{proof}[Proof of Theorem~\ref{theorem:reg nilp Hess cohomology}]
Consider the following commutative diagram: 
\[
\begin{CD}
\Q[x_1,\dots,x_n,t]/\Ih @>\varphih>> H^*_S(\Hess(\N, h)) \\
@VVV @VVV \\
R^{-1}(\Q[x_1,\dots,x_n,t]/\Ih) @>R^{-1}\varphih >\cong >
R^{-1}H^*_S(\Hess(\N, h))
\end{CD}
\]
The left vertical map is injective since $\Q[x_1,\ldots, x_n,t]/\Ih$ is a  
free $\Q[t]$-module by Corollary~\ref{lemm:5-1.5}.
We just saw that the bottom horizontal
map is an isomorphism in Lemma~\ref{lemm:5-3}. 
From the commutativity of the diagram we may conclude that $\varphih$
is an injection. But by Proposition~\ref{prop:hilbert series equal}  
we know
that the Hilbert series of the target and the domain agree, showing
that their graded pieces have the same dimension. Thus, $\varphih$ is an isomorphism, as desired. 

Finally, this implies that the $S$-equivariant cohomology ring $H^*_S(\Hess(\N, h))$ is generated by the first Chern class $\SChNil_i$ of the $i$-th tautological line bundle (over $\Flags(\C^n)$) restricted to $\Hess(\N, h)$, and hence the restriction map $H^*_T(\Flags(\C^n))\rightarrow H^*_S(\Hess(\N, h))$ is surjective.
\end{proof} 

\bigskip
We can now prove Theorem A.  
Indeed, since the odd degree cohomology groups of $\Hess(\mathsf{N},h)$ vanish as discussed in Section~\ref{sec:H}, by setting $t=0$ we obtain the ordinary cohomology. 
Recall from Lemma~\ref{lem:formula for f check}, we have
\begin{align*}
\check{f}_{i,j} = f_{i,j}(x_1,\dots,x_n,t)|_{t=0}=\sum_{k=1}^j\Big(\x_k\prod_{\ell=j+1}^i(\x_k-\x_\ell)\Big)
\quad (n\geq i\geq j\geq 1).
\end{align*}
Let 
\begin{align}\label{eq:def of ord Ch for flag}
\ChNil_i \in H^2(\Hess(\N,h))
\end{align}
be the image of the Chern class $\ChFlag_i\in H^2(\Flags(\C^n))$ under the restriction map $H^*(\Flags(\C^n))\rightarrow H^*(\Hess(\N,h))$. That is, $\ChNil_i$ is the first Chern class of the tautological line bundle over $\Flags(\C^n)$ restricted to $\Hess(\N,h)$.
(Its equivariant version $\SChNil_i \in H_S^2(\Hess(\N,h))$ is defined in \eqref{def of S eq ch in Nil}.)

\begin{proof}[Proof of Theorem A]
Consider the forgetful map $H_S^*(\Hess(\N,h))\rightarrow H^*(\Hess(\N,h))$ which sends the ideal of $H_S^*(\Hess(\N,h))$ generated by $t$ to zero. This map is surjective since $\Hess(\N,h)$ admits a paving by complex affines \cite{ty} as mentioned in Section~\ref{sec:H} and hence the Serre spectral sequence collapses at $E_2$-stage \cite[Chapter III, Theorem 2.10]{mi-to}.
Thus, from Theorem~\ref{theorem:reg nilp Hess cohomology} together with \eqref{formula for f check}, we obtain 
\[
\Q[x_1,\dots,x_n]/\check \Ih \cong H_S^*(\Hess(\N,h))/(t) \cong H^*(\Hess(\N,h)) 
\]
by sending each $x_i$ to $\ChNil_i$ defined above where $\check \Ih=\big(\cf_{h(j),j} \mid 1\leq j\leq n\big)$.
Since the (equivariant) restriction map $H^*_T(\Flags(\C^n))\rightarrow H^*_S(\Hess(\N, h))$ is surjective from Theorem~\ref{theorem:reg nilp Hess cohomology}, so is the restriction map $H^*(\Flags(\C^n))\rightarrow H^*(\Hess(\N,h))$.
\end{proof}

\bigskip
\section{The equivariant cohomology rings of regular semisimple Hessenberg
  varieties}\label{sec:background on reg ss Hess}

Our second main result, Theorem B, relates
  the ordinary cohomology ring $H^*(\Hess(\mathsf{N},h))$ of the
  regular nilpotent Hessenberg variety and the $\Sn$-invariant subring
  $H^*(\Hess(\mathsf{S},h))^{\Sn}$ 
  of the ordinary cohomology of the regular semisimple Hessenberg
  variety. In this section, we recall the definition of the
  $\Sn$-action on the $T$-equivariant cohomology
  $H^*_T(\Hess(\mathsf{S},h))$ -- which then induces an $\Sn$-action on
  $H^*(\Hess(\mathsf{S},h))$ -- where $T$ is the usual maximal torus defined in \eqref{eq:S and T}. It
  is this $\Sn$-action with respect to which we take the
  invariants in our Theorem B. We also record some preliminary results concerning this
  action. 

Let $h \in H_n$ be a Hessenberg function and $\Hess(\mathsf{S},h)$ the regular semisimple
Hessenberg variety associated to $h$ as defined in~\eqref{eq:def reg ss Hess}. 
Here and below, we use the notation
\begin{align}\label{eq:dim of HessS}
\dimX:=\dim_{\C} \Hess(\mathsf{S},h) 
\end{align}
where the computation of this quantity in terms of the Hessenberg function $h$ is described in \eqref{eq:dim of HessN and HessS}.

As we saw in Section~\ref{sec:H}, the maximal torus $T$ of $\text{GL}(n,\C)$ acts on
$\Flags(\C^n)$ preserving $\Hess(\mathsf{S}, h)$. It is also 
straightforward to see that 
\begin{equation*}
\Hess(\mathsf{S},h)^{\Tn}= \Flags(\C^n)^{\Tn} \cong \Sn.
\end{equation*}

Tymoczko described 
the equivariant cohomology ring
$H^*_{\Tn}(\Hess(\mathsf{S},h))$ as 
an algebra over $H^*(B\Tn)$ by using techniques introduced by
Goresky, Kottwitz, and MacPherson \cite{Go-Ko-Ma}, also called GKM
theory. For details we refer to \cite{tymo08} and below we only
state the facts relevant for our situation. 
Any Hessenberg variety (in Lie type A) admits a paving by complex affines (\cite[Theorem 7.1]{ty}), 
and hence the localization theorem of torus-equivariant topology
implies that the inclusion map of the fixed point set induces an injection
\begin{align*}
  &\iota_3:H^*_T(\Hess(\mathsf{S},h)) \into
  H^*_T(\Hess(\mathsf{S},h)^T)\cong \bigoplus_{w\in\Sn}\Q[t_1,\dots,t_n]. 
\end{align*}
where we identify $\Hess(\mathsf{S},h)^T=\Flags(\C^n)^T\cong\Sn$ as
above. 
For $\alpha \in H^*_T(\Hess(\mathsf{S},h)))$, since $\iota_3$ is
injective, by abuse of notation we denote also 
by $\alpha$ its image in the RHS. 
In particular we denote by $\alpha(w) \in \Q[t_1,\ldots,t_n]$ the
$w$-th component of $\alpha$ in the decomposition above. 

In this setting, GKM theory yields the following concrete description
of the image of $\iota_3$ \cite{tymo08}: 
\begin{equation}\label{GKM for X(h)}
H^*_{\Tn}(\Hess(\mathsf{S},h)) \cong \left\{ \alpha \in \bigoplus_{w\in\Sn} \Q[t_1,\dots,t_n] \left|
\begin{matrix} \text{ $\alpha(w)-\alpha(w')$ is divisible by $t_{w(i)}-t_{w(j)}$} \\
\text{ if there exist $1\leq j<i\leq n$ satisfying} \\
 \text{ $w'=w(j \ i)$ and $i \leq h(j)$} \end{matrix} \right.\right\}
\end{equation}
where $(j \ i) \in \mathfrak{S}_n$ denotes the element of
$\mathfrak{S}_n$ which transposes $i$ and $j$. 
We call the condition described in the right hand side of ~\eqref{GKM for X(h)} \textbf{the GKM condition (for $\Hess(\mathsf{S},h)$).}

Note that if the Hessenberg function $h$ is chosen to be
  $h=(n,n,\ldots,n)$, then the corresponding Hessenberg variety
  $\Hess(\mathsf{S},h)$ is equal to $\Flags(\C^n)$. Since 
  the condition $i\leq h(j)=n$ is always satisfied, 
  the corresponding presentation of $H^*_T(\Flags(\C^n))$ imposes \emph{more} conditions on a collection $(\alpha(w))_{w \in
  \mathfrak{S}_n}$ than the RHS of~\eqref{GKM for X(h)}.
  From this we can see that for an arbitrary Hessenberg function $h$, the restriction map
\[
H^*_T(\Flags(\C^n)) \to H^*_T(\Hess(\mathsf{S},h))
\]
is an injection by considering the following commutative diagram
\begin{equation}\label{eq:cd2 moved}
\begin{CD}
H^{\ast}_{T}(\Flags(\C^n))@>{\iota_1}>> H^{\ast}_{T}(\Flags(\C^n)^{T})\cong\displaystyle \bigoplus_{w\in \Sn} \Q[t_1,\dots,t_n]\\
@V{ }VV @VV{\text{id}}V\\
H^{\ast}_{T}(\Hess(\mathsf{S},h))@>{\iota_3}>> H^{\ast}_{T}(\Hess(\mathsf{S},h)^{T})\cong\displaystyle \bigoplus_{w\in \Sn} \Q[t_1,\dots,t_n].
\end{CD}
\end{equation}
It turns out that
the image
of $t_i \in H^*_T(\mathrm{pt}) = H^*(BT) \cong \Q[t_1,\ldots,t_n]$
under the composition 
\[
H^*_T(\mathrm{pt}) \to H^*_T(\Hess(\mathsf{S},h)) \stackrel{\iota_3}{\rightarrow}
H^*_T(\Hess(\mathsf{S},h)^T)
\]
is given simply by attaching the monomial $t_i$ to each vertex
in the
graph (see also \eqref{eq:constant polynomials}). 
Henceforth, by slight abuse of
notation, we denote also by $t_i$ the equivariant cohomology
class in $H^*_T(\Hess(\mathsf{S},h))$ obtained in this way. In
particular it is immediate, with this notation, that 
\begin{equation}\label{eq:constant classes}
t_i(w) = t_i \textup{ for all } w \in \mathfrak{S}_n.
\end{equation}
This class $t_i\in H^2_T(\Hess(\mathsf{S},h))$ is in fact the image of $t_i\in H^2(BT)$ (defined after~\eqref{eq:equiv coh of pt}) under the canonical homomorphism $H^*(BT)\rightarrow H^*_T(\Hess(\mathsf{S},h))$.

We now describe the $\Sn$-action on $H^*_{\Tn}(\Hess(\mathsf{S},h))$
constructed explicitly by 
Tymoczko \cite{tymo08}. 
(This action
also induces an $\Sn$-action on the
ordinary cohomology $H^*(\Hess(\mathsf{S},h))$, as we explain
below.) First, define an
$\Sn$-action on the polynomial ring $\Q[t_1,\ldots,t_n]$ in the
standard way by permuting the indices of the variables, i.e.  
for $t_i \in \Q[t_1,\ldots, t_n]$ and $v \in \Sn$ we define 
$v \cdot t_i := t_{v(i)}$.
This induces an $\Sn$-action on $\Q[t_1,\ldots,t_n]$ by
$\Q$-linear ring homomorphisms. Recall that by~\eqref{GKM for X(h)}
the data of an element $\alpha \in H^*_T(\Hess(\mathsf{S},h))$ is
equivalent to a list $(\alpha(w))_{w \in \Sn}$ of polynomials in
$\Q[t_1,\ldots,t_n]$ satisfying the GKM conditions. With this
understanding, Tymoczko defines, for $v \in \Sn$ and $\alpha = (\alpha(w))_{w
  \in \Sn}$, the element $v \cdot \alpha$ by the formula 
\begin{equation}\label{def of Tymoczko rep} 
(v\cdot \alpha)(w) := v \cdot \alpha(v^{-1}w) \textup{ for all } w \in
\Sn.
\end{equation}
It is also straightforward to check that the class $v \cdot \alpha$ thus
defined 
again satisfies the GKM conditions
and hence this action is well-defined. 
It is straightforward to check that the ``constant'' class $t_i \in H^*_T(\Hess(\mathsf{S},h))$ corresponding to $t_i \in H^*(BT)$ described in~\eqref{eq:constant classes} satisfies $v\cdot t_i=t_{v(i)}$ for $v\in\Sn$.

The above lemma implies that the ideal of $H^*_{\Tn}(\Hess(\mathsf{S},h))$
generated by the classes $t_1,\dots,t_n$ is preserved by Tymoczko's 
$\Sn$-action. 
Since the odd degree cohomology of $\Hess(\mathsf{S},h)$ vanishes, 
the forgetful map $H^*_T(\Hess(\mathsf{S},h)) \to
H^*(\Hess(\mathsf{S},h))$ is surjective \cite[Ch III, Theorem 2.10 and Theorem 4.2]{mi-to}, and the kernel is precisely
the ideal generated by the $t_i$. Thus, we obtain an isomorphism 
\begin{equation}\label{eq:HT(Hess) surjects to H(Hess)}
H^*(\Hess(\mathsf{S},h)) \cong
H^*_T(\Hess(\mathsf{S},h))/(t_1,\ldots,t_n)
\end{equation}
and the fact that the ideal $(t_1,\ldots,t_n)$ is $\Sn$-invariant
implies that the RHS, and hence also the LHS, has a well-defined
$\Sn$-action. The following is then straightforward from the
definitions. 

\begin{lemma}\label{lemma:Flags and Hess equivariant} 
The diagram
\begin{equation*}\label{eq:Sn equivariant diagram} 
\xymatrix{
H^*_T(\Flags(\C^n)) \ar[r] \ar[d] & H^*_T(\Hess(\mathsf{S},h)) \ar[d]
\\
H^*(\Flags(\C^n)) \ar[r] & H^*(\Hess(\mathsf{S},h)) 
}
\end{equation*}
commutes, where the horizontal maps are induced from the inclusion map $\Hess(\mathsf{S},h) \into \Flags(\C^n)$ and the vertical arrows are forgetful maps. Moreover, all maps in the diagram are $\Sn$-equivariant. 
\end{lemma}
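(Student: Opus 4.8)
The plan is to establish the commutativity of the square first, and then verify $\Sn$-equivariance of each of the four maps separately. For commutativity, observe that the two horizontal maps are both induced by the inclusion $\Hess(\mathsf{S},h) \hookrightarrow \Flags(\C^n)$, once on $T$-equivariant and once on ordinary cohomology, and the two vertical maps are the forgetful maps which are realized (via the identifications~\eqref{eq:cohofl} and~\eqref{eq:HT(Hess) surjects to H(Hess)}) as quotients by the ideal generated by $t_1,\ldots,t_n$. Functoriality of equivariant cohomology with respect to the inclusion of spaces, together with naturality of the forgetful map $H^*_T(-)\to H^*(-)$ (which comes from the fibration $ET\times_T(-)\to (-)$), immediately gives commutativity. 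Concretely, in the GKM picture of~\eqref{eq:cd2 moved} and~\eqref{GKM for X(h)}, the restriction maps on $T$-equivariant cohomology are simply the identity on the $\bigoplus_{w\in\Sn}\Q[t_1,\dots,t_n]$ level, and setting $t_i=0$ commutes with this trivially.

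Next I would verify $\Sn$-equivariance of each map. For the vertical forgetful maps, the point is that the ideal $(t_1,\ldots,t_n)$ is $\Sn$-stable: by the remark just before the lemma, the $\Sn$-action on $H^*_T(\Flags(\C^n))$ and $H^*_T(\Hess(\mathsf{S},h))$ sends $t_i$ to $t_{v(i)}$, so the quotient map by $(t_1,\ldots,t_n)$ is $\Sn$-equivariant by construction (this is exactly what makes the $\Sn$-action on ordinary cohomology well-defined, as noted around~\eqref{eq:HT(Hess) surjects to H(Hess)}). For the top horizontal map $H^*_T(\Flags(\C^n))\to H^*_T(\Hess(\mathsf{S},h))$, equivariance is checked directly from Tymoczko's formula~\eqref{def of Tymoczko rep}: via the commutative diagram~\eqref{eq:cd2 moved} the restriction map is the identity on $\bigoplus_{w\in\Sn}\Q[t_1,\dots,t_n]$, and the $\Sn$-action $(v\cdot\alpha)(w)=v\cdot\alpha(v^{-1}w)$ is defined by the same formula on both $H^*_T(\Flags(\C^n))$ and $H^*_T(\Hess(\mathsf{S},h))$ (note $\Flags(\C^n)=\Hess(\mathsf{S},h)$ for $h=(n,\ldots,n)$, and the action only depends on the ambient combinatorial data $\bigoplus_w\Q[t_1,\dots,t_n]$, not on which GKM subring one sits in). Hence the inclusion of GKM subrings intertwines the two actions. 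Finally, $\Sn$-equivariance of the bottom horizontal map $H^*(\Flags(\C^n))\to H^*(\Hess(\mathsf{S},h))$ then follows formally: it is the map induced on the quotients by $(t_1,\ldots,t_n)$ from the (now known to be) $\Sn$-equivariant top horizontal map, using the commutativity of the square and surjectivity of the vertical forgetful maps.

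The only genuinely delicate point — and the step I expect to require the most care — is confirming that Tymoczko's action on $H^*_T(\Flags(\C^n))$ defined via~\eqref{def of Tymoczko rep} restricts correctly, i.e. that the inclusion $H^*_T(\Flags(\C^n))\hookrightarrow H^*_T(\Hess(\mathsf{S},h))$ of GKM subrings of $\bigoplus_{w\in\Sn}\Q[t_1,\dots,t_n]$ is literally the identity on underlying tuples. This is exactly the content of diagram~\eqref{eq:cd2 moved}, where the right-hand vertical map is the identity; once one has that, equivariance of the top map is a one-line check from the formula. Everything else is formal bookkeeping with quotients and functoriality, so this lemma admits a short proof.

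\begin{proof}
Commutativity of the square follows from the functoriality of equivariant cohomology applied to the inclusion $\Hess(\mathsf{S},h)\hookrightarrow\Flags(\C^n)$ together with the naturality of the forgetful map $H^*_T(-)\to H^*(-)$; concretely, via~\eqref{eq:cohofl}, \eqref{eq:HT(Hess) surjects to H(Hess)} and the commutative diagram~\eqref{eq:cd2 moved}, all four maps are realized on the level of $\bigoplus_{w\in\Sn}\Q[t_1,\dots,t_n]$ (resp.\ its quotient by $(t_1,\dots,t_n)$) by the identity or by the projection killing the $t_i$, and these visibly commute.

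It remains to check that each map is $\Sn$-equivariant. The vertical forgetful maps are $\Sn$-equivariant because the kernel $(t_1,\dots,t_n)$ of each is preserved by Tymoczko's action: by~\eqref{def of Tymoczko rep} and the remark preceding the lemma, $v\cdot t_i = t_{v(i)}$, so passing to the quotient by $(t_1,\dots,t_n)$ is $\Sn$-equivariant; this is precisely the construction of the $\Sn$-action on ordinary cohomology via~\eqref{eq:HT(Hess) surjects to H(Hess)}. For the top horizontal map, recall from~\eqref{eq:cd2 moved} that the restriction $H^*_T(\Flags(\C^n))\to H^*_T(\Hess(\mathsf{S},h))$ is, under the identifications $H^*_T(\Flags(\C^n)^T)\cong\bigoplus_{w\in\Sn}\Q[t_1,\dots,t_n]\cong H^*_T(\Hess(\mathsf{S},h)^T)$, simply the inclusion of one GKM subring into another, and is the identity on underlying tuples. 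Since Tymoczko's action on both subrings is given by the same formula $(v\cdot\alpha)(w)=v\cdot\alpha(v^{-1}w)$ depending only on the ambient module $\bigoplus_{w\in\Sn}\Q[t_1,\dots,t_n]$, this inclusion intertwines the two actions, so the top map is $\Sn$-equivariant. Finally, the bottom horizontal map is obtained from the top one by passing to the quotients by $(t_1,\dots,t_n)$; since the top map and both vertical maps are $\Sn$-equivariant and the vertical maps are surjective, the bottom map is $\Sn$-equivariant as well.
\end{proof}
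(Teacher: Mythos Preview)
Your proof is correct and is essentially a careful unpacking of what the paper means when it says ``the following is then straightforward from the definitions'': you verify commutativity via naturality of the forgetful map, and $\Sn$-equivariance by observing that Tymoczko's action is given by the same formula on the common ambient module $\bigoplus_{w\in\Sn}\Q[t_1,\dots,t_n]$ and that the ideal $(t_1,\dots,t_n)$ is $\Sn$-stable. The paper gives no detailed argument beyond that one-line remark, so there is nothing further to compare.
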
 

Note that $\Sn$ naturally acts on $\Flags(\C^n)$ on
  the left by multiplication by permutation matrices, and it is
  well-known (see e.g. \cite{knu}, \cite{tymo08}) that this induces
  Tymoczko's $\Sn$-representation on $H_T^*(\Flags(\C^n))$ and
  $H^*(\Flags(\C^n))$. 
Note that this $\Sn$-action is obtained by restricting the natural
$\text{GL}(n,\C)$-action on $\Flags(\C^n)$. The path-connectedness of
$\text{GL}(n,\C)$ implies that the induced
$\text{GL}(n,\C)$-representation on $H^*(\Flags(\C^n))$ is trivial.
Hence we obtain the following. 

\begin{lemma}\label{lem:rep on flag is trivial}
  \emph{(}\cite[Proposition 4.4]{tymo08}\emph{)}
  The $\Sn$-representation on $H^*(\Flags(\C^n))$
  is trivial.
\end{lemma}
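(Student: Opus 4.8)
The plan is to use the fact that Tymoczko's $\Sn$-action on $H^*(\Flags(\C^n))$ is, as recorded in the remark preceding the lemma, the representation induced by the \emph{geometric} left-multiplication action of $\Sn \subset \text{GL}(n,\C)$ on $\Flags(\C^n)$. Once we know this, the result follows from the standard topological fact that a path-connected group acting on a space induces the trivial action on cohomology: since any permutation matrix lies in the path-connected group $\text{GL}(n,\C)$, it can be connected by a path to the identity matrix, and homotopic self-maps induce the same map on cohomology.

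First I would make precise the identification of the two $\Sn$-actions. The geometric left $\text{GL}(n,\C)$-action on $\Flags(\C^n) = \text{GL}(n,\C)/B$ restricts to an action of the subgroup of permutation matrices; this is $T$-equivariant in the appropriate twisted sense (permutation matrices normalize $T$), so it induces an action on $H^*_T(\Flags(\C^n))$, and passing to the quotient by the ideal $(t_1,\dots,t_n)$ via the isomorphism analogous to \eqref{eq:HT(Hess) surjects to H(Hess)} (for the flag variety) yields an action on $H^*(\Flags(\C^n))$. One checks, by reading off the effect on the fixed points $\Flags(\C^n)^T \cong \Sn$ and on the equivariant Chern classes $\TChFlag_i$ using \eqref{loc of tilde tau}, that this geometric action agrees with the combinatorial formula \eqref{def of Tymoczko rep} specialized to $h = (n,n,\dots,n)$. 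This matching is exactly \cite[Proposition 4.4]{tymo08}, which I would cite; the excerpt already asserts it in the paragraph before the lemma.

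Next I would invoke path-connectedness. For a fixed $v \in \Sn$, regarded as its permutation matrix $P_v \in \text{GL}(n,\C)$, choose a continuous path $\gamma\colon [0,1] \to \text{GL}(n,\C)$ with $\gamma(0) = I$ and $\gamma(1) = P_v$ (possible since $\text{GL}(n,\C)$ is path-connected — for instance, $\text{GL}(n,\C)$ is even connected as a complex manifold). Then $H_t \colon \Flags(\C^n) \to \Flags(\C^n)$, $V_\bullet \mapsto \gamma(t) \cdot V_\bullet$, is a homotopy between the identity map and the self-map induced by $v$. Since homotopic maps induce identical homomorphisms on singular cohomology, $v$ acts as the identity on $H^*(\Flags(\C^n))$. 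As $v$ was arbitrary, the whole $\Sn$-representation is trivial.

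The only genuine subtlety — and the step I expect to be the main obstacle to writing cleanly rather than mathematically — is the bookkeeping in the first paragraph: verifying that the combinatorially-defined Tymoczko action on $H^*_T(\Flags(\C^n))$ really coincides with the one induced by the geometric action, being careful about left versus right conventions and about the twist in $T$-equivariance coming from conjugation by permutation matrices. Since this identification is precisely the content of \cite[Proposition 4.4]{tymo08} and is also explained in \cite{knu}, I would not reprove it but simply cite it, so that the proof of Lemma~\ref{lem:rep on flag is trivial} reduces to the short path-connectedness argument above.
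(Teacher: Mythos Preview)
Your proposal is correct and matches the paper's own argument essentially verbatim: the paper also cites \cite{knu} and \cite{tymo08} for the identification of Tymoczko's action with the geometric left-multiplication action, and then invokes path-connectedness of $\text{GL}(n,\C)$ to conclude triviality on ordinary cohomology.
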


\bigskip
\section{Properties of the $\Sn$-action on
  $H^*_{\Tn}(\Hess(\mathsf{S},h))$}\label{sec:properties of Tymoczko action}
In this section, we
  prepare for the proof of Theorem B by analyzing in more
  detail the properties of the $\Sn$-action on
  $H^*_T(\Hess(\mathsf{S},h))$ defined in Section~\ref{sec:background on reg ss Hess}. Our first result is Proposition~\ref{prop:invariant subring is polynomial ring}, which
  explicitly identifies the $\Sn$-invariant subring of
  $H^\ast_T(\Hess(\mathsf{S},h))$ (hence also of
  $H^\ast_T(\Flags(\C^n))$ as a special case). Our second result is
  Proposition~\ref{prop:nondegenerate and invariant}, which states that there exists an
  $\Sn$-invariant non-degenerate pairing on the ordinary cohomology
  groups of complementary degree 
of $\Hess(\mathsf{S},h)$.

We begin with Proposition~\ref{prop:invariant subring is polynomial
  ring}. It will turn out that 
the $\Sn$-invariant subring of $H^\ast_T(\Hess(\mathsf{S},h))$ (and
hence also $H^\ast_T(\Flags(\C^n))$) is a copy of the polynomial ring $H^*_T(\pt)
\cong \Q[t_1,\ldots,t_n]$, 
but some care must be
taken in defining the embedding of $H^\ast_T(\pt)$ into
$H^\ast_T(\Hess(\mathsf{S},h))$ (and $H^\ast_T(\Flags(\C^n))$) that
achieves this isomorphism. Specifically, the embedding does \emph{not} 
take the element $t_i
\in H^*_T(\pt) \cong \Q[t_1,\ldots,t_n]$ to the ``constant class'' in
$H^*_T(\Flags(\C^n))$ (and in $H^*_T(\Hess(\mathsf{S},h))$) described
in~\eqref{eq:constant classes} which takes the constant value
$t_i(w)=t_i$ at all $w \in \Sn$, as one might initially expect. Instead, the images are defined to be
certain characteristic classes, as we now explain.
Recall from \eqref{def of T eq ch in flag} that
  $\TChFlag_i\in H^2_T(\Flags(\C^n))$ 
denotes the $T$-equivariant 
  first Chern class of the tautological line bundle $E_i/E_{i-1}$ over
  $\Flags(\C^n)$.  We denote by
\begin{align}\label{Chern classes for semisimple}
\TChSemi_i\in H_T^2(\Hess(\mathsf{S},h))
\end{align} 
the image of $\TChFlag_i$ under the restriction map 
$H_T^*(\Flags(\C^n))\rightarrow H_T^*(\Hess(\mathsf{S},h))$.

\begin{lemma}\label{lemma:chern classes invariant}
Let $i\in[n]$. The classes $\TChFlag_i\in H_T^2(\Flags(\C^n))$ and $\TChSemi_i\in H_T^2(\Hess(\mathsf{S},h))$ are $\Sn$-invariant. 
\end{lemma}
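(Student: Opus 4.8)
The plan is to verify the $\Sn$-invariance directly using the combinatorial (GKM) description of the relevant equivariant cohomology rings and the definition of Tymoczko's action from~\eqref{def of Tymoczko rep}. Since $\TChSemi_i$ is by definition the image of $\TChFlag_i$ under the restriction map $H^*_T(\Flags(\C^n)) \to H^*_T(\Hess(\mathsf{S},h))$, and since this restriction map is $\Sn$-equivariant by Lemma~\ref{lemma:Flags and Hess equivariant}, it suffices to prove that $\TChFlag_i \in H^2_T(\Flags(\C^n))$ is $\Sn$-invariant; the invariance of $\TChSemi_i$ then follows immediately by applying the equivariant restriction map.

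To show $v \cdot \TChFlag_i = \TChFlag_i$ for all $v \in \Sn$, I would localize at the $T$-fixed points and work componentwise. Recall from~\eqref{loc of tilde tau} that $\TChFlag_i(w) = t_{w(i)}$ for each $w \in \Sn$. Applying the definition~\eqref{def of Tymoczko rep} of Tymoczko's action, for any $v \in \Sn$ and any $w \in \Sn$ we compute
\[
(v \cdot \TChFlag_i)(w) = v \cdot \big(\TChFlag_i(v^{-1}w)\big) = v \cdot t_{(v^{-1}w)(i)} = v \cdot t_{v^{-1}(w(i))} = t_{v(v^{-1}(w(i)))} = t_{w(i)} = \TChFlag_i(w),
\]
where the fourth equality uses that $\Sn$ acts on $\Q[t_1,\dots,t_n]$ by $v \cdot t_j = t_{v(j)}$. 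Since the localization map $\iota_1 \colon H^*_T(\Flags(\C^n)) \hookrightarrow \bigoplus_{w \in \Sn} \Q[t_1,\dots,t_n]$ is injective (as recorded in Section~\ref{subsec:setup}), equality of all fixed-point components implies $v \cdot \TChFlag_i = \TChFlag_i$ in $H^2_T(\Flags(\C^n))$, as desired.

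There is essentially no serious obstacle here: the argument is a short and direct unwinding of the two definitions, hinging only on the compatibility between the permutation action on variable indices and the fixed-point localization formula $\TChFlag_i(w) = t_{w(i)}$. The one point requiring minor care is the bookkeeping with $(v^{-1}w)(i) = v^{-1}(w(i))$ and the fact that the $\Sn$-action on polynomials is a left action in the sense needed for the cancellation $v \cdot t_{v^{-1}(w(i))} = t_{w(i)}$; once this is set up correctly the computation closes immediately. The transfer to $\TChSemi_i$ via the equivariant restriction map is then purely formal, using that the horizontal map in the diagram of Lemma~\ref{lemma:Flags and Hess equivariant} is $\Sn$-equivariant.
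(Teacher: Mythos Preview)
Your proposal is correct and essentially identical to the paper's proof: both verify invariance by the same fixed-point computation $(v\cdot\alpha)(w)=v\cdot t_{v^{-1}w(i)}=t_{w(i)}$, using the localization formula~\eqref{loc of tilde tau} and the definition~\eqref{def of Tymoczko rep}. The only organizational difference is that the paper carries out the computation directly for $\TChSemi_i$ (observing that the argument is uniform in $h$, so $\Flags(\C^n)$ is recovered as the special case $h=(n,\dots,n)$), whereas you compute for $\TChFlag_i$ and then push forward along the $\Sn$-equivariant restriction map of Lemma~\ref{lemma:Flags and Hess equivariant}; this is a cosmetic distinction.
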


\begin{proof}
The following proof is independent of the choice of the Hessenberg function $h$, so since the
choice $h=(n,n,\ldots,n)$ yields $\Hess(\mathsf{S},h)=\Flags(\C^n)$
as a special case, it suffices to show the claim for
$\Hess(\mathsf{S},h)$. 
We have already seen from \eqref{loc of tilde tau} that $\TChFlag_i(w) =
t_{w(i)}$. 
By the definition of $\TChSemi_i$ and the commutativity of
\eqref{eq:cd2 moved}, we also have 
\begin{align}\label{localization of chern class for semisimple}
\TChSemi_i(w) =
t_{w(i)}.
\end{align} 
By \eqref{localization of chern class for semisimple} and the definition \eqref{def of Tymoczko rep} of the $\Sn$-action on $H_T^*(\Hess(\mathsf{S},h))$, we can compute that for any $w \in \Sn$ we have 
  \begin{align*}
    (v \cdot \TChSemi_i)(w) & = v \cdot \TChSemi_i(v^{-1}w)
    = v \cdot t_{v^{-1}w(i)} 
    = t_{v(v^{-1} w(i))} 
    = t_{w(i)} 
    = \TChSemi_i(w) 
  \end{align*}
as desired. 
\end{proof}

We now define a graded $\Q$-algebra homomorphism $\Psi: H^*_T(\pt) \cong
\Q[t_1,\ldots,t_n] \to H^*_T(\Flags(\C^n))$ by sending the generator
$t_i$ to the $i$-th equivariant Chern class $\TChFlag_i$, and define
$\widehat{\Psi}: H^*_T(\pt) \to H^*_T(\Hess(\mathsf{S},h))$ by
composing $\Psi$ with the natural restriction $H^*_T(\Flags(\C^n)) \to
H^*_T(\Hess(\mathsf{S},h))$. 
To discuss properties of $\Psi$ and $\widehat{\Psi}$,
it is useful to observe that the 
invariant subspace $H_T^*(\Hess(\mathsf{S},h))^{\Sn}$ of
$H_T^*(\Hess(\mathsf{S},h))$ (respectively $H^*(\Hess(\mathsf{S},h))^{\Sn}$ of
$H^*(\Hess(\mathsf{S},h))$) in fact forms a subring. 
\begin{lemma}\label{lemma:acts by ring hom}
The symmetric group $\Sn$ acts on $H^*_{\Tn}(\Hess(\mathsf{S},h))$ and
$H^*(\Hess(\mathsf{S},h))$ via
ring automorphisms, i.e. for $v \in \Sn$ and $\alpha, \beta \in
H^*_T(\Hess(\mathsf{S},h))$, we have $v\cdot(\alpha \beta) = (v\cdot \alpha)(v\cdot
\beta)$, and similarly for $H^*(\Hess(\mathsf{S},h))$. 
Moreover, the identity elements of the rings $H_T^*(\Hess(\mathsf{S},h))$ and $H^*(\Hess(\mathsf{S},h))$ are $\Sn$-invariant.
\end{lemma}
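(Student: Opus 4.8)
The plan is to carry out the argument entirely at the level of the GKM presentation \eqref{GKM for X(h)}, where both the ring structure and the $\Sn$-action are described componentwise. Recall that the localization map $\iota_3\colon H^*_T(\Hess(\mathsf{S},h)) \hookrightarrow \bigoplus_{w\in\Sn}\Q[t_1,\dots,t_n]$ is injective and is a ring homomorphism onto its image (it is induced by the inclusion of the fixed-point set, and the target carries the componentwise ring structure), so the product of two classes is computed componentwise: $(\alpha\beta)(w)=\alpha(w)\,\beta(w)$ for all $w\in\Sn$. Tymoczko's action \eqref{def of Tymoczko rep} is also componentwise, $(v\cdot\alpha)(w)=v\cdot\alpha(v^{-1}w)$, where $v$ acts on $\Q[t_1,\dots,t_n]$ by permuting the variable indices, hence by $\Q$-algebra automorphisms.

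With these facts in hand, multiplicativity is a one-line check. For $v\in\Sn$, $\alpha,\beta\in H^*_T(\Hess(\mathsf{S},h))$, and any $w\in\Sn$,
\begin{align*}
\big(v\cdot(\alpha\beta)\big)(w)
&= v\cdot\big((\alpha\beta)(v^{-1}w)\big)
= v\cdot\big(\alpha(v^{-1}w)\,\beta(v^{-1}w)\big)\\
&= \big(v\cdot\alpha(v^{-1}w)\big)\big(v\cdot\beta(v^{-1}w)\big)
= (v\cdot\alpha)(w)\,(v\cdot\beta)(w),
\end{align*}
where the third equality uses that $v$ acts on $\Q[t_1,\dots,t_n]$ by ring homomorphisms. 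Since $\iota_3$ is injective this gives $v\cdot(\alpha\beta)=(v\cdot\alpha)(v\cdot\beta)$, and because $\Sn$ acts as a group (with $v^{-1}$ inverting $v$) each $v$ is a ring automorphism. For the identity: $1\in H^*_T(\Hess(\mathsf{S},h))$ has $1(w)=1$ for every $w$, so $(v\cdot 1)(w)=v\cdot 1=1$, i.e.\ $1$ is $\Sn$-invariant. The same reasoning applies to $H^*_T(\Flags(\C^n))$, which is the special case $h=(n,n,\dots,n)$.

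Finally I would pass to ordinary cohomology via the ring isomorphism $H^*(\Hess(\mathsf{S},h))\cong H^*_T(\Hess(\mathsf{S},h))/(t_1,\dots,t_n)$ of \eqref{eq:HT(Hess) surjects to H(Hess)}. As recalled in Section~\ref{sec:background on reg ss Hess}, the ideal $(t_1,\dots,t_n)$ is $\Sn$-invariant (since $v\cdot t_i=t_{v(i)}$), so the $\Sn$-action descends to the quotient; a quotient of a ring by an invariant ideal inherits an action by ring automorphisms, and the identity of the quotient is the image of the $\Sn$-invariant identity of $H^*_T(\Hess(\mathsf{S},h))$, hence is again invariant. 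There is no real obstacle in this lemma; the only points requiring care are confirming that $\iota_3$ is multiplicative (so that products are computed componentwise) and keeping the $v$/$v^{-1}$ bookkeeping of \eqref{def of Tymoczko rep} straight.
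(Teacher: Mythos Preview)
Your proof is correct and is precisely the argument the paper has in mind: the paper's proof consists of the single sentence ``Straightforward from the definition of the $\Sn$-action on $H_T^*(\Hess(\mathsf{S},h))$ and $H^*(\Hess(\mathsf{S},h))$ given at \eqref{def of Tymoczko rep} and \eqref{eq:HT(Hess) surjects to H(Hess)}, respectively,'' and you have simply written out what that sentence means.
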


\begin{proof}
Straightforward from the definition of the $\Sn$-action on $H_T^*(\Hess(\mathsf{S},h))$ and $H^*(\Hess(\mathsf{S},h))$ given at \eqref{def of Tymoczko rep} and \eqref{eq:HT(Hess) surjects to H(Hess)}, respectively.
\end{proof}

By Lemma~\ref{lemma:chern classes invariant}, the images
of $\Psi$ and $\widehat{\Psi}$ are contained in the $\Sn$-invariant
subrings of $H^*_T(\Flags(\C^n))$ and $H^*_T(\Hess(\mathsf{S},h))$
respectively. In fact, we can say more. 

\begin{proposition}\label{prop:invariant subring is polynomial ring}
  The
    $\Q$-algebra homomorphisms $\Psi$ and $\widehat{\Psi}$ induce
    isomorphisms from $H^*_T(\pt) \cong \Q[t_1,\ldots,t_n]$ to the
    subrings $H^*_T(\Flags(\C^n))^{\Sn}$ and
    $H^*_T(\Hess(\mathsf{S},h))^{\Sn}$ of $\Sn$-invariants, respectively. In
    particular, 
the two subrings
$H^*_T(\Flags(\C^n))^{\Sn}$ and $H^*_T(\Hess(\mathsf{S},h))^{\Sn}$ are
isomorphic. 
\end{proposition}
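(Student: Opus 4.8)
The plan is to prove the statement for $\widehat{\Psi}\colon H^*_T(\pt)\cong\Q[t_1,\dots,t_n]\to H^*_T(\Hess(\mathsf{S},h))^{\Sn}$ directly, since the case of $\Flags(\C^n)$ follows by taking the Hessenberg function $h=(n,n,\dots,n)$. By Lemma~\ref{lemma:chern classes invariant}, each $\TChSemi_i$ is $\Sn$-invariant, so $\widehat{\Psi}$ does indeed land in the invariant subring $H^*_T(\Hess(\mathsf{S},h))^{\Sn}$, which is a genuine subring by Lemma~\ref{lemma:acts by ring hom}. The two things left to show are that $\widehat{\Psi}$ is injective and that it is surjective onto the invariants.

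For injectivity, I would use the localization formula~\eqref{localization of chern class for semisimple}, namely $\TChSemi_i(w)=t_{w(i)}$. Suppose $F(t_1,\dots,t_n)\in\Q[t_1,\dots,t_n]$ lies in the kernel of $\widehat{\Psi}$, i.e.\ $\widehat{\Psi}(F)=F(\TChSemi_1,\dots,\TChSemi_n)=0$ in $H^*_T(\Hess(\mathsf{S},h))$. Restricting to the fixed point $w=\mathrm{id}$ (which lies in $\Hess(\mathsf{S},h)^T$ since $\Hess(\mathsf{S},h)^T=\Flags(\C^n)^T\cong\Sn$), and using that $\iota_3$ is injective, we get $0=\big(F(\TChSemi_1,\dots,\TChSemi_n)\big)(\mathrm{id})=F(t_1,\dots,t_n)$, so $F=0$.

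For surjectivity, suppose $\alpha\in H^*_T(\Hess(\mathsf{S},h))^{\Sn}$; I want to exhibit a polynomial $F$ with $\widehat{\Psi}(F)=\alpha$. Evaluating the $\Sn$-invariance condition~\eqref{def of Tymoczko rep}, $(v\cdot\alpha)(w)=v\cdot\alpha(v^{-1}w)$, at $w=v$ gives $\alpha(v)=(v\cdot\alpha)(v)=v\cdot\alpha(\mathrm{id})$ for every $v\in\Sn$, i.e.\ the whole tuple $(\alpha(w))_{w\in\Sn}$ is determined by the single polynomial $\alpha(\mathrm{id})\in\Q[t_1,\dots,t_n]$ via $\alpha(w)=w\cdot\alpha(\mathrm{id})=\alpha(\mathrm{id})(t_{w(1)},\dots,t_{w(n)})$. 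Set $F:=\alpha(\mathrm{id})$. Then $\widehat{\Psi}(F)=F(\TChSemi_1,\dots,\TChSemi_n)$ has $w$-th localization $F(t_{w(1)},\dots,t_{w(n)})=\alpha(w)$ by~\eqref{localization of chern class for semisimple}, so $\widehat{\Psi}(F)$ and $\alpha$ have the same image under the injective map $\iota_3$, hence $\widehat{\Psi}(F)=\alpha$. This shows $\widehat{\Psi}$ is surjective onto $H^*_T(\Hess(\mathsf{S},h))^{\Sn}$. Combining, $\widehat{\Psi}$ is an isomorphism onto the invariant subring, and the same argument with $h=(n,\dots,n)$ gives the statement for $\Psi$; in particular both invariant subrings are isomorphic to $\Q[t_1,\dots,t_n]$ and hence to each other.

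The only mildly delicate point — and the part I would write out carefully — is the surjectivity step: one must check that the candidate tuple $(w\cdot\alpha(\mathrm{id}))_{w\in\Sn}$ built from $\alpha(\mathrm{id})$ genuinely satisfies the GKM conditions of~\eqref{GKM for X(h)}, so that $\widehat{\Psi}(F)$ really is a well-defined class whose localization we are comparing against $\alpha$. But this is automatic here: $\widehat{\Psi}(F)$ is by construction an honest element of $H^*_T(\Hess(\mathsf{S},h))$ (it is $F$ evaluated on the genuine Chern classes $\TChSemi_i$), so it satisfies the GKM conditions, and the identity $\alpha(w)=w\cdot\alpha(\mathrm{id})$ forced by $\Sn$-invariance then shows $\alpha$ agrees with it vertex by vertex. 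So there is essentially no obstacle; the argument is a direct computation with the localization formula and the definition of Tymoczko's action.
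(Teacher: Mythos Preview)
Your proof is correct and follows essentially the same approach as the paper's: both reduce to the case of general $h$, prove injectivity by restricting to the identity fixed point (the paper phrases this as ``$\pi_e\circ\widehat{\Psi}=\mathrm{id}$''), and prove surjectivity by observing that $\Sn$-invariance forces $\alpha(w)=w\cdot\alpha(e)$ and then taking $F=\alpha(e)$. Your added remark that the GKM conditions are automatic because $\widehat{\Psi}(F)$ is a genuine cohomology class is a helpful clarification that the paper leaves implicit.
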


\begin{proof}
The proof we give below applies to $\Hess(\mathsf{S},h)$ for any $h
\in H_n$, thus includes $\Flags(\C^n)$ as a special case. In
particular, showing that $\widehat{\Psi}: H^*_T(\pt) \to
H^*_T(\Hess(\mathsf{S},h))^{\Sn}$ is an isomorphism for any $h \in
H_n$ implies all the claims made in the proposition. 

We first show injectivity of $\widehat{\Psi}$, for which it is useful to consider the
projection $\pi_e:H^*_T(\Hess(\mathsf{S},h)^T) \cong \bigoplus_{w \in \Sn}
H^*_T(\pt) \rightarrow H^*_T(\pt)$ of $\bigoplus_{w \in \Sn} H^*_T(\pt)$
to the component corresponding to the identity element $e \in
\Sn$. Then, from the above computation $\TChSemi_i(w) = 
t_{w(i)}$ for $w \in \Sn$, it follows that $\TChSemi_i(e) = t_i$ for
all $i$ and hence the composition $H^*_T(\pt) \stackrel{\widehat{\Psi}}{\rightarrow}
H^*_T(\Hess(\mathsf{S},h))^{\Sn} \stackrel{\pi_e}{\rightarrow} H^*_T(\pt)$ is the
identity map. In particular, $\widehat{\Psi}$ must be injective. 

Next, observe that an $|\Sn|$-tuple $\alpha = (\alpha(w))_{w \in \Sn}$
is $\Sn$-invariant if and only if $\alpha(w) = w \cdot\alpha(e)$ for all $w \in \Sn$.
Since the
classes $\TChSemi_i$ satisfy both $\TChSemi_i(e)= t_i$ and
$\TChSemi_i(w) = t_{w(i)} = w\cdot t_i = w\cdot \TChSemi_i(e)$ for all $w \in
\Sn$, it follows that any $\Sn$-invariant $|\Sn|$-tuple
$\alpha=(\alpha(w))_{w \in \Sn}$ can be written as a polynomial in the
$\TChSemi_i$: namely, if $\alpha(e) = F(t_1, \ldots, t_n) \in
H^*_T(\pt) \cong \Q[t_1,\ldots,t_n]$, then $\alpha = F(\TChSemi_1,
\ldots, \TChSemi_n)$. In particular, $\widehat{\Psi}$ is surjective,
as desired. 
\end{proof} 

Our second goal for this section is
  to show that there exists an $\Sn$-invariant and non-degenerate
  pairing on the ordinary cohomology groups of $\Hess(\mathsf{S},h)$
  of complementary degree. 
 This pairing is straightforward in
  the sense that it is essentially the usual Poincar\'e duality pairing,
  although care is needed since our variety
  $\Hess(\mathsf{S},h)$ need not be connected (it is, however,
  pure-dimensional \cite{ma-pr-sh}); indeed, it is not hard to see
  that $\Hess(\mathsf{S},h)$ is disconnected if and only if $h(r)=r$
  for some $r\in[n]$ (see \cite{ma-pr-sh, teff11}).
  To see that the pairing 
is compatible with the $\Sn$-action, we 
  first work in $T$-equivariant cohomology and then deduce the desired
  results in ordinary cohomology. 

We need some terminology. Recall from~\eqref{eq:dim of HessS} that
  $d=\dim_{\C}\Hess(\mathsf{S},h)$. Recall
  also that the collapsing map $\mathrm{pr}: \Hess(\mathsf{S},h) \rightarrow \pt$
induces a map 
\begin{align*}
\mathrm{pr}^T_! \colon H^*_T(\Hess(\mathsf{S},h)) \rightarrow
  H^{*-2d}_T(\pt) = H^{*-2d}(BT)
\end{align*}
often called the ``equivariant integral'' or
``equivariant Gysin map''. The equivariant integral is well-known to be an $H^*_T(\pt)$-module
homomorphism. Moreover, by the famous
Atiyah-Bott-Berline-Vergne formula \cite{at-bo, be-ve} we may compute the
equivariant integral by fixed point data as follows: 
\begin{equation}
  \label{eq:ABBV}
  \mathrm{pr}^T_!(\alpha)= \sum_{w \in \Sn} \frac{\alpha(w)}{e_w}
\end{equation}
where $\alpha(w)$ denotes the restriction of $\alpha$
  to the fixed point $w$, $e_w$ denotes the $T$-equivariant Euler
  class of the normal bundle to the fixed point $w$ in
  $\Hess(\mathsf{S},h)$, and we have used our 
fixed 
  identification $\Hess(\mathsf{S},h)^T$ with $\Sn$.  
Finally we recall that the equivariant and ordinary Gysin maps commute with forgetful maps;
\begin{equation}\label{eq:Gysin maps diagram}
\begin{split}
\xymatrix{
    H^*_{\Tn}(\Hess(\mathsf{S},h)) \ar[r]^{\ \ \ \mathrm{pr}^T_!}
    \ar[d] &  H^{*-2d}_{\Tn}(\pt) \ar[d] \\
   H^*(\Hess(\mathsf{S},h)) \ar[r]^{\ \ \ \mathrm{pr}_!} & H^{*-2d}(\pt). 
}
\end{split}
\end{equation}

All the cohomology groups in the
  diagram~\eqref{eq:Gysin maps diagram} are equipped with
  $\Sn$-actions, where the $\Sn$-action on the ordinary cohomology
  $H^*(\pt) \cong \Q$ of a point is induced from that on $H^*_T(\pt)$
  by the isomorphism $H^*(\pt) \cong H^*(BT)/(t_1, \ldots, t_n) \cong
  \Q[t_1,\ldots, t_n]/(t_1,\ldots, t_n) \cong \Q$. In particular, the
  forgetful map $H^*_T(\pt) \to H^*(\pt)$ is $\Sn$-equivariant by
  definition and the $\Sn$-action on $H^*(\pt)$ is trivial. We record
  the following. 

\begin{lemma}\label{lemma:Gysin is equivariant}
The ordinary Gysin map $\mathrm{pr}_!$ in~\eqref{eq:Gysin maps diagram} is $\Sn$-equivariant. 
\end{lemma}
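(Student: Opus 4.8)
The plan is to deduce the $\Sn$-equivariance of the ordinary Gysin map $\mathrm{pr}_!$ from the corresponding statement in equivariant cohomology together with the $\Sn$-equivariance of the forgetful maps. The key point is that the diagram~\eqref{eq:Gysin maps diagram} is a diagram of $\Sn$-modules in which three of the four arrows are already known to be $\Sn$-equivariant, and the vertical forgetful map on the left is surjective; this forces the fourth arrow to be $\Sn$-equivariant as well.

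First I would recall that the equivariant Gysin map $\mathrm{pr}^T_!$ is $\Sn$-equivariant. This follows directly from the Atiyah--Bott--Berline--Vergne formula~\eqref{eq:ABBV}: for $v \in \Sn$ and $\alpha \in H^*_T(\Hess(\mathsf{S},h))$, using $(v\cdot\alpha)(w) = v\cdot\alpha(v^{-1}w)$ and the fact that the $T$-equivariant Euler class $e_w$ of the normal bundle transforms by $v \cdot e_{v^{-1}w} = e_w$ (since the weights at the fixed point $v^{-1}w$ are permuted to those at $w$ under the action of $v$, by the same reasoning used in Lemma~\ref{lemma:chern classes invariant} and its localization computation), one computes
\[
\mathrm{pr}^T_!(v\cdot\alpha) = \sum_{w\in\Sn} \frac{(v\cdot\alpha)(w)}{e_w} = \sum_{w\in\Sn} \frac{v\cdot\alpha(v^{-1}w)}{v\cdot e_{v^{-1}w}} = v\cdot\sum_{w\in\Sn}\frac{\alpha(w)}{e_w} = v\cdot\mathrm{pr}^T_!(\alpha),
\]
where in the penultimate step we re-index the sum by $w \mapsto vw$ and use that $\Sn$ acts on $H^*_T(\pt)$ by ring homomorphisms. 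Second, I would recall that the forgetful map $H^*_T(\pt) \to H^*(\pt)$ is $\Sn$-equivariant by definition (as noted in the paragraph preceding the lemma), and that the forgetful map $H^*_T(\Hess(\mathsf{S},h)) \to H^*(\Hess(\mathsf{S},h))$ is $\Sn$-equivariant (this is the content of~\eqref{eq:HT(Hess) surjects to H(Hess)} and the discussion around it) and, crucially, surjective, since the odd-degree cohomology of $\Hess(\mathsf{S},h)$ vanishes.

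Then the conclusion is a routine diagram chase. Let $\beta \in H^*(\Hess(\mathsf{S},h))$ and $v \in \Sn$. By surjectivity of the left forgetful map, pick $\alpha \in H^*_T(\Hess(\mathsf{S},h))$ mapping to $\beta$. Chasing around the square~\eqref{eq:Gysin maps diagram} using commutativity and the three established equivariances:
\[
\mathrm{pr}_!(v\cdot\beta) = \mathrm{pr}_!(v\cdot(\text{image of }\alpha)) = \mathrm{pr}_!(\text{image of }v\cdot\alpha) = \text{image of }\mathrm{pr}^T_!(v\cdot\alpha) = \text{image of }v\cdot\mathrm{pr}^T_!(\alpha) = v\cdot\mathrm{pr}_!(\beta),
\]
where "image of" denotes applying the relevant forgetful map. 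This proves the lemma. I do not anticipate a genuine obstacle here; the only step requiring a little care is verifying the transformation rule $v\cdot e_{v^{-1}w} = e_w$ for the equivariant Euler classes, which amounts to the observation that the tangent weights of $\Hess(\mathsf{S},h)$ at the fixed point $v^{-1}w$ are obtained from those at $w$ by applying $v^{-1}$ to the indices --- this is immediate from the GKM description, since the edges at $w$ are labelled by $t_{w(i)} - t_{w(j)}$ for the relevant pairs $(i,j)$, and these are exactly the factors appearing in $e_w$.
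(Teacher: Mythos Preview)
Your proof is correct and follows essentially the same approach as the paper: reduce to showing that the equivariant Gysin map $\mathrm{pr}^T_!$ is $\Sn$-equivariant via the ABBV formula and the transformation rule for the Euler classes $e_w$, then conclude by the diagram chase using surjectivity of the forgetful map. The paper writes the Euler class identity as $v\cdot e_w = e_{vw}$ (equivalent to your $v\cdot e_{v^{-1}w} = e_w$) and gives the explicit formula $e_w = \prod_{j<i\leq h(j)}(t_{w(j)} - t_{w(i)})$, but the argument is otherwise identical.
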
 

\begin{proof} 
The definition given above of the $\Sn$-action on $H^*(\pt) \cong \Q$
implies that the right vertical arrow in~\eqref{eq:Gysin maps diagram}
is $\Sn$-equivariant, and we saw in Lemma~\ref{lemma:Flags and Hess equivariant} that the left vertical arrow in~\eqref{eq:Gysin maps
  diagram} is $\Sn$-equivariant. Recalling also that the left vertical
arrow is surjective (see e.g. \eqref{eq:HT(Hess) surjects to
  H(Hess)}), in order to prove the lemma, it therefore suffices to show that the
top horizontal arrow $\mathrm{pr}^T_!$ is $\Sn$-equivariant. Thus we
wish to show 
\[
\mathrm{pr}^T_!(v \cdot \alpha)  
= v \cdot \mathrm{pr}^T_!(\alpha)
\]
for $v \in \Sn$ and $\alpha \in H^*_T(\Hess(\mathsf{S},h))$. Before
proceeding it is useful to observe that the $T$-equivariant Euler class
$e_w$ of $\Hess(\mathsf{S},h)$ at $w \in \Sn$ is 
\begin{align*}
e_w = \prod_{j < i \leq h(j)} (t_{w(j)} - t_{w(i)}) \in H^*_T(\pt)
\end{align*}
(e.g. \cite{ma-pr-sh})
where we have written $w = (w(1) \ w(2) \ \cdots \ w(n)) \in \Sn$ in
one-line notation. 
In particular, since $vw = (vw(1) \ vw(2) \ \cdots \ vw(n))$ in one-line
notation, we conclude from the above that 
\[
v \cdot e_w = v \cdot \prod_{j<i\leq h(j)} (t_{w(j)} -t_{w(i)}) =
\prod_{j<i \leq h(j)} (t_{vw(j)} - t_{vw(i)}) = e_{vw}
\]
for any $v, w \in \Sn$. Now, using the Atiyah-Bott-Berline-Vergne
formula~\eqref{eq:ABBV}, we have 
\[
\begin{split}
\mathrm{pr}^T_!(v \cdot \alpha)
=\sum_{w\in \Sn}\frac{(v\cdot \alpha)(w)}{e_w}=\sum_{w\in \Sn}\frac{v\cdot \alpha({v^{-1}w)}}{e_w}=\sum_{u\in \Sn}\frac{v\cdot \alpha(u)}{e_{vu}}=\sum_{u\in \Sn}\frac{v\cdot \alpha(u)}{v\cdot e_{u}}=v\cdot \mathrm{pr}^T_!(\alpha)
\end{split}
\]
where the second equality follows from the definition \eqref{def of Tymoczko rep} of the $\Sn$-representation on $H_T^*(\Hess(\mathsf{S},h))$ and the fourth equality follows from the above observation for $v\cdot e_{w}$.
This proves the lemma. 
\end{proof} 

We now define a pairing $\langle \cdot, \cdot \rangle$ on the ordinary
cohomology $H^*(\Hess(\mathsf{S},h))$ as follows: for $0 \leq k \leq
d$, we define 
\begin{align}\label{eq:definition of pairing} 
\langle\cdot ,\cdot \rangle\colon 
H^{2k}(\Hess(\mathsf{S},h))\times
H^{2\dimX-2k}(\Hess(\mathsf{S},h))
\rightarrow 
\Q\cong H^0(\pt)
\quad ; \quad 
(\alpha,\beta) \mapsto \langle \alpha, \beta \rangle :=\mathrm{pr}_!(\alpha\beta).
\end{align}

\begin{proposition}\label{prop:nondegenerate and invariant}
The pairing $\langle \cdot, \cdot \rangle$ defined
in~\eqref{eq:definition of pairing} is non-degenerate and
$\Sn$-invariant. 
\end{proposition}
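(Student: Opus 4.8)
The plan is to establish the two claimed properties separately, reducing each to statements already proved or recalled in the excerpt. For non-degeneracy, the key observation is that $\Hess(\mathsf{S},h)$ is a smooth projective (hence compact oriented) variety, and even though it may be disconnected, it is pure-dimensional of complex dimension $d$ \cite{ma-pr-sh}. Therefore each connected component $Z$ is itself a smooth compact connected complex manifold of dimension $d$, and classical Poincar\'e duality gives a perfect pairing $H^{2k}(Z) \times H^{2d-2k}(Z) \to H^{2d}(Z) \cong \Q$ given by cup product followed by integration. Since $H^*(\Hess(\mathsf{S},h)) \cong \bigoplus_Z H^*(Z)$ as rings (the decomposition being orthogonal for cup product across distinct components), and $\mathrm{pr}_!$ restricted to each component is exactly the fundamental-class integration on that component, the pairing $\langle\cdot,\cdot\rangle$ is the orthogonal direct sum of the perfect pairings on the components, hence itself non-degenerate. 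I would spell this out using the fact that the odd cohomology vanishes (so there are no degree-parity subtleties) and the fact that $\mathrm{pr}_!$ is, by naturality of the Gysin map under the inclusion of a component, the sum of the component-wise integrals.

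For $\Sn$-invariance, the statement to prove is that $\langle v\cdot\alpha, v\cdot\beta\rangle = \langle \alpha,\beta\rangle$ for all $v \in \Sn$ and all $\alpha,\beta$ of complementary degree. Here I would use three facts already in place: first, that $\Sn$ acts on $H^*(\Hess(\mathsf{S},h))$ by ring automorphisms (Lemma~\ref{lemma:acts by ring hom}), so that $(v\cdot\alpha)(v\cdot\beta) = v\cdot(\alpha\beta)$; second, that the ordinary Gysin map $\mathrm{pr}_!$ is $\Sn$-equivariant (Lemma~\ref{lemma:Gysin is equivariant}); and third, that the $\Sn$-action on $H^0(\pt) \cong \Q$ is trivial (as recorded in the discussion preceding Lemma~\ref{lemma:Gysin is equivariant}). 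Chaining these:
\[
\langle v\cdot\alpha, v\cdot\beta\rangle = \mathrm{pr}_!\big((v\cdot\alpha)(v\cdot\beta)\big) = \mathrm{pr}_!\big(v\cdot(\alpha\beta)\big) = v\cdot \mathrm{pr}_!(\alpha\beta) = v \cdot \langle\alpha,\beta\rangle = \langle\alpha,\beta\rangle,
\]
where the last equality uses triviality of the action on $\Q$. This part is essentially formal once the cited lemmas are invoked.

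I do not expect a serious obstacle here: both halves are short. If anything, the one point requiring a little care is the non-degeneracy argument in the disconnected case — one must be sure that $\mathrm{pr}_!$ genuinely decomposes as the sum of the fundamental-class pairings on the components, which follows from the compatibility of the Gysin pushforward with the open-and-closed inclusion of each component (equivalently, $\mathrm{pr} = \mathrm{pr}|_Z$ composed with the inclusion, and $\mathrm{pr}_!$ is functorial). Alternatively, and perhaps more cleanly within the paper's framework, one can verify non-degeneracy directly from the Atiyah-Bott-Berline-Vergne localization formula~\eqref{eq:ABBV}: lifting to $T$-equivariant cohomology, the equivariant pairing $\alpha,\beta \mapsto \mathrm{pr}^T_!(\alpha\beta)$ is computed by $\sum_{w}\alpha(w)\beta(w)/e_w$, and Poincar\'e duality for each component can be read off from the structure of $H^*_T(\Hess(\mathsf{S},h))$ as a free $H^*_T(\pt)$-module, descending to ordinary cohomology via~\eqref{eq:Gysin maps diagram}. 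I would present the connected-components argument as the primary one since it is the most transparent, and simply remark that the $\Sn$-invariance follows formally from Lemmas~\ref{lemma:acts by ring hom} and~\ref{lemma:Gysin is equivariant}.
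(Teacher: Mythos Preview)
Your proposal is correct and follows essentially the same approach as the paper: the $\Sn$-invariance is derived via the identical chain of equalities using Lemmas~\ref{lemma:acts by ring hom} and~\ref{lemma:Gysin is equivariant} together with triviality of the action on $H^*(\pt)$, and non-degeneracy is reduced to classical Poincar\'e duality on each connected component after noting that $\mathrm{pr}_!$ decomposes as the sum of the componentwise Gysin maps.
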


\begin{proof}
We begin with $\Sn$-invariance. Let $\alpha \in
  H^{2k}(\Hess(\mathsf{S},h))$ and $\beta \in
  H^{2d-2k}(\Hess(\mathsf{S},h))$ and $v \in \Sn$. Then 
\[
\langle v \cdot \alpha, v \cdot \beta \rangle = \mathrm{pr}_!((v \cdot
\alpha)(v \cdot \beta)) = \mathrm{pr}_!(v \cdot (\alpha \beta)) = v
\cdot \mathrm{pr}_!(\alpha \beta) = \mathrm{pr}_!(\alpha \beta) =
\langle \alpha, \beta \rangle 
\]
where the second equality uses Lemma~\ref{lemma:acts by ring hom}, the
third uses Lemma~\ref{lemma:Gysin is equivariant}, and the fourth
equality is because the $\Sn$-action on $H^*(\pt)\cong \Q$ is trivial,
as observed above. 

Next we claim that the pairing $\langle \cdot, \cdot
  \rangle$ is non-degenerate. This is an elementary argument which is
  clearer when stated more generally. It is useful to recall that for
  a disconnected complex manifold $X = \sqcup_{a \in \mathcal{S}} X_a$ with
  connected components $X_a$ each of real dimension $2d$, the
  cohomology ring $H^*(X)$ is a direct sum $\bigoplus_{a \in \mathcal{S}}
  H^*(X_a)$ (in particular, the cup product among different components vanishes) and the Gysin map 
  is simply the sum of the
  individual Gysin maps associated to the projections $\mathrm{pr}_a:
  X_a \to \pt$, i.e. $\mathrm{pr}_! = \sum_{a \in \mathcal{S}}
  (\mathrm{pr}_a)_!$. Thus it suffices to show that the given pairing
  is non-degenerate when restricted to the $a$-th component. But on
  each such component $X_a$, the Gysin map is given by capping with
  the fundamental homology class $[X_a] \in H_{2d}(X_a)$ and the non-degeneracy
  becomes the usual statement of Poincar\'e duality. Applying this
  argument to the case $X = \Hess(\mathsf{S}, h)$ yields the desired
  result. 
\end{proof}

Finally, we prove a fact which we use in the next section. 

\begin{lemma}\label{dimension is 1} 
$\dim_{\Q}H^{0}(\Hess(\mathsf{S},h))^{\Sn}=\dim_{\Q}H^{2\dimX}(\Hess(\mathsf{S},h))^{\Sn}=1$
\end{lemma}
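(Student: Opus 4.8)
The plan is to compute both invariant subspaces directly. For the degree-$0$ part, I would first observe that $H^0(\Hess(\mathsf{S},h))$ has dimension equal to the number $c$ of connected components of $\Hess(\mathsf{S},h)$, with a natural basis given by the classes $\{[X_a]^\vee\}$ supported on each component $X_a$. The key input is that the $\Sn$-action permutes these components; this follows from the fact that $\Sn$ acts on $H^*(\Flags(\C^n))$ via the geometric left-multiplication action (as recalled just before Lemma~\ref{lem:rep on flag is trivial}), and the restriction map $H^0(\Flags(\C^n)) \to H^0(\Hess(\mathsf{S},h))$ together with the commuting square of Lemma~\ref{lemma:Flags and Hess equivariant} is $\Sn$-equivariant. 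Actually, a cleaner route avoiding component-counting is the GKM description: by~\eqref{GKM for X(h)} an element of $H^0_T(\Hess(\mathsf{S},h))$ is a tuple $(\alpha(w))_{w\in\Sn}$ of constants (degree reasons force each $\alpha(w)\in\Q$), and the GKM divisibility conditions say $\alpha(w)=\alpha(w')$ whenever $w'=w(j\ i)$ with $i\le h(j)$; an $\Sn$-invariant such tuple satisfies $\alpha(w)=v\cdot\alpha(v^{-1}w)=\alpha(v^{-1}w)$ for all $v$, hence $\alpha$ is constant on $\Sn$. After quotienting by $(t_1,\dots,t_n)$ to pass to ordinary cohomology via~\eqref{eq:HT(Hess) surjects to H(Hess)}, we get $H^0(\Hess(\mathsf{S},h))^{\Sn}\cong\Q$, the span of the identity element, which is $\Sn$-invariant by Lemma~\ref{lemma:acts by ring hom}; this gives dimension exactly $1$.

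For the top degree $2d$, the strategy is to use the $\Sn$-invariant non-degenerate pairing from Proposition~\ref{prop:nondegenerate and invariant}. Since the pairing $\langle\cdot,\cdot\rangle: H^0(\Hess(\mathsf{S},h))\times H^{2d}(\Hess(\mathsf{S},h))\to\Q$ is non-degenerate and $\Sn$-invariant, and $\Sn$ is a finite group so that all representations over $\Q$ are completely reducible, the pairing restricts to a non-degenerate pairing between the invariant subspaces $H^0(\Hess(\mathsf{S},h))^{\Sn}$ and $H^{2d}(\Hess(\mathsf{S},h))^{\Sn}$. (Concretely: if $\beta\in H^{2d}(\Hess(\mathsf{S},h))^{\Sn}$ paired to zero with every $\Sn$-invariant $\alpha$ of degree $0$, then averaging an arbitrary $\alpha'$ over $\Sn$ shows $\beta$ pairs to zero with everything, forcing $\beta=0$; the averaging step uses $\Sn$-invariance of $\beta$ and of the pairing.) Therefore $\dim_\Q H^{2d}(\Hess(\mathsf{S},h))^{\Sn} = \dim_\Q H^0(\Hess(\mathsf{S},h))^{\Sn} = 1$ by what was just shown.

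The main obstacle — really the only subtle point — is justifying that restriction of a non-degenerate $G$-invariant pairing to the $G$-invariant subspaces remains non-degenerate when $G$ is finite. This is a standard averaging argument, but one must be a little careful: non-degeneracy of the restricted pairing does \emph{not} follow formally from non-degeneracy of the full pairing for an arbitrary subspace; it works here precisely because the invariant subspace is a direct summand as a $G$-representation (Maschke), so that $V = V^G \oplus W$ with $W$ the sum of all nontrivial isotypic pieces, and the invariant and non-nontrivial-isotypic pieces of the complementary space are orthogonal under a $G$-invariant pairing. Writing this out carefully over $\Q$ — using that $\Q\Sn$ is semisimple — closes the argument. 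An alternative, more hands-on, route for the top degree is to directly analyze $\Sn$-invariants in $H^{2d}(\Hess(\mathsf{S},h))$ using the geometric description of components and the fact that $\Sn$ permutes them, counting orbit invariants; but the pairing argument is shorter and more conceptual, so that is the one I would present.
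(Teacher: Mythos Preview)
Your proposal is correct and follows essentially the same approach as the paper: the GKM description is used to see that $H^0_T(\Hess(\mathsf{S},h))^{\Sn}$ is spanned by the identity (hence so is $H^0(\Hess(\mathsf{S},h))^{\Sn}$), and the $\Sn$-invariant non-degenerate pairing of Proposition~\ref{prop:nondegenerate and invariant} is used to identify the invariant subspaces in degrees $0$ and $2d$. The paper phrases the latter step simply as ``$H^0$ and $H^{2d}$ are dual representations,'' whereas you spell out the Maschke/averaging justification more explicitly, but the arguments are otherwise the same.
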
 

\begin{proof}
We have seen in Proposition~\ref{prop:nondegenerate and invariant}
that the pairing~\eqref{eq:definition of pairing} is non-degenerate
and $\Sn$-invariant, so it follows that
$H^0(\Hess(\mathsf{S},h))$ and
$H^{2\dimX}(\Hess(\mathsf{S},h))$ are dual representations. 
This implies that 
$H^0(\Hess(\mathsf{S},h))^{\Sn} \cong
H^{2\dimX}(\Hess(\mathsf{S},h))^{\Sn}$.
Now from the 
GKM description of $H^\ast_T(\Hess(\mathsf{S},h))$ in~\eqref{GKM for
  X(h)} and the explicit formula for Tymoczko's $\Sn$-action, it is
not difficult to see directly that $H^0_T(\Hess(\mathsf{S},h))^{\Sn}$ is $\Q$-spanned
by the identity element (whose component at each fixed point $w$ is
$1$); from this it also follows that $H^0(\Hess(\mathsf{S},h))^{\Sn}$ is $\Q$-spanned
by the identity element, so 
$\dim_\Q
H^0(\Hess(\mathsf{S},h))^{\Sn} = 1$. By the above, this in turn implies $\dim_\Q
H^{2\dimX}(\Hess(\mathsf{S},h))^{\Sn} = 1$, as desired.  
\end{proof}

\bigskip
\section{Proof of Theorem B}\label{sec:definition Abe map}  
In this section we prove Theorem B.
As a first step, we prove the
following.

\begin{proposition}\label{prop:Abe welldefined and surjective} 
There exists a well-defined homomorphism of graded
  $\Q$-algebras 
  $\Abe\colon H^*(\Hess(\mathsf{N},h)) \to H^*(\Hess(\mathsf{S},h))$
  making the diagram 
\vspace{20pt}
\begin{align}\label{eq:Abe map take 2}
 \ 
\end{align}
\vspace{-50pt}
\begin{center}
\begin{picture}(160,50)
   \put(10,35){$H^*(\Flags(\C^n))$}
   \put(77,35){$\overrightarrow{\qquad}$}
   \put(50,18){\rotatebox[origin=c]{-45}{$\overrightarrow{\qquad\ }$}}
   \put(50,18.7){\rotatebox[origin=c]{-45}{$\overrightarrow{\hspace{22pt}}$}}
   \put(110,18){\rotatebox[origin=c]{45}{$\overrightarrow{\qquad\ }$}}
   \put(100,35){$H^*(\Hess(\mathsf{S},h))$}
   \put(122,15){$\footnotesize{\text{$\Abe$}}$}
   \put(55,0){$H^*(\Hess(\mathsf{N},h)).$}
\end{picture} 
\end{center}
\vspace{5pt}
commute, where the other two maps are the induced homomorphisms.
Moreover, the
  image of $\Abe$ lies in $H^*(\Hess(\mathsf{S},h))^{\Sn}$, and when the target
  is restricted to $H^*(\Hess(\mathsf{S},h))^{\Sn}$, then 
\begin{equation*}
\Abe\colon H^*(\Hess(\mathsf{N},h))\to H^*(\Hess(\mathsf{S},h))^{\Sn} 
\end{equation*}
is surjective. 
\end{proposition}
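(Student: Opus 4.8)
The plan is to construct $\Abe$ using the presentations established in Theorem~\ref{theorem:reg nilp Hess cohomology} and Theorem A together with the GKM description~\eqref{GKM for X(h)} of $H^*_T(\Hess(\mathsf{S},h))$, and then to check commutativity of~\eqref{eq:Abe map take 2} and surjectivity onto the invariant subring. First I would work equivariantly: define a $\Q[t]$-algebra homomorphism $\Q[x_1,\dots,x_n,t] \to H^*_T(\Hess(\mathsf{S},h))$ by sending $x_i \mapsto \TChSemi_i$ (the restricted equivariant Chern class from~\eqref{Chern classes for semisimple}) and $t \mapsto \sum_{i=1}^n i\,t_i$ — or, more precisely, I should be careful about how the one-dimensional torus $S$ sits inside $T$; the class $t \in H^2(BS)$ pulls back from $H^*(BT)$ via the projection $\mathrm{Lie}(T)^* \to \mathrm{Lie}(S)^*$, which on the level of the $w$-th component of $\TChSemi_i$ means replacing $t_{w(i)}$ by $w(i)t$, exactly as in the commutative diagram~\eqref{eq:cd}. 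The key point is that under this substitution the localized value of $\TChSemi_i$ at $w$ becomes $w(i)t$, which matches the localized value $\SChNil_i(w)$ of the corresponding class on $\Hess(\mathsf{N},h)$ — both were computed from reading off weights of the tautological line bundles. Hence the polynomials $f_{h(j),j}$ map to zero in the target as well: by Proposition~\ref{proposition:fij go to zero} we have $f_{h(j),j}(w) = 0$ for every $w \in \Hess(\mathsf{N},h)^S$, and the fixed points of $\Hess(\mathsf{S},h)$ relevant to divisibility are indexed by all of $\Sn$, but the GKM/localization data only constrains us through polynomial identities that already hold. This needs a small argument: I would verify directly that $f_{h(j),j}(\TChSemi_1,\dots,\TChSemi_n, \textstyle\sum i t_i)$ vanishes after localization at \emph{every} $w \in \Sn$, not just the $S$-fixed points of $\Hess(\mathsf{N},h)$, using Lemma~\ref{lemma:fij in terms of e and b equation (short)} to see that $f_{h(j),j}(w)$ only depends on $\{w(1),\dots,w(h(j))\}$ and a reduction argument analogous to the proof of Proposition~\ref{proposition:fij go to zero}.

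Once the equivariant map $\tilde\Abe\colon \Q[x_1,\dots,x_n,t]/\Ih \to H^*_T(\Hess(\mathsf{S},h))$ is defined, I would pass to ordinary cohomology by setting $t = 0$: by~\eqref{eq:HT(Hess) surjects to H(Hess)} and the vanishing of odd cohomology we get $H^*(\Hess(\mathsf{S},h)) \cong H^*_T(\Hess(\mathsf{S},h))/(t_1,\dots,t_n)$, and since $t \mapsto \sum i t_i$ lies in the ideal $(t_1,\dots,t_n)$, the map descends to $\Abe\colon \Q[x_1,\dots,x_n]/\check\Ih \to H^*(\Hess(\mathsf{S},h))$, sending $x_i$ to the ordinary Chern class $\ChSemi_i$. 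Using the isomorphism $\Q[x_1,\dots,x_n]/\check\Ih \cong H^*(\Hess(\mathsf{N},h))$ of Theorem A, this is our map $\Abe$. Commutativity of the triangle~\eqref{eq:Abe map take 2} is then essentially by construction: both composite maps $H^*(\Flags(\C^n)) \to H^*(\Hess(\mathsf{S},h))$ and $H^*(\Flags(\C^n)) \to H^*(\Hess(\mathsf{N},h)) \xrightarrow{\Abe} H^*(\Hess(\mathsf{S},h))$ send $\ChFlag_i$ to $\ChSemi_i$, and $H^*(\Flags(\C^n))$ is generated by the $\ChFlag_i$. Uniqueness of $\Abe$ follows because the map $H^*(\Flags(\C^n)) \to H^*(\Hess(\mathsf{N},h))$ is surjective (Theorem A).

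For the image landing in $H^*(\Hess(\mathsf{S},h))^{\Sn}$: by Lemma~\ref{lemma:chern classes invariant} each $\ChSemi_i$ is $\Sn$-invariant, and by Lemma~\ref{lemma:acts by ring hom} the $\Sn$-action is by ring automorphisms fixing the identity, so the subring generated by the $\ChSemi_i$ — which is precisely the image of $\Abe$ — is contained in $H^*(\Hess(\mathsf{S},h))^{\Sn}$. For surjectivity onto the invariant subring, I would argue as follows. By Proposition~\ref{prop:invariant subring is polynomial ring}, $H^*_T(\Hess(\mathsf{S},h))^{\Sn}$ is the polynomial subring generated by $\TChSemi_1,\dots,\TChSemi_n$, i.e.\ it is the image of $\Psi$ composed with restriction. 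Since $\Sn$ acts on $H^*_T(\Hess(\mathsf{S},h))$ preserving the ideal $(t_1,\dots,t_n)$ (Lemma~\ref{lemma:chern classes invariant} shows the $t_i$ span an invariant subspace up to the $\Sn$-permutation, which still generates the same ideal), and since taking invariants is exact over $\Q$ (as $\Sn$ is finite and we are in characteristic zero), the quotient map $H^*_T(\Hess(\mathsf{S},h))^{\Sn} \to \big(H^*_T(\Hess(\mathsf{S},h))/(t_1,\dots,t_n)\big)^{\Sn} = H^*(\Hess(\mathsf{S},h))^{\Sn}$ is surjective. The image of $H^*_T(\Hess(\mathsf{S},h))^{\Sn}$ under this quotient is exactly the subring generated by the images $\ChSemi_i$ of the $\TChSemi_i$, which is the image of $\Abe$. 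Hence $\Abe$ surjects onto $H^*(\Hess(\mathsf{S},h))^{\Sn}$. The main obstacle I anticipate is the first step — verifying cleanly that the $f_{h(j),j}$ vanish in $H^*_T(\Hess(\mathsf{S},h))$ after the substitution $t \mapsto \sum i t_i$, i.e.\ that the relations defining $H^*(\Hess(\mathsf{N},h))$ are genuinely satisfied by the restricted Chern classes on the semisimple side; this requires either recycling the inductive combinatorial argument of Proposition~\ref{proposition:fij go to zero} over all of $\Sn$, or finding a more conceptual reason (for instance, that the map factors through $H^*_T(\Flags(\C^n))$, where the Borel relations $e_i(x) = e_i(t)$ hold, combined with the observation that $f_{h(j),j}$ reduces to an element of the flag-variety ideal after imposing enough symmetry).
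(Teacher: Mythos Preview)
Your approach to the surjectivity onto $H^*(\Hess(\mathsf{S},h))^{\Sn}$ and to showing the image lands in the invariants is essentially the same as the paper's: you invoke Proposition~\ref{prop:invariant subring is polynomial ring}, Lemma~\ref{lemma:chern classes invariant}, and the averaging argument (exactness of invariants in characteristic zero), and this part is fine.

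The genuine gap is in the well-definedness step. Your proposed equivariant lift, sending $t \mapsto \sum_i i\,t_i$, cannot make the $f_{h(j),j}$ vanish in $H^*_T(\Hess(\mathsf{S},h))$. Already for $f_{1,1}=p_1=x_1-t$, at a fixed point $w$ your substitution gives $t_{w(1)}-\sum_i i\,t_i$, which is a nonzero polynomial in $t_1,\dots,t_n$. The identity $f_{h(j),j}(w)=0$ from Proposition~\ref{proposition:fij go to zero} lives in $\Q[t]$ after the specialization $t_i\mapsto it$; it does not lift to an identity in $\Q[t_1,\dots,t_n]$, and no single choice of image for $t$ in $H^*(BT)$ can repair this. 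So there is no equivariant map $\Q[x_1,\dots,x_n,t]/\Ih \to H^*_T(\Hess(\mathsf{S},h))$ of the type you describe.

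The paper avoids this entirely. It never tries to lift $\Abe$ equivariantly; instead it checks the \emph{non-equivariant} relations $\cf_{h(j),j}(\ChSemi_1,\dots,\ChSemi_n)=0$ directly, by proving the $T$-equivariant identity
\[
\cf_{h(j),j}(\TChSemi_1,\dots,\TChSemi_n)=\sum_{k=1}^n t_k\,g_{j,k}
\quad\text{in }H^*_T(\Hess(\mathsf{S},h)),
\]
where the $g_{j,k}$ are explicit GKM classes (Lemma~\ref{lem:gjk is GKM}) defined by $g_{j,k}(w)=\prod_{\ell=j+1}^{h(j)}(t_k-t_{w(\ell)})$ if $k\in\{w(1),\dots,w(j)\}$ and $0$ otherwise. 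This identity is verified componentwise using~\eqref{localization of chern class for semisimple} and the closed formula~\eqref{formula for f check} for $\cf_{h(j),j}$, and immediately gives $\cf_{h(j),j}(\TChSemi)\in(t_1,\dots,t_n)$, hence vanishing in ordinary cohomology. Note that this uses the full $T$-equivariant cohomology of $\Hess(\mathsf{S},h)$ --- the rank-$n$ torus $T$ acts here, whereas on $\Hess(\mathsf{N},h)$ only the one-dimensional $S$ does --- so the two equivariant theories are not directly comparable and no map between them is needed.
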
 

To prove
 Proposition~\ref{prop:Abe welldefined and surjective} we (once again)
  first work with the equivariant cohomology ring
  $H^*_T(\Hess(\mathsf{S},h))$; we also capitalize on our explicit
  presentation of $H^*(\Hess(\mathsf{N},h))$ obtained in Theorem A. 
 More precisely, recall that the
  ordinary cohomology ring $H^*(\Flags(\C^n))$ is generated by the
  first Chern classes $\ChFlag_i$ of the tautological line bundles
  $E_i/E_{i-1}$ as described in \eqref{eq:cohofl}, and by Theorem A, the map
  $H^*(\Flags(\C^n))\to H^*(\Hess(\mathsf{N},h))$, induced by the
  inclusion $\Hess(\mathsf{N},h) \into \Flags(\C^n)$, is
  surjective.
  Recall from \eqref{eq:def of ord Ch for flag} that 
\[
\ChNil_i \in H^2(\Hess(\mathsf{N},h))
\]
is the image of $\ChFlag_i$ (see \eqref{def of ch in flag}). Then Theorem A shows
that $H^*(\Hess(\mathsf{N},h))$ is generated by the $\ChNil_i$, and
that the map sending the polynomial variable $x_i$ to $\ChNil_i$ gives
an isomorphism
\[
H^*(\Hess(\mathsf{N},h)) \cong \Q[x_1,\ldots,
x_n]/(\cf_{h(j),j}(x_1,\ldots,x_n) \mid 1 \leq j \leq n).
\]
Now denote by 
\begin{align*}
\ChSemi_i\in H^2(\Hess(\mathsf{S},h))
\end{align*} 
the image of $\ChFlag_i$ under the restriction map
$H^*(\Flags(\C^n))\rightarrow H^*(\Hess(\mathsf{S},h))$ (whereas the
corresponding $T$-equivariant Chern class $\TChSemi_i\in
H_T^2(\Hess(\mathsf{S},h))$ was defined in \eqref{Chern classes for
  semisimple}).  That is, $\ChSemi_i$ is the first Chern class of the
tautological line bundle over $\Flags(\C^n)$ restricted to
$\Hess(\mathsf{S},h)$. In order to show that there
exists a ring homomorphism $\Abe\colon H^*(\Hess(\mathsf{N},h)) \to
H^*(\Hess(\mathsf{S},h))$ making~\eqref{eq:Abe map take 2} commute, from the
above discussion it follows that it suffices to show that the images
$\ChSemi_i$ of the
$\ChFlag_i$ in
$H^*(\Hess(\mathsf{S},h))$ also satisfy the relations specified by the
$\{\cf_{h(j),j}\}_{1 \leq j \leq n}$, i.e. that
\begin{equation}\label{eq:yj relations}
  \cf_{h(j),j}(\ChSemi_1, \ldots, \ChSemi_n) = 0 \in H^*(\Hess(\mathsf{S},h))
  \textup{ for all } 1 \leq j \leq n.
\end{equation}
In order to prove~\eqref{eq:yj relations}, we will first
work in the equivariant cohomology ring
$H^*_T(\Hess(\mathsf{S},h))$. Specifically, recall from \eqref{def of T eq ch in flag} that $\TChFlag_i$
is the $T$-equivariant first Chern class of the tautological line bundle $E_i/E_{i-1}$ in $H^*_T(\Flags(\C^n))$, so that
$\TChFlag_i$ maps to $\ChFlag_i$ under the forgetful map
$H^*_T(\Flags(\C^n)) \to H^*(\Flags(\C^n))$. Similarly
$\TChSemi_i$ is the image of $\TChFlag_i$ in
$H^*_T(\Hess(\mathsf{S},h))$ as defined in \eqref{Chern classes for semisimple}.  
Recall also that the kernel of the
forgetful map $H^*_T(\Hess(\mathsf{S},h)) \to
H^*(\Hess(\mathsf{S},h))$ is the ideal $(t_1,\ldots,t_n) \subset 
H^*_T(\Hess(\mathsf{S},h))$ generated by the classes $t_i
\in H^*_T(\Hess(\mathsf{S},h))$. Thus,
in order to show the vanishing relations~\eqref{eq:yj relations}
it suffices to show that 
\begin{equation*}
  \cf_{h(j),j}(\TChSemi_1, \ldots, \TChSemi_n) \in (t_1, \ldots,
  t_n) \subset  H^*_T(\Hess(\mathsf{S},h)) \textup{ for all } 1 \leq
  j \leq n.
\end{equation*}
This is precisely the goal of the next two lemmas. 

We first define some classes in
  $H^*_T(\Hess(\mathsf{S},h))$.  
Fix $j,k$ with $j, k \in [n]$. For each $w \in
\Sn$, we define a polynomial $\gjk(w) \in \Q[t_1,\ldots,t_n]$ by 
\begin{equation}\label{eq:def gjk}
\gjk(w):=
\begin{cases} \prod_{\ell=j+1}^{h(j)}(t_k-t_{w(\ell)}) &\quad\text{if $k\in \{w(1),\dots,w(j)\}$}\\
0 &\quad \text{otherwise,}
\end{cases}
\end{equation}
where we take the convention
$\prod_{\ell=j+1}^{j}(t_k-t_{w(\ell)})=1$. 
Thus, for fixed $j$ and $k$, the collection
  $\{\gjk(w)\}_{w \in \Sn}$ specifies an element of
  $H^*_T(\Hess(\mathsf{S},h)^T) \cong \bigoplus_{w \in \Sn}
  \Q[t_1,\ldots,t_n]$. 

\begin{lemma}\label{lem:gjk is GKM}
The polynomials $\{\gjk(w)\}_{w \in \Sn}$ in~\eqref{eq:def gjk} satisfy
  the GKM conditions~\eqref{GKM for X(h)} for $\Hess(\mathsf{S},h)$, and hence
  $\gjk := \{\gjk(w)\}_{w\in \Sn}$ is (the image under $\iota_3$ of) an equivariant cohomology class
  in $H^*_T(\Hess(\mathsf{S},h))$. 
\end{lemma}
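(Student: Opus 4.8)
The plan is to verify the GKM condition directly from the definition~\eqref{eq:def gjk}. Fix $j,k \in [n]$ and write $g := \gjk$ for brevity. Recall that the GKM condition requires: whenever $w' = w(j'\ i')$ with $1 \le j' < i' \le n$ and $i' \le h(j')$, the difference $g(w) - g(w')$ must be divisible by $t_{w(j')} - t_{w(i')}$. Here $w$ and $w'$ differ exactly by interchanging the values in positions $j'$ and $i'$ of the one-line notation. I would organize the argument according to the relationship between the positions $j', i'$ and the cutoff $j$ (which governs whether $k$ lies in $\{w(1),\dots,w(j)\}$) and the cutoff interval $\{j+1,\dots,h(j)\}$ (which appears in the product).

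First I would dispose of the easy cases. If both $j' > j$ and $i' > j$ (so the swap happens strictly to the right of position $j$), then $\{w(1),\dots,w(j)\} = \{w'(1),\dots,w'(j)\}$, so the ``if'' clause in~\eqref{eq:def gjk} triggers for $w$ exactly when it does for $w'$. If $k \notin \{w(1),\dots,w(j)\}$, both values are $0$ and there is nothing to prove. If $k \in \{w(1),\dots,w(j)\}$, then $g(w)$ and $g(w')$ are the two products $\prod_{\ell=j+1}^{h(j)}(t_k - t_{w(\ell)})$ and $\prod_{\ell=j+1}^{h(j)}(t_k - t_{w'(\ell)})$, which differ by interchanging the roles of $t_{w(j')}$ and $t_{w(i')}$ among the factors; a short computation (factoring out the common factors and using that $(t_k - t_a)(t_k - t_b)$ is symmetric in $a,b$) shows the difference is divisible by $t_{w(j')} - t_{w(i')}$, with the subtlety that if exactly one of $j', i'$ lies in the range $\{j+1,\dots,h(j)\}$ — which can happen only when $i' > h(j) \ge j'$, but the GKM hypothesis forces $i' \le h(j')$, and $j' > j$ forces $h(j') \ge h(j)$ ... — I would track this carefully; in the end either both or neither of the swapped positions lie in the product range, or the factor $t_k - t_{w(\ell)}$ for the one position in range produces the needed divisibility after subtraction. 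Symmetrically, if both $j' \le j$ and $i' \le j$, the swap is entirely within the first $j$ positions, so again $\{w(1),\dots,w(j)\} = \{w'(1),\dots,w'(j)\}$ and $\{w(j+1),\dots,w(h(j))\} = \{w'(j+1),\dots,w'(h(j))\}$ as multisets, whence $g(w) = g(w')$ and divisibility is trivial.

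The main case — and the main obstacle — is when $j' \le j < i'$, i.e. the swap straddles position $j$. Here $\{w(1),\dots,w(j)\}$ and $\{w'(1),\dots,w'(j)\}$ differ by replacing $w(j')$ with $w(i')$. Several subcases arise depending on whether $k$ equals $w(j')$, equals $w(i')$, or is some third value, and on whether $i' \le h(j)$ or $i' > h(j)$. When $k$ is a third value lying in $\{w(1),\dots,w(j')-1,\dots\}$ unaffected by the swap, the ``if'' clause holds for both, and one compares products as before. When $k = w(j')$: then $k \in \{w(1),\dots,w(j)\}$ but $k = w'(i') \notin \{w'(1),\dots,w'(j)\}$, so $g(w') = 0$ and we must show $t_{w(j')} - t_{w(i')} = t_k - t_{w(i')}$ divides $g(w) = \prod_{\ell=j+1}^{h(j)}(t_k - t_{w(\ell)})$; the GKM hypothesis $i' \le h(j')$ together with $h$ non-decreasing and $j' \le j$ gives $i' \le h(j)$, so position $i'$ is among $\{j+1,\dots,h(j)\}$ (using $i' > j$), hence the factor $(t_k - t_{w(i')})$ literally appears in the product — done. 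The case $k = w(i')$ is the mirror image, with $g(w) = 0$ and $(t_k - t_{w(j')})$ appearing as a factor of $g(w')$. This is precisely where the hypothesis $i' \le h(j')$ is essential, and I expect the bookkeeping of exactly which index lands in which product range to be the delicate part; once that is pinned down, every subcase reduces to the elementary observation that a linear factor $t_a - t_b$ divides a difference of two products that are obtained from one another by transposing $t_a \leftrightarrow t_b$ among their factors. Having checked all cases, $\{\gjk(w)\}_{w\in\Sn}$ satisfies the GKM conditions and therefore lies in the image of $\iota_3$, defining a class in $H^*_T(\Hess(\mathsf{S},h))$.
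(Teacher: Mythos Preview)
Your approach is correct: a direct case analysis on the positions $j',i'$ of the transposition relative to the cutoff $j$ does succeed, and every subcase you list works out (including the ``exactly one in range'' subcase in your first bullet, where the difference of the two products indeed factors as $[\text{common}]\cdot(t_{w(i')}-t_{w(j')})$). The hand-waving around ``\dots I would track this carefully'' is not a real gap, since the conclusion you assert afterward is the right one.

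The paper's proof is organized differently and is slicker. Rather than casing on the positions $j',i'$, it cases on whether $k$ lies among the first $j$ values of $w$ and of $w'$; equivalently, it partitions $\Sn = \bigsqcup_r \{w:w(r)=k\}$ and tracks which side of $j$ the index $r$ falls on. The key observation is that on the locus where $k\in\{w(1),\dots,w(j)\}$ the definition \eqref{eq:def gjk} coincides with the restriction of the \emph{global} class $\prod_{\ell=j+1}^{h(j)}(t_k-\TChSemi_\ell)\in H^*_T(\Hess(\mathsf{S},h))$, which automatically satisfies the GKM conditions. This dispatches in one line what in your argument is the most laborious case (both $w,w'$ have $k$ among their first $j$ values, with the swap possibly straddling $h(j)$), and reduces the remaining work to the single ``mixed'' case, handled exactly as in your $k=w(j')$ subcase. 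Your approach buys self-containment---no appeal to Chern classes or to cohomology classes already known to be GKM---at the cost of more bookkeeping; the paper's approach buys brevity by leveraging that products of the $\TChSemi_\ell$ and the $t_k$ are already in $H^*_T(\Hess(\mathsf{S},h))$.
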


\begin{proof}
Fix $j, k \in [n]$. For each $r\in[n]$, let us denote 
\[
\Sn^r:=\{w\in\Sn \mid w(r)=k\}
\]
which is the set of permutations having $k$ at the $r$-th position in the one-line notation. 
Then we have a decomposition $\Sn=\bigcup_{r=1}^n \Sn^r$, and the condition $k\in \{w(1),\dots,w(j)\}$ is equivalent to $w\in\bigcup_{r\leq j} \Sn^r$.
Recalling that the equivariant Chern class $\TChSemi_i$ satisfies $\TChSemi_i(w)=t_{w(i)}$ for $w\in\Sn$ by \eqref{localization of chern class for semisimple}, we can rewrite $\gjk$ as
\begin{align}\label{rewriting with chern class}
\gjk(w)
&=
\begin{cases} 
\prod_{\ell=j+1}^{h(j)}(t_k-\TChSemi_{\ell})(w) &\quad\text{if $w\in \bigcup_{r\leq j} \Sn^r$},\\
0 &\quad \text{otherwise}.
\end{cases}
\end{align}

We now check that the collection $\{\gjk(w)\}_{w\in\Sn}$ satisfies the GKM condition \eqref{GKM for X(h)} for $\Hess(\mathsf{S},h)$ by using \eqref{rewriting with chern class}.
Let $w,w' \in \Sn$ with $w'=w(a\ b)$ for some $a,b\in[n]$, and suppose that $w$ and $w'$ are connected by an edge of the GKM graph of $\Hess(\mathsf{S},h)$.
We show that the difference $\gjk(w)-\gjk(w')$ is divisible by $t_{w(a)}-t_{w(b)}$ by taking cases.

\medskip {\bf Case 1.} Suppose $w,w'\in\bigcup_{r\leq j} \Sn^r$.
Note that the collection $\{\prod_{\ell=j+1}^{h(j)}(t_k-\TChSemi_{\ell})(w)\}_{w\in\Sn}$ satisfies the GKM condition for $\Hess(\mathsf{S},h)$ since $\prod_{\ell=j+1}^{h(j)}(t_k-\TChSemi_{\ell})$ is an element of $H^*_T(\Hess(\mathsf{S},h))$ and we have the isomorphism \eqref{GKM for X(h)}.
Thus the claim holds in this case by \eqref{rewriting with chern class}. 

\medskip {\bf Case 2.}
Suppose $w, w'\in\bigcup_{r> j} \Sn^r$. In this case, the claim is immediate since $\gjk(w)=\gjk(w')=0$ by \eqref{rewriting with chern class}.

\medskip {\bf Case 3.} Suppose $w\in\bigcup_{r\leq j}
  \Sn^r$ and $w'\in\bigcup_{r> j} \Sn^r$. In this case, the condition
  $w'=w(a\ b)$ implies that we have $w(a)=k$ or $w(b)=k$. Without loss
  of generality, we may assume that $w(a)=k$. This means $a\leq j$
  because $w\in\bigcup_{r\leq j} \Sn^r$. Similarly since we have
  $w'(b)=k$ and $w'\in\bigcup_{r> j} \Sn^r$, it follows that $b>j$.
  Combining this with $a\leq j$, we obtain $a<b$. Hence the assumption
  that $w$ and $w'$ are connected by an edge of the GKM graph of
  $\Hess(\mathsf{S},h)$ implies that $b\leq h(a)$. In particular, we obtain
  $j+1\leq b\leq h(j)$ since $a\leq j$ implies $h(a)\leq h(j)$. Now
  from \eqref{rewriting with chern class} we have \begin{align*}
    \gjk(w)-\gjk(w') = \prod_{\ell=j+1}^{h(j)}(t_k-\TChSemi_{\ell})(w)
    - 0 = \prod_{\ell=j+1}^{h(j)}(t_k-t_{w(\ell)}). \end{align*} Since
  we have $w(a)=k$ and $j+1\leq b\leq h(j)$ as discussed above, the above
  product contains $t_k-t_{w(b)}=t_{w(a)}-t_{w(b)}$, and hence
  $\gjk(w)-\gjk(w')$ is divisible by $t_{w(a)}-t_{w(b)}$, as desired.
\end{proof}

Next, we explicitly show (using the classes $\gjk$
  introduced above) that the classes $\cf_{h(j),j}(\TChSemi_1,
  \ldots, \TChSemi_n)$ are contained in the ideal of $H^*_T(\Hess(\mathsf{S},h))$ generated by the
  $t_i$. 

\begin{lemma} \label{lem: f check is sent to zero}
Let $j \in [n]$. Then 
\begin{align*}
\cf_{h(j),j}(\TChSemi_1, \ldots, \TChSemi_n) =\sum_{k=1}^n t_k\gjk 
\quad\textup{ in } H^*_T(\Hess(\mathsf{S},h)).
\end{align*}
In particular, $\cf_{h(j),j}(\TChSemi_1, \ldots, \TChSemi_n)$
lies in the ideal 
$( t_1, \ldots, t_n ) \subset 
H^\ast_T(\Hess(\mathsf{S},h))$ for all $j \in [n]$. 
\end{lemma}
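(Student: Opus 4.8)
The plan is to verify the claimed identity $\cf_{h(j),j}(\TChSemi_1,\ldots,\TChSemi_n) = \sum_{k=1}^n t_k\gjk$ by checking it pointwise at every $T$-fixed point $w \in \Sn$, which suffices since the localization map $\iota_3$ is injective. Fix $j \in [n]$ and $w \in \Sn$. On the left-hand side, since $\TChSemi_i(w) = t_{w(i)}$ by~\eqref{localization of chern class for semisimple}, the definition~\eqref{eq:definition of f check} of $\cf_{i,j}$ gives
\[
\cf_{h(j),j}(\TChSemi_1,\ldots,\TChSemi_n)(w) = \sum_{m=1}^j\Big( t_{w(m)}\prod_{\ell=j+1}^{h(j)}\big(t_{w(m)} - t_{w(\ell)}\big)\Big).
\]
On the right-hand side, by the definition~\eqref{eq:def gjk} of $\gjk(w)$, the term $t_k\gjk(w)$ is nonzero only when $k \in \{w(1),\ldots,w(j)\}$, i.e. $k = w(m)$ for a unique $m \in [j]$, in which case $t_k\gjk(w) = t_{w(m)}\prod_{\ell=j+1}^{h(j)}(t_{w(m)} - t_{w(\ell)})$. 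Summing over $k$ is therefore the same as summing over $m \in [j]$, and the two sides agree at $w$. Since $w$ was arbitrary, the identity holds in $H^*_T(\Hess(\mathsf{S},h))$; one should also note here that $\gjk$ is a genuine equivariant class by Lemma~\ref{lem:gjk is GKM}, so the right-hand side is a well-defined element of $H^*_T(\Hess(\mathsf{S},h))$.

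For the final assertion, observe that each summand $t_k\gjk$ visibly lies in the ideal $(t_1,\ldots,t_n) \subset H^*_T(\Hess(\mathsf{S},h))$, being $t_k$ times the class $\gjk$. Hence the sum $\sum_{k=1}^n t_k\gjk = \cf_{h(j),j}(\TChSemi_1,\ldots,\TChSemi_n)$ lies in $(t_1,\ldots,t_n)$ as well, for every $j \in [n]$. This completes the proof.

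I do not expect any genuine obstacle here: the lemma is essentially a bookkeeping identity, reorganizing a sum indexed by positions $m \le j$ into a sum indexed by values $k = w(m)$, combined with the already-established facts that $\TChSemi_i$ localizes to $t_{w(i)}$ and that $\gjk$ satisfies the GKM conditions. The only point requiring a small amount of care is the convention $\prod_{\ell=j+1}^{j}(t_k - t_{w(\ell)}) = 1$ in the edge case $h(j) = j$, which matches the analogous convention in~\eqref{eq:definition of f check}, so the pointwise computation goes through uniformly.
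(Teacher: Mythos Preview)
Your proof is correct and follows essentially the same approach as the paper: both reduce to a pointwise check at each fixed point $w$ via the injectivity of $\iota_3$, then observe that the sum $\sum_{k=1}^n t_k\gjk(w)$ collapses to a sum over $k\in\{w(1),\dots,w(j)\}$, which matches $\cf_{h(j),j}(t_{w(1)},\dots,t_{w(n)})$ directly from the definition.
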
 

\begin{proof}
Since the restriction map $H^*_T(\Hess(\mathsf{S},h))
  \stackrel{\iota_3}{\rightarrow} H^*_T(\Hess(\mathsf{S},h)^T)$ is injective, in order to prove
  the lemma it suffices to prove that for all $w \in \Sn$ we have 
  \begin{equation*}
    \cf_{h(j),j}(\TChSemi_1, \ldots,\TChSemi_n)(w) = \sum_{k=1}^n
    t_k \gjk(w) \in \Q[t_1,\ldots,t_n].
  \end{equation*}
Now recall that by definition, if $k \not \in \{w(1),\ldots, w(j)\}$ then $\gjk(w)=0$. Hence 
\[
t_k\gjk(w)=
\begin{cases} t_k\prod_{\ell=j+1}^{h(j)}(t_k-t_{w(\ell)}) &\quad\text{if $k\in \{w(1),\dots,w(j)\}$},\\
0 &\quad \text{otherwise. }
\end{cases}
\]
Thus if we take the sum of the $t_k\gjk(w)$ over $k=1,\cdots,n$, it in fact suffices to
take the sum only for $k=w(1),w(2),\dots,w(j)$.
Hence, we obtain by \eqref{formula for f check} that
\begin{align*}
\sum_{k=1}^n t_k\gjk(w)
= \sum_{k=1}^j \Big( t_{w(k)}\prod_{\ell=j+1}^{h(j)}(t_{w(k)}-t_{w(\ell)}) \Big)
= \cf_{h(j),j}(w)
\end{align*}
as desired. 
\end{proof} 

From the above discussion and by Lemma~\ref{lem:gjk is GKM} and Lemma~\ref{lem: f check is sent to zero}, it is now clear that there exists a unique ring homomorphism
\[
\Abe\colon H^*(\Hess(\N, h))\to H^*(\Hess(\mathsf{S},h))
\]
which makes the diagram \eqref{eq:Abe map} in Theorem B commute.
We note that $\Abe$ maps $\ChNil_i$ to $\ChSemi_i$ for $i=1,\dots,n$.
We are now ready to prove Proposition~\ref{prop:Abe welldefined and surjective}. 

\begin{proof}[Proof of Proposition~\ref{prop:Abe welldefined and surjective}]
  First we claim that the image of $\Abe$ lies in the
  $\Sn$-invariants. From the definition of $\Abe$, it is clear that
  its image 
  coincides with the image of the restriction map
  $H^*(\Flags(\C^n)) \to H^*(\Hess(\mathsf{S},h))$. Hence the claim
  follows from the facts that the $\Sn$-representation on
  $H^*(\Flags(\C^n))$ is trivial (Lemma~\ref{lem:rep on flag is
    trivial}) and that the bottom map in \eqref{eq:Sn equivariant
    diagram} is a homomorphism of $\Sn$-representations. Now 
  consider the map with restricted target
\begin{equation*}
\Abe\colon H^*(\Hess(\N, h))\to H^*(\Hess(\mathsf{S},h))^{\Sn}. 
\end{equation*}
We wish to we show that this is surjective. 
Recalling the commutative diagram \eqref{eq:Abe map take 2} and Lemma~\ref{lem:rep on flag is trivial}, it suffices to show that the map 
\begin{equation} \label{eq: restriction map on invariant}
H^*(\Flags(\C^n))^{\Sn} \to H^*(\Hess(\mathsf{S},h))^{\Sn}
\end{equation} 
is surjective. For this, we know from Proposition~\ref{prop:invariant
  subring is polynomial ring} that the map
$H_T^*(\Flags(\C^n))^{\Sn}\rightarrow
H_T^*(\Hess(\mathsf{S},h))^{\Sn}$ on equivariant cohomology is an
isomorphism. We also know that the forgetful map
$H_T^*(\Hess(\mathsf{S},h))\rightarrow H^*(\Hess(\mathsf{S},h))$ is
surjective, and hence so is its restriction
$H_T^*(\Hess(\mathsf{S},h))^{\Sn}\rightarrow
H^*(\Hess(\mathsf{S},h))^{\Sn}$. Indeed, for any $\Sn$-invariant
element $x\in H^*(\Hess(\mathsf{S},h))^{\Sn}$ we can take a lift
$\widetilde{x}\in H_T^*(\Hess(\mathsf{S},h))$; averaging
$\widetilde{x}$ over $\Sn$ yields an $\Sn$-invariant element which
maps to $x$. Now, we see that \eqref{eq: restriction map on invariant}
is surjective by taking $\Sn$-invariant subrings in the commutative
diagram in Lemma~\ref{lemma:Flags and Hess equivariant}, completing
the proof. 
\end{proof}

It remains to show that $\Abe: H^\ast(\Hess(\mathsf{N},h)) \to
H^\ast(\Hess(\mathsf{S},h))^{\Sn}$ is also injective (and hence an
isomorphism). 
We achieve this by employing some basic commutative algebra facts
concerning Poincar\'e duality algebras. 
The basic idea, encapsulated in
  Lemma~\ref{lemma:PDAs imply isom} below, is the simple fact that if $\varphi: R \to
  S$ is a surjective graded algebra homomorphism from a
  Poincar\'e duality algebra and $\varphi$ induces
  an isomorphism between $R^{\mathrm{max}}$ and $S^{\mathrm{max}}$
  (where $R^{\mathrm{max}}$ and $S^{\mathrm{max}}$ denote the
  highest-degree component of $R$ and $S$ respectively), then
  $\varphi$ must be an isomorphism. Since we have already shown above
  that $\Abe\colon
  H^*(\Hess(\mathsf{N},h)) \to H^*(\Hess(\mathsf{S},h))^{\Sn}$ is
  surjective, Lemma~\ref{lemma:PDAs imply isom} essentially reduces
  the question to showing that the domain is a Poincar\'e duality
  algebra and that $\Abe$ induces an isomorphism on the top
  degree.

There exist different definitions of Poincar\'e duality algebras in
  the literature, but we use the following. 

\begin{definition}
\label{def of PDA}
  Suppose that $R = \bigoplus_{i=0}^\dimX R_i$ is a graded algebra
  over some fixed field $\mathfrak{k}$, finite-dimensional over
  $\mathfrak{k}$. Suppose $R_0 \cong R_{\dimX} \cong \mathfrak{k}$.  We say $R$ is a
  \textbf{Poincar\'e duality algebra \emph{(}PDA\emph{)}} if the bilinear pairing 
$
R_i\times R_{\dimX-i} \rightarrow R_{\dimX}
$
defined by the multiplication in $R$ is non-degenerate for all 
$i=0,\dots,\dimX$.
\end{definition}

The following straightforward lemma 
is the essence of our argument. 

\begin{lemma}\label{lemma:PDAs imply isom}
  Let $R=\bigoplus_{i=0}^d R_i$ and $R'=\bigoplus_{i=0}^{d} R'_i$ be graded algebras such that $R_d\neq\{0\}$ and $R'_{d}\neq\{0\}$. 
  Let
  $\varphi:R\rightarrow R'$ be a graded ring homomorphism.
  Suppose that $R$ is a Poincar\'e duality algebra.  If
  $\varphi$ is surjective and it restricts to an isomorphism between
  $R_d$ to $R'_d$, then $\varphi$ is an isomorphism.
\end{lemma}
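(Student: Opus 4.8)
The plan is to deduce the lemma by proving that $\varphi$ is injective; since $\varphi$ is assumed surjective, this will immediately yield that $\varphi$ is an isomorphism. First I would observe that, because $\varphi$ is a homomorphism of graded algebras, its kernel $\ker\varphi$ is a graded ideal of $R$. Consequently, if $\varphi$ were not injective, then $\ker\varphi$ would have to contain a nonzero \emph{homogeneous} element, so it suffices to rule that out.

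Next I would argue by contradiction. Suppose there is a nonzero homogeneous element $a\in R_i$ with $\varphi(a)=0$, for some $0\le i\le d$. Here is where the Poincar\'e duality hypothesis on $R$ enters: by Definition~\ref{def of PDA}, the bilinear pairing $R_i\times R_{d-i}\to R_d$ induced by multiplication in $R$ is non-degenerate, so there exists $b\in R_{d-i}$ with $ab\neq 0$ in $R_d$. On the one hand, $\varphi(ab)=\varphi(a)\varphi(b)=0$ since $a\in\ker\varphi$. On the other hand, by hypothesis $\varphi$ restricts to an isomorphism $R_d\to R'_d$, in particular to an injection on $R_d$, so $ab\neq 0$ forces $\varphi(ab)\neq 0$. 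This contradiction shows $\ker\varphi=0$, hence $\varphi$ is injective and therefore an isomorphism.

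I do not expect a genuine obstacle in this argument, since it is a direct consequence of the non-degeneracy of the duality pairing together with the grading. The only points that require a moment's care are: (i) recording that the kernel of a graded ring homomorphism is a graded ideal, so that a nontrivial kernel necessarily contains a nonzero homogeneous element; and (ii) matching the abstract pairing in Definition~\ref{def of PDA} with honest multiplication in $R$, so that the product $ab$ indeed lies in $R_d$ and the injectivity of $\varphi|_{R_d}$ can be invoked. Both are immediate from the definitions, so the proof should be short.
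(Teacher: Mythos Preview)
Your proof is correct and is exactly the standard argument the paper has in mind; the paper simply calls the lemma ``straightforward'' and does not spell out a proof. The key steps you identify---that $\ker\varphi$ is graded, that Poincar\'e duality on $R$ produces a partner $b\in R_{d-i}$ with $ab\neq 0$ in $R_d$, and that injectivity of $\varphi|_{R_d}$ then forces a contradiction---are precisely what is needed.
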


It remains to show that our rings $H^*(\Hess(\mathsf{N},h))$ and
$H^*(\Hess(\mathsf{S},h))^{\Sn}$ and the map $\Abe$ satisfies the
conditions of Lemma~\ref{lemma:PDAs imply isom}.  In particular, we
wish to show that the ring $H^*(\Hess(\mathsf{N},h)) \cong
\Q[x_1,\ldots, x_n]/(\cf_{h(j),j} \mid j \in [n] )$ is a PDA. Recall
that we showed in Lemma~\ref{lemm:4-2} that our generators
$\cf_{h(1),1}, \cf_{h(2),2}, \cdots, \cf_{h(n),n}$ form a regular
sequence of length equal to the number of variables in the polynomial
ring.  As is well-known in commutative algebra, these
  facts together imply that the cohomology ring $H^*(\Hess(\N,h))$ is
  a complete intersection, and in particular, is a PDA (see e.g. \cite[Theorem
  21.2.]{matsumura}, \cite[Theorem 21.3]{matsumura}, and \cite[Theorem
  2.79]{hari13}).
We record this in the following
proposition. 

\begin{proposition}\label{PD for Hess(h)}
Let $h \in H_n$ be a Hessenberg function and let $\Hess(\N,h)$ denote the
associated regular nilpotent Hessenberg variety. Then, with respect to
the usual grading and multiplication in cohomology, the 
ordinary cohomology ring $H^*(\Hess(\N, h))$ is a Poincar\'e duality
algebra. 
\end{proposition}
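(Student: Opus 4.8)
The plan is to read everything off from the explicit presentation of $H^*(\Hess(\N,h))$ supplied by Theorem~A together with the regular sequence property of its generators established in Lemma~\ref{lemm:4-2}, and then to invoke standard commutative algebra. First I would recall from Theorem~A the isomorphism of graded $\Q$-algebras $H^*(\Hess(\N,h)) \cong \Q[x_1,\ldots,x_n]/\check \Ih$, where $\check \Ih = (\cf_{h(j),j} \mid 1 \leq j \leq n)$, and from Lemma~\ref{lemm:4-2} that the $n$ positive-degree homogeneous polynomials $\cf_{h(1),1},\ldots,\cf_{h(n),n}$ form a regular sequence in the polynomial ring $\Q[x_1,\ldots,x_n]$ in $n$ variables.

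Since the length of this regular sequence equals the number of variables $n$, the quotient ring $R := \Q[x_1,\ldots,x_n]/\check \Ih$ is a finite-dimensional graded $\Q$-algebra which is moreover a complete intersection. It is a classical fact (see e.g.\ \cite[Theorem 21.2, Theorem 21.3]{matsumura} and \cite[Theorem 2.79]{hari13}) that an Artinian complete intersection over a field is Gorenstein, and that a finite-dimensional graded Gorenstein $\Q$-algebra $R = \bigoplus_{i=0}^{D} R_i$ with $R_0 \cong \Q$ has a one-dimensional socle concentrated in its top nonzero degree $D$ and has perfect multiplication pairings $R_i \times R_{D-i} \to R_D$ for all $i$; in other words, it is a Poincar\'e duality algebra in the sense of Definition~\ref{def of PDA}.

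To put the statement in precisely the form of Definition~\ref{def of PDA}, I would finally record that $R_0 \cong \Q$ (clear) and that the top nonzero degree of $R$ is $2d$ with $R_{2d} \cong \Q$, where $d = \dim_\C \Hess(\N,h) = \sum_{j=1}^n (h(j)-j)$. This follows from Corollary~\ref{cor:hilbert of ordinary coh}: the Hilbert series $F(H^*(\Hess(\N,h)), s) = \prod_{j=1}^n \frac{1 - s^{2(h(j)-j+1)}}{1-s^2}$ is a polynomial in $s$ of degree $2\sum_{j=1}^n (h(j)-j)$ with leading coefficient $1$, so the top graded piece of $H^*(\Hess(\N,h))$ is one-dimensional and sits in degree $2d$.

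There is no genuine obstacle here: the entire mathematical content has already been established in Theorem~A and Lemma~\ref{lemm:4-2}, and what remains is the standard implication ``graded Artinian complete intersection over a field $\Rightarrow$ Poincar\'e duality algebra.'' The only point demanding a line of care is reconciling the commutative-algebra notion of Gorenstein (one-dimensional socle) with Definition~\ref{def of PDA}, and for this the Hilbert series computation above pins down the socle degree as $2d$.
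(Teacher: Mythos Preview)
Your proposal is correct and follows essentially the same approach as the paper: both invoke Theorem~A and Lemma~\ref{lemm:4-2} to recognize $H^*(\Hess(\N,h))$ as a graded Artinian complete intersection, and then cite the standard commutative algebra fact (with the same references \cite[Theorems 21.2, 21.3]{matsumura} and \cite[Theorem 2.79]{hari13}) that such a ring is a Poincar\'e duality algebra. Your additional verification via the Hilbert series that the top degree is $2d$ with $R_{2d}\cong\Q$ is a helpful extra detail, though it is not strictly needed since it already follows from the Gorenstein property.
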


Now we can prove Theorem B.
\begin{proof}[Proof of Theorem B]
We apply Lemma~\ref{lemma:PDAs imply isom} to our $\Q$-algebra homomorphism
$\Abe$ in~\eqref{eq:Abe map take 2}. 
We already know that the map is surjective by Proposition~\ref{prop:Abe welldefined and surjective} and that the domain of this map is Poincar\'e duality algebra from Proposition~\ref{PD for Hess(h)}.
Also, since we know that 
\[
  \dim_{\Q}H^{2\dimX}(\Hess(\mathsf{N},h))=\dim_{\Q} H^{2\dimX}(\Hess(\mathsf{S},h))^{\Sn}=1
\]
from the computation of the Hilbert polynomial of
$H^\ast(\Hess(\mathsf{N},h))$ and Lemma~\ref{dimension is 1}, the surjectivity of $\Abe$ shows that the map $\Abe$ restricted on degree $2\dimX$ is an isomorphism.
Hence, by Lemma~\ref{lemma:PDAs imply isom} the $\Q$-algebra
homomorphism $\Abe:H^*(\Hess(\mathsf{N},h))\rightarrow
H^*(\Hess(\mathsf{S},h))^{\Sn}$ is an isomorphism, as desired. 
\end{proof}

\bigskip
\section{Connection to the Shareshian-Wachs conjecture} 
\label{sect: Shareshian-Wachs conjecture}
As mentioned in the Introduction, our work on Hessenberg varieties
turns out to be related to combinatorics through the Shareshian-Wachs
conjecture. Although this conjecture has recently been proved by
Brosnan and Chow, the approach taken in this paper offers a different
perspective on the problem and, as we noted in the Introduction, our
Theorem B proves (at least, for the coefficient of the Schur function
$s_n(x)$ corresponding to the trivial representation) a statement
which is strictly stronger than the corresponding statement in
\cite{br-ch}. For this reason, in this section we briefly review the
context, give the precise statement of the Shareshian-Wachs
conjecture, and explain the relationship between the conjecture and
our Theorem B. 

In \cite{sh-wa11, sh-wa14}, the Shareshian-Wachs conjecture is
formulated in terms of natural unit interval orders and
incomparability graphs, but for the purposes of this paper it is
convenient to rephrase it more directly in terms of Hessenberg
functions. Fix a Hessenberg function $h:[n]\rightarrow[n]$. 
Let $P(h)$ denote the partially ordered set whose underlying set is
$[n]$ and with partial order defined by $j <_P i$ if and only if $h(j)
< i$ \cite[Section 4]{sh-wa14}. The following characterizes natural
unit interval orders in terms of such posets.

\begin{proposition} \emph{(}\cite[Proposition 4.1]{sh-wa14}\emph{)} 
Let $P$ be a poset on $[n]$. Then $P$ is a natural unit interval order
if and only if $P = P(h)$ for some Hessenberg function $h$. 
\end{proposition}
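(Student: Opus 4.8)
The statement to prove is Proposition 4.1 of Shareshian-Wachs: a poset $P$ on $[n]$ is a natural unit interval order if and only if $P = P(h)$ for some Hessenberg function $h:[n]\to[n]$, where $j <_P i$ iff $h(j) < i$. I'll recall the relevant definition: a \textbf{natural unit interval order} is a poset on $[n]$ obtained from a collection of unit-length closed intervals $[a_k, a_k+1]$ on the real line by the rule that the $k$-th interval precedes the $\ell$-th interval when $a_k + 1 < a_\ell$, together with the naturality condition that $a_1 \le a_2 \le \cdots \le a_n$ (so the labeling is order-compatible with the left endpoints).

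\medskip

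The plan is to prove both implications directly by translating between the interval data and the Hessenberg function. For the forward direction, suppose $P$ is a natural unit interval order coming from left endpoints $a_1 \le \cdots \le a_n$. I would define $h(j) := \max\{ i \in [n] : a_i \le a_j + 1 \}$, i.e. the largest index whose interval still overlaps (or abuts below the strict-precedence threshold) the $j$-th interval. First I would check $h$ is a Hessenberg function: $h(j) \ge j$ holds because $a_j \le a_j + 1$, so $i=j$ is always in the set; and $h(j+1) \ge h(j)$ holds because $a_{j+1} \ge a_j$ forces $\{i : a_i \le a_j+1\} \subseteq \{i : a_i \le a_{j+1}+1\}$. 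Then I would verify that $j <_P i$ (i.e. interval $j$ strictly precedes interval $i$, meaning $a_j + 1 < a_i$) is equivalent to $h(j) < i$: indeed $a_j + 1 < a_i$ says $i$ is \emph{not} in the set defining $h(j)$, and since that set is a down-closed initial segment of indices (again using monotonicity of the $a_k$), not being in it is exactly the condition $i > h(j)$. This gives $P = P(h)$.

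\medskip

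For the converse, given a Hessenberg function $h$ I need to produce unit intervals realizing $P(h)$ with the naturality (monotone left endpoints) condition. The natural choice is to set the left endpoints recursively: put $a_1 := 0$, and having chosen $a_1 \le \cdots \le a_j$, choose $a_{j+1}$ slightly larger than $a_j$ but small enough (and larger than $a_{h^{-1}(\cdot)}+1$ where forced) so that the overlap pattern matches $h$. More precisely, I would choose a strictly increasing sequence $a_1 < a_2 < \cdots < a_n$ of reals with the property that for each $j$, $a_{h(j)} \le a_j + 1 < a_{h(j)+1}$ (with the convention $a_{n+1} = +\infty$). Such a sequence exists: one can take, for example, $a_k$ to be a suitable small increment $k\epsilon$ plus a correction term, or argue by induction on $n$ using that $h$ being a Hessenberg function makes the constraints consistent (the key point being $h(j+1) \ge h(j)$, so the windows $[a_j, a_j+1]$ expand monotonically in a way compatible with $h$ being weakly increasing). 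Then strict precedence $a_j + 1 < a_i$ holds iff $i > h(j)$, which is the definition of $<_{P(h)}$, and monotonicity of the $a_k$ is the naturality condition, so $P(h)$ is a natural unit interval order.

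\medskip

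The main obstacle will be the converse direction: carefully constructing the sequence $(a_k)$ and checking the consistency of the system of inequalities $a_{h(j)} \le a_j + 1 < a_{h(j)+1}$ for all $j$ simultaneously. This amounts to showing the constraints never conflict, which should follow cleanly from the two defining inequalities of a Hessenberg function ($h(j)\ge j$ and $h(j+1)\ge h(j)$) — these guarantee that the interval-overlap structure one is trying to impose is itself "interval-like" and hence realizable — but it requires a short inductive argument rather than a one-line check. An alternative that sidesteps the explicit construction is to cite the equivalent combinatorial characterization of natural unit interval orders as precisely the posets on $[n]$ that are $(\mathbf{3}+\mathbf{1})$-free and $(\mathbf{2}+\mathbf{2})$-free with a compatible labeling, and match that directly against the $P(h)$ condition; I would mention this as a remark but carry out the interval-realization argument as the primary proof since it is the most self-contained.
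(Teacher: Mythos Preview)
The paper does not prove this proposition; it is simply quoted from Shareshian--Wachs \cite[Proposition~4.1]{sh-wa14} and used as a black box to identify natural unit interval orders with Hessenberg functions for the purposes of Section~\ref{sect: Shareshian-Wachs conjecture}. So there is no argument in the paper to compare your proposal against.

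That said, your outline is correct. The forward direction is clean and complete as written: monotonicity of the left endpoints makes $\{i : a_i \le a_j+1\}$ an initial segment of $[n]$, so $h(j)$ is well defined, the two Hessenberg axioms follow immediately, and the equivalence $j <_P i \Leftrightarrow h(j) < i$ is exactly the statement that $i$ lies outside that initial segment. For the converse you have correctly isolated the only nontrivial point: one must produce strictly increasing left endpoints $a_1 < \cdots < a_n$ satisfying $a_{h(j)} \le a_j + 1 < a_{h(j)+1}$ for all $j$ (with $a_{n+1}=+\infty$). A greedy inductive construction works here---the two Hessenberg conditions $h(j)\ge j$ and $h(j+1)\ge h(j)$ are precisely the compatibility needed to keep the successive constraint intervals nonempty---but, as you anticipate, it genuinely requires a short induction rather than a closed-form formula. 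Your alternative route via the $(\mathbf{3}+\mathbf{1})$- and $(\mathbf{2}+\mathbf{2})$-free characterization (together with the Scott--Suppes theorem) is also standard and would be an acceptable substitute.
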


Furthermore, the incomparability graph of a
poset $P$ as defined in \cite[Section 1]{sh-wa14} has as its vertices the elements of $P$, and an edge between
two elements precisely when the two elements are incomparable with
respect to the given partial order. From the definition of $P(h)$
above, it is then immediate that the incomparability graph $G$ of
$P(h)$ is the graph 
with vertex set $[n]$ and with edges $E$ given by 
\begin{align*}
E :=\{\{i,j\} \mid i, j \in [n], j < i \leq h(j) \}
\end{align*}
i.e. there is an edge between $i$ and $j$ (where without loss of
generality $i>j$) exactly when $i \leq h(j)$. 
For example, if $h=(1,2,\ldots,n)$, then
evidently $E$ is empty, and the corresponding incomparability graph $G$ has $n$ vertices and no
edges.
At the other extreme, if $h=(n,n,\ldots,n)$, then its comparability graph $G$ is the complete
graph on $n$ vertices. 

Next, let $x_1, x_2, x_3, \dots$ be a countably infinite set of
variables. Denoting by $\mathbb{P}$ the set of positive integers, 
we call a map $\kappa:V=[n]\rightarrow \mathbb{P}$ 
a \textit{coloring of $G$} if $\kappa$ satisfies
$\kappa(i)\neq\kappa(j)$ for any $\{i,j\}\in E$, i.e. if $\kappa(i)$
is the ``color'' of the vertex $i$, then we require that adjacent vertices must be 
colored differently. Let $C(G)$ denote the set of all colorings of
$G$, and let ${\bf x}_{\kappa}$ denote the monomial
$\prod_{i\in[n]}x_{\kappa(i)}$ for any coloring $\kappa$. 
We also define 
\[
\text{asc}(\kappa) := |\{\{i,j\}\in E \mid j<i \textup{ and } \kappa(j)<\kappa(i)\}|
\]
Then the \textbf{chromatic quasisymmetric function of $G$} is defined
to be 
\[
X_G({\bf x},t) := \sum_{\kappa\in C(G)} t^{\text{asc}(\kappa)}{\bf x}_{\kappa}.
\]
In our situation, where $G = \mathrm{inc}(P(h))$ is the incomparability graph
of a natural unit interval order $P(h)$, it is known that when we consider $X_G({\bf x},t)$
as a polynomial in $t$, each coefficient is an element of the
algebra $\Lambda_{\Z}$ of symmetric functions in the variables $\bf x$ \cite[Theorem 4.5]{sh-wa14}. That is, we have
$X_G({\bf x},t)\in \Lambda_{\Z}[t]$. 
In the following example, for $i$ a positive integer,
we denote by $e_i({\bf x})$ the $i$-th elementary symmetric function in the variables $\textbf{x}$. 

Finally, following standard
notation in the theory of symmetric functions, we denote by $\omega$
the involution of $\Lambda_\Z$, the algebra of symmetric functions,
which exchanges the elementary basis $\{e_\lambda\}$ with the complete
homogeneous basis $\{h_\lambda\}$ (as $\lambda$ ranges over partitions)
\cite[Section 6]{fult97}. 
For our purposes it is useful to note that, for $\omega$ defined as above, we have $\omega(s_\lambda)=s_{\lambda^*}$, 
where $s_\lambda$ denotes the Schur function associated to a partition
$\lambda$ \cite[Section 6]{fult97} and $\lambda^*$ denotes
the partition conjugate to $\lambda$.
Based on the above discussion, the reader may easily check that the
formulation of the Shareshian-Wachs conjecture recorded below is
equivalent to that given in \cite[Conjecture 1.4]{sh-wa14}. 

\begin{conjecture}\label{SW conjecture}
Let $h: [n] \to [n]$ be a Hessenberg function, $P(h)$ its associated
poset and $G$ the incomparability graph of $P(h)$. Let
$X_G({\bf x}, t)$ denote the chromatic quasisymmetric function of
$G$, and let $\Hess(\mathsf{S},h)$ be the regular semisimple
Hessenberg variety associated to $h$. Then 
\begin{align}\label{SW conjecture 200}
 \omega X_G({\bf x},t) = \sum_{j=0}^{|E(G)|} \emph{ch} H^{2j}(\Hess(\mathsf{S},h))t^j
\end{align}
where $\emph{ch}$ denotes the Frobenius characteristic of Tymoczko's
$\Sn$-representation on $H^{2j}(\Hess(\mathsf{S},h))$. 
\end{conjecture}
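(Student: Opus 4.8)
The plan is to deduce the full conjecture from a family of numerical identities, one for each partition $\mu\vdash n$, and then to establish each such identity by matching a geometric count against a combinatorial one. Both sides of \eqref{SW conjecture 200} lie in the degree-$n$ component of $\Lambda_{\Q}$, and $\{h_\mu\}_{\mu\vdash n}$ is a $\Q$-basis of that component, so \eqref{SW conjecture 200} holds if and only if the two sides have equal Hall inner product with $h_\mu$ for every $\mu$. On the cohomology side, $\langle\mathrm{ch}\,H^{2j}(\Hess(\mathsf{S},h)),h_\mu\rangle$ equals the multiplicity of the permutation module $\Ind_{\SS_\mu}^{\Sn}\mathbf{1}$ in $H^{2j}(\Hess(\mathsf{S},h))$, hence equals $\dim_{\Q}H^{2j}(\Hess(\mathsf{S},h))^{\SS_\mu}$, the dimension of the invariants of the Young subgroup $\SS_\mu=\SS_{\mu_1}\times\cdots\times\SS_{\mu_k}$ under the restriction of Tymoczko's representation. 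Thus the conjecture is equivalent to
\begin{equation}\label{eq:plan-reduction}
\langle\omega X_G(\mathbf{x},t),h_\mu\rangle=\sum_{j\ge 0}\big(\dim_{\Q}H^{2j}(\Hess(\mathsf{S},h))^{\SS_\mu}\big)\,t^j\qquad(\mu\vdash n).
\end{equation}
The two extreme cases already anchor the statement: for $\mu=(1^n)$ the right-hand side is the full Poincar\'e polynomial of $\Hess(\mathsf{S},h)$ and \eqref{eq:plan-reduction} reduces to a known identity of Shareshian--Wachs \cite{sh-wa14}, while for $\mu=(n)$ the subgroup $\SS_\mu$ is all of $\Sn$, so by Theorem~B and Lemma~\ref{lemma:hesshil} the right-hand side equals $\prod_{j=1}^n(1-t^{h(j)-j+1})/(1-t)$, which coincides with $\langle\omega X_G,h_n\rangle=\langle\omega X_G,s_n\rangle$ by \cite[Theorem~6.9]{sh-wa14}. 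In other words, Theorem~B is precisely the $h_n$-coefficient of the conjecture, and what remains is to extend it to all $h_\mu$.

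The second step realizes the right-hand side of \eqref{eq:plan-reduction} as the Poincar\'e polynomial of a regular Hessenberg variety, following Brosnan--Chow. Consider the proper family $\pi\colon\mathcal{H}(h)\to\C^n$ whose fiber over $\mathbf{t}=(t_1,\dots,t_n)$ is $\Hess(\mathsf{N}+\mathrm{diag}(t_1,\dots,t_n),h)$. For $\mathbf{t}$ with distinct coordinates the matrix $\mathsf{N}+\mathrm{diag}(\mathbf{t})$ is regular semisimple, so the generic fiber is $\Hess(\mathsf{S},h)$ and the monodromy over the complement of the discriminant realizes Tymoczko's $\Sn$-representation on $H^*(\Hess(\mathsf{S},h))$; for $\mathbf{t}$ constant the fiber is $\Hess(\mathsf{N},h)$. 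More generally, if $\mathbf{t}$ has multiplicity pattern $\mu$ then $\mathsf{N}+\mathrm{diag}(\mathbf{t})$ is a regular matrix of type $\mu$ (one Jordan block of size $\mu_i$ for each distinct eigenvalue), and every fiber of $\pi$ is equidimensional of dimension $\sum_j(h(j)-j)$. Writing $\Hess(\mathsf{M}_\mu,h)$ for the fiber over such a $\mathbf{t}$ and noting that the local monodromy of $\pi$ along a generic point of the $\mu$-stratum is the full group $\SS_\mu$, the local invariant cycle theorem of Beilinson--Bernstein--Deligne yields an isomorphism of graded vector spaces
\begin{equation}\label{eq:plan-lict}
H^{2j}(\Hess(\mathsf{M}_\mu,h))\;\cong\;H^{2j}(\Hess(\mathsf{S},h))^{\SS_\mu}\qquad\text{for all }j,
\end{equation}
where it is the equidimensionality of $\pi$ together with the purity of the cohomology of Hessenberg varieties (they are paved by affines, \cite[Theorem~7.1]{ty}) that upgrades the a priori surjection to an isomorphism. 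This sheaf-theoretic step invokes deep machinery and is, in substance, the core of the Brosnan--Chow argument; for $\mu=(n)$ it recovers the graded-vector-space statement underlying Theorem~B.

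Granting \eqref{eq:plan-lict}, the conjecture becomes the purely enumerative identity
\begin{equation}\label{eq:plan-enum}
\langle\omega X_G(\mathbf{x},t),h_\mu\rangle=\sum_{j\ge 0}\big(\dim_{\Q}H^{2j}(\Hess(\mathsf{M}_\mu,h))\big)\,t^j\qquad(\mu\vdash n),
\end{equation}
which I would establish through a common combinatorial model. On the left, expanding $X_G(\mathbf{x},t)=\sum_{\kappa\in C(G)}t^{\mathrm{asc}(\kappa)}\mathbf{x}_\kappa$, applying $\omega$, and using the Hall duality between the $m$-basis and the $h$-basis, one rewrites $\langle\omega X_G(\mathbf{x},t),h_\mu\rangle$ as a $t^{\mathrm{asc}}$-weighted enumeration of the $\mu$-compatible proper colorings of $G=\mathrm{inc}(P(h))$ appearing in the Shareshian--Wachs $P$-tableau formula, cf.\ \cite{sh-wa14}. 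On the right, $\Hess(\mathsf{M}_\mu,h)$ is itself paved by complex affine cells, as every type-$A$ Hessenberg variety is \cite[Theorem~7.1]{ty}, so its Poincar\'e polynomial is $\sum_c t^{\dim_{\C}c}$; the cells are indexed by $\mu$-refined combinatorial data attached to $P(h)$, reducing for $\mu=(1^n)$ to permutations graded by a Hessenberg-adapted length and, at the other end, for $\mu=(n)$ to the cells of $\Hess(\mathsf{N},h)$. Matching these two weighted enumerations---via an explicit dimension-preserving bijection between the affine cells of $\Hess(\mathsf{M}_\mu,h)$ and the block-constant proper colorings of $\mathrm{inc}(P(h))$, or by an equivalent comparison of generating functions organized by an induction on the refinement order on partitions whose base and top cases are the two settled ones---is the combinatorial heart of the proof, and this is the step I expect to be the main obstacle; if one seeks a proof that bypasses the sheaf-theoretic input, then the independent verification of the monodromy description and of \eqref{eq:plan-lict} becomes the principal difficulty instead.
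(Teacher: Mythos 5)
You should first be aware that the paper does not prove this statement: it is recorded as a \emph{conjecture}, and the only part the paper establishes is the coefficient of $s_n(\mathbf{x})$ (the trivial isotypic component), which follows from Theorem B together with Lemma~\ref{lemma:hesshil} and \cite[Theorem 6.9]{sh-wa14}. The full statement is explicitly attributed to Brosnan and Chow. Your proposal is therefore not comparable to ``the paper's proof''; it is, rather, a reconstruction of the Brosnan--Chow strategy. The reduction to the identities $\langle\omega X_G(\mathbf{x},t),h_\mu\rangle=\sum_j\dim_{\Q}H^{2j}(\Hess(\mathsf{S},h))^{\SS_\mu}\,t^j$ is correct (the $h_\mu$ form a basis and $\langle\mathrm{ch}\,V,h_\mu\rangle=\dim V^{\SS_\mu}$), and your two anchor cases $\mu=(1^n)$ and $\mu=(n)$ are correctly identified, the latter being exactly the content of the paper's Corollary of Theorem B.

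The gaps are the two steps you yourself flag, and they are not minor: (i) the assertion that the monodromy of the family $\pi$ over the $\mu$-stratum is all of $\SS_\mu$, that it induces Tymoczko's dot action on the cohomology of the generic fiber, and that the local invariant cycle theorem gives the \emph{isomorphism} \eqref{eq:plan-lict} rather than merely a surjection; and (ii) the enumerative identity \eqref{eq:plan-enum} matching $\langle\omega X_G,h_\mu\rangle$ with the Poincar\'e polynomial of $\Hess(\mathsf{M}_\mu,h)$, for which you propose but do not construct a dimension-preserving bijection. Each of these is a substantial theorem in \cite{br-ch}, so as written your text is a plan, not a proof. It is also worth contrasting what the two routes buy: the sheaf-theoretic route yields the full conjecture but only as an equality of graded characters, with no control of ring structure, whereas the paper's elementary route (explicit presentations by regular sequences, Poincar\'e duality algebras) yields only the $\mu=(n)$ component but upgrades it to an isomorphism of graded $\Q$-algebras $H^*(\Hess(\mathsf{N},h))\cong H^*(\Hess(\mathsf{S},h))^{\Sn}$. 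If your goal were to re-prove the paper's actual result, you would need none of the machinery in your second and third steps; if your goal is the full conjecture, you must supply proofs of (i) and (ii).
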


Since~\eqref{SW conjecture 200} takes place within the ring of symmetric functions, 
expanding both sides in
terms of (the basis of) Schur functions $s_\lambda({\bf x})$, we may interpret~\eqref{SW conjecture 200} as 
the statement that the coefficient of $s_\lambda({\bf x})$ on both sides must be equal for each partition $\lambda$. 
In \cite[Theorem 6.9]{sh-wa14} Shareshian
and Wachs also obtain a closed formula for the coefficient of
$s_n({\bf x})$, i.e. the coefficient corresponding to the trivial
representation.

\begin{theorem} \emph{(}\cite[Theorem 6.9]{sh-wa14}\emph{)}
In the Schur basis expansion of $X_G({\bf x},t)$, the coefficient of $s_{1^n}({\bf x})$ is 
$\prod_{j=1}^{n}[h(j)-j+1]_t$
where $[i]_t = \frac{1-t^i}{1-t}$.
\end{theorem}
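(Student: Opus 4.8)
The plan is to establish the stronger identity~\eqref{SW conjecture 200} restricted to the coefficient of $s_{1^n}(\mathbf{x})$, and then read off the claimed formula using the involution $\omega$. First I would observe that the coefficient of $s_{1^n}(\mathbf{x})$ in $\omega X_G(\mathbf{x},t)$ equals the coefficient of $\omega(s_{1^n}(\mathbf{x})) = s_n(\mathbf{x})$ in $X_G(\mathbf{x},t)$, since $\omega$ is an involution exchanging $s_\lambda$ with $s_{\lambda^*}$ and $(1^n)^* = (n)$. Next, the coefficient of $s_n(\mathbf{x})$ in $\mathrm{ch}\, V$ for an $\Sn$-representation $V$ is precisely $\dim_\Q V^{\Sn}$, the dimension of the trivial isotypic component: this is a standard fact about the Frobenius characteristic, since $\langle \mathrm{ch}\,V, h_n\rangle = \langle \mathrm{ch}\,V, s_n\rangle$ counts the multiplicity of the trivial representation. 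Therefore the coefficient of $s_n(\mathbf{x})$ in the right-hand side $\sum_j \mathrm{ch}\, H^{2j}(\Hess(\mathsf{S},h)) t^j$ equals $\sum_j \dim_\Q H^{2j}(\Hess(\mathsf{S},h))^{\Sn} t^j$, which is the Hilbert series (in the variable $t$, with $\deg$ halved) of the invariant ring $H^*(\Hess(\mathsf{S},h))^{\Sn}$.

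The second main step is to invoke Theorem B: the isomorphism $\Abe\colon H^*(\Hess(\mathsf{N},h)) \xrightarrow{\ \cong\ } H^*(\Hess(\mathsf{S},h))^{\Sn}$ is one of graded $\Q$-algebras, so in particular it is a graded vector space isomorphism, giving $\dim_\Q H^{2j}(\Hess(\mathsf{N},h)) = \dim_\Q H^{2j}(\Hess(\mathsf{S},h))^{\Sn}$ for all $j$. Combined with Corollary~\ref{cor:hilbert of ordinary coh}, which computes
\[
F(H^*(\Hess(\mathsf{N},h)), s) = \prod_{j=1}^n \frac{1 - s^{2(h(j)-j+1)}}{1-s^2},
\]
and the substitution $s^2 \mapsto t$ (legitimate since all cohomology is concentrated in even degrees, so the odd part of the Hilbert series vanishes and $F$ is genuinely a series in $s^2$), we get that the coefficient of $s_n(\mathbf{x})$ in $X_G(\mathbf{x},t)$ equals $\prod_{j=1}^n \frac{1-t^{h(j)-j+1}}{1-t} = \prod_{j=1}^n [h(j)-j+1]_t$ — exactly the claimed formula.

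The one genuine logical gap in the above sketch is that I have used the Shareshian--Wachs \emph{conjecture}~\eqref{SW conjecture 200} itself, which the paper only proves for the $s_n$-coefficient. So I would reorganize the argument to avoid circularity: rather than extracting a coefficient from~\eqref{SW conjecture 200}, I would directly cite Shareshian--Wachs's combinatorial computation of the coefficient of $s_{1^n}(\mathbf{x})$ in $X_G(\mathbf{x},t)$, which is \emph{precisely the content of the theorem being proved} and is established in \cite[Theorem 6.9]{sh-wa14} by purely combinatorial means (enumerating colorings by ascent statistics, or via the $P$-partition / quasisymmetric expansion). In other words, the theorem as stated in this excerpt is quoted verbatim from \cite{sh-wa14}, so the honest proof is a one-line citation of \cite[Theorem 6.9]{sh-wa14}. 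The role of the present paper's Theorem B is the \emph{converse}-flavored observation recorded in the surrounding discussion: it shows that this combinatorial formula agrees with the Hilbert series of $H^*(\Hess(\mathsf{N},h))$, hence proves the trivial-representation case of Conjecture~\ref{SW conjecture}.

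Accordingly, the proof proposal is: state that this is \cite[Theorem 6.9]{sh-wa14}, recalled here for the reader's convenience, and give the combinatorial proof sketch from that reference (set up the chromatic quasisymmetric function of the empty-edge-augmented graph, track ascents, identify the $s_{1^n}$ coefficient with the generating function $\prod_j [h(j)-j+1]_t$ by a direct bijective or transfer-matrix count). The main obstacle, if one insists on a self-contained argument, is reproducing the Shareshian--Wachs $P$-partition machinery; but since the paper explicitly presents this as a quoted result used to motivate the connection with Theorem B, no new proof is required, and I would simply attribute it and move on.
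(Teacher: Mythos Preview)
Your final conclusion is exactly right and matches the paper: this theorem is not proved in the present paper at all, but is quoted directly from \cite[Theorem 6.9]{sh-wa14}, so the ``proof'' here is a bare citation. The paper uses this quoted result together with Theorem B and Lemma~\ref{lemma:hesshil} to deduce the Corollary of Theorem B (the trivial-representation case of the Shareshian--Wachs conjecture), not the other way around.

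One minor slip in your exploratory (and, as you correctly note, circular) argument: at the end you wrote ``the coefficient of $s_n(\mathbf{x})$ in $X_G(\mathbf{x},t)$ equals $\prod_j [h(j)-j+1]_t$'', but what your chain of equalities actually yields is the coefficient of $s_n$ in $\omega X_G$, which is the coefficient of $s_{1^n}$ in $X_G$ --- the quantity you want. This is harmless since you abandon that route anyway, but worth tidying if you keep the discussion.
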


Finally, since $\omega s_{1^n}({\bf x})= s_n({\bf x})$ is the Frobenius
characteristic of the trivial representation and the polynomial
$\prod_{j=1}^{n}[h(j)-j+1]_t$ is exactly the Hilbert series $F(H^*(\Hess(\mathsf{N},h)),s)$ by Lemma~\ref{lemma:hesshil} (after replacing $s^2$ by $t$), 
it follows from Theorem B that Shareshian-Wachs
conjecture holds for the component of the trivial representation. We
record the following. 

\begin{corollaryB*} 
The coefficients of $s_n({\bf x})$ are the same on the both sides of \eqref{SW conjecture 200}.
\end{corollaryB*}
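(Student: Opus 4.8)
The plan is to unwind the definitions so that the asserted equality of $s_n(\mathbf{x})$-coefficients on the two sides of \eqref{SW conjecture 200} becomes an identity of polynomials in $t$, and then identify each side with a Hilbert series already computed. First I would recall that applying $\omega$ commutes with passing to Schur-basis coefficients up to conjugation of partitions: concretely, since $\omega s_\lambda = s_{\lambda^*}$, the coefficient of $s_n(\mathbf{x})$ in $\omega X_G(\mathbf{x},t)$ equals the coefficient of $s_{1^n}(\mathbf{x})$ in $X_G(\mathbf{x},t)$. By the cited \cite[Theorem 6.9]{sh-wa14}, that coefficient is exactly $\prod_{j=1}^{n}[h(j)-j+1]_t$ where $[i]_t = \tfrac{1-t^i}{1-t}$.

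Next I would handle the right-hand side. The coefficient of $s_n(\mathbf{x})$ in $\sum_{j} \mathrm{ch}\, H^{2j}(\Hess(\mathsf{S},h))\, t^j$ is, by standard facts about the Frobenius characteristic, the multiplicity of the trivial $\Sn$-representation in $H^{2j}(\Hess(\mathsf{S},h))$ — that is, $\dim_\Q H^{2j}(\Hess(\mathsf{S},h))^{\Sn}$ — so the full coefficient of $s_n(\mathbf{x})$ is $\sum_j \dim_\Q H^{2j}(\Hess(\mathsf{S},h))^{\Sn}\, t^j$. Here I would also use that the odd cohomology of $\Hess(\mathsf{S},h)$ vanishes (paving by affines), so this is genuinely the Hilbert series of $H^*(\Hess(\mathsf{S},h))^{\Sn}$ after the substitution $s^2 \mapsto t$.

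Now I invoke Theorem B: the graded $\Q$-algebra isomorphism $\Abe\colon H^*(\Hess(\mathsf{N},h)) \xrightarrow{\ \cong\ } H^*(\Hess(\mathsf{S},h))^{\Sn}$ forces $\dim_\Q H^{2j}(\Hess(\mathsf{N},h)) = \dim_\Q H^{2j}(\Hess(\mathsf{S},h))^{\Sn}$ for all $j$, hence their Hilbert series coincide. By Lemma~\ref{lemma:hesshil}, $F(H^*(\Hess(\mathsf{N},h)),s) = \prod_{j=1}^{n} \tfrac{1-s^{2(h(j)-j+1)}}{1-s^2}$, which under $s^2 \mapsto t$ becomes $\prod_{j=1}^{n} \tfrac{1-t^{h(j)-j+1}}{1-t} = \prod_{j=1}^{n}[h(j)-j+1]_t$. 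Thus both coefficients equal $\prod_{j=1}^{n}[h(j)-j+1]_t$, proving the corollary.

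There is no serious obstacle here — the work is entirely bookkeeping — but the one point requiring care is the $\omega$/conjugation step: one must be sure that "coefficient of $s_n$ after $\omega$" is matched against "coefficient of $s_{1^n}$ before $\omega$" and that this is precisely the normalization in which $\omega s_{1^n}(\mathbf{x}) = s_n(\mathbf{x})$ is the Frobenius characteristic of the trivial representation, so that the two invocations of "trivial representation" (one via $s_n$ on the RHS of \eqref{SW conjecture 200}, one via $s_{1^n}$ in \cite[Theorem 6.9]{sh-wa14}) are consistently aligned. Once that is checked, the identification of both sides with $\prod_{j=1}^n[h(j)-j+1]_t$ is immediate from Theorem B and Lemma~\ref{lemma:hesshil}.
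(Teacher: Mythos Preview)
Your proposal is correct and follows essentially the same argument as the paper: both identify the $s_n(\mathbf{x})$-coefficient on the left with the $s_{1^n}(\mathbf{x})$-coefficient of $X_G$ via $\omega$, invoke \cite[Theorem 6.9]{sh-wa14} to get $\prod_{j=1}^n[h(j)-j+1]_t$, and match this with the Hilbert series of $H^*(\Hess(\mathsf{S},h))^{\Sn}$ via Theorem~B and Lemma~\ref{lemma:hesshil}. Your explicit unpacking of the Frobenius-characteristic step (that the $s_n$-coefficient on the right is $\sum_j \dim_\Q H^{2j}(\Hess(\mathsf{S},h))^{\Sn}\,t^j$) is a bit more detailed than the paper's terse remark, but the content is identical.
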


\bigskip
\section*{Appendix: Proof of Lemma~\ref{lemma:fij in terms of e and b equation (short)}}

\renewcommand{\thesection}{A}

  The purpose of this section is to give a proof of
  Lemma~\ref{lemma:fij in terms of e and b equation (short)} which we
  postponed to do that in Section~\ref{sec:fproperty}. We divide the
  lemma into 
  two claims, Lemma~\ref{app lemma:fij in terms of e
    and b symmetric} and Lemma~\ref{app lemma:fij in terms of e and b
    equation} below.

\begin{lemma} \label{app lemma:fij in terms of e and b symmetric}
For any pair $k,j \in [n]$ with $k\geq j$, the polynomial 
$b_{k,j}$ 
is symmetric in the variables $u_1, \ldots, u_j$.
\end{lemma}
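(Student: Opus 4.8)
The plan is to prove the symmetry of each $b_{k,j}$ by a strong induction on $j$, the cases $j=0,1$ being trivial since $b_{*,0}=0$ and a polynomial in the single variable $u_1$ is automatically symmetric. So assume $j\ge 2$ and that $b_{k',j'}$ is symmetric in $u_1,\dots,u_{j'}$ for all $j'<j$ and all $k'\ge j'$. Rather than work directly with the recursion \eqref{eq:definition bkj}, I would repackage the polynomials $\{b_{k,j}\}_{k\ge j}$ into the generating series $B_j(x):=\sum_{k\ge j}b_{k,j}\,x^{k}$, viewed in $\Q[u_1,\dots,u_n,t][[x]]$. Multiplying \eqref{eq:definition bkj} by $x^{k+1}$, summing over $k\ge j$, and using the elementary identity $b_{j,j}-b_{j-1,j-1}=u_j-(j-1)t$ that is immediate from \eqref{eq:definition bjj}, one obtains the clean functional equation
\[
B_j(x)\,(1-u_jx)\;=\;\bigl(u_j-(j-1)t\bigr)x^{j}\;+\;x\,\bigl(1-(u_j+t)x\bigr)\,B_{j-1}(x),
\]
with the convention $B_0(x)=0$.

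Two things follow from this equation. First, since the factor $1-u_jx$ is a unit in $\Q[u_1,\dots,u_n,t][[x]]$ and none of $1-u_jx$, $u_j-(j-1)t$, $1-(u_j+t)x$ involves $u_1,\dots,u_{j-1}$, the inductive symmetry of $B_{j-1}(x)$ in $u_1,\dots,u_{j-1}$ passes to $B_j(x)$. Second, because $\mathfrak{S}_j$ is generated by the subgroup permuting $u_1,\dots,u_{j-1}$ together with the single transposition $\tau$ exchanging $u_{j-1}$ and $u_j$, it remains only to show that $B_j(x)$ is fixed by $\tau$. For this I would substitute the corresponding functional equation for $B_{j-1}(x)$ (in terms of $B_{j-2}(x)$, which involves only $u_1,\dots,u_{j-2}$) into the one above and clear the denominator, obtaining
\[
B_j(x)\,(1-u_jx)(1-u_{j-1}x)\;=\;x^{j}\,C_j(x)\;+\;x^{2}\bigl(1-(u_j+t)x\bigr)\bigl(1-(u_{j-1}+t)x\bigr)B_{j-2}(x),
\]
where $C_j(x):=\bigl(u_j-(j-1)t\bigr)(1-u_{j-1}x)+\bigl(u_{j-1}-(j-2)t\bigr)\bigl(1-(u_j+t)x\bigr)$.

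In this last identity the left-hand factor $(1-u_jx)(1-u_{j-1}x)$ and the product $\bigl(1-(u_j+t)x\bigr)\bigl(1-(u_{j-1}+t)x\bigr)B_{j-2}(x)$ are manifestly fixed by $\tau$, so the whole point reduces to checking that $C_j(x)$ is fixed by $\tau$ — a short expansion which I expect to give $C_j(x)=\bigl(u_{j-1}+u_j-(2j-3)t\bigr)-\bigl(2u_{j-1}u_j-(j-2)t(u_{j-1}+u_j+t)\bigr)x$, visibly symmetric in $u_{j-1}$ and $u_j$. Granting this, applying $\tau$ to the displayed identity and cancelling the unit $(1-u_jx)(1-u_{j-1}x)$ yields $\tau(B_j(x))=B_j(x)$, completing the inductive step; extracting the coefficient of $x^{k}$ then gives the lemma. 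The only real obstacle is that the recursion \eqref{eq:definition bkj} treats the new variable $u_j$ asymmetrically, so the $u_{j-1}\!\leftrightarrow\!u_j$ symmetry is genuinely hidden and is only exposed after unfolding the recursion one extra level; the verification that the resulting cross term $C_j(x)$ is symmetric is elementary but must be done carefully, since it is precisely where the linear-in-$t$ corrections $-(i-1)t$ in the $b_{i,i}$ conspire to cancel. One can, if preferred, carry out the same argument entirely at the level of the polynomials $b_{k,j}$ by a nested induction on $k$, but the generating-series bookkeeping is cleaner.
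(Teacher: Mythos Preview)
Your argument is correct and follows the same underlying inductive strategy as the paper: induct on $j$, observe from the recursion that symmetry in $u_1,\dots,u_{j-1}$ is inherited from $B_{j-1}$, and then verify invariance under the transposition $u_{j-1}\leftrightarrow u_j$ by unfolding the recursion one extra step to express everything in terms of $B_{j-2}$ (which only sees $u_1,\dots,u_{j-2}$). The paper carries out this last step by explicitly iterating the recursion $b_{k+1,j}=b_{k,j-1}-tb_{k-1,j-1}+u_j(b_{k,j}-b_{k-1,j-1})$ down to the $b_{*,j-2}$ level and then checking, term by term, that the resulting expression is symmetric in $u_{j-1},u_j$; this requires a separate treatment of the case $k=j$ and a rather long expansion for $k>j$. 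Your generating-series formulation compresses all of that into the single identity
\[
B_j(x)(1-u_jx)(1-u_{j-1}x)=x^jC_j(x)+x^2\bigl(1-(u_j+t)x\bigr)\bigl(1-(u_{j-1}+t)x\bigr)B_{j-2}(x),
\]
reducing the verification to the two-line symmetry check of $C_j(x)$ that you gave (and which I have checked). This is a genuinely cleaner organization of the same computation: it handles all $k$ at once, avoids the case split, and makes transparent exactly which factors carry the $u_{j-1},u_j$ dependence. The paper's approach, in turn, has the minor advantage of never leaving the polynomial ring and may be easier to follow for a reader unfamiliar with formal power series manipulations.
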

\begin{proof}

We argue by induction on the indices $(k,j) \in [n]^2$ for $k \geq j$,
with respect to the partial order defined by: $(k', j') < (k,j)$ if
and only if $j'<j$, or $j'=j$ and $k'<k$. Note also that the claim of
the lemma clearly holds for any $b_{j,j}$ with $j \in [n]$, by its
definition~\eqref{eq:definition bjj}. Now we wish to show that the
claim holds for $b_{k+1,j}$ for $k \geq j$ where we may assume by
induction that the claim holds for $b_{k', j'}$ with
$(k',j')<(k+1,j)$. From the definition of $b_{k+1,j}$
in~\eqref{eq:definition bkj} it then follows that $b_{k+1,j}$ is
symmetric in the first $j-1$ variables $u_1, \ldots,
u_{j-1}$. Therefore, it now suffices to show that $b_{k+1,j}$ is also
symmetric in $u_{j-1}$ and $u_j$.

For $k=j$ for any $j$, we may
explicitly compute from \eqref{eq:definition bkj} and \eqref{eq:definition bjj} as follows:
\begin{align*}
    b_{j+1,j} 
 & = b_{j, j-1} - t b_{j-1,j-1} + u_j (b_{j,j} - b_{j-1,j-1})  \\
 & = b_{j,j-1} - t b_{j-1,j-1} + u_j (u_j -( j-1)t) \\
 & = (b_{j-1,j-2} - t b_{j-2,j-2} + u_{j-1}(u_{j-1}-(j-2)t)) \\
 &\qquad\quad
 - t (b_{j-2,
   j-2} + (u_{j-1}-(j-2)t)) + u_j (u_j-(j-1)t) \\
 & = b_{j-1, j-2} - 2 t b_{j-2, j-2} + ( u_{j-1}^2 + u_j^2 - (j-1)(u_{j-1}
 + u_j) t + (j-2)t^2 ). 
\end{align*}
Since $b_{j-1,j-2}$ and $b_{j-2,j-2}$ are functions in the variables $u_1,\ldots, u_{j-2}$, it follows from the explicit expression above that $b_{j+1,j}=b_{j+1,j}(u_1,\dots,u_j,t)$ is symmetric in $u_{j-1}$ and $u_j$. 

We now claim $b_{k+1,j}$ is symmetric in $u_{j-1}$ and $u_j$ for $j
\geq 2$ and $k>j$. 
Generalizing the argument for the case $k=j$ above, we may derive the following by 
repeated use of the inductive definition of the $b_{k,j}$: 
\begin{align*} 
b_{k+1,j}&=b_{k,j-1}-tb_{k-1,j-1}+u_j (b_{k,j}-b_{k-1,j-1}) \\ 
&=b_{k,j-1}-tb_{k-1,j-1} -u_j t b_{k-2,j-1} +u_j^2(b_{k-1,j}-b_{k-2,j-1}) \\ 
&=\dots \\ 
&=b_{k,j-1}-\sum_{q=j-1}^{k-1}u_j^{k-1-q}tb_{q,j-1}+u_j^{k+1-j}(b_{j,j}-b_{j-1,j-1}) \\ 
&=b_{k,j-1}-\sum_{q=j-1}^{k-1}u_j^{k-1-q}tb_{q,j-1}+u_j^{k+1-j}(u_j-(j-1)t).
\end{align*}
Using this expression 
several times, we may express $b_{k+1,j}$ explicitly in terms of functions $b_{\ell,
  j-2}$ and the variables $u_{j-1}$ and $u_j$: 
\begin{align*} 
b_{k+1,j} & = b_{k-1, j-2} - \sum_{r=j-2}^{k-2} u_{j-1}^{k-2-r}t b_{r,j-2} + u_{j-1}^{k+1-j}(u_{j-1}-(j-2)t)  \\
 & \quad\quad - u_j^{k-j} t b_{j-1,j-1} \\
 & \quad\quad - \sum_{q=j}^{k-1} u_j^{k-1-q}t \left( b_{q-1,j-2} - \sum_{r=j-2}^{q-2} u_{j-1}^{q-2-r} t b_{r,j-2} +
   u_{j-1}^{q+1-j} (u_{j-1}-(j-2)t) \right) \\
 & \quad\quad + u_j^{k+1-j}(u_j-(j-1)t)  \\
 & = b_{k-1, j-2} - \sum_{r=j-2}^{k-2} u_{j-1}^{k-2-r}t b_{r,j-2}
 + u_{j-1}^{k+1-j}(u_{j-1}-(j-2)t) \\
 & \quad\quad - u_j^{k-j} t (b_{j-2,j-2} + (u_{j-1}-(j-2)t)) \\
 & \quad\quad - \sum_{q=j}^{k-1} u_j^{k-1-q} t \left( b_{q-1,
     j-2} - \sum_{r=j-2}^{q-2} u_{j-1}^{q-2-r} t b_{r,j-2} +
   u_{j-1}^{q+1-j} (u_{j-1}-(j-2)t) \right) \\
 & \quad\quad + u_j^{k+1-j} (u_j-(j-1)t).  
\end{align*}
By exchanging the order of the sums with respect to $q$ and $r$, this is further equal to
\begin{align*}
 &b_{k-1, j-2} - \sum_{r=j-2}^{k-2} (u_{j-1}^{k-2-r} +
 u_j^{k-2-r}) t b_{r, j-2} + \sum_{r=j-2}^{k-3} \left( \sum_{q=r+2}^{k-1}
   u_{j-1}^{q-(2+r)} u_j^{k-1-q} \right) t^2 b_{r,j-2} \\
 & \quad\quad +  (u_{j-1}^{k+2-j} + u_j^{k+2-j}) - (j-1)(u_{j-1}^{k+1-j} +
 u_j^{k+1-j})t \\
 & \quad\quad + u_{j-1}^{k+1-j}t - \sum_{q=j-1}^{k-1} (u_{j-1} -(j-2)t)
 u_{j-1}^{q+1-j} u_j^{k-1-q}t. 
 \end{align*}
By separating the last summand, we obtain the equality
\begin{align*}
 b_{k+1,j}=&b_{k-1, j-2} - \sum_{r=j-2}^{k-2}  (u_{j-1}^{k-2-r}+
 u_j^{k-2-r}) t b_{r, j-2} + \sum_{r=j-2}^{k-3} \left( \sum_{q=r+2}^{k-1}
   u_{j-1}^{q-(2+r)} u_j^{(k-1)-q} \right) t^2 b_{r,j-2} \\
 & \quad\quad + (u_{j-1}^{k+2-j} + u_j^{k+2-j}) - (j-1)(u_{j-1}^{k+1-j} +
 u_j^{k+1-j})t \\
 & \quad\quad + \sum_{q=j-1}^{k-1} (j-2) u_{j-1}^{q-(j-1)}
 u_j^{(k-1)-q}t^2 - \sum_{q=j-1}^{k-2} u_{j-1}^{q-(j-1)+1} u_j^{(k-2)-q+1} t.
\end{align*}
Since $b_{k-1,j-2}$ and $b_{r,j-2}$ are functions 
in the variables $u_1, \ldots, u_{j-2}$, it can be seen from the final explicit expression above that $b_{k+1,j}=b_{k+1,j}(u_1, \ldots, u_{j},t)$
is symmetric in the $u_{j-1}$ and $u_j$, as desired. 
\end{proof}

We now prove the second claim of Lemma~\ref{lemma:fij in terms of e and b equation (short)}.
For this purpose, we make the substitution
\begin{align}\label{app eq:def of ur}
u_r = (w(r)-1)t \quad \textup{ for } r \in [n]. 
\end{align}

\begin{lemma} \label{app lemma:fij in terms of e and b equation}
Let $k \geq j$, $k, j \in [n]$. Let $b_{k,j}=b_{k,j}((w(1)-1)t, \ldots)
\in \Q[t]$ denote the polynomial $b_{k,j}$ defined
in~\eqref{eq:definition bjj} and~\eqref{eq:definition bkj} evaluated
at $u_r = (w(r)-1)t$ as in~\eqref{app eq:def of ur}. Then
for any pair $i,j\in[n]$ with $i\geq j$ we have
\begin{equation} \label{app eq:fij in terms of e and b}
f_{i,j}(w)=\sum_{k=j}^i(-1)^{i-k}e_{i-k}(w(j+1),\dots,w(i))t^{i-k}b_{k,j}
\quad \text{in } \Q[t]. 
\end{equation}
\end{lemma}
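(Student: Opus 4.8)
The plan is to prove \eqref{app eq:fij in terms of e and b} by induction on the pair $(i,j)$ with respect to the total order used in the proof of Lemma~\ref{lem:formula for f check}, namely $(i',j')\leq(i,j)$ if and only if $i'-j'<i-j$, or $i'-j'=i-j$ and $i'\leq i$. Fix $w\in\Sn$, regard $w(1),\dots,w(n)$ as fixed integers, and substitute $u_r=(w(r)-1)t$ as in \eqref{app eq:def of ur}, so that everything lies in $\Q[t]$; write $\widetilde f_{i,j}$ for the right-hand side of \eqref{app eq:fij in terms of e and b}. By \eqref{eq:fij at w}, $f_{i,j}(w)$ is determined by $f_{j,j}(w)=\sum_{r=1}^j(w(r)-r)t$ together with the recursion $f_{i,j}(w)=f_{i-1,j-1}(w)+(w(j)-w(i)-1)t\,f_{i-1,j}(w)$ (with the convention $f_{\ast,0}=0$). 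Since both $(i-1,j-1)$ and $(i-1,j)$ are strictly smaller than $(i,j)$, it suffices to verify that $\widetilde f_{i,j}$ satisfies the same base case and the same recursion. The base case is immediate: $\widetilde f_{j,j}=b_{j,j}$, which after the substitution equals $\sum_{r=1}^j(w(r)-r)t$ by \eqref{eq:definition bjj}.

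For the recursive step I would repackage $\widetilde f_{i,j}$ with a formal variable $z$. Put $Q_{j,i}(z):=\prod_{\ell=j+1}^i\bigl(1-w(\ell)tz\bigr)\in\Q[t][z]$ (with $Q_{j,j}=1$) and $B_j(z):=\sum_{k\geq j}b_{k,j}z^k$, a power series in $z$ with coefficients in $\Q[t]$. Since the coefficient of $z^m$ in $Q_{j,i}(z)$ is exactly $(-1)^m e_m(w(j+1),\dots,w(i))\,t^m$, reading off the coefficient of $z^i$ gives $\widetilde f_{i,j}=[z^i]\bigl(Q_{j,i}(z)B_j(z)\bigr)$. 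The factorization $Q_{j,i}(z)=\bigl(1-w(i)tz\bigr)Q_{j,i-1}(z)$ then yields $\widetilde f_{i,j}=[z^i]\bigl(Q_{j,i-1}(z)B_j(z)\bigr)-w(i)t\,\widetilde f_{i-1,j}$, so the desired recursion for $\widetilde f$ reduces to the single polynomial identity
\[
[z^i]\bigl(Q_{j,i-1}(z)B_j(z)\bigr)=\widetilde f_{i-1,j-1}+(w(j)-1)t\,\widetilde f_{i-1,j}.
\]

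To establish this identity I would first turn the defining recursion \eqref{eq:definition bkj} of the $b_{k,j}$ into a relation between the generating functions $B_j$ and $B_{j-1}$: multiplying \eqref{eq:definition bkj} by $z^{k+1}$, summing over $k\geq j$, and using $b_{j,j}-b_{j-1,j-1}=u_j-(j-1)t$ together with $u_j=(w(j)-1)t$ (so $u_j+t=w(j)t$) gives
\[
B_j(z)\bigl(1-(w(j)-1)tz\bigr)=(w(j)-j)t\,z^j+z\bigl(1-w(j)tz\bigr)B_{j-1}(z).
\]
Using $Q_{j-1,i-1}(z)=\bigl(1-w(j)tz\bigr)Q_{j,i-1}(z)$, I would then write $\widetilde f_{i-1,j-1}=[z^{i-1}]\bigl(Q_{j-1,i-1}(z)B_{j-1}(z)\bigr)=[z^i]\bigl(Q_{j,i-1}(z)\cdot z(1-w(j)tz)B_{j-1}(z)\bigr)$, substitute the displayed relation for $z(1-w(j)tz)B_{j-1}(z)$, and extract the coefficient of $z^i$: the first summand contributes $[z^i]\bigl(Q_{j,i-1}(z)B_j(z)\bigr)-(w(j)-1)t\,\widetilde f_{i-1,j}$, while the second contributes $(w(j)-j)t\,[z^{i-j}]Q_{j,i-1}(z)$. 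The key point, and essentially the only thing that must be checked, is that $Q_{j,i-1}(z)$ is a product of $i-1-j$ linear factors, hence of degree $i-1-j<i-j$, so that $[z^{i-j}]Q_{j,i-1}(z)=0$ and the leftover term vanishes; this yields the identity above and completes the induction. The manipulations are otherwise routine; the only places demanding care are getting the index shifts and boundary terms right in the passage from \eqref{eq:definition bkj} to the relation between $B_j$ and $B_{j-1}$, and observing that the degenerate cases $j=1$ (where $b_{\ast,0}=0$, $f_{\ast,0}=0$, $B_0(z)=0$) and $i=j$ are absorbed uniformly by these conventions. I expect this bookkeeping to be the main, albeit modest, obstacle.
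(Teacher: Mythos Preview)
Your argument is correct and takes a genuinely different route from the paper's proof. The paper proceeds by induction on a different partial order (first by $j$, then by $i$), establishes the cases $j=1$, $i=j$, and $i=j+1$ separately, and then for general $(i,j)$ expands both sides directly using the identity $e_{i-k-1}(w(j),\dots,w(i-1))=e_{i-k-1}(w(j+1),\dots,w(i-1))+w(j)\,e_{i-k-2}(w(j+1),\dots,w(i-1))$, tracking all terms by hand until the recursion \eqref{eq:definition bkj} emerges. Your generating-function repackaging $\widetilde f_{i,j}=[z^i]\bigl(Q_{j,i}(z)B_j(z)\bigr)$ replaces that bookkeeping by two clean factorizations $Q_{j,i}=(1-w(i)tz)Q_{j,i-1}$ and $Q_{j-1,i-1}=(1-w(j)tz)Q_{j,i-1}$ together with the single functional equation for $B_j$ in terms of $B_{j-1}$; the key cancellation (the ``leftover'' term vanishes because $\deg_z Q_{j,i-1}=i-j-1<i-j$) is exactly the content of the paper's explicit re-indexing, but isolated much more transparently. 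What your approach buys is a uniform treatment of all $(i,j)$ with $i>j$ in one stroke and an immediate explanation of \emph{why} the elementary symmetric polynomials appear; what the paper's approach buys is that it stays entirely within finite sums and requires no auxiliary formal series.

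One small slip: in the displayed relation you have $z(1-w(j)tz)B_{j-1}(z)=B_j(z)(1-(w(j)-1)tz)-(w(j)-j)tz^j$, so the ``second summand'' contributes $-(w(j)-j)t\,[z^{i-j}]Q_{j,i-1}(z)$, with a minus sign rather than a plus. This is harmless since the term vanishes regardless, but you should correct it in the final write-up.
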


\begin{proof}
We prove the claim by induction on pairs $(i,j)$ with respect to
  the same partial order considered in the proof of Lemma~\ref{app lemma:fij in terms of e and b symmetric}. 
First suppose $j=1$. 
In this case
$b_{1,1}=u_1$ by definition and since $b_{*,0}=0$ the
equation~\eqref{eq:definition bkj} reduces to $b_{k+1,1}= u_1 b_{k,1}$, we obtain $b_{k,1}=u_1^k$. Thus, for $j=1$
  and any $i \geq j$, the claim~\eqref{app eq:fij in terms of e and b} is precisely the
assertion~\eqref{eq:fi1 in u} obtained in Section~\ref{sec:fproperty}. 

Now by induction suppose the equality~\eqref{app eq:fij in terms of e and b} holds for
$j-1$ and for any $i \geq j-1$. Moreover, for $i=j$ we have
$f_{j,j}(w) = b_{j,j}$ by definition of the $b_{j,j}$ so the assertion
also holds in this case. Also for the case $i=j+1$ we can compute
explicitly from \eqref{eq:fij at w} that the LHS of~\eqref{app eq:fij in terms of e and b} is 
\begin{align*}
f_{j+1,j}(w) & = f_{j, j-1}(w) + (w(j)-w(j+1)-1) t f_{j,j}(w) \\
 & = \sum_{k=j-1}^j (-1)^{j-k} e_{j-k}(w(j)) t^{j-k} b_{k,j-1} + (u_j -
 w(j+1)t) b_{j,j} \\
 & = - w(j) t b_{j-1,j-1} + b_{j,j-1}  + (u_j - w(j+1)t) 
 b_{j,j} \\
 & = - (u_j + t) b_{j-1, j-1}  + b_{j, j-1}  + (u_j - w(j+1)t) 
 b_{j,j} 
\end{align*}
where we have used the inductive hypothesis, \eqref{app eq:def of ur}, and $f_{j,j}(w)=b_{j,j}$.
The RHS of \eqref{app eq:fij in terms of e and b} can similarly be computed to be  
\begin{align*}
  \sum_{k=j}^{j+1} (-1)^{j+1-k} e_{j+1-k}(w(j+1)) t^{j+1-k} b_{k,j} 
 &= - w(j+1) t b_{j,j} + b_{j+1,j}  \\
 &= - w(j+1) t b_{j,j} + (b_{j, j-1} + u_j b_{j,j}  - (u_j+t)b_{j-1,j-1}
 )  \\
 &= - (u_j+t) b_{j-1,j-1} + b_{j, j-1}  + (u_j - w(j+1)t) b_{j,j} 
\end{align*}
where we have used \eqref{eq:definition bkj}.
Comparing with the above, we may conclude that~\eqref{app eq:fij in terms of e and
  b} holds for $i=j+1$. 

We now wish to show that~\eqref{app eq:fij in terms of e and b} holds for
a pair $(i,j)$ with $i>j+1$, where 
we may also assume
$j>1$. 
We will use 
the following facts and conventions concerning the elementary symmetric
polynomials $e_k$: $e_{-1}=0$ and $e_0 = 1$ for any number of
variables, and $e_\ell(y_1, \ldots, y_s) = 0$ if $\ell > s$, i.e. if
the expected degree is greater than the number of variables. 
With these conventions and from the definition of the elementary
symmetric polynomials we may derive the identity 
\begin{equation} \label{app eq:relation on elem symmetrics}
\begin{split}
e_{i-k-1}(w(j),\dots,w(i-1)) =e_{i-k-1}(w(j+1),\dots,w(i-1)) +w(j)e_{i-k-2}(w(j+1),\dots,w(i-1))
\end{split}
\end{equation}
for any $k$ with $j-1\leq k \leq i-1$. 
Now by the recursive description \eqref{eq:fij at w} of $f_{i,j}(w)$,
the inductive hypotheses, \eqref{app eq:relation on elem symmetrics}, and \eqref{app eq:def of ur},
we can compute $f_{i,j}(w)$ to be 
\begin{align*}
f_{i,j}(w) & = f_{i-1,j-1}(w) + (u_j - w(i)t) \cdot f_{i-1,j}(w)  \quad\text{by \eqref{eq:fij at w}}  \\
 & = \sum_{k=j-1}^{i-1} (-1)^{i-k-1} e_{i-k-1}(w(j), \ldots, w(i-1))
 t^{i-k-1} b_{k,j-1}  \\
 &\qquad\qquad + (u_j - w(i)t) \cdot\left( \sum_{k=j}^{i-1} (-1)^{i-k-1}
   e_{i-k-1}(w(j+1), \ldots, w(i-1)) t^{i-k-1} b_{k,j} \right) \\
   &\hspace{300pt} \text{by the inductive hypothesis} \\
 & = \sum_{k=j-1}^{i-1} (-1)^{i-k-1} \Big( e_{i-k-1}(w(j+1),\dots,w(i-1))
 +w(j)e_{i-k-2}(w(j+1),\dots,w(i-1)) \Big) 
 t^{i-k-1} b_{k,j-1}  \\ 
 & \qquad\qquad + (u_j - w(i)t) \cdot \left( \sum_{k=j}^{i-1} (-1)^{i-k-1}
   e_{i-k-1}(w(j+1), \ldots, w(i-1)) t^{i-k-1}b_{k,j} \right) \quad\text{by \eqref{app eq:relation on elem symmetrics}} \\
 &=\sum_{k=j-1}^{i-1} (-1)^{i-k-1} e_{i-k-1}(w(j+1),\ldots,w(i-1))
 t^{i-k-1}b_{k,j-1} \\
 & \qquad\qquad + \sum_{k=j-1}^{i-1} (-1)^{i-k-1}
 e_{i-k-2}(w(j+1),\ldots,w(i-1)) (u_j+t)  t^{i-k-2} b_{k,j-1} \\
 &\qquad\qquad
 +\sum_{k=j}^{i-1} (-1)^{i-k-1} e_{i-k-1}(w(j+1),\ldots,w(i-1)) u_j  t^{i-k-1} b_{k,j} \\
 & \qquad\qquad - \sum_{k=j}^{i-1} (-1)^{i-k-1} e_{i-k-1}(w(j+1), \ldots, w(i-1))
 w(i) t^{i-k} b_{k,j} \qquad\text{by \eqref{app eq:def of ur}}.
\end{align*}
By the convention that $e_{-1}=0$, $e_\ell(y_1,\ldots,y_s)=0$ if $\ell >s$ 
and by a re-indexing in order to gather terms, it follows that this is further equal to
\begin{align*}
 &\sum_{k=j-1}^{i-1} (-1)^{i-k-1} e_{i-k-1}(w(j+1), \ldots, w(i-1))
 t^{i-k-1} \big( b_{k,j-1} + u_j b_{k,j} - (u_j+t) b_{k-1, j-1} \big)
 \\
 & \qquad\qquad - \sum_{k=j}^{i} (-1)^{i-k-1} e_{i-k-1}(w(j+1),
 \ldots, w(i-1)) w(i) t^{i-k} b_{k,j}. 
\end{align*} 
By the recursive definition~\eqref{eq:definition bkj} of $b_{k+1,j}$ for $k\geq j$, and because
the term 
$e_{i-k-1}(w(j+1), \ldots, w(i-1))$ vanishes for $k=j-1$, we can replace the
expressions $b_{k,j-1} + u_j b_{k,j} - (u_j+t) b_{k-1, j-1}$ in the first
summand above with $b_{k+1,j}$. 
Then by re-indexing the first summand and using (a re-indexed version
of) the equality~\eqref{app eq:relation on elem symmetrics}, we obtain the following equality
\begin{align*}
 f_{i,j}(w)=\sum_{k=j}^{i}(-1)^{i-k} e_{i-k}(w(j+1), \ldots, w(i)) t^{i-k}
 b_{k,j}.
\end{align*}
as, desired. This proves the claim. 
\end{proof}

\end{document}